\documentclass[draftclsnofoot,onecolumn,journal,12pt]{IEEEtran}

\usepackage{amssymb,graphicx}
\usepackage{latexsym}
\usepackage{mathtools}
\usepackage{cite}
\usepackage{color}
\usepackage{subfigure}
\usepackage{bm,bbm}
\usepackage{tabularx,booktabs}
\usepackage{ltablex}
\usepackage{dsfont}
\usepackage{mathrsfs}
\usepackage{enumitem}
\usepackage{cases}
\usepackage{yhmath}

\newcolumntype{M}[1]{>{\centering\arraybackslash}m{#1}}

\usepackage[linesnumbered,lined,boxed]{algorithm2e}
\usepackage{setspace}
\SetAlCapSkip{1em}
\SetKwComment{Comment}{$\triangleright$\ }{}
\RequirePackage[OT1]{fontenc}
\RequirePackage{amsthm,amsmath}
\RequirePackage[numbers]{natbib}
\RequirePackage[colorlinks,citecolor=blue,urlcolor=blue]{hyperref}
\newcommand*{\fullref}[2]{\hyperref[{#1}]{\ref*{#1} {\it(#2)}}}
\newcommand*{\partialref}[2]{\hyperref[{#1}]{#2}}

\newcommand{\bA}{\bm{A}}

\newcommand{\bF}{\mathbf F}
\newcommand{\bR}{\mathbf R}
\newcommand{\bS}{\bm{S}}
\newcommand{\bZ}{\bm{Z}}

\newcommand{\bs}{\bm{s}}
\newcommand{\bv}{\bm{v}}
\newcommand{\balpha}{\bm{\alpha}}
\newcommand{\db}[1]{\Bar{\Bar{#1}}}

\newcommand{\bz}{\mathbf z}
\newcommand{\bbR}{\mathbb R}
\newcommand{\bbP}{\mathbb P}

\newcommand{\innerproduct}[2]{\bigl\langle #1,#2\bigr\rangle}

\newcommand{\bbE}{\mathbb E}

\newcommand{\boalphaLP}{\overline{\bm{\alpha}}^{\text{LP}}}
\newcommand{\oalphaLP}{\overline{\alpha}^{\text{LP}}}

\newcommand{\sS}{\mathsf S}
\newcommand{\sA}{\mathsf A}

\newcommand{\Lipschitza}{a}

\newcommand{\lPhi}{\Phi}  
\newcommand{\PhiWC}{\Phi^{\text{WC}}}
\newcommand{\lPhih}{\Phi^h}  
\newcommand{\PhiWCh}{\Phi^{\text{WC},h}}

\newcommand{\dblPhih}{\db{\Phi}^h}

\newcommand{\PsiZ}{\Psi^0}
\newcommand{\PsiZWC}{\Psi^{0,\text{WC}}}

\newcommand{\lPsi}{\Psi}

\newcommand{\actionphi}{a^{\phi}}
\newcommand{\actionphiVec}{\bm{a}^{\phi}}
\newcommand{\actionphih}{a^{\phi,h}}
\newcommand{\actionphihVec}{\bm{a}^{\phi,h}}

\newcommand{\actionpsihVec}{\bm{a}^{\psi,h}}

\newcommand{\action}{a}
\newcommand{\actionVec}{\bm a}
\newcommand{\dummyAction}{\action^0}

\newcommand{\deltay}{\Delta_{\mathscr{Y}}}
\newcommand{\deltayh}{\Delta_{\mathscr{Y}}^h}

\newcommand{\indicator}{\mathbb{I}}
\newcommand{\inter}[1]{\text{int}\left\{#1\right\}}

\newcommand{\orule}{\varphi}
\newcommand{\dborule}{\varphi}

\usepackage[scr=boondox]{mathalfa}
\newcommand{\scra}{\mathscr{a}}

\newcommand{\scrp}{\mathscr{p}}
\newcommand{\scru}{\mathscr{u}}

\newcommand{\scrz}{\mathscr{z}}

\newcommand{\scrR}{\mathscr{R}}

\newcommand{\frakR}{\mathfrak{R}}

\newcommand{\calJ}{\mathcal{J}}

\newcommand{\calJext}{\mathcal{J}^+}

\newcommand{\ALP}{FAB }
\newcommand{\GSA}{\text{GSA} }

\newcommand{\GSATs}{GSATs }

\renewcommand{\proof}[1]{\emph{#1}}
\renewcommand{\endproof}{{\hfill $\blacksquare$}}

\newtheorem{theorem}{\bf Theorem}
\newtheorem{lemma}{\bf Lemma}
\newtheorem{proposition}{\bf Proposition}

\newtheorem{assumption}{\bf Assumption}

\usepackage[deletedmarkup=xout]{changes}

\graphicspath{{./figs/}}

\newcounter{condition}
\newenvironment{condition}[1]{\refstepcounter{condition}
\par\medskip
\noindent \textbf{#1:~} \it}{\medskip \newline}

\newcounter{externalTheorem}

\newcommand{\norm}[1]{\left\lVert #1 \right\rVert}

\begin{document}
\title{Weakly-Coupled Multi-Action Restless Bandits - Exponential Convergence in Probability}
\author{Jing~Fu,
Bill~Moran~and~Jos\'e Ni\~no-Mora
\thanks{
Jing Fu is with Department of Electrical and Electronic Engineering, School of Engineering, STEM College, RMIT University, Australia (e-mail: jing.fu@rmit.edu.au).}
\thanks{Bill Moran is with Department of Electrical and Electronic Engineering, the University of Melbourne, VIC 3010, Australia (e-mail:wmoran@unimelb.edu.au).}
\thanks{Jos\'e Ni\~no-Mora is with Department of Statistics, Carlos III University of Madrid, Spain (E-mail: jose.nino@uc3m.es)}
}

\maketitle

\begin{abstract}
We study a finite time horizon Markov decision process (MDP) consisting of several groups of multi-action finite-state restless bandit processes, which are identical within each group.
The bandit processes into different groups can be rather different.
The bandit processes are subject to multiple weakly coupled constraints on their state and action variables. 
In contrast to prior studies that considered only a few specific policies/algorithms, here, we study the behaviours of the general stochastic process
and, most importantly, the design of policies that guarantee its convergence to an ideal trajectory as the problem size increases.
We prove that, for any policy in a rather general class, the resulting stochastic process converges in probability to a deterministic process as the system size (measured by the number of bandits) tends to infinity, at an exponential rate.
Unlike the previous proofs, our exponential convergence does not rely on any non-degenerate assumptions.
It follows that the chosen policy asymptotically approaches optimality (with exponentially diminishing suboptimality) in the size dimension if and only if the deterministic process coincides with optimality. 
We further propose a policy and prove that, in general, it converges in probability to optimality exponentially fast in the system size.
\end{abstract}

\begin{IEEEkeywords}
restless bandits; weakly-coupled constraints; asymptotic optimality
\end{IEEEkeywords}

\section{Introduction}\label{sec:introduction}
\subsection{Overview and Motivation}\label{subsec:background}

\emph{Restless multi-arm bandit (RMAB)} problem, originally proposed by \cite{whittle1988restless}, is a Markov decision process (MDP) with high-dimensional state and action spaces.
It consists of multiple individual and otherwise independent \emph{bandit processes}, which are again  MDPs but with binary actions \cite{gittins2011multiarmed}, linked through a constraint over all their action variables. 
It has offered a flexible modeling framework applicable to a broad class of dynamic systems and stochastic optimisation problems; see \cite{krishnamurthy2007structured,wang2020whittle,fu2016asymptotic,fu2020energy,avrachenkov2016whittle}, and the review \cite{ninomora2023survey}.

Typically, the complexity stems from the multiplicity of such processes. These RMABs are themselves MDPs whose state space grows exponentially with the number of constituent bandits, rendering them computationally intractable (PSPACE-hard, see   \cite{papadimitriou1999complexity}).
The large state space prevents conventional optimization techniques for MDP, such as value iteration, or exhibits excessively slow convergence of reinforcement learning techniques.

Past efforts resort to scalable heuristics with theoretical performance guarantees.
\cite{whittle1988restless} proposed the famous Whittle index policy that quantifies all states of each of the bandit processes through real-valued \emph{indices} by solving a relaxed version of the original problem and, at each decision epoch, always prioritizes the bandit processes with highest indices.
The Whittle indices are proposed under a non-trivial condition, referred to as \emph{Whittle indexability}, and computed with computational complexity linear to the number of bandit processes, such as the algorithms discussed in \cite{nino2006restless,nino2007dynamic}, which is thus scalable for a general RMAB problem.
After \cite{whittle1988restless} conjectured \emph{asymptotic optimality} of Whittle index policy - it approaches optimality when the scale of the problem, measured by the number of the bandit processes, tends to infinity,
\cite{weber1990index} proved it with further assumed a non-trivial condition related to the existence of a global attractor of its associated averaging process.
The global attractor condition is crucial, although it may naturally exist in special cases, in the sense of showing asymptotic optimality for RMAB with long-run objectives, such as discussions in \cite{weber1990index,verloop2016asymptotically,fu2018restless,fu2020energy}.
In contrast to the challenges of establishing Whittle indexability and the existence of a global attractor, recent work~\cite{brown2020index,gast2023linear,yan2024optimal,fu2024patrolling} explored approximations to fluid models,  which are tractable to solve and serves upper/lower bounds of the maximum/minimum of the original RMAB. 
Such policies, although tractable, exhibit relatively higher computational complexity than that of the Whittle indices but, remarkably, achieve proved asymptotic optimality in general.

The conventional RMAB formulation is limited by its assumptions of binary actions and a single linear constraint on the action variables.
More general and practically relevant multi-action bandit processes (MABPs), featuring richer constraints on both action and state variables, have been studied in \cite{nino2008index,nino2022multigear,fu2018restless,brown2023fluid,fu2024patrolling,gast2024reoptimization,fu2025restless}.
In this paper, an MABP is simply an MDP with finite state and action spaces. We retain the term ``bandit process'' in line with the RMAB literature and to distinguish it from more general MDPs.
The discussions have been further generalized and enriched by building on the valuable work of~\cite{brown2023fluid,fu2024patrolling,gast2024reoptimization}, making them more applicable to practical cases - these  MABPs are coupled through multiple \emph{weakly-coupled} constraints, making the overall problem fall in the scope of weakly-coupled MDPs (see \cite{adelman2008relaxations}).
They have been  analyzed through linear-programming-based techniques with achieved asymptotic optimality in the finite-time-horizon case.
In \cite{fu2024patrolling}, for a special case of the weakly coupled constraints, the performance deviation was proved to converge  in probability at $e^{-O(h)}$, where $h$ is the number of  bandit processes.
In \cite{brown2023fluid}, for the general weakly-coupled bandit processes, under certain \emph{non-degenerate} assumptions for a relaxed version of the problem,
the performance convergence between optimality and the designed scalable policies was also proved to be $O(e^{-h})$ in expectation.
Later in \cite{gast2024reoptimization}, with a slightly different non-degenerate assumption, the performance sub-optimality rate has been substantially improved to $e^{-O(h)}$ in both probability and expectation.

Building on and extending existing results, This paper focuses on the general case where MABPs are coupled through multiple weakly-coupled constraints and no assumptions about non-degenerate conditions.
As demonstrated in \cite[Figure 2]{gast2024reoptimization}, the non-degenerate assumptions are non-trivial and do not hold in general.
We begin by extending the notion of an RMAB. Here, the basic structure consists of a finite collection of groups of multi-action bandit processes. These groups are called \emph{gangs}, and each gang comprises bandit processes that share the same state and action spaces, as well as the same state–action-dependent transition probabilities and reward rates; that is, bandit processes within a gang are stochastically identical. Bandit processes belonging to different gangs, however, may be completely distinct. We study the general setting with weakly coupled constraints and refer to such problems as Weakly Coupled Gangs (WCGs).
A detailed definition and explanation on the relationship between WCGs and conventional RMAB are provided in Section~\ref{subsec:model}.
WCGs encompass conventional RMABs and are at least as difficult to solve.

For the general WCG, unlike the past work~\cite{brown2023fluid,gast2023linear,gast2024reoptimization}, which focused on a few specific algorithms/policies and their performance, we study the behaviors of the general WCG process; most importantly, how to design policies under which the WCG process converges to an ideal trajectory as the problem size increases.

We refer to the scale of the problem; that is $h$, as the \emph{size dimension}, in contradistinction to the time dimension (timeline).

\subsection{Main Results and Roadmap}
\label{sec:Results}

Our main contributions consist of theorems that analyze the behaviors of the general WCG process and  practical methods for which the performance is theoretically guaranteed by the theorems.
They can be summarized into three points.
\begin{enumerate}[label=(\arabic*)]
\item 
We consider the class of all policies, which are well defined and satisfy a \partialref{cond:weak_stab}{Lipschitz–limit regularity} in the limit regime with $h\to \infty$. 
The \partialref{cond:weak_stab}{Lipschitz–limit regularity} specifies how we design the action variables of a policy, but does not impose any intrinsic condition on the WCG problem itself, such as the non-degenerate assumptions in~\cite{brown2023fluid,gast2023linear,gast2024reoptimization}.
We refer to such policies as \emph{Lipschitz-limit} policies and prove that the Whittle index policy~\cite{whittle1988restless} and the policies proposed in the prior work~\cite{brown2023fluid,gast2023linear} fall into this class.
We prove, for the first time, in Theorems~\ref{theorem:convergence-Z} and \ref{theorem:convergence_Z_exp} that 
the WCG process, under any Lipschitz-limit policy, converges in probability to a deterministic trajectory at rate $e^{-O(h)})$.
Such exponential convergence in probability, in contrast to the above-mentioned studies, holds in general without assuming non-degenerate conditions.
The relevant discussions are provided in Section~\ref{sec:convergence}.
\item 
We prove a necessary and sufficient condition for a Lipschitz-limit policy to approach the optimum of the Whittle relaxation of the WCG problem in the asymptotic regime ($h\to \infty$).
Moreover, the convergence (in probability) is at rate $e^{-O(h)})$.
This relaxed optimum coincides with the optimum of the original WCG problem in the asymptotic regime in fairly general settings, such as those discussed in~\cite{brown2020index,brown2023fluid,gast2023linear,gast2024reoptimization,fu2020energy,fu2018restless,fu2024patrolling}; in this case, the provided condition is also necessary and sufficient for a Lipschitz-limit policy to be asymptotically optimal.
The analysis for the necessary and sufficient condition is provided in Section~\ref{subsec:alp:asym_opt_exp}.
\item Under the same resource allocation scenario  as in~\cite{brown2023fluid,gast2024reoptimization}, in Section~\ref{subsec:example}, we propose an algorithm that always lead to an asymptotically optimal policy.
We prove that the suboptimality diminishes in probability exponentially in the size dimension (without requesting non-degenerate assumptions).
The computational complexity of the algorithm is logarithmic-linear in the size dimension - scalable for a large-scale WCG problem. 
Through simulation results, which do not satisfy any non-degenerate condition, we demonstrated that the performance deviation between the proposed algorithm and optimality quickly diminishes in the size dimension, which is consistent with our theoretical conclusions.
\end{enumerate}

\subsection{Relations to the Literature} \label{subsec:rWork}

The past work for weakly coupled systems mainly focused on appropriate relaxation techniques that achieve tighter upper/lower bounds for the original maximum/minimum.
The LP-based relaxation technique was compared with Lagrangian relaxation approaches for weakly coupled dynamic problems by \cite{adelman2008relaxations}  who proved that the LP-based relaxation produced tighter bounds on the performance deviation from optimality, while the Lagrangian relaxation led to lower computational complexity. 
For a decomposable MDP with long-run discounted cumulative reward, \cite{bertsimas2016decomposable} proposed the fluid linear optimization (fluid-LO) formulation, which is a linear programming optimization problem and provides tighter bounds of the original optimality than those of Lagrangian and LP-based relaxations. 
For the case where the MDP actions are subject to weakly coupled constraints, \cite{bertsimas2016decomposable} proved that tighter bound can be achieved by increasing the size of the fluid-LO formulation.
\cite{ye2017weakly} discussed the information relaxation technique for decomposable MDP with a long-run discounted objective and weakly coupled sub-problems, which are also MDPs with the same state and action spaces and transition kernels. 
Such an MDP is usually referred to as a weakly coupled MDP.
Unlike the other relaxations that initiating relaxed versions of optimization problems, information relaxation iterative an information function, which converges to an upper bound, tighter than Lagrangian relaxation, of the value function of the original MDP.  
For weakly coupled MDP with finite-time horizon, \cite{nadarajah2025self} proposed the feasibility network relaxation (FNR) technique that provides an upper bound, tighter than Lagrangian relaxation and the same as LP-based relaxation (referred to as approximate linear programming (ALP) in \cite{nadarajah2025self}), of the original optimality.
In particular, FNR initiates a relaxed problem (linear programming) for which the sum of the numbers of the variables and the constraints is equal or less than that of LP-based relaxation; while, it maintains the same upper bound.
This paper focuses on the weakly coupled MDP with finite time horizon, where we prove that the weakly coupled MDP converges to a deterministic trajectory in the asymptotic regime.
In a general resource allocation scenario, it leads to the convergence, in the asymptotic regime, between its optimality and the upper bound equal to or tighter than that of the LP-based relaxation.

Apart from \cite{gast2023linear,gast2024reoptimization,brown2020index,brown2023fluid} mentioned in Section~\ref{subsec:background}, \cite{hong2024achieving} made an important contribution to the standard RMAB problem that maximizes the long-run average reward. 
They proved the exponentially fast convergence of the proposed policy to optimality in the asymptotic regime, which again relies on a non-degenerate assumption similar to \cite{gast2024reoptimization,brown2023fluid}. 
For the weakly coupled MDP consisting of heterogeneous groups of MDPs (that is, multi-action restless bandit processes) and a finite time horizon, we prove that, under any Lipschits-limit policy, the MDP in general converges in probability to a deterministic trajectory in $e^{-O(h)}$.
It leads to a range of policies, with appropriately adapted actions, that converge exponentially fast to optimality in the asymptotic regime.

The remainder of the paper is organized as follows.
In Section~\ref{sec:model}, we give a detailed definition of the WCG problem. 
In Section~\ref{sec:convergence}, we present our main results: theorems assure fast convergence of the stochastic process in the size dimension. 
Based on the fast convergence in the size dimension,
in Section~\ref{sec:alp}, 
we propose a class of policies, referred to as ALP, for which the performance degradation diminishes exponentially in the size dimension.
We also proposed a specific algorithm, in the general resource allocation scenario, that constructs an asymptotically optimal policy. 
Numerical validation of the algorithm is provided in Section~\ref{subsec:example} for a system that falls out of the scope of non-degenerate assumptions.
In Section~\ref{sec:conclusions}, we present conclusions.


\section{The Problem}\label{sec:model}

The sets of positive and non-negative integers are denoted by $\mathbb{N}_{+}$ and $\mathbb{N}_{0}$ , respectively, and, for any $N\in\mathbb{N}_{+}$,  $[N]$ represents the set $\{1,2,\ldots,N\}$ with $[0]=\emptyset$.
We use $\mathbb{N}_0^{\infty}$, $\mathbb{N}_+^{\infty}$, and $[N]_0$ to represent the sets $\mathbb{N}_0\cup\{\infty\}$, $\mathbb{N}_+\cup\{\infty\}$, and $\{0\}\cup[N]$, respectively.
Similarly,  $\mathbb{R}$, $\mathbb{R}_{+}$ and $\mathbb{R}_{0}$ denote the  sets of all, positive and non-negative reals, respectively.

Define $\db{\bR}$ as the set of all real-valued random variables.
For a finite set $\bS$, let $\bF(\bS)$ and $\db{\bF}(\bS)$ represent the sets of all real valued functions $\bS \rightarrow \mathbb{R}$ and all functions $\bS\rightarrow \db{\bR}$, respectively.
For any $f\in\bF(\bS)$, we define $\lVert f \rVert \coloneqq \max_{s\in\bS}\lvert f(s) \rvert$.

Let $\bm{v}\circ\bm{u}$, $\innerproduct{\bm{v}}{\bm{u}}$, and $\norm{\bm{v}}$ represent the Hadamard product (element-wise multiplication), inner product, and  the maximum norm, respectively.

\subsection{System Model}\label{subsec:model}
We consider a system consisting of $I$ gangs  of restless multi-action bandit (RB) processes, where the $i$th gang  ($i\in [I]$) has $N_i$ stochastically identical   RB processes; that is,  with identical state and action spaces, transition kernels, and identically distributed reward rates. 
An RB process of class $i\in[I]$ is a discrete-time Markov decision process (MDP) with \emph{finite} state and action spaces $\bS_i$ and $\bA_i$, respectively.
Let $s_{i,n}(t)$ and $\action_{i,n}(t)$ ($i\in[I],n\in[N_i]$) represent the state and action variables, respectively, of the $n$th RB process in gang $i$ at time $t\in\mathbb{N}_0$. 
These are  random variables $s_{i,n}(t)$ and $a_{i,n}(t)$  taking  values in $\bS_i$ and $\bA_i$, respectively.
The full state and actions spaces are  $\sS\coloneqq \prod_{i\in [I]}\bS_{i}^{{N_{i}}} $ and $\sA\coloneqq \prod_{i\in I}\bA_{i}^{{N_{i}}} $, respectively.
We observe that the state and action space $\sS$ and $\sA$ are  time-independent. We use the notation
$\bs(t)\coloneqq (s_{i,n}(t): i\in[I], n\in[N_i])\in \sS$ and $\actionVec(t)\coloneqq (\action_{i,n}(t): i\in[I], n\in[N_i])\in \sA$.
At each time $t\in \mathbb{N}_0$, the system controller chooses  the value of the action vector $\actionVec(t)$ as a function of current state $\bs(t)$ and time index $t$.

For a system of this kind, a \emph{policy} $\phi$ is a sequence of mappings $\phi_{t}:\sS\mapsto \sA$ ($t\in \bm{N}_{0}$). 
Evolution of the system under such a policy requires that the action $\actionVec(t)$ at time $t$ is
\begin{equation*}
  \actionVec(t)=\phi_t(\bs(t))\in \sA.
\end{equation*}

Choice of the action $\actionVec(t)=(\action_{i,n}(t))$ provides the transition matrix  $\mathcal{P}_i(a)=\bigl[p_i(s,a,s')\bigr]_{|\bS_i|\times|\bS_i|}\in[0,1]^{|\bS_i|\times|\bS_{i}|}$ ($a\in\bA_i$),
so that  the  individual RB process $\bigl\{s_{i,n}(t),t\in\mathbb{N}_0\bigr\}$ evolves according to  the transition probability $p_i\bigl(s_{i,n}(t),\action_{i,n}(t),s_{i,n}(t+1)\bigr)$ from $s_{i,n}(t)$ to $s_{i,n}(t+1)$. This state transition generates a real-valued, non-negative and bounded   \emph{random reward}, 
$R_i\in\db{\bF}(\bS_i\times \bA_i)$,
with expectation $r_i\bigl(s,\action\bigr)\coloneqq \mathbb{E}\bigl[R_i(s,a)\bigr]$, where $r_i\in\bF(\bS_i\times\bA_i)$.
We define $R_{i,n}(t) \sim R_i\bigl(s_{i,n}(t),\action_{i,n}(t)\bigr)$ that represents the sampled reward for bandit process $n$ at time $t$, where $\sim$ represents equality in distribution.  
We refer to $r_i$ as the \emph{reward function.}
The reward $R_{i,n}(t)$ is dependent on only $s_{i,n}(t)$ and $a_{i,n}(t)$ but independent of state transitions.

To highlight the dependence of the various objects studied on the choice of  policy $\phi$,  we write $s_{i,n}(t)$, $\action_{i,n}(t)$, and $R_{i,n}(t)$,  as $s^{\phi}_{i,n}(t)$, $\actionphi_{i,n}(t)$ and $R^{\phi}_{i,n}(t)$, respectively. We will adopt a similar notation, as required, for other derived entities in relation to the system.

We consider a finite time horizon $T\in\mathbb{N}_+$ of the entire process $\{\bs^{\phi}(t),t\in[T]_0\}$.
The original  RMAB formulation~\cite{whittle1988restless} imposed a simple form of constraint:
\begin{equation}\label{eqn:constraint:rmab}
\sum_{i\in[I],n\in[N_i]}\actionphi_{i,n}(t) = M,~\forall t\in[T]_0,
\end{equation}
where $M\in[N]$, and $\bA_i$ is specified to be $\{0,1\}$ for all $i\in[I]$.
This  couples the RB processes $\bigl\{s_{i,n}(t),t\in\mathbb{N}_0\bigr\}$, enforcing dependencies among $s^{\phi}_{i,n}(t)$ and $\actionphi_{i,n}(t)$, for all $i\in[I]$ and $n\in[N_i]$.
Let 
\begin{equation}\label{eqn:obj_func}
\Gamma(\bs^0) \coloneqq \bbE^{\phi}_{\bs^0}\Bigl[\sum_{t\in[T]_0}\sum_{i\in[I],n\in[N_i]} R^{\phi}_{i,n}(t)\Bigr],
\end{equation}
where $\bs^0\in \sS$ is the initial state, $\bbE^{\phi}_{\bs^0}$ represents the expectation conditioned on given initial state $\bs^0$ and the employed policy $\phi$,
and, of course, the value of $\Gamma(\bs^0)$ is dependent on the action choices.
In \cite{whittle1988restless,weber1990index,nino2007dynamic,verloop2016asymptotically,ninomora2020verification,fu2018restless,brown2020index}, RMAB techniques  deal with maximization of  the long-run average expected reward, $\lim_{T\rightarrow \infty}\Gamma(\bs^0)/T$,  the expected cumulative reward of the entire system $\Gamma(\bs^0)$ and/or  the long-run discounted expected cumulative reward, where the state and action variables are fully observed and the transition matrices and the reward functions are known a priori  (the offline case).

In \cite{weber1990index,verloop2016asymptotically,fu2018restless,brown2020index,fu2024patrolling}, scalable  policies were proposed for large-scale problems and were proved (under certain conditions) to approach optimality as the problem sizes, usually measured by the number of  RB processes, tend to infinity. In particular, the deviations between the performance of the proposed policies and optimality decrease exponentially or polynomially as the problem size increases.

However, many real-world applications do not fall in the scope of the standard RMAB formulation. For instance, stochastic optimization involving geospatial/topology data~\cite{fu2024patrolling}, the  bipartite-queue matching problem~\cite{fu2025restless}, resource allocation with limited capacities~\cite{fu2018restless}, and hospital care scheduling~\cite{daeth2023optimal}.
In this paper, we consider a set of constraints that generalize \eqref{eqn:constraint:rmab}, 
\begin{equation}\label{eqn:constraint:linear}
   \sum_{i\in[I]}\sum_{n\in[N_i]}f_{i,\ell}\bigl(s^{\phi}_{i,n}(t),\actionphi_{i,n}(t)\bigr)\leq 0,~\forall \ell\in[L], t\in[T]_0,
\end{equation}
where $L\in\mathbb{N}_0$, and $f_{i,\ell}:\bS_i\times\bA_i \rightarrow \mathbb{R}$ is a bounded function.
The  inequality in \eqref{eqn:constraint:linear} is for the sake of simplicity. It is straightforward  to modify the results presented here to the case where we replace some of the  inequalities  \eqref{eqn:constraint:linear} by  equalities; that is, having a mix of equalities and inequalities.

We refer to such a process $\bigl\{\bs^{\phi}(t), t\in[T]_0\bigr\}$, consisting of the $I$ classes of RB processes coupled through \eqref{eqn:constraint:linear}, as
\emph{Weakly Coupled Gangs (WCGs)}.
A WCG is a Markov decision process with state space $\sS=\prod_{i\in[I]}\bS_i^{N_i}$ and, of course,  is at least as difficult as a general RMAB problem, when the time horizon $T < \infty$. 
It is mathematically determined by \[\Bigl\{I,\{N_i\}_{i\in[I]},\{\bS_i\}_{i\in[I]},\{\bA_i\}_{i\in[I]}, \{\mathcal{P}_i(a)\}_{i\in[I],a\in\bA_i},\{R_i\}_{i\in[I]},\{f_{i,\ell}\}_{i\in[I],\ell\in[L]}\Bigr\}.\]

Let $\lPhi$ represent the set of all the policies $\phi$ determined by $\phi_t$, $t\in[T]_0$,  for which recall $\actionphiVec(t) = \phi_t(\bs^{\phi}(t))\in\sA$.
Similarly, let $\PhiWC$ represent the set of all the policies $\phi$ determined by $\phi_t$, $t\in[T]_0$, and constrained by \eqref{eqn:constraint:linear}. 
The superscript `WC' stands for the \emph{weakly coupling constraints} 
in \eqref{eqn:constraint:linear}.
It follows that $\PhiWC\subset \lPhi$.

We are interested in finding  a policy $\phi\in \PhiWC$  that achieves  the objective 
\begin{equation}\label{eqn:obj:long-run-average}
\max_{\phi\in\PhiWC} \Gamma^{\phi}(\bs^0) =  \max_{\phi\in\PhiWC}\sum_{t\in[T]_0}\sum_{i\in[I]}\sum_{n\in[N_i]}\sum_{(s,a)\in\bS_i\times\bA_i}r_i(s,a)\mathbb{P}\bigl\{s^{\phi}_{i,n}(t) = s, a^{\phi}_{i,n}(t) = a\bigl|~\bs^{\phi}(0)=\bs^0\bigr\}.
\end{equation}

\section{Fast Convergence in Size Dimension}
\label{sec:convergence}

\cite{brown2023fluid,gast2023linear,gast2024reoptimization} proposed the LP-based approximation method that casts the same action variables as those for an optimal solution of the relaxed problem, solved through linear programming, and then adapts the actions a little to fit the constraints of the original problem.
Such action variables are applicable to the original problem; meanwhile, the resulting stochastic process is ensured to converge to optimality of the relaxed problem, which is an upper/lower bound of the maximum/minimum of the original problem, in the asymptotic regime.
It leads to asymptotic optimality of the proposed policy.

Unlike \cite{brown2023fluid,gast2023linear,gast2024reoptimization},  we do not request \emph{non-degenerate} assumptions for the optimal solution to the relaxed problem (linear programming problem).
We will demonstrate in this section that the asymptotic optimality holds in general to a range of algorithms applicable to the original WCG problem. 
In particular, the deviations between such algorithms and optimality diminish as exponentially as the size of the problem increases.

\subsection{Asymptotic Regime}
\label{subsec:asym_regime}
We start with a discussion of the asymptotic behaviour of the WCG problem.
Consider the effects of scaling the number of copies $N_i$ of each gang $i\in [I]$ by a \emph{scaling parameter} $h$.

Consider a parameter $h\in\mathbb{N}_+$ such that $N_i = h N_i^0$ for some $N_i^0 \in\mathbb{N}_+$ ($i\in[I]$). 
We refer to $h$ as the \emph{scaling parameter} of the WCG system. 
Note that the scaling parameter $h$ is used to scale  $N_i$ ($i\in[I]$), the number of bandit processes of gang $i$.  All the definitions of the WCG system remain the same.  For an
initial state $\bs^0$, we aim to maximize
\begin{equation}\label{eqn:obj:h}
    \max_{\phi\in\PhiWC} \Gamma^{\phi,h}(\bs^0),
\end{equation}
where 
\begin{equation}
   \Gamma^{\phi,h}(\bs^0)\coloneqq 
        \frac{1}{h}\Gamma^{\phi}(\bs^0), 
\end{equation}
and we write $\Gamma^{\phi, \infty}(\bs^0)=\liminf_{h\to \infty} G^{\phi, \infty}(\bs^0)$.
Here,
the superscript $\phi$ and $h$ are attached to indicate their influence on $\Gamma(\bs^0)$.
For a policy $\phi$, since it is applied on each individual member $i$ of the  WCG,   we can extend it to the new system scaled by $h$ in the obvious way, by applying it to each member of the enlarged WCG. To indicate this we write $\bs^{\phi,h}(t)$ and $\actionphihVec(t)$ for the Markov process that results from the application of that policy to the larger system.
Meanwhile, we also write $\lPhih$ and $\PhiWCh$ to substitute $\lPhi$ and $\PhiWC$, respectively, as both sets are defined with given $h\in\mathbb{N}_+$.

We define $\calJ\coloneqq \prod_{i\in[I]}(\bS_i\times\bA_i)$ as the set of all triples (called \emph{gang-state-action (GSA) triples})   of the form  $(i, s,\action)$ with $i\in [I]$, $s\in \bS_i$ and $\action\in \bA_i$.

For $h\in\mathbb{N}_+$, $\phi\in\lPhih$, $(i,s,\action)\in \mathcal{J}$, and $t\in[T]_0$,
define $Z^{\phi,h}_{i,s,\action}(t)$ as the fraction of bandit processes $\bigl\{s^{\phi,h}_{i,n}(t), t\in[T]_0\bigr\}$ ($i\in[I],n\in[N_i]$) that take the value $i,s,\action$ at time $t$; that is, 
\begin{equation}\label{eqn:define_Z}
Z^{\phi,h}_{i,s,\action}(t)\coloneqq \frac{1}{\sum_{i\in[I]}N_i}\biggl|\Bigl\{n\in[N_{i}]~|~s^{\phi,h}_{i,n}(t) = s,\, \actionphih_{i,n}(t)=\action\Bigr\}\biggr|.
\end{equation}
Now  define $\bm{Z}^{\phi,h}(t) \coloneqq \bigl(Z^{\phi,h}_{i,s,\action}(t):(i,s,\action)\in\mathcal{J}\bigr)$, which takes values in a simplex $\Delta_{\calJ}\coloneqq \bigl\{\bz\in [0,1]^{|\calJ|}\bigl|\sum_{(i,s,\action)\in\calJ}z_{i,s,\action} = 1\bigr\}$.
For the maximization in \eqref{eqn:obj:h}, the bandit processes associated with $(i,n)$  gain the same expected reward $r_i(s,a)$ and transition rates $\mathcal{P}_i(a)$ when they are in the same \GSA triple  $(i,s,a)$, $(s,a)\in\bS_i\times\bA_i$.
In this context, for each time $t$,  the expected total reward $\sum_{i\in[I]}\sum_{n\in[N_i]}\mathbb{E}\bigl[R^{\phi,h}_{i,n}(t)\bigr] = \sum_{(i,s,\action)\in\mathcal{J}}\sum_{i\in[I]}N_i Z^{\phi,h}_{i,s,\action}(t)r_{i}(s,\action)$, where $R^{\phi,h}_{i,n}(t)$ is the instantaneous reward generated by process $\bigl\{s^{\phi,h}_{i,n}(t), t\in[T]_0\bigr\}$ with scaling parameter $h$.

The problem described in \eqref{eqn:obj:h} and \eqref{eqn:constraint:linear} is equivalent to 
\begin{equation}\label{eqn:obj:Z}
\max_{\phi\in\lPhih}\sum_{t=0}^T \sum_{(i,s,\action)\in\mathcal{J}}\bbE^{\phi}_{\bs^0}\Bigl[Z^{\phi,h}_{i,s,\action}(t)\Bigr]r_{i}(s,\action),
\end{equation}
subject to 
\begin{equation}\label{eqn:constraint:linear:Z}
 \sum_{(i,s,\action)\in\mathcal{J}}Z^{\phi,h}_{i,s,\action}(t)f_{i,\ell}(s,\action)\leq 0,~\forall \ell\in[L],t\in[T]_0.
\end{equation}
The initial state of $\bigl\{\bm{Z}^{\phi,h}(t),t\in[T]_0\bigr\}$ satisfies $\sum_{\action\in\bA_i} Z^{\phi,h}_{i,s,\action}(0) = y^0_{i,s}$ for all $i\in[I]$ and $s\in\sS_i$, where $y^0_{i,s}$ is the initial state given by
\begin{equation}\label{eqn:define:upsilon}
    y^0_{i,s}\coloneqq \frac{1}{\sum_{i\in[I]}N_i}\Bigl|\bigl\{n\in[N_i]~\bigl|~s^{\phi,h}_{i,n}(0) = s\bigr\}\Bigr|.
\end{equation}
Let $\bm{y}^0\coloneqq (y^0_{i,s}:i\in[I],s\in\sS_i)$, which takes values in a simplex 
$$\deltay\coloneqq \bigl\{\bm{y}\in\mathbb{R}_0^{\sum_{i\in[I]}|\bS_i|}\bigl| \sum_{i\in[I],s\in\bS_i}y_{i,s} = 1\bigr\}.$$
We can rewrite $\Gamma^{\phi,h}(\bs^0)$ as  
\begin{equation}\label{eqn:definition:gamma}
    \Gamma^{\phi,h}(\bm{y}^0)\coloneqq \sum_{i\in[I]}N^0_i\sum_{t=0}^T\sum_{(i,s,\action)\in\mathcal{J}}\bbE^{\phi}_{\bm{y}^0}\Bigl[Z^{\phi,h}_{i,s,\action}(t)\Bigr]r_{i}(s,\action),
\end{equation}
where the initial condition $\bm{y}^0$ satisfies \eqref{eqn:define:upsilon} by plugging in $\bs^{\phi,h}(0)=\bs^0$, and $\bbE^{\phi}_{\bm{y}^0}$ is the expectation with initial condition $\bm{y}^0$.
We say that the process $\bigl\{\bs^{\phi,h}(t),t\in[T]_0\bigr\}$ can be interpreted by $\bigl\{\bm{Z}^{\phi,h}(t),t\in[T]_0\bigr\}$ with initial condition $\bm{y}^0$, and refer to the limit case $h\rightarrow \infty$ as the \emph{asymptotic regime}.

\subsection{Fast Convergence Towards Equilibrium in Size Dimension }

\label{subsec:asym_opt}

The WCG problem is a special MDP for which the sizes of the state and action spaces, $\sS$ and $\sA$, respectively, exponentially increases in the scaling parameter $h\in\mathbb{N}_+$.
Recall, in this paper, the WCG problem is at least as hard as the finite-time-horizon RMAB problem, which is considered to be already hard with intractable optimal solutions~\cite{whittle1988restless,weber1990index,papadimitriou1999complexity}. 
We resort to effective algorithms that are applicable in large-scale WCG systems (large $h$) and, meanwhile, exhibit theoretically bounded performance degradations.

Recall that a policy $\phi\in\lPhih$ is a sequence of mappings $\phi_t:\sS\mapsto \sA $ ($t\in[T]_0$), for which the domain $\sS$ changes as $h$ changes.
We define a family of policies indexed by $h$: 
$\orule \coloneqq \bigl\{\phi^h\in \lPhih: h=1,2,\ldots\bigr\}$, and a set $\lPsi$ of all such $\orule$.
Typically, in the past work, the Whittle index policy~\cite{whittle1988restless}, index-style policies~\cite{fu2016asymptotic,fu2020energy,fu2018restless,fu2024restless}, and most linear-programming-based policies~\cite{fu2024patrolling,gast2023linear,gast2024reoptimization,brown2023fluid} are policy families with some relationship between different $\phi^h$ ($h\in\mathbb{N}_+$).
For any $\orule=(\phi^h:h\in\mathbb{N}_+)\in\lPsi$, we write $\bZ^{\orule,h}(t)\coloneqq \bZ^{\phi^h,h}(t)$, $\bs^{\orule,h}(t)\coloneqq \bs^{\phi^h,h}(t)$, and $\actionVec^{\orule,h}(t)\coloneqq \actionVec^{\phi^h,h}(t)$, and let $\bbE^{\orule}_{\bm{y}^0}$ represent the expectation operation $\bbE^{\phi^h}_{\bm{y}_0}$.

Unlike the past work that focuses on one or a few specified policy families, here, we will consider a set of policy families, which include most of those specified in the past work, and provide sufficient conditions for how to construct policy families (which were referred to as ``policies" in the past work) that are asymptotically optimal.
In this paper, we do not provide conditions for the structure of the WCG problem but, given the problem, we condition on the structure of the designed policy families.

For $\phi\in\lPhih$, we define $\bm{Y}^{\phi,h}(t)\coloneqq (Y^{\phi,h}_{i,s}(t):i\in[I],s\in\bS_i)$, where $Y^{\phi,h}_{i,s}(t)$ is the proportion of bandit processes in the gang-state pair $(i,s)$ at time $t$; that is,
\begin{equation}\label{eqn:define:upsilon}
    Y^{\phi,h}_{i,s}(t) \coloneqq \frac{1}{\sum_{i'\in[I]}N_{i'}}\Bigl\lvert\bigl\{n\in[N_i]~\Bigl|~s^{\phi,h}_{i,n}(t)=s\bigr\} \Bigr\rvert = \sum_{\action\in\bA_i}Z^{\phi,h}_{i,s,\action}(t).
\end{equation}
Similarly, we write $\bm{Y}^{\orule,h}(t)\coloneqq \bm{Y}^{\phi^h,h}(t)$.

Given $h\in\mathbb{N}_+$,  $\bm{Y}^{\orule,h}(t)$ only takes values in $\bm{y}\in\deltay$ such that $h\sum_{i\in[I]}N^0_i \bm{y}$ is a vector of non-negative integers. 
That is, $\bm{Y}^{\varphi,h}(t)$ only takes finitely many possible values 
in $\deltay$. 
Let $\deltayh$ represent such a finite set of all possible values of $\bm{Y}^{\varphi,h}(t)$; that is,
\begin{equation}
    \deltayh \coloneqq \Bigl\{\bm{y}\in\deltay~\Bigl|~\sum_{i\in[I]}N_i \bm{y}\in \mathbb{N}_0^{\sum_{i\in[i]}|\bS_i|}\Bigr\}.
\end{equation}
In this case, for any $\bm{y}\in\deltayh$, an element $y_{i,s}$ can possibly be $0,\frac{1}{\sum_{i\in[I]}N_i},\frac{2}{\sum_{i\in[I]}N_i},\ldots,1$.

For $\orule\in\lPsi$, $h\in\mathbb{N}_+$ and $(i,s,\action)\in\calJ$, we define a function $\alpha^{\orule,h}_{i,s,\action}:\deltay\times [T]_0 \mapsto [0,1]$, which satisfies, for $t\in[T]_0$ and $\bm{y}\in\deltayh$, 
\begin{equation}\label{eqn:define:alpha}
        \alpha^{\orule,h}_{i,s,\action}(\bm{y},t) = \begin{cases}
           \frac{ \mathbb{E}\Bigl[Z^{\orule,h}_{i,s,\action}(t)\Bigl| \bm{Y}^{\orule,h}(t)=\bm{y}\Bigr]}{y_{i,s}} , & \text{if } y_{i,s} > 0,\\
            \min_{\begin{subarray}~i\in[I],s\in\bS_i:\\y_{i,s}\geq \frac{1}{\sum_{i'\in[I]}N_{i'}}\end{subarray}}\alpha^{\orule,h}_{i,s,\action}\Bigl(\bm{y}+\bv_{i,s}\bigl(\frac{1}{\sum_{i'\in[I]}N_{i'}}\bigr)-\bv_{i,s}\bigl(\frac{1}{\sum_{i'\in[I]}N_{i'}}\bigr),t\Bigr), & \text{otherwise},
        \end{cases}
\end{equation}
where $\bv_{i,s}: \bbR\mapsto \bbR^{\sum_{i\in[I]}|\bS_i|}$ such that 
all elements of $\bv_{i,s}(x)$ are zero except the $(i,s)$th one equal to $x$, and if $\bm{y}\in\deltayh$, then $\bm{y}+\bv_{i,s}\bigl(\frac{1}{\sum_{i\in[I]}N_i}\bigr)-\bv_{i,s}\bigl(\frac{1}{\sum_{i\in[I]}N_i}\bigr)$ is also in $\deltayh$. 
The first line in \eqref{eqn:define:alpha} defines $\alpha^{\orule,h}_{i,s,\action}$ for all $\bm{y}\in \deltayh$ with $y_{i,s}>0$.
Then, the second line in \eqref{eqn:define:alpha} can define  $\alpha^{\orule,h}_{i,s,\action}$ for all $\bm{y}\in \deltayh$ with $y_{i,s}=0$.  

Equation~\eqref{eqn:define:alpha} specifies the value of $\alpha^{\orule,h}_{i,s,\action}(\bm{y},t)$ for any $\bm{y}\in\deltayh$. 
For $\bm{y}\notin\deltayh$,  through any linear interpolation over the given points $\bigl(\bm{y},\alpha^{\orule,h}_{i,s,\action}(\bm{y},t)\bigr)\in\deltayh\times [0,1]$, we can construct the function $\alpha^{\orule,h}_{i,s,\action}(\bm{y},t)$ that is Lipschitz continuous in all $\bm{y}\in\deltay$, as shown in the following proposition.

\begin{proposition}\label{prop:stable_cond:1}
Given $h\in\mathbb{N}_+$, for any $\varphi\in\lPsi$ and $(i,s,\action)\in\calJ$, there exists $\alpha^{\orule,h}_{i,s,\action}(\bm{y},t)$ that is Lipschitz continuous in $\bm{y}\in\deltay$ and satisfies \eqref{eqn:define:alpha} for all $\bm{y}\in\deltayh$.
\end{proposition}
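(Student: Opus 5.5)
Since $h$ is fixed, $\deltayh$ is a finite subset of the compact simplex $\deltay$. Equation~\eqref{eqn:define:alpha} therefore prescribes only finitely many values $\alpha^{\orule,h}_{i,s,\action}(\bm{y},t)\in[0,1]$, $\bm{y}\in\deltayh$, for each fixed $(i,s,\action)$ and $t\in[T]_0$. The task is to extend this finitely supported function from $\deltayh$ to all of $\deltay$ so that the extension is Lipschitz continuous and takes values in $[0,1]$. The claim is a Lipschitz extension problem for a function defined on a finite set, and I will treat it that way.

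The first step checks that \eqref{eqn:define:alpha} is well defined on $\deltayh$. The first line handles the case $y_{i,s}>0$ directly, because the conditional expectation is well defined whenever $\bbP\{\bm{Y}^{\orule,h}(t)=\bm{y}\}>0$; if this probability is zero, I fix an arbitrary value in $[0,1]$. The second line (with $y_{i,s}=0$) refers to neighbouring grid points. Its intended meaning is that one unit of mass is moved into $(i,s)$ from some $(i',s')$ with $y_{i',s'}\ge 1/\sum_{i'}N_{i'}$, so the referenced point lies in $\deltayh$ and has positive $(i,s)$-coordinate. Hence it is defined by the first line, and the recursion terminates after one step. Bounds: $0\le Z^{\orule,h}_{i,s,\action}(t)\le Y^{\orule,h}_{i,s}(t)$ gives values in $[0,1]$, and a minimum of values in $[0,1]$ stays in $[0,1]$.

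The second step is the extension itself. Two options are available.

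\begin{enumerate}
\item \textbf{McShane extension.} Set
\[
\tilde\alpha(\bm{y})\coloneqq\min_{\bm{x}\in\deltayh}\bigl\{\alpha(\bm{x})+K\norm{\bm{y}-\bm{x}}\bigr\}, \qquad K\coloneqq\max_{\bm{x}\neq\bm{x}'\in\deltayh}\frac{\lvert\alpha(\bm{x})-\alpha(\bm{x}')\rvert}{\norm{\bm{x}-\bm{x}'}}.
\]
The constant $K$ is finite because the set is finite and distinct grid points are at distance at least $1/\sum_i N_i$; in fact $K\le\sum_i N_i$. This $\tilde\alpha$ is $K$-Lipschitz and agrees with $\alpha$ on $\deltayh$. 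Truncating to $[0,1]$ by $\min\{1,\max\{0,\cdot\}\}$ preserves both properties.
\item \textbf{Piecewise-linear interpolation.} This matches the paper's wording. Triangulate $\deltay$ using a simplicial subdivision whose vertices are exactly $\deltayh$ (for example, the Freudenthal/Kuhn triangulation of the scaled simplex), and interpolate linearly on each cell. The result is continuous and piecewise affine with finitely many pieces, hence Lipschitz, and the convex combinations keep its values in $[0,1]$.
\end{enumerate}

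I expect the main obstacle to be interpretive rather than technical: the second line of \eqref{eqn:define:alpha} literally adds and subtracts the same vector, so it has to be read as the shifted point described in the first step. Once that is settled, the existence claim follows from the elementary extension above. The Lipschitz constant depends on $h$ (up to order $\sum_i N_i$), which the proposition permits. Uniformity in $h$ is presumably what the later Lipschitz--limit regularity condition imposes, and it is not needed here.
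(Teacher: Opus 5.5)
Your proof is correct. Your second option follows the paper's interpolation idea but does it properly, and your first option is a genuinely different route.

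The paper interpolates linearly but locally. For each off-grid $\bm{y}$ it uses the corner simplex spanned by the rounded-down point $\bm{y}_{i_0,s_0}(\bm{y})$ and the unit shifts $\bm{y}_{i,s}(\bm{y})$, with barycentric weights $\lambda_{i,s}(\bm{y})$. Computing these weights gives $\lambda_{i,s}(\bm{y})$ equal to the fractional part of $y_{i,s}\sum_{i'}N_{i'}$ for $(i,s)\neq(i_0,s_0)$, and $\lambda_{i_0,s_0}(\bm{y})=1-\sum_{(i,s)\neq(i_0,s_0)}\lambda_{i,s}(\bm{y})$. The latter is negative whenever those fractional parts sum to more than one. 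So $\bm{y}$ need not lie in the chosen simplex, the interpolated value can leave $[0,1]$, and the affine pieces used in adjacent grid cubes in general disagree on shared faces. Once $\sum_i|\bS_i|\geq 3$, the nonnegativity and continuity claimed in the paper's proof therefore fail in general.

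Your Kuhn/Freudenthal triangulation with vertex set exactly $\deltayh$ repairs this: the weights are nonnegative, cells meet along common faces, and there are finitely many affine pieces. Your McShane extension is more elementary. It needs only that $\deltayh$ is finite, and it gives the explicit constant $K\leq\sum_i N_i$.

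For the proposition on its own either option works, but the triangulation is the one to keep for what follows. The Lipschitz--limit regularity condition takes $\lim_{h\to\infty}\alpha^{\orule,h}_{i,s,\action}(\bm{y},t)$ at every $\bm{y}\in\deltay$, including off-grid points. The McShane constant is of order $\sum_i N_i$ as soon as rounding makes neighbouring grid values with small $y_{i,s}$ differ by $O(1)$. With such a constant, the McShane extension can exceed nearby grid values by $O(1)$ between grid points. A simplicial interpolant, by contrast, is a convex combination of grid values within distance $O(1/\sum_i N_i)$ and so inherits their limit.

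Your first step also supplies well-posedness that the paper's proof takes for granted. You read the second line of \eqref{eqn:define:alpha} as a one-unit transfer of mass into $(i,s)$, and you fix a value where $\bbP\{\bm{Y}^{\orule,h}(t)=\bm{y}\}=0$.
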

The proof of Proposition~\ref{prop:stable_cond:1} is provided in Appendix~\ref{app:linear_interpolation}.

Let $\bm{\alpha}^{\orule,h}(\bm{y},t) \coloneqq (\alpha^{\orule,h}_{i,s,\action}(\bm{y},t):(i,s,\action)\in\calJ)$.
We consider the following condition.

\begin{condition}{Lipschitz-limit regularity (in Size Dimension)}\label{cond:weak_stab}
For a policy family $\orule = (\phi^h: h\in\mathbb{N}_+)\in\lPsi$, 
any $\bm{y}\in\deltay$, $(i,s,\action)\in\calJ$, and $t\in[T]$,
\begin{equation}
 \bm{\alpha}^{\orule}_{i,s,\action}(\bm{y},t)\coloneqq \lim_{h\to \infty}\bm{\alpha}^{\orule,h}_{i,s,\action}(\bm{y},t),
\end{equation} 
exists and is Lipschitz continuous in $\bm{y}$.
\end{condition} 

We refer to a policy family $\orule\in\lPsi$ satisfying~\partialref{cond:weak_stab}{Lipschitz–limit regularity} as a \emph{Lipschitz-limit} family of policies, 
and define $\PsiZ$ as the set of all Lipschitz-limit policy families $\orule\in\lPsi$.
Clearly, $\PsiZ\subset \lPsi$.
We also define a subset $\PsiZWC\subset \PsiZ$ as the set of policy families such that, for any $\orule=(\phi^h:h\in\mathbb{N}_+)\in\PsiZ$, $\phi^h\in\PhiWC$ for all $h\in\mathbb{N}_+$. 

Typically, Whittle index policy (for RMAB case) and LP-based policies~\cite{whittle1988restless,brown2023fluid,gast2023linear} are Lipschitz-limit policy families in $\PsiZWC$. 
We provide proofs for the Lipschitz-limit regularity of these policies in Appendix~\ref{app:stability}.

Here, we start with theorems on a general property of the WCG process: for all Lipschitz-limit policy families $\orule\in\PsiZ$, the stochastic process $\bigl\{\bm{Z}^{\orule,h}(t), t\in[T]_0\bigr\}$ quickly converges to its averaging trajectory $\bigl\{\bbE^{\orule}_{\bm{y}^0}\bm{Z}^{\orule,h}(t), t\in[T]_0\bigr\}$ as $h\rightarrow \infty$. 
Let $\bm{z}^{\orule,h}(t)\coloneqq \bbE^{\orule}_{\bm{y}^0}\bm{Z}^{\orule,h}(t)$.
Such a convergence will later lead to a class of algorithms, proposed and discussed in Section~\ref{sec:alp}, that are scalable to WCG systems with arbitrarily large $h$ and quickly approach optimality as $h\rightarrow\infty$.

\begin{theorem}\label{theorem:convergence-Z}
For any $\orule\in\PsiZ$, $\epsilon>0$, and initial condition $y^0_{i,s}=\sum_{\action\in\bA_i}Z^{\orule,h}_{i,s,\action}(0)$ ($i\in[I]$ and $s\in\bS_i$), 
\begin{enumerate}
    \item the limit $\bm{z}^{\orule}(t)\coloneqq \lim_{h\rightarrow \infty}\bm{z}^{\orule,h}(t)=\lim_{h\rightarrow \infty}\bbE^{\orule}_{\bm{y}^0}\bm{Z}^{\orule,h}(t)$ exists for all $t\in[T]_0$; and
\item $\bm{Z}^{\orule,h}(t)$ converges to $\bm{z}^{\orule}(t)$ in probability for all $t\in[T]_0$; that is, 
\begin{equation}\label{eqn:theorem:convergence-Z}
    \lim_{h\rightarrow\infty} \mathbb{P}\Bigl\{\max_{t=0,1,\ldots,T}\bigl\lVert\bm{Z}^{\orule,h}(t) - \bm{z}^{\orule}(t)\bigr\rVert > \epsilon\Bigr\} = 0.
\end{equation}
\end{enumerate}
\end{theorem}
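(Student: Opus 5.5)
Our approach is induction on $t\in[T]_0$. We construct the deterministic limit trajectory explicitly through the limiting action-fractions $\balpha^{\orule}$ from the \partialref{cond:weak_stab}{Lipschitz–limit regularity}. Concretely, we set $\bm{y}^{\orule}(0)=\bm{y}^0$. For each $t$ we define
\[
z^{\orule}_{i,s,\action}(t)\coloneqq y^{\orule}_{i,s}(t)\,\alpha^{\orule}_{i,s,\action}(\bm{y}^{\orule}(t),t),
\]
and then $y^{\orule}_{i,s'}(t+1)\coloneqq\sum_{s,\action}z^{\orule}_{i,s,\action}(t)p_i(s,\action,s')$. We then prove by induction that, for every $\epsilon>0$, $\mathbb{P}\{\lVert\bm{Y}^{\orule,h}(t)-\bm{y}^{\orule}(t)\rVert>\epsilon\}\to0$ and $\mathbb{P}\{\lVert\bm{Z}^{\orule,h}(t)-\bm{z}^{\orule}(t)\rVert>\epsilon\}\to0$. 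Since $T$ is finite, a union bound over $t\in[T]_0$ gives \eqref{eqn:theorem:convergence-Z}. Part 1 follows from part 2 by bounded convergence: all entries of $\bm{Z}^{\orule,h}(t)$ lie in $[0,1]$, so convergence in probability to the constant $\bm{z}^{\orule}(t)$ implies $\bbE^{\orule}_{\bm{y}^0}\bm{Z}^{\orule,h}(t)\to\bm{z}^{\orule}(t)$, and this identifies the limit.

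\textbf{Transition step $Z(t)\to Y(t+1)$.} Conditionally on the whole configuration at time $t$, the next states of the $N_i=hN_i^0$ bandits are independent. Hence $\sum_{i'}N_{i'}\,Y^{\orule,h}_{i,s'}(t+1)$ is a sum of independent Bernoulli variables with conditional mean $\sum_{i'}N_{i'}\sum_{s,\action}Z^{\orule,h}_{i,s,\action}(t)p_i(s,\action,s')$. Hoeffding's inequality then gives deviation probability at most $2e^{-2\epsilon^2 h\sum_i N_i^0}$. Combining this with the induction hypothesis on $\bm{Z}^{\orule,h}(t)$ and the triangle inequality (the map $\bz\mapsto\bz\mathcal{P}$ is $1$-Lipschitz in the max norm up to a constant $\max_i|\bS_i||\bA_i|$) yields $\bm{Y}^{\orule,h}(t+1)\to\bm{y}^{\orule}(t+1)$ in probability.

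\textbf{Action step $Y(t)\to Z(t)$.} This is the main obstacle. The policy $\phi^h_t$ may correlate the actions of different bandits arbitrarily, so conditionally on $\bm{Y}^{\orule,h}(t)=\bm{y}$ the vector $\bm{Z}^{\orule,h}(t)$ need not concentrate. Definition \eqref{eqn:define:alpha} controls only its conditional mean $y_{i,s}\alpha^{\orule,h}_{i,s,\action}(\bm{y},t)$. We split the error as
\[
\bm{Z}^{\orule,h}(t)-\bm{z}^{\orule}(t)=\bigl(\bm{Z}^{\orule,h}(t)-\bbE[\bm{Z}^{\orule,h}(t)\mid\bm{Y}^{\orule,h}(t)]\bigr)+\bigl(\bm{y}\circ\balpha^{\orule,h}(\bm{y},t)-\bm{y}\circ\balpha^{\orule}(\bm{y},t)\bigr)+\bigl(\bm{y}\circ\balpha^{\orule}(\bm{y},t)-\bm{y}^{\orule}(t)\circ\balpha^{\orule}(\bm{y}^{\orule}(t),t)\bigr),
\]
where $\bm{y}=\bm{Y}^{\orule,h}(t)$.

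The third term is small on the event where $\bm{Y}^{\orule,h}(t)$ is close to $\bm{y}^{\orule}(t)$, by Lipschitz continuity of $\balpha^{\orule}$ together with boundedness.

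For the second term, pointwise convergence $\balpha^{\orule,h}\to\balpha^{\orule}$ is not enough because $\bm{y}$ is random. We plan to upgrade it to uniform convergence on the compact set $\deltay$. One route is Proposition~\ref{prop:stable_cond:1} combined with a uniform Lipschitz bound. If such a bound is unavailable, we would instead use convergence only on the neighbourhood where $\bm{Y}^{\orule,h}(t)$ concentrates, via a Dini/Arzelà–Ascoli-type argument. This is the delicate point, and we may need to read the regularity condition as giving equi-Lipschitz $\balpha^{\orule,h}$.

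For the first term, which is the conditional fluctuation, we would argue that the policies under consideration randomize each bandit's action independently given the state counts, or else deterministically round the counts. In the latter case the fluctuation is $O(1/h)$, since $\bm{Z}^{\orule,h}(t)$ is then a function of $\bm{Y}^{\orule,h}(t)$ up to rounding. In the independent case, Hoeffding's inequality applies again. If neither structure is available, we would rely on the interpretation implicit in the Lipschitz-limit framework that $\bm{Z}$ is determined through $\balpha$ up to vanishing error.

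Assembling the three terms with union bounds closes the induction, and the exponential constants obtained along the way prepare the later rate result (Theorem~\ref{theorem:convergence_Z_exp}).
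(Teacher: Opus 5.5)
Your transition step (conditional independence of next states given the time-$t$ configuration, Hoeffding, and the bound on $\bm z\mapsto\bm z\mathcal{P}$) is sound, and so is your derivation of Part~1 from Part~2 by bounded convergence. The gap is the action step. It cannot be closed from the stated hypotheses, because membership in $\PsiZ$ alone does not imply the statement. Both terms you flagged are genuine obstructions.

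\emph{First term.} Take $I=1$, $\bS_1=\bA_1=\{0,1\}$, $L=0$, and $p_1(s,\action,s')=1/2$ for all $s,\action,s'$. Start all bandits in state $1$ and use action $0$ at every $t\neq 1$. At $t=1$, every bandit takes action $1$ if and only if bandit $n=1$ is in state $0$; this is a deterministic Markov policy. The states at $t=1$ are i.i.d., so $\bbP\{s^{\orule,h}_{1,1}(1)=0\mid\bm Y^{\orule,h}(1)=\bm y\}=y_{1,0}$. Hence $\alpha^{\orule,h}_{1,s,1}(\cdot,1)$ converges to the affine map $\bm y\mapsto y_{1,0}$, and the regularity condition holds. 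But $Z^{\orule,h}_{1,0,1}(1)=Y^{\orule,h}_{1,0}(1)\,\mathbb{I}\{s^{\orule,h}_{1,1}(1)=0\}$ is close to $1/2$ or to $0$, each with probability close to $1/2$. So it has no deterministic limit in probability.

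\emph{Second term.} Keep the same system, but make the time-$1$ rule count-based: every bandit takes action $1$ iff $\lvert Y^{\orule,h}_{1,0}(1)-1/2\rvert\in[h^{-1/2},2h^{-1/2}]$. Now $\bZ^{\orule,h}(1)$ is a function of $\bm Y^{\orule,h}(1)$, and $\alpha^{\orule,h}_{1,s,1}(\bm y,1)\to 0$ for every fixed $\bm y$. Yet by the central limit theorem the event has probability tending to $\bbP\{2\le\lvert G\rvert\le 4\}>0$ for standard normal $G$. Again $Z^{\orule,h}_{1,0,1}(1)$ does not concentrate. So neither Proposition~\ref{prop:stable_cond:1} (whose Lipschitz constants grow with $h$) nor a Dini or Arzel\`a--Ascoli argument can supply the uniformity you need. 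Likewise, the first term does not vanish for general $\orule\in\PsiZ$.

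The paper's proof does not resolve these points either. Its Poisson-clock embedding and Freidlin--Wentzell averaging only give concentration of $\bm{\mathcal Z}^h$. Lemma~\ref{lemma:sim} identifies $\bm{\mathcal Z}^h$ with an auxiliary policy $\psi$ that randomizes bandits independently with probabilities $\balpha^{\orule}$. The transfer to $\orule$ in Appendix~\ref{app:lemma:sim} then does two unjustified things:
\begin{itemize}
\item the first inequality of \eqref{eqn:lemma:sim:9} replaces $Z^{\orule,h}_\zeta(t+1)$ by its conditional mean given $\bm Y^{\orule,h}(t+1)$, which is exactly your first term;
\item \eqref{eqn:lemma:sim:12}--\eqref{eqn:lemma:sim:14} apply limits that hold pointwise in $\bm z$, together with the Lipschitz constant of the limit $\balpha^{\orule}$, at the random, $h$-dependent arguments $\bZ^{\orule,h}(t)$ and $\bZ^{\psi,h}(t)$, which is exactly your second term.
\end{itemize}

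So what is missing is not a technique but hypotheses:
\begin{itemize}
\item[(a)] conditional concentration, $\bbP\{\lVert\bZ^{\orule,h}(t)-\bbE[\bZ^{\orule,h}(t)\mid\bm Y^{\orule,h}(t)]\rVert>\epsilon\}\to 0$. This holds, exponentially fast, when bandits are randomized independently given $\bm Y^{\orule,h}(t)$, or when $\bZ^{\orule,h}(t)$ is determined by $\bm Y^{\orule,h}(t)$ up to $O(1/h)$ rounding, as for the Whittle, LP-based and FAB policies;
\item[(b)] uniform convergence, $\sup_{\bm y}\max_{(i,s,\action)\in\calJ}y_{i,s}\lvert\alpha^{\orule,h}_{i,s,\action}(\bm y,t)-\alpha^{\orule}_{i,s,\action}(\bm y,t)\rvert\to 0$, with the supremum allowed to range only over a neighbourhood of your $\bm y^{\orule}(t)$;
\item[(c)] the regularity condition at $t=0$ as well, since as written it covers only $t\in[T]$.
\end{itemize}
With these added, your induction is a complete proof and far more elementary than the paper's route. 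When the concentration in (a) is exponential, the same induction also gives the $e^{-Ch}$ bound of Theorem~\ref{theorem:convergence_Z_exp} directly.
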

The proof of Theorem~\ref{theorem:convergence-Z} is provided in Appendix~\ref{app:theorem:convergence-Z}.

Theorem~\ref{theorem:convergence-Z} indicates that, for any policy family $\orule\in\PsiZ$, increasing the size of the WCG problem, measured by $h$, the process $\bigl\{\bm{Z}^{\orule,h}(t), t\in[T]_0\bigr\}$ converges to the deterministic averaging trajectory $\bigl\{\bm{z}^{\orule}(t),t\in[T]_0\bigr\}$ in probability.
More importantly,  based on Freidlin's theorems \cite[Theorem 4.1, Chapter 7]{freidlin2012random}, 
we prove that the convergence speed (in probability) for Lipschitz-limit policy families $\orule\in\PsiZ$ is exponential in $h$.

Theorem~\ref{theorem:convergence-Z} implies that $\Gamma^{\orule,\infty}(\bm{y}^0) \coloneqq \lim_{h\rightarrow\infty}\Gamma^{\orule,h}(\bm{y}^0)$ exists for any $\orule\in\PsiZ$ and initial condition $\bm{y}^0\in \deltay$. We then discuss the convergence speed in the following theorem.

\begin{theorem}\label{theorem:convergence_Z_exp}
For any $\orule\in\PsiZ$,  $\epsilon>0$, and initial condition $y^0_{i,s}=\sum_{\action\in\bA_i}Z^{\orule,h}_{i,s,\action}(0)$ ($i\in[I]$ and $s\in\bS_i$), there exist positive constants $C,H<\infty$ such that, for all $h>H$,
\begin{equation}\label{eqn:theorem:convergence_Z_exp}
    \mathbb{P}\Bigl\{\max_{t=0,1,\ldots,T}\bigl\lVert\bm{Z}^{\orule,h}(t) - \bm{z}^{\orule}(t)\bigr\rVert > \epsilon\Bigr\} \leq e^{-Ch}.
\end{equation}
\end{theorem}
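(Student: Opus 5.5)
The plan is to prove \eqref{eqn:theorem:convergence_Z_exp} by induction over the finite horizon $t=0,1,\ldots,T$, with a concentration inequality for sums of independent bounded variables used at each step. Since $T$ is finite, it suffices to show that for each $t$ and each $\delta>0$ there are $C_t,H_t$ with $\mathbb{P}\{\lVert\bm{Z}^{\orule,h}(t)-\bm{z}^{\orule}(t)\rVert>\delta\}\le e^{-C_t h}$ for $h>H_t$. A union bound over $t\in[T]_0$ then gives the maximum version, after shrinking $C$ to absorb the factor $T+1$. The deterministic limit is taken to be the recursion
\[
y^{\orule}_{i,s'}(t+1)=\sum_{s,\action} y^{\orule}_{i,s}(t)\,\alpha^{\orule}_{i,s,\action}(\bm{y}^{\orule}(t),t)\,p_i(s,\action,s'),\qquad z^{\orule}_{i,s,\action}(t)=y^{\orule}_{i,s}(t)\,\alpha^{\orule}_{i,s,\action}(\bm{y}^{\orule}(t),t),
\]
which is the limit identified in Theorem~\ref{theorem:convergence-Z}.

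Each step splits into two one-step deviation events.

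\emph{Action step.} Conditional on $\bm{Y}^{\orule,h}(t)=\bm{y}$, the quantity $Z^{\orule,h}_{i,s,\action}(t)$ has mean $y_{i,s}\alpha^{\orule,h}_{i,s,\action}(\bm{y},t)$ by \eqref{eqn:define:alpha}.

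\emph{Transition step.} Conditional on $\bm{Z}^{\orule,h}(t)$, the next-state counts are sums of independent multinomial indicators, one per bandit, with probabilities $p_i(s,\action,\cdot)$. Hoeffding's inequality applied to the $h\sum_i N_i^0$ bandits gives
\[
\mathbb{P}\bigl\{\lVert\bm{Y}^{\orule,h}(t+1)-\bm{Z}^{\orule,h}(t)\mathcal{P}\rVert>\delta\bigm|\bm{Z}^{\orule,h}(t)\bigr\}\le 2|\calJ|\,e^{-2\delta^2 h\sum_i N_i^0}.
\]

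The deterministic errors are then handled by the triangle inequality:
\[
\lVert \bm{y}\circ\bm{\alpha}^{\orule,h}(\bm{y},t)-\bm{y}^{\orule}(t)\circ\bm{\alpha}^{\orule}(\bm{y}^{\orule}(t),t)\rVert\le \lVert\bm{\alpha}^{\orule,h}(\bm{y},t)-\bm{\alpha}^{\orule}(\bm{y},t)\rVert+(1+L_\alpha)\lVert\bm{y}-\bm{y}^{\orule}(t)\rVert .
\]
Here $L_\alpha$ is the Lipschitz constant of $\bm{\alpha}^{\orule}$ provided by \partialref{cond:weak_stab}{Lipschitz–limit regularity}. The first term must be made smaller than $\delta/3$ uniformly over $\bm{y}$ near $\bm{y}^{\orule}(t)$ once $h>H$. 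To get this, I would upgrade the pointwise limit to a uniform one on the compact simplex $\deltay$. One route uses equi-Lipschitz interpolants from Proposition~\ref{prop:stable_cond:1} together with a Dini/Arzel\`a-type argument. Alternatively, it is enough to have convergence on a small ball around the single point $\bm{y}^{\orule}(t)$, using Lipschitz continuity of the limit and of the interpolants.

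Chaining the bounds, the bad event at time $t+1$ with tolerance $\delta$ is contained in the union of three events:
\begin{enumerate}
\item the bad event at time $t$ with tolerance $\delta/(3(1+L_\alpha))$, controlled by the induction hypothesis;
\item a conditional Hoeffding failure for the action step;
\item a conditional Hoeffding failure for the transition step.
\end{enumerate}
Each of these has probability at most $e^{-c h}$. The base case $t=0$ is deterministic, since $\bm{Y}^{\orule,h}(0)=\bm{y}^0$, apart from the action step. Finally, take $C$ smaller than the minimum of the resulting rates.

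The main obstacle is the action step. Under a general policy $\phi^h$, the actions of different bandits in the same state need not be conditionally independent given $\bm{Y}^{\orule,h}(t)$, because a coupled policy may deliberately correlate them. The definition of $\bm{\alpha}^{\orule,h}$ only controls the conditional mean, so Hoeffding does not apply directly. My first attempt would exploit that the policy map $\phi^h_t$ is deterministic. For a priority or LP-rounding policy, the realized $Z^{\orule,h}_{i,s,\action}(t)$ is then a deterministic function of $\bm{s}^{\orule,h}(t)$ and exchangeable in the bandits' labels, so it is essentially a function of $\bm{Y}^{\orule,h}(t)$. In that case the conditional variance vanishes up to $O(1/h)$ rounding, and the action step contributes no exponential term at all. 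If that reduction fails in general, I would fall back on the paper's suggested route: view the chain as a small-noise perturbation of the deterministic map and apply Freidlin's large-deviation bound \cite[Theorem 4.1, Chapter 7]{freidlin2012random}. That bound yields exponential concentration around the averaged trajectory whenever the drift is Lipschitz, which is exactly what Lipschitz–limit regularity provides.
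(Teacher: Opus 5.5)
Your induction skeleton, Hoeffding for the transition step and the union bound over $t\in[T]_0$ are fine. The gap is the action step, which you correctly flag but do not close, and it cannot be closed from the stated hypotheses. Lipschitz--limit regularity constrains only the conditional mean $\bbE[\bZ^{\orule,h}(t)\mid\bm{Y}^{\orule,h}(t)]$. A deterministic $\phi^h_t$ acts on the labelled state $\bm{s}^{\orule,h}(t)$, so $\bZ^{\orule,h}(t)$ need not be even approximately a function of $\bm{Y}^{\orule,h}(t)$. Your exchangeability reduction therefore covers only label-symmetric rules such as index or LP-rounding policies.

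Here is a counterexample. Take $I=1$, $\bS_1=\bA_1=\{0,1\}$, $p_1(s,\action,s')=1/2$ for all $s,\action,s'$, all bandits starting in state $0$, $T=1$, and let $\phi^h_1$ give every bandit action $1$ if the first bandit is in state $1$, and action $0$ otherwise. Given the counts, the first bandit is in state $1$ with probability $y_{1,1}$, so $\alpha^{\orule,h}_{1,s,1}(\bm{y},1)=y_{1,1}$ whenever $y_{1,s}>0$, and $\orule\in\PsiZ$. Yet $Z^{\orule,h}_{1,1,1}(1)$ equals $Y^{\orule,h}_{1,1}(1)\approx 1/2$ or $0$, each with probability $1/2$, whereas $z^{\orule}_{1,1,1}(1)=1/4$. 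The statement as written is false for this family, so no argument can close the step without an extra hypothesis.

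Your Freidlin fallback does not supply one. The large-deviation bound concerns the paper's auxiliary Poisson-driven ODE, in which actions enter only as the deterministic fractions $\scra^{\orule}$, so it never sees such correlations. The paper's transfer back to $\bZ^{\orule,h}$ in Lemma~\ref{lemma:sim} has exactly your gap: in \eqref{eqn:lemma:sim:9} it silently replaces $Z^{\orule,h}_{\zeta}(t+1)$ by $Y^{\orule,h}_{i^{\zeta},s^{\zeta}}(t+1)\alpha^{\orule,h}_{\zeta}(\bm{Y}^{\orule,h}(t+1),t+1)$. That lemma also gives convergence without any rate.

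A second gap is your upgrade to uniform convergence. Proposition~\ref{prop:stable_cond:1} gives a Lipschitz interpolant for each fixed $h$, but with a constant typically of order $h$. So there is no equi-Lipschitz family, and Dini's theorem would need monotonicity. Pointwise convergence is genuinely insufficient, because you evaluate $\bm{\alpha}^{\orule,h}$ at the random point $\bm{Y}^{\orule,h}(t)$, which lies at distance of order $h^{-1/2}$ from $\bm{y}^{\orule}(t)$.

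For example, in the model above let $\phi^h_1$ give all bandits action $1$ if and only if $\lvert Y^{\orule,h}_{1,1}(1)-1/2\rvert\in[h^{-1/2},2h^{-1/2}]$. The interpolants converge to $0$ at every fixed $\bm{y}$. By the central limit theorem, however, this event has probability bounded away from $0$, so $Z^{\orule,h}_{1,1,1}(1)$ does not concentrate. The paper's \eqref{eqn:lemma:sim:13} tacitly uses an $h$-independent Lipschitz constant in the same way.

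Your argument does go through if you add two hypotheses:
\begin{enumerate}
\item exponential concentration, $\bbP\{\max_{(i,s,\action)\in\calJ}\lvert Z^{\orule,h}_{i,s,\action}(t)-Y^{\orule,h}_{i,s}(t)\alpha^{\orule,h}_{i,s,\action}(\bm{Y}^{\orule,h}(t),t)\rvert>\delta\}\le e^{-ch}$, which holds under independent per-bandit randomization and whenever $\bZ^{\orule,h}(t)$ is a function of $\bm{Y}^{\orule,h}(t)$, as for Whittle, LP-rounding or FAB;
\item locally uniform convergence of $y_{i,s}\alpha^{\orule,h}_{i,s,\action}(\bm{y},t)$.
\end{enumerate}
Under these, your induction yields \eqref{eqn:theorem:convergence_Z_exp} directly, with explicit constants and none of the paper's $a\downarrow 0$, $U\to\infty$ large-deviation machinery.
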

The proof of Theorem~\ref{theorem:convergence_Z_exp} is provided in Appendix~\ref{app:theorem:convergence_Z_exp}.

Theorem~\ref{theorem:convergence_Z_exp} strengthens Theorem~\ref{theorem:convergence-Z}, and, for any Lipschitz-limit $\orule\in\PsiZ$, it ensures a fast convergence speed to the asymptotic regime with respect to $\bm{Z}^{\orule,h}(t)$. We refer to such a convergence as $h\rightarrow \infty$ as the convergence in \emph{size dimension}.
For any Lipschitz-limit $\orule\in\PsiZ$, such exponential convergence in probability does not request any intrinsic properties over the WCG problem, such as the non-degenerate assumptions in~\cite{brown2023fluid,gast2023linear,gast2024reoptimization}.
Recall that $\PsiZ$ is the set of all Lipschitz-limit policy families, including the Whittle index policy~\cite{whittle1988restless} and those proposed in~\cite{brown2023fluid,gast2023linear}, that satisfy a fairly general condition about the action variables in the asymptotic regime (see \partialref{cond:weak_stab}{Lipschitz–limit regularity}).

From Theorem~\ref{theorem:convergence_Z_exp}, 
for any $\orule\in\PsiZ$, it approaches optimality in probability at rate $e^{-O(h)}$ - asymptotic optimality with exponential rate - \emph{if and only if} the deterministic process $\{\bm{z}^{\orule}(t),t\in[T]\}$ coincides with the optimum of the original WCG problem.
We are interested in the questions: how to construct such an asymptotically optimal $\varphi$ that is scalable to the WCG system with large $h$, and how tightly that the convergence of the WCG process affects the overall performance of the potential algorithms?
We will provide a detailed discussion on constructing efficient and effective algorithms in Section~\ref{sec:alp}.

\section{Fluid Approximation}
\label{sec:alp}

\subsection{Asymptotic Optimality in Size Dimension: Exponentially Diminishing suboptimality}\label{subsec:alp:asym_opt_exp}
We follow the general methodology  of \cite{whittle1988restless}: we randomize the policy $\phi\in\lPhih$.  
We will use $\db{\sA}$ to indicate the space of randomized actions; that is, the space of random variables with values in $\sA$. 

For a randomized policy $\phi$ and each time $t\in[T]_0$, $\actionphihVec(t)\coloneqq(\actionphih_{i,n}(t):i\in[I],n\in[N_i])$ is a random action in $\db{\sA}$, with distribution $\bbP\bigl[\actionphih_{i,n}(t)~\bigl|~\bs^{\phi,h}(t)\bigr]$.
Define 
a  simplex  $\Delta_{\bA_i}\coloneqq \bigl\{\balpha: \bA_{i}\to \bbR_{0}|\sum_{a\in\bA_i}\alpha_a = 1\bigr\}$.
Naturally the probability $\bbP\bigl[\actionphih_{i,n}(t)~\bigl|~\bs^{\phi,h}(t)\bigr]$ takes values in $\Delta_{\bA_i}$.
For given $t\in[T]_0$, the random action $\actionphihVec(t)$ and the action probability $\bbP\bigl[\actionphih_{i,n}(t)~\bigl|~\bs^{\phi,h}(t)\bigr]$ are functions of 
current state:
$\sS\rightarrow \db{\sA}$ and $\sS \rightarrow \bigl(\Delta_{\bA_i}\bigr)^{\sum_{i\in[I]}|\bS_i|}$, respectively, where $\db{\sA}$ is the space of the random actions.
Note that such functions are potentially different for different $t\in[T]_0$.
We define $\dblPhih$ as the set of all the policies $\phi$ determined by such random actions $\actionphihVec(t)$ for all $t\in[T]_0$.

We consider the problem, 
\begin{equation}\label{eqn:objective:finite-time:h}
   \sup_{\phi\in\dblPhih} \frac{1}{h}\Gamma^{\phi,h}(\bs^0)=\sum_{i\in[I]}N^0_i\sup_{\phi\in\dblPhih}\sum_{t=0}^T \sum_{(i,s,\action)\in \mathcal{J}}\bbE^{\phi}_{\bm{y}^0}\Bigl[Z^{\phi,h}_{i,s,\action}(t)\Bigr]r_{i}(s,\action),
\end{equation}
subject to 
\begin{equation}\label{eqn:constraint:relax:h}
\sum_{(i,s,\action)\in\mathcal{J}}\bbE^{\phi}_{\bm{y}^0}\Bigl[Z^{\phi,h}_{i,s,\action}(t)\Bigr] f_{i,\ell}(s,\action)\leq0
,~\forall \ell\in[L], t\in[T]_0,
\end{equation}
which is derived by taking expectation operation on both sides of \eqref{eqn:constraint:linear}.

The problem \eqref{eqn:objective:finite-time:h}
subject to \eqref{eqn:constraint:relax:h} is a relaxed version of the problem described in \eqref{eqn:obj:h} and \eqref{eqn:constraint:linear} (or equivalently, the problem described in \eqref{eqn:obj:Z} and \eqref{eqn:constraint:linear:Z}).
We rewrite the relaxed problem described in \eqref{eqn:objective:finite-time:h} and \eqref{eqn:constraint:relax:h} in a linear programming form, 
\begin{equation}\label{eqn:obj:linear programming}
 \Gamma^* \coloneqq \max_{\bm{x}\in[0,1]^{|\mathcal{J}|(T+1)}}\sum_{t\in[T]_0}\sum_{(i,s,\action)\in\mathcal{J}} r_{i}(s,\action)x_{t}(i,s,\action)N_i^0,
\end{equation}
subject to
\begin{eqnarray}
\label{eqn:constraint:linear programming:1}&
\sum_{s'\in\bS_i,\action'\in\bA_i}x_{t}(i,s',\action') p_i(s',\action',s) = \sum_{\action\in\bA_i}x_{t+1}(i,s,\action),&\forall i\in[I],s\in\bS_i,t\in[T-1]_0\\
\label{eqn:constraint:linear programming:2}&\sum_{s\in\bS_i,\action\in\bA_i}x_{t}(i,s,\action) = 1, &~\forall i\in[I],t\in[T]_0,\\
\label{eqn:constraint:linear programming:3}&\sum_{\action\in\bA_i}x_{0}(i,s,\action) = y_{i,s}^0, &~\forall i\in[I],s\in\bS_i,\\
&\sum_{(i,s,\action)\in\mathcal{J}}N_i^0 x_{t}(i,s,\action)f_{i,\ell}(s,\action) \leq 0,&~\forall \ell\in[L],t\in[T]_0,
\label{eqn:constraint:linear programming:4}
\end{eqnarray}
where initial condition $\bm{y}^0\in\deltay$ is given for all $i\in[I]$ and $s\in\bS_i$. 
Constraints~\eqref{eqn:constraint:linear programming:1}-\eqref{eqn:constraint:linear programming:3} ensure that $x_{t}(i,s,\action)=\frac{\sum_{i\in[I]}N^0_i}{N^0_{i}}\bbE^{\phi}_{\bm{y}^0}Z^{\phi,h}_{i,s,\action}(t)$ ($(i,s,\action)\in\mathcal{J}$) for some $\phi\in\dblPhih$, representing the probability that the process $\{s^{\phi,h}_{i,n}(t),t\in\mathbb{N}_0\}$, for any $n\in[N_{i}]$, stays in \GSA triple $(i,s,\action)$.
Note that such a linear problem (relaxed problem) is different from the original WCG problem and is used as an intermediate step to achieve near-optimal policies for the original WCG problem.

For any given $h\in\mathbb{N}_+$ and $\bm{x}\in [0,1]^{|\mathcal{J}|(T+1)}$ satisfying~\eqref{eqn:constraint:linear programming:1}-\eqref{eqn:constraint:linear programming:3}, we can construct a corresponding policy $\phi\in\dblPhih$ by setting the action probability for $(i,s,\action)\in\mathcal{J}$ and $t\in[T]_0$,
\begin{equation}\label{eqn:linear programming:3}
    \bbP\bigl[\actionphih_{i,n}(t)= \action~\bigl|~s^{\phi,h}_{i,n}(t)=s\bigr]
    =\oalphaLP_{i,s,\action}(\bm{x},t) \coloneqq \begin{cases}
        \frac{x_{t}(i,s,\action)}{\sum_{\action'\in\bA_i}x_{t}(i,s,\action')},& \text{if }\sum_{\action'\in\bA_i}x_{t}(i,s,\action') > 0,\\
        \frac{1}{|\bA_i|},&\text{otherwise},
        \end{cases}
\end{equation}
for any $n\in [N_{i}]$ due to the stochastic identity of all bandit processes in the same gang-state pair. 
For the function $\oalphaLP_{i,s,\action}$ defined in \eqref{eqn:linear programming:3}, the overline and superscript of $\oalphaLP_{i,s,\action}$ indicates that it represents the action probability applicable to the relaxed problem (described in \eqref{eqn:objective:finite-time:h} and \eqref{eqn:constraint:relax:h}), and is based on a feasible solution to the linear programming problem (that is, its first argument $\bm{x}$ should satisfies \eqref{eqn:constraint:linear programming:1}-\eqref{eqn:constraint:linear programming:3}), respectively. 
For a given $\bm{x}\in [0,1]^{|\mathcal{J}|(T+1)}$ satisfying \eqref{eqn:constraint:linear programming:1}-\eqref{eqn:constraint:linear programming:3}, we write $\phi(\bm{x})$ as a policy determined by the action probabilities 
$\bbP\bigl[\action^{\phi(\bm{x}),h}_{i,n}(t)= \action~\bigl|~s^{\phi(\bm{x}),h}_{i,n}(t) = s\bigr]=\oalphaLP_{i,s,\action}(\bm{x},t)$, for all $(i,s,\action)\in\mathcal{J}$ and $t\in[T]_0$.
Let $\boalphaLP(\bm{x},t) \coloneqq (\oalphaLP_{i,s,\action}(\bm{x},t) : (i,s,\action)\in\mathcal{J})$.
Since the action probability $\oalphaLP_{i,s,\action}(\bm{x},t)$ is independent to $h$, $\bbP\bigl[\bZ^{\phi(\bm{x}),h}(t+1)~\bigl|~\bZ^{\phi(\bm{x}),h}(t)\bigr]$ is also independent to $h$.
There exists $\bm{z}^{\phi(\bm{x})}(t) \coloneqq  \bm{z}^{\phi(\bm{x}),h}(t) = \bbE^{\phi}_{\bm{y}^0}\bigl[\bm{Z}^{\phi(\bm{x}),h}(t)\bigr]$ independent to $h$.
In this context, we can construct a policy family $\dborule(\bm{x}) \coloneqq (\phi^h: h \in\mathbb{N}_+)$ consisting of a constant sequence $\phi^h = \phi(\bm{x})$ for all $h\in\mathbb{N}_+$.

\begin{proposition}\label{prop:asym_opt:LP}
For any $\orule\in\PsiZ$ and $\bm{x}\in[0,1]^{|\mathcal{J}|(T+1)}$ satisfying \eqref{eqn:constraint:linear programming:1}-\eqref{eqn:constraint:linear programming:3} and $\bm{Y}^{\orule,h}(0)=\bm{y}^0$,  
\begin{equation}\label{eqn:assumption:action}
\lim_{h\rightarrow \infty} \Bigl\lvert \bigl(\alpha^{\orule,h}_{i,s,\action}(\bm{Y}^{\orule,h}(t),t)- \oalphaLP_{i,s,\action}(\bm{x},t)\bigr)Y^{\orule,h}_{i,s}(t)\bigr)\Bigr\rvert =0,~\forall (i,s,\action)\in\calJ,t\in[T]_0,
\end{equation}
if and only if, for any $\epsilon>0$, there exist $C,C_1,H<\infty$ and $C_2>0$ such that, for all $h>H$, 
\begin{equation}\label{eqn:prop:asym_opt:LP:1}
 \bbP\Bigl\{\max_{t\in[T]_0} \bigl\lVert \bm{Z}^{\orule,h}(t)-\bm{z}^{\dborule(\bm{x})}(t)\rVert >\epsilon\Bigr\}\leq e^{-C h},
\end{equation}
and
\begin{equation}\label{eqn:prop:asym_opt:LP:2}
 \max_{t\in[T]_0} \bigl\lVert \bm{z}^{\orule,h}(t)-\bm{z}^{\dborule(\bm{x})}(t)\rVert \leq C_1e^{-C_2h} + \epsilon,
\end{equation}
where recall $\bm{z}^{\orule,h}(t) = \bbE^{\orule}_{\bm{y}^0}[\bm{Z}^{\orule,h}(t)]$. 
\end{proposition}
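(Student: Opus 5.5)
The plan is to treat both directions through Theorem~\ref{theorem:convergence_Z_exp}, applied twice: once to $\orule$ itself, and once to the constant family $\dborule(\bm{x})$. I interpret the limit in \eqref{eqn:assumption:action} as a limit in probability, consistent with the rest of the statement. First observe that $\dborule(\bm{x})\in\PsiZ$. Its action probabilities $\oalphaLP_{i,s,\action}(\bm{x},t)$ do not depend on $\bm{y}$ or $h$, so the interpolated $\alpha^{\dborule(\bm{x}),h}$ from Proposition~\ref{prop:stable_cond:1} is constant and trivially Lipschitz-limit. Hence Theorem~\ref{theorem:convergence-Z} and Theorem~\ref{theorem:convergence_Z_exp} apply to it, and $\bm{z}^{\dborule(\bm{x})}(t)=\bm{z}^{\dborule(\bm{x}),h}(t)$ is the deterministic limit of $\bm{Z}^{\dborule(\bm{x}),h}(t)$.

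\textbf{Sufficiency.} Given \eqref{eqn:assumption:action}, I show that the deterministic limits coincide, $\bm{z}^{\orule}(t)=\bm{z}^{\dborule(\bm{x})}(t)$ for all $t$, by induction on $t$. At $t=0$ both $Y$-marginals equal $\bm{y}^0$. The conditional mean of $\bm{Z}(t)$ given $\bm{Y}(t)$ is $\alpha_{i,s,\action}Y_{i,s}$. Moreover, $\bm{Y}(t+1)$ has conditional mean $\sum_{s',\action'}Z_{i,s',\action'}(t)p_i(s',\action',s)$, which is a linear map of $\bm{Z}(t)$. Suppose $\bm{Y}^{\orule,h}(t)\to \bm{y}(t)$ in probability, where $\bm{y}(t)$ is the $Y$-marginal of $\bm{z}^{\dborule(\bm{x})}(t)$. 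Combining Theorem~\ref{theorem:convergence-Z} for $\orule$ with \eqref{eqn:assumption:action}, the limit $z^{\orule}_{i,s,\action}(t)$ must equal $\oalphaLP_{i,s,\action}(\bm{x},t)y_{i,s}(t)=z^{\dborule(\bm{x})}_{i,s,\action}(t)$. Pushing this through the linear transition map gives the step to $t+1$. With the limits identified, \eqref{eqn:prop:asym_opt:LP:1} is exactly Theorem~\ref{theorem:convergence_Z_exp} applied to $\orule$. For \eqref{eqn:prop:asym_opt:LP:2}, split the expectation on the good event and its complement: $\lVert\bm{z}^{\orule,h}(t)-\bm{z}^{\dborule(\bm{x})}(t)\rVert\le \bbE\lVert \bm{Z}^{\orule,h}(t)-\bm{z}^{\dborule(\bm{x})}(t)\rVert\le \epsilon+ 2e^{-Ch}$, using that all entries lie in $[0,1]$. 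This yields $C_1=2$, $C_2=C$.

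\textbf{Necessity.} Suppose \eqref{eqn:prop:asym_opt:LP:1} holds for every $\epsilon$. Then $\bm{Z}^{\orule,h}(t)\to\bm{z}^{\dborule(\bm{x})}(t)$ in probability, and the same holds for the $Y$-marginals. Since $z^{\dborule(\bm{x})}_{i,s,\action}(t)=\oalphaLP_{i,s,\action}(\bm{x},t)y_{i,s}(t)$, it remains to show that $\alpha^{\orule,h}_{i,s,\action}(\bm{Y}^{\orule,h}(t),t)Y^{\orule,h}_{i,s}(t)$ is close to $Z^{\orule,h}_{i,s,\action}(t)$ with high probability. This is a concentration statement: conditionally on the state, the bandits in pair $(i,s)$ choose actions whose empirical fraction has conditional mean $\alpha Y_{i,s}$. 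I would use the Lipschitz-limit regularity, which gives continuity of $\alpha^{\orule}$ in $\bm{y}$, together with the exponential concentration already established in the proof of Theorem~\ref{theorem:convergence_Z_exp}, to conclude that $\lvert Z-\alpha Y\rvert\to0$ in probability. The triangle inequality then gives \eqref{eqn:assumption:action}. Condition \eqref{eqn:prop:asym_opt:LP:2} is not needed for this direction, though it is consistent with it.

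\textbf{Main obstacle.} The delicate step is the concentration of $Z$ around $\alpha Y$ given only $\bm{Y}$. The definition \eqref{eqn:define:alpha} uses a conditional expectation given $\bm{Y}$, not given the full state, so a general policy could correlate actions across bandits. I would handle this by bounding $\bbE[\lvert Z-\alpha Y\rvert]$ via the conditional variance. If that fails for arbitrary policies, I would restrict to the Lipschitz-limit structure, where the limiting $\bm{z}^{\orule}$ is determined by $\alpha^{\orule}$ along the deterministic trajectory; deviations of $Z$ from $\alpha Y$ would then contradict part 1 of Theorem~\ref{theorem:convergence-Z}.
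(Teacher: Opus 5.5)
Your sufficiency direction is correct and is essentially the paper's argument. Both run an induction over $t$ through the linear propagation of $Y$-marginals, then apply Theorem~\ref{theorem:convergence_Z_exp}, then split the expectation for \eqref{eqn:prop:asym_opt:LP:2}. Identifying $\bm{z}^{\orule}=\bm{z}^{\dborule(\bm{x})}$ first and invoking the theorem once is a tidier organization. Make the identification go through expectations. By the tower property, $\bbE\bigl[\alpha^{\orule,h}_{i,s,\action}(\bm{Y}^{\orule,h}(t),t)Y^{\orule,h}_{i,s}(t)\bigr]=z^{\orule,h}_{i,s,\action}(t)$. Bounded convergence then turns \eqref{eqn:assumption:action} into $z^{\orule,h}_{i,s,\action}(t)-\oalphaLP_{i,s,\action}(\bm{x},t)\,y^{\orule,h}_{i,s}(t)\to 0$. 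No concentration of $Z$ around $\alpha Y$ is needed there.

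The gap is in necessity. You reduce it to showing $Z^{\orule,h}_{i,s,\action}(t)-\alpha^{\orule,h}_{i,s,\action}(\bm{Y}^{\orule,h}(t),t)Y^{\orule,h}_{i,s}(t)\to0$, but you leave this open, and none of your suggested routes closes it.

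\emph{Conditional variance.} Bounding it from the structure of the policy would need near conditional independence of actions across bandits given $\bm{Y}$. A general $\phi^h:\sS\to\sA$ does not have this, which is exactly your worry.

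\emph{Lipschitz-limit regularity.} It gives only pointwise convergence $\alpha^{\orule,h}(\bm{y},t)\to\alpha^{\orule}(\bm{y},t)$, and the Lipschitz constants in Proposition~\ref{prop:stable_cond:1} depend on $h$. So it says nothing about $\alpha^{\orule,h}$ evaluated at the random point $\bm{Y}^{\orule,h}(t)$.

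\emph{Fallback via part~1 of Theorem~\ref{theorem:convergence-Z}.} This cannot work, since $\bbE[Z-\alpha Y]=0$ identically, so no statement about means detects such deviations.

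The missing idea is that the control comes from \eqref{eqn:prop:asym_opt:LP:1} itself. By \eqref{eqn:define:alpha}, $\alpha^{\orule,h}_{i,s,\action}(\bm{Y}^{\orule,h}(t),t)Y^{\orule,h}_{i,s}(t)=\bbE\bigl[Z^{\orule,h}_{i,s,\action}(t)\mid\bm{Y}^{\orule,h}(t)\bigr]$, trivially also when $Y^{\orule,h}_{i,s}(t)=0$. Conditional Jensen and boundedness then give, for every $\epsilon>0$ and $h>H(\epsilon)$,
\[
\bbE\bigl\lVert \bbE[\bZ^{\orule,h}(t)\mid\bm{Y}^{\orule,h}(t)]-\bm{z}^{\dborule(\bm{x})}(t)\bigr\rVert\le \bbE\bigl\lVert \bZ^{\orule,h}(t)-\bm{z}^{\dborule(\bm{x})}(t)\bigr\rVert\le \epsilon+e^{-Ch}.
\]
Write $y_{i,s}(t)\coloneqq\sum_{\action'}z^{\dborule(\bm{x})}_{i,s,\action'}(t)$. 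By Markov's inequality, $\alpha Y\to z^{\dborule(\bm{x})}_{i,s,\action}(t)=\oalphaLP_{i,s,\action}(\bm{x},t)\,y_{i,s}(t)$ in probability. Also $\oalphaLP_{i,s,\action}(\bm{x},t)Y^{\orule,h}_{i,s}(t)$ converges to the same limit by \eqref{eqn:prop:asym_opt:LP:1}. The triangle inequality then yields \eqref{eqn:assumption:action}.

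This is the paper's route: it compares $\bbE[\bZ^{\orule,h}(t)\mid\bm{Y}^{\orule,h}(t)]$ with its counterpart under $\dborule(\bm{x})$ directly. The concentration of $Z$ around $\alpha Y$ you were after then follows afterwards; it is not an input.

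A minor point: you never actually use $\dborule(\bm{x})\in\PsiZ$. That membership is not literally covered by the definition of $\lPsi$, since $\phi(\bm{x})$ is randomized. The facts you need, $z^{\dborule(\bm{x})}_{i,s,\action}(t)=\oalphaLP_{i,s,\action}(\bm{x},t)\,y_{i,s}(t)$ and the linear propagation of marginals, follow from linearity of expectation.
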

The proof of Proposition~\ref{prop:asym_opt:LP} is based on Theorem~\ref{theorem:convergence_Z_exp} and is provided in Appendix~\ref{app:asym_opt:LP}. 
For any policy $\phi\in\dblPhih$, the expected total reward $\frac{1}{h}\Gamma^{\phi,h}(\bs^0) = \sum_{t\in[T]_0}\bm{r}\cdot \bm{z}^{\phi,h}(t)$, where $\bs^{\phi,h}(0)=\bs^0$ is the initial state, and
recall $\bm{r} = (r_{i}(s,\action): (i,s,\action)\in\mathcal{J})$.
Based on Proposition~\ref{prop:asym_opt:LP}, if we propose a policy family $\orule\in\PsiZWC$ such that \eqref{eqn:assumption:action} and the original WCG constraints~\eqref{eqn:constraint:linear} are both satisfied ($\orule$ is applicable to the original WCG problem for all $h\in\mathbb{N}_+$), then $\orule$ approaches the same objective function as $\dborule(\bm{x})$ in the asymptotic regime, where the performance deviation exponentially diminishing in probability in $h$.

Moreover, given $\bm{x}=\bm{x}^*$ optimal to the linear programming problem \eqref{eqn:obj:linear programming}-\eqref{eqn:constraint:linear programming:4}, we construct the corresponding optimal policy $\phi(\bm{x}^*)\in\dblPhih$ through \eqref{eqn:linear programming:3}.
Apparently, such $\phi(\bm{x}^*)$ is not necessarily applicable to the original WCG problem, because it may not satisfy \eqref{eqn:constraint:linear}.
From Proposition~\ref{prop:asym_opt:LP}, if there is a policy family $\orule\in\PsiZWC$ such that \eqref{eqn:assumption:action} (plugged in $\bm{x}=\bm{x}^*$) and the original WCG constraints~\eqref{eqn:constraint:linear} are both satisfied, 
$\orule$ is asymptotically optimal with performance deviation exponentially diminishing in probability in $h$.
Because $\orule$ approaches optimality of the relaxed problem, which is an upper bound of the maximum of the original WCG problem described in \eqref{eqn:obj:h} and \eqref{eqn:objective:finite-time:h}.

For such asymptotically optimal $\orule\in\PsiZWC$, condition \eqref{eqn:assumption:action} requests convergence between $\alpha^{\orule,h}_{i,s,\action}(\bm{Y}^{\orule,h}(t), t)$ and $\oalphaLP_{i,s,\action}(\bm{x}^*,t)$  in the asymptotic regime, $h\rightarrow \infty$, whenever $\lim_{h\to\infty}Y^{\orule,h}_{i,s}(t)>0$.

Unlike the past work~\cite{brown2020index,brown2023fluid,fu2024patrolling,gast2023linear,gast2024reoptimization} that focus on specified policies, condition~\eqref{eqn:assumption:action} defines a range of policy families $\orule$  (referred to as `policies' in the past work) that can be asymptotically optimal. 
The specified policies discussed in the past work, which are derived through Whittle relaxation (or equivalently, the linear programming version), are mostly included in the range of policy families constrained by \eqref{eqn:assumption:action}.
Examples for such $\orule$ include the \emph{Lagrangian index policy}~\cite{brown2020index} and the \emph{movement-adapted index policy}~\cite{fu2024patrolling} for the RMAB case.
Also, \cite{gast2023linear} and \cite{brown2023fluid} discussed a problem similar to the WCG problem except that the maximization was conducted over randomized policies in $\dblPhih$. That is, the problem  maximizes \eqref{eqn:objective:finite-time:h} subject to \eqref{eqn:constraint:linear}.
Similarly, for such a maximization over all policies in $\dblPhih$, based on Proposition~\ref{prop:asym_opt:LP}, if a policy $\phi\in\dblPhih$ satisfies the original constraints \eqref{eqn:constraint:linear} and \eqref{eqn:assumption:action} with plugged in $\bm{x}=\bm{x}^*$, then $\phi$ approaches optimality in the asymptotic regime withe performance suboptimality vanishes at rate $O(e^{-h})$.
The \emph{water filling policy}~\cite{gast2023linear} for the RMAB case, and the \emph{fluid-budget balancing policy}~\cite{brown2023fluid} for the general WCG case are examples of such asymptotically optimal $\phi\in\dblPhih$ that satisfy \eqref{eqn:assumption:action}.

We refer to those $\orule\in\PsiZWC$ satisfying \eqref{eqn:constraint:linear} and \eqref{eqn:assumption:action} as the \emph{fluid approximation} (FA) policy families.

Proposition~\ref{prop:asym_opt:LP} proves that FA approaches optimality in probability at rate $e^{-O(h)}$.
When the WCG reduces to a standard RMAB problem, assuming $\bm{x}^*$ is a non-degenerate solution to the linear programming problem, \cite{gast2023linear} provided similar results with exponential convergence between a linear programming-based policy and optimality in the asymptotic regime.
For the general case, under slightly different non-degenerate assumptions, 
\cite{brown2023fluid,gast2024reoptimization} proved that their policies are asymptotically optimal with performance suboptimality $O(e^{-h})$ and  $e^{-O(h)}$, respectively. 
Here, Proposition~\ref{prop:asym_opt:LP} has no request on the form of $\bm{x}^*$ (not necessary to be non-degenerate) and is applicable for general WCG and every BA satisfying \eqref{eqn:assumption:action}.
The condition~\eqref{eqn:assumption:action} is more straightforward and is satisfied by a wider range of policies, including those proposed in \cite{gast2023linear,brown2023fluid}. 
For the general WCG without non-degenerate assumptions, we will further propose an FA policy family satisfying~\eqref{eqn:assumption:action} in Section~\ref{subsec:example}, which, based on Proposition~\ref{prop:asym_opt:LP}, is asymptotically optimal in probability with rate $e^{-O(h)}$.


The asymptotic optimality in the size dimension is marvelous for large-scale WCG systems where the problem size, measured by the scaling parameter $h$, is large. 
If the system is small, then conventional learning and optimization techniques, such as value iteration and Q learning, can achieve solutions without requesting excessive large amount of computational and storage resources. 
When the system becomes large, conventional techniques exhibit the curse of dimensionality with respect to learning and control.
Optimal solutions become intractable and we resort good approximations of optimality.
The WCG model and the BA policy families are applicable to a wide range of application scenarios, including the standard RMAB case~\cite{whittle1988restless,weber1990index,nino2001restless,verloop2016asymptotically,fu2019towards,fu2020energy,brown2020index} and \cite{gast2023linear} and extended restless-bandit-based problems~\cite{fu2018restless,brown2023fluid,fu2024patrolling}.

\subsection{An Algorithm for Fluid Approximation}\label{subsec:example}

In this subsection, we consider a resource allocation scenario specified by the following assumption.
This rather general scenario is the same as the problem model discussed in \cite{brown2023fluid,gast2024reoptimization}.
\begin{assumption}\label{assumption:feasibility}
\begin{enumerate}[label=(\arabic*)]
\item For each $i\in[I]$ and $s\in\bS_i$, there exists $a_i(s)\in\bA_i$ such that, for all $\ell\in[L]$,
$f_{i,\ell}(s,a_i(s))  = \min_{a\in\bA_i}f_{i,\ell}(s,a)$.
\item For any $\bs^{\phi}(t')=\bs\in\sS$,  \eqref{eqn:constraint:linear} is satisfied by plugging in $\action^{\phi}_{i,n}(t) = a_i(s^{\phi}_{i,n}(t))$ for all $i\in[I]$ and $n\in[N_i]$.
\end{enumerate}
\end{assumption}

Note that 
\cite{brown2023fluid} and \cite{gast2024reoptimization} assumed Assumption~\ref{assumption:feasibility} for their model, under which they further request non-degenerate assumptions for achieving exponentially diminishing sub-optimality.

Assumption~\ref{assumption:feasibility} ensures that, with appropriate initial state $\bs^0$, there always exist feasible actions at time $t\in[T]_0$.
It avoids the case where $\PhiWC = \emptyset$.
We need Assumption~\ref{assumption:feasibility} for the feasibility of the proposed algorithm discussed in only Section~\ref{subsec:example} (this subsection), no other place in this paper. 
The main theoretical results, Theorems~\ref{theorem:convergence-Z} and \ref{theorem:convergence_Z_exp} and Proposition~\ref{prop:asym_opt:LP}, do not rely on Assumption~\ref{assumption:feasibility}.

Given Assumption~\ref{assumption:feasibility}, we propose an algorithm that constructs an BA family discussed in Section~\ref{subsec:alp:asym_opt_exp} that satisfy both \eqref{eqn:constraint:linear} and \eqref{eqn:assumption:action}. 
The algorithm is constructed in a similar manner as that of the fluid-budget balancing policy in~\cite{brown2023fluid}. 
Nonetheless, since this paper does not assume the existence of the only ``saturated" constraint $\ell^*\in[L]$ in \eqref{eqn:constraint:linear} (non-degenerate assumption), in the more general case, we provide steps in the proposed algorithm with precise discussions about how to select appropriate alternative actions to fit the constraints in \eqref{eqn:constraint:linear}.

Given $\bm{x}^*$ optimal to the linear programming problem \eqref{eqn:obj:linear programming}-\eqref{eqn:constraint:linear programming:4}, we obtain $\boalphaLP(\bm{x}^*,t)$, through \eqref{eqn:linear programming:3}, for all $t\in[T]_0$. 
Consider a policy $\psi\in\lPhih$, for each $t\in[T]_0$ and observed $\bS^{\psi,h}(t)$, through the following steps.
\begin{enumerate}[label=\roman*)]
\item\label{step:1} Based on the observed $\bS^{\psi,h}(t)$, keep $Y^{\psi,h}_{i,s}(t)$ updated, and, for all $(i,s,\action)\in\mathcal{J}$, set $Z^{\psi,h}_{i,s,\action}(t)$ to be \[\frac{\bigl\lfloor \oalphaLP_{i,s,\action}(\bm{x}^*,t) Y^{\psi,h}_{i,s}(t)h\sum_{i\in[I]}N^0_i\bigr\rfloor}{h\sum_{i\in[I]}N^0_i}.\]
\item \label{step:2}
For all $i\in[I]$ and $s\in\bS_i$, pick up an action $a^0(i,s)\in\bA_i$ and the corresponding \GSA triple $(i,s,a^0(i,s))\in\mathcal{J}$, and set $Z^{\psi,h}_{i,s,a^0(i,s)}(t)$ to be 
\[Y^{\psi,h}_{i,s}(t) - \sum_{\action\in\bA_i:\action\neq a^0(i,s)}Z^{\psi,h}_{i,s,\action}(t).\]
Such $a^0(i,s)$ can be any action in $\bA_i$.
\item \label{step:3}
Explore all the constraints in \eqref{eqn:constraint:linear}. Let $\mathscr{L}(t)$ represent the set of all $\ell\in[L]$ with
\begin{equation}\label{eqn:define_LHS}
    L_{\ell}(t)\coloneqq \sum_{(i,s,\action)\in\mathcal{J}}N^0_{i}Z^{\psi,h}_{i,s,\action}(t)f_{i,\ell}(s,\action) > 0.
\end{equation}
If $\mathscr{L}(t) \neq \emptyset$,
then we order all the \GSA triples $(i,s,\action)\in\mathcal{J}$ according to the ascending order of $\oalphaLP_{i,s,\action}(\bm{x}^*,t)$, denoting the $j$th \GSA triple as $\bigl((i(j),s(j),\action(j)\bigr)$ for $j\in[|\mathcal{J}|]$,  and go to Step~\ref{step:4}; otherwise, jump to Step~\ref{step:done_ell}.
\item \label{step:4} 
Explore all $\ell\in\mathscr{L}(t)$, for each of which we initialize $j=1$ and conduct Step~\ref{step:5}.
\item \label{step:5}
The operations in this step are for a fixed $\ell\in\mathscr{L}(t)$ and a fixed $j\in[|\mathcal{J}|]$. 
If $Z^{\psi,h}_{i(j),s(j),\action(j)}(t) =0$, then keep incrementing $j$ one by one until $Z^{\psi,h}_{i(j),s(j),\action(j)}(t) >0$.
For $(i,s,\action)\in\calJ$ and $\ell\in[L]$, we define 
\begin{equation}\label{eqn:define:M}
    M_{\ell}(i,s,\action)\coloneqq \bigl\lvert \bigl\{\action\in\bA_i~\bigl|~f_{i,\ell}(s,\action)<\action\bigr\}\bigr\rvert +1.
\end{equation}
For the fixed $j$ and $\ell$, we find all the actions 
$a_1,a_2,\ldots,a_{M}\in\bA_{i(j)}$ such that 
$M=M_{\ell}(i(j),s(j),\action(j))$,
$a_{M} = \action(j)$, $a_1 = a^0(i(j),s(j))$, and, for all $\ell'\in[L]$,
\begin{multline}\label{eqn:descending_actions}
f_{i(j),\ell'}(s(j),a_1) \leq 
f_{i(j),\ell'}(s(j),a_2) \leq\ldots \leq f_{i(j),\ell'}(s(j),a_{M-1}) \\< f_{i(j),\ell'}(s(j),a_{M}) = f_{i(j),\ell'}(s(j),\action(j)).
\end{multline}
Based on Assumption~\ref{assumption:feasibility}, such $M=M_{\ell}(i(j),s(j),\action(j))\geq 2$.
Decrementing $m$ from $M-1$, we iteratively 
\begin{itemize}
    \item update $\Delta f_m \coloneqq f_{i(j),\ell}\bigl(s(j),a_{m+1}\bigr)-f_{i(j),\ell}\bigl(s(j),a_m\bigr)$;
    \item update
\begin{equation}\label{eqn:define_deltaZ}
\Delta Z_{m,\ell}\bigl(i(j),s(j),\action(j)\bigr)\coloneqq \begin{cases}
    \Bigl\lceil\frac{L_{\ell}(t)h\sum_{i\in[I]}N^0_i}{N^0_{i(j)}\Delta f_m}\Bigr\rceil\frac{1}{h\sum_{i\in[I]}N^0_i},&\text{if }\Delta f_m > 0,\\
    0, &\text{otherwise};
    \end{cases}
\end{equation}
    \item update $L_{\ell}(t)$ with 
    \begin{equation}\label{eqn:update_LHS}
    L_{\ell}(t) - N_i^0\Delta f_m\min\Bigl\{Z^{\psi,h}_{i(j),s(j),a_{m+1}},\Delta Z_{m,\ell}\bigl(i(j),s(j),\action(j)\bigr)\Bigr\};
    \end{equation}
    \item and then update $Z^{\psi,h}_{i(j),s(j),a_m}$ and $Z^{\psi,h}_{i(j),s(j),a_{m+1}}$ with $$Z^{\psi,h}_{i(j),s(j),a_m}+\min\Bigl\{Z^{\psi,h}_{i(j),s(j),a_{m+1}},\Delta Z_{m,\ell}\bigl(i(j),s(j),\action(j)\bigr)\Bigr\},$$ and $$Z^{\psi,h}_{i(j),s(j),a_{m+1}}-\min\Bigl\{Z^{\psi,h}_{i(j),s(j),a_{m+1}},\Delta Z_{m,\ell}\bigl(i(j),s(j),\action(j)\bigr)\Bigr\},$$ respectively;
\end{itemize}
until $L_{\ell}(t) \leq 0$ or $m=1$. 
If $L_{\ell}(t) \leq 0 $, then go back to Step~\ref{step:4} and explore another not-yet-explored $\ell$ in set $\mathscr{L}(t)$.
If all $\ell\in\mathscr{L}(t)$ have been explored, go to Step~\ref{step:done_ell}.
If $L_{\ell}(t) > 0$ (that is, $m=1$), then find the smallest $j'>j$ such that $Z^{\psi,h}_{i(j), s(j),\action(j)}(t)>0$, assign the value of $j'$ to $j$, and go back to the beginning of Step~\ref{step:5} with the same $\ell$.
\item \label{step:done_ell}
Based on the updated $\bm{Z}^{\psi,h}(t)$, determine the action variables $\bm{a}^{\psi,h}(t)$.
\end{enumerate}
We refer to the $\psi$ output through Steps~\ref{step:1}-\ref{step:done_ell} as the \emph{fluid-approximation balancing (FAB)} algorithm.
We also provide pseudo-code of \ALP in Algorithm~\ref{algo:alp}.
In this paper, we use $\psi$ to specifically represent the policy led by \ALP.

\IncMargin{1em}
\begin{algorithm}
\small 
\linespread{0.5}\selectfont

\SetKwFunction{FALP}{ALPExample}
\SetKwProg{Fn}{Function}{:}{\KwRet}
\SetKwInOut{Input}{Input}\SetKwInOut{Output}{Output}
\SetAlgoLined
\DontPrintSemicolon
\Input{Given $t\in[T]_0$, $\bS^{\psi,h}(t)$, and $\bm{x}^*$ optimal to \eqref{eqn:obj:linear programming}-\eqref{eqn:constraint:linear programming:4}.}
\Output{Action vector $\actionpsihVec(t)$ or, equivalently, $\bm{Z}^{\psi,h}(t)$.}
\Fn{\FALP{}}{
    Update $\bm{Y}^{\psi,h}(t)$ based on $\bS^{\psi,h}(t)$\;
    Update $Z^{\psi,h}_{i,s,\action}(t)\gets \frac{\Bigl\lfloor \oalphaLP_{i,s,\action}(\bm{x}^*,t)Y^{\psi,h}_{i,s}(t)h\sum_{i\in[I]}N^0_i\Bigr\rfloor}{h\sum_{i\in[I]}N^0_i}$ for all $(i,s,\action)\in\mathcal{J}$\label{line:alp_step}\;
    Pick up an action $a^0(i,s)\in\bA_i$ for all $i\in[I]$ and $s\in\bS_i$\;
    For all $i\in[I]$ and $s\in\bS_i$, update
    $Z^{\psi,h}_{i,s,a^0(i,s)}(t) \gets Y^{\psi,h}_{i,s}(t) - \sum_{\action\in \bA_i:\action\neq a^0(i,s)}Z^{\psi,h}_{i,s,\action}(t)$\;
    \label{line:adaption:1}Order all \GSA triples $(i,s,\action)\in\mathcal{J}$ according to the ascending order of $\oalphaLP_{i,s,\action}(\bm{x}^*,t)$, and label the $j$th \GSA triple as $\bigl(i(j),s(j),\action(j)\bigr)$ for $j\in[|\mathcal{J}|]$.\;
    \For{$\ell\in[L]$\label{line:adaption:2}}{
        \If{$L_{\ell}(t) > 0$}{\tcc*{$L_{\ell}(t)$ is defined in \eqref{eqn:define_LHS}}
            Initialize $j=1$\;
            \While {$j < |\mathcal{J}|$\label{line:middle:1}}{
                $M\gets M_{\ell}\bigl(i(j),s(j),\action(j)\bigr)$\;
                Find all $a_1,a_2,\ldots,a_M\in\bA_{i(j)}$ that satisfy \eqref{eqn:descending_actions}.\;
                Initialize $m=M-1$\;
                \While{$m \geq 1$ and $L_{\ell}(t) > 0$\label{line:adaption:inner:1}}{
                    Update $\Delta Z_{m,\ell}\bigl(i(j),s(j),\action(j)\bigr)$ based on \eqref{eqn:define_deltaZ}\;
                    Update $L_{\ell}(t)$ according to \eqref{eqn:update_LHS}\;
                    $Z^{\psi,h}_{i(j),s(j),a_m}(t)\gets Z^{\psi,h}_{i(j),s(j),a_m}(t) + \min\Bigl\{Z^{\psi,h}_{i(j),s(j),a_{m+1}}(t),\Delta Z_{m,\ell}\bigl(i(j),s(j),\action(j)\bigr)\Bigr\}$\;
                    $Z^{\psi,h}_{i(j),s(j),a_{m+1}}(t) \gets Z^{\psi,h}_{i(j),s(j),a_{m+1}}(t) -\min\Bigl\{Z^{\psi,h}_{i(j),s(j),a_{m+1}}(t),\Delta Z_{m,\ell}\bigl(i(j),s(j),\action(j)\bigr)\Bigr\}$\;
                \label{line:adaption:inner:2}}
                \If{$L_{\ell}(t) \leq 0$}{
                    {\bf Break}, go to Line~\ref{break:point}.\;
                }
                Fine the smallest $j'>j$ such that $Z^{\psi,h}_{i(j),s(j),\action(j)}(t)>0$, and $j\gets j'$\;
            }
            \label{break:point}
        }
    }\label{line:adaption:done-1}
    Determine the action vector $\actionpsihVec(t)$ based on the updated $\bm{Z}^{\psi,h}(t)$.\label{line:adaption:done}\;
}
\caption{The decisions of the \ALP algorithm at each time $t$.}\label{algo:alp}
\end{algorithm}
 \DecMargin{1em}

Step~\ref{step:1} aligns the action ratio $Z^{\psi,h}_{i,s,\action}(t)/Y^{\psi,h}_{i,s}(t)$ to be reasonably close to $\oalphaLP_{i,s,\action}(\bm{x}^*,t)$ for all $(i,s,\action)\in\mathcal{J}$ except those with $\action = a^0(i,s)$.
Step~\ref{step:2} ensures that all the bandit processes have been assigned exactly an action; that is, for all $i\in[I]$ and $s\in\bS_i$,
$\sum_{\action\in\bA_i}Z^{\psi,h}_{i,s,\action}(t) = Y^{\psi,h}_{i,s}(t)$.
Steps~\ref{step:1}-\ref{step:2} construct a $\psi$ that is in $\lPhih$ but not necessarily satisfies \eqref{eqn:constraint:linear}.
We refer to such a policy led by Steps~\ref{step:1}-\ref{step:2} as the \emph{linear programming-Approximated (LP-Approx)} policy.

Steps~\ref{step:3}-\ref{step:done_ell} continue adapting the LP-Approx policy to achieve \eqref{eqn:constraint:linear}.
In Steps~\ref{step:5}, in each of the iterations from $m=M-1$ to $1$ (or reach the stop condition), we keep replacing action $a_{m+1}(j,t)$ with less expensive $a_m(j,t)$ so that the summand on the left hand side of \eqref{eqn:constraint:linear} decreases for the given $\ell\in[L]$.
Based on Assumption~\ref{assumption:feasibility}, such replacement will not cause increment on the left hand side of \eqref{eqn:constraint:linear} with other $\ell'\in[L]$.
Also, due to Assumption~\ref{assumption:feasibility}, there must exist some sufficiently ``cheap" actions such that  the iterations in Step~\ref{step:5} achieve the stop condition: $L_{\ell} \leq 0$ for all $\ell\in[L]$.
After Step~\ref{step:done_ell}, the output policy $\psi\in\PhiWCh$ satisfies \eqref{eqn:constraint:linear}.
We refer to the process from Step~\ref{step:3} to \ref{step:done_ell} as the \emph{decision adaption} process and refer to the output policy $\psi$ as the \emph{\ALP} policy.

The LP-Approx policy satisfies \eqref{eqn:assumption:action} but, in general, does not align with \eqref{eqn:constraint:linear}.
For any given $h\in\mathbb{N}_+$, the decision adaption process in Steps~\ref{step:3}-\ref{step:done_ell} usually drives the actions away from \eqref{eqn:assumption:action} but it ensures that \eqref{eqn:constraint:linear} is satisfied and the resulting \ALP is feasible to the original WCG problem described in \eqref{eqn:obj:h} and \eqref{eqn:constraint:linear}.
Fortunately, the proportion of the actions that are changed in the decision adaption process vanishes at rate the asymptotic regime.
In Section~\ref{subsubsec:alp:asym_opt}, we will provide a detailed discussion about how \ALP satisfies \eqref{eqn:assumption:action} in the asymptotic regime and, based on Proposition~\ref{prop:asym_opt:LP}, is asymptotically optimal.

Without Assumption~\ref{assumption:feasibility}, the decision adaption process is not guaranteed to reach actions satisfying \eqref{eqn:constraint:linear}. 
In this case, based on the features of different WCG problems, different decision adaption processes can be considered to reach a feasible policy such that \eqref{eqn:constraint:linear} is always satisfied and the proportion of adapted actions becomes negligible in the asymptotic regime.
For instance, a special case of the WCG problem was discussed in \cite{fu2024patrolling} where Assumption~\ref{assumption:feasibility} is not satisfied.
\cite{fu2024patrolling} proposed an asymptotically optimal algorithm through a similar way: construct an LP-Approx policy and then adapt its actions to achieve \eqref{eqn:constraint:linear} and \eqref{eqn:assumption:action}. It led to a policy satisfying \eqref{eqn:assumption:action} and, based on Proposition~\ref{prop:asym_opt:LP}, is asymptotically optimal.
Proposition~\ref{prop:asym_opt:LP} applies to general WCG, providing a guideline for constructing asymptotically optimal policies.

\subsubsection{Computational Complexity}
To achieve the LP-Approx policy, Steps~\ref{step:1}-\ref{step:2}  take computational complexity $O(|\mathcal{J}|)$.

Steps~\ref{step:3}-\ref{step:done_ell} (decision adaption) correspond to Lines~\ref{line:adaption:1}-\ref{line:adaption:done} in Algorithm~\ref{algo:alp}. 
In Line~\ref{line:adaption:1}, ordering all the \GSA triples has complexity $O(|\mathcal{J}|\log |\mathcal{J}|)$.
From Line~\ref{line:adaption:2} to \ref{line:adaption:done-1}, it includes three layers of loops. 
The inner loop from Line~\ref{line:adaption:inner:1} to \ref{line:adaption:inner:2} in Algorithm~\ref{algo:alp}, iterates $O\Bigl(M_{\ell}\bigl(i(j),s(j),\action(j)\bigr)\Bigr)= O(|\bA_{i(j)}|)$ times, each for which has complexity $O(1)$.
The middle loop, corresponding to Step~\ref{step:5}, in Lines~\ref{line:middle:1}-\ref{break:point}, has worst case complexity $O(\sum_{(i,s,\action)\in\mathcal{J}}|\bA_i|)$.
Then, we obtain the complexity $O(L\sum_{(i,s,\action)\in\mathcal{J}}|\bA_i|)$ for all the three loops.
The computational complexity for the decision adaption is 
$O(|\mathcal{J}|\log|\mathcal{J}|+L\sum_{(i,s,\action)\in\mathcal{J}}|\bA_i|)$, which is the main contributor to the complexity of the entire algorithm.

\subsubsection{Convergence to Asymptotic Optimality}\label{subsubsec:alp:asym_opt}

For all $h\in\mathbb{N}_+$, the \ALP policies form a family of policies, which we refer to as  the \ALP policy family and still represented by $\psi$ with a bit abuse of notation.

\begin{lemma}\label{lemma:action_ALP_satisfied}
For the \ALP policy family, represented by $\psi$,    \eqref{eqn:assumption:action} holds by plugging in $\orule = \psi$ and $\bm{x} = \bm{x}^*$.
\end{lemma}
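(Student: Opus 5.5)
Write $N^h\coloneqq h\sum_{i\in[I]}N^0_i$. The plan is to show that, for every realization, the FAB fractions $Z^{\psi,h}_{i,s,\action}(t)$ differ from $\oalphaLP_{i,s,\action}(\bm{x}^*,t)Y^{\psi,h}_{i,s}(t)$ by $O(1/h)$, with a constant that depends only on the problem data $|\calJ|$, $L$, $\max|f_{i,\ell}|$, $\min\{\Delta f>0\}$ and $N^0_i$. Since $\alpha^{\psi,h}_{i,s,\action}(\bm{y},t)\,y_{i,s}=\bbE[Z^{\psi,h}_{i,s,\action}(t)\mid \bm{Y}^{\psi,h}(t)=\bm{y}]$ whenever $y_{i,s}>0$, and FAB is deterministic given $\bm{Y}^{\psi,h}(t)$, this conditional expectation is just the FAB output. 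The quantity in \eqref{eqn:assumption:action} is therefore bounded by a deterministic $C/h\to 0$. When $Y^{\psi,h}_{i,s}(t)=0$ the product vanishes trivially, because $\alpha\in[0,1]$.

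First I would bound the LP-Approx stage (Steps~\ref{step:1}--\ref{step:2}). The floor operation gives $|Z_{i,s,\action}-\oalphaLP_{i,s,\action}Y_{i,s}|\le 1/N^h$ for $\action\neq a^0(i,s)$. Since $\sum_{\action}\oalphaLP_{i,s,\action}=1$, the residual assigned to $a^0(i,s)$ deviates by at most $(|\bA_i|-1)/N^h$. Next I would bound the constraint violation this creates. Feasibility of $\bm{x}^*$ in \eqref{eqn:constraint:linear programming:4} only controls the expected fluid quantity, not the realized $\oalphaLP\circ\bm{Y}^{\psi,h}(t)$. So in general $L_\ell(t)$ need not be $O(1/h)$, and I cannot simply bound it by the rounding error.

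This is the main obstacle. The statement as written is pathwise, with $\bm{Y}^{\psi,h}(t)$ random, so it must be read in probability, in the sense used by Proposition~\ref{prop:asym_opt:LP}. I would argue by induction on $t$. Suppose $\bm{Y}^{\psi,h}(t)$ is within $\delta$ of the fluid $\bm{y}^{\phi(\bm{x}^*)}(t)$ with probability at least $1-e^{-Ch}$. Then $L_\ell(t)\le \sum N^0_i f_{i,\ell}\,\oalphaLP\,(Y-y^*)+O(1/h)\le c\delta+O(1/h)$, using linearity of $L_\ell$ in $\bm{Z}$ and boundedness of $f$. The adaptation in Step~\ref{step:5} moves mass only in quanta of size $\lceil L_\ell N^h/(N^0\Delta f_m)\rceil/N^h$ per triple, and it stops once $L_\ell\le0$. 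Hence the total mass moved, over all $\ell$, $j$ and $m$, is at most $L\,|\calJ|\max_i|\bA_i|\,(L_\ell(t)/(N^0_{\min}\Delta f_{\min})+1/N^h)=O(\delta+1/h)$.

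Under Assumption~\ref{assumption:feasibility}, each move weakly decreases every constraint, so processing one $\ell$ never increases another $L_{\ell'}$. The cumulative perturbation of $\bm{Z}$ is therefore controlled by a fixed multiple of $\max_\ell L_\ell^+$. Propagating one step, the transition kernels are linear and the sampling noise is bounded by Hoeffding's inequality with rate $e^{-O(h)}$. This gives $\|\bm{Y}^{\psi,h}(t+1)-\bm{y}^*(t+1)\|\le c'(\delta+1/h)+\eta$ with exponentially small failure probability. Choosing $\delta,\eta$ recursively small over the finite horizon $T$ makes the adaptation error in \eqref{eqn:assumption:action} arbitrarily small with probability tending to one, which is the required limit. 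The delicate points are that the ordering by $\oalphaLP$ and the choice of $a^0$ do not affect these bounds, and that the quantum rounding adds only $O(1/h)$ per step of Step~\ref{step:5}.
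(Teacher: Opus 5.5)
Your argument is correct, read in probability, which is also how the paper uses \eqref{eqn:assumption:action} in the necessity part of Proposition~\ref{prop:asym_opt:LP}. Its skeleton matches the paper's:
\begin{itemize}
\item split the error into the $O(1/h)$ floor/residual rounding of Steps~\ref{step:1}--\ref{step:2} plus the mass moved in Step~\ref{step:5};
\item bound the moved mass by $\max_\ell L_\ell(t)^+$;
\item bound $L_\ell(t)^+$ using feasibility of $\bm{x}^*$ in \eqref{eqn:constraint:linear programming:4} and closeness of $\bm{Y}^{\psi,h}(t)$ to the fluid;
\item induct on $t$.
\end{itemize}

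The difference is how the induction is closed. The paper goes from $t$ to $t+1$ by invoking Proposition~\ref{prop:asym_opt:LP} with horizon $T'=t$. That rests on Theorem~\ref{theorem:convergence_Z_exp} and needs $\psi\in\PsiZ$, i.e.\ Lemma~\ref{lemma:ALP_stable}. It states the adaptation bound only qualitatively, and leaves implicit the extra transition from $\bm{Z}^{\psi,h}(T')$ to $\bm{Y}^{\psi,h}(T'+1)$.

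You propagate one step directly instead. $\bbE[\bm{Y}^{\psi,h}(t+1)\mid \bm{Z}^{\psi,h}(t)]$ is a fixed linear image of $\bm{Z}^{\psi,h}(t)$, and bandits transition independently given their state--action pairs, so Hoeffding gives $e^{-O(h)}$ concentration. Your explicit moved-mass bound makes the error at $t$ a fixed multiple of the deviation of $\bm{Y}^{\psi,h}(t)$ plus $O(1/h)$. It uses three facts: each move lowers $L_\ell$ by $N^0_i\Delta f_m$ times the mass moved, moves stop once $L_\ell\le 0$, and the monotone chain \eqref{eqn:descending_actions} means no move raises another $L_{\ell'}$.

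What each route buys: yours is elementary and self-contained, gives exponential rates directly, and makes this lemma independent of the Lipschitz-limit regularity of $\psi$ and of the large-deviation machinery. The paper's reuses its general proposition, at the cost of those dependencies.

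You implicitly use that the fluid trajectory of $\phi(\bm{x}^*)$ reproduces $\bm{x}^*$. That is what lets $\oalphaLP\circ\bm{y}^*(t)$ satisfy \eqref{eqn:constraint:linear programming:4}. It follows by induction from \eqref{eqn:constraint:linear programming:1}--\eqref{eqn:constraint:linear programming:3}, and you should state it.

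Delete the opening claim that the quantity in \eqref{eqn:assumption:action} is bounded by a deterministic $C/h$ on every realization. As you note yourself a paragraph later, it is false when $\bm{Y}^{\psi,h}(t)$ strays from the fluid: then $L_\ell(t)$ and the moved mass are of order one. Only the in-probability statement holds.
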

The proof of Lemma~\ref{lemma:action_ALP_satisfied} is provided in Appendix~\ref{app:lemma:action_ALP_satisfied}.

\begin{lemma}\label{lemma:ALP_stable}
    The \ALP policy family, represented by $\psi$, is Lipschitz-limit; that is, it satisfies \partialref{cond:weak_stab}{Lipschitz–limit regularity} by plugging in $\orule = \psi$.
\end{lemma}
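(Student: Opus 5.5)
The plan is to identify the limit $\bm{\alpha}^{\psi}(\bm{y},t)$ explicitly as the output of a \emph{fluid} version of the \ALP steps, and then to show that this fluid map is piecewise linear. For fixed $t$ the \ALP policy is deterministic given $\bm{Y}^{\psi,h}(t)$. Hence for $\bm{y}\in\deltayh$ with $y_{i,s}>0$ we have $\alpha^{\psi,h}_{i,s,\action}(\bm{y},t)=Z^{\psi,h}_{i,s,\action}(t)/y_{i,s}$, where $\bm{Z}^{\psi,h}(t)$ is the output of Steps~\ref{step:1}--\ref{step:done_ell} on input $\bm{y}$.

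The fluid map $\bm{F}(\bm{y},t)$ is obtained from Steps~\ref{step:1}--\ref{step:done_ell} by deleting all floors and ceilings. Thus Step~\ref{step:1} sets $z_{i,s,\action}=\oalphaLP_{i,s,\action}(\bm{x}^*,t)\,y_{i,s}$. Step~\ref{step:2} assigns the remainder to $a^0(i,s)$. $L_\ell$ is the linear form \eqref{eqn:define_LHS}. Each transfer in Step~\ref{step:5} becomes $\min\{z_{i(j),s(j),a_{m+1}},\,L_\ell/(N^0_{i(j)}\Delta f_m)\}$, with $\Delta f_m>0$ a constant depending only on the problem data. The ordering of \GSA triples and the choice of $a_1,\dots,a_M$ do not depend on $\bm{y}$ or $h$. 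So $\bm{F}(\cdot,t)$ is a fixed finite composition of linear maps, $\min$, and positive-part operations. It is therefore continuous, piecewise linear with finitely many pieces, and positively homogeneous of degree one in $\bm{y}$.

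\emph{Convergence step.} Compare the discrete and fluid executions operation by operation. Each floor or ceiling perturbs an entry by at most $1/(h\sum_iN^0_i)$. Every elementary operation ($\min$, linear combinations, the update \eqref{eqn:update_LHS}) is Lipschitz with constants depending only on $|\calJ|$, $L$, $\max|f_{i,\ell}|$, $\min\{\Delta f_m:\Delta f_m>0\}$, and $\max_i N^0_i$. The total number of operations is bounded independently of $h$ (by the complexity count above). Therefore $\lVert \bm{Z}^{\psi,h}(t)-\bm{F}(\bm{y},t)\rVert\le C/h$ uniformly over $\bm{y}\in\deltayh$. Dividing by $y_{i,s}$ gives pointwise convergence on grid points of $\alpha^{\psi,h}_{i,s,\action}$ to $F_{i,s,\action}(\bm{y},t)/y_{i,s}$ whenever $y_{i,s}>0$. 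Because the interpolant of Proposition~\ref{prop:stable_cond:1} is built from the grid values, the same uniform estimate transfers to general $\bm{y}\in\deltay$ with $y_{i,s}>0$.

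For $y_{i,s}=0$, the second line of \eqref{eqn:define:alpha} defines the value by shifting one unit of mass into $(i,s)$. Since the shift is $O(1/h)$, this value converges to the right-hand limit of $F_{i,s,\action}/y_{i,s}$ along that direction. Existence of that limit follows from piecewise linearity and positive homogeneity of $\bm{F}$ restricted to the ray.

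\emph{Main obstacle: Lipschitz continuity of $\bm{\alpha}^{\psi}(\cdot,t)=\bm{F}(\cdot,t)/y_{i,s}$.} Away from $\{y_{i,s}=0\}$ this is immediate, because $\bm{F}$ is Lipschitz and $y_{i,s}$ is bounded below. Near that face, the transfers have the form $\min\{c\,y_{i,s},\,L_\ell(\bm{y})/(N^0\Delta f)\}$, and the ratio could a priori be only bounded rather than Lipschitz.

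The first approach I would try uses the structure of the algorithm. On each linear piece of $\bm{F}$, the amount removed at triple $(i(j),s(j),\action(j))$ is either proportional to $y_{i(j),s(j)}$, so the ratio is constant, or equal to the residual $L_\ell$. In the residual case I would show that the residual is itself bounded by a constant multiple of $y_{i(j),s(j)}$, since the previous triple's transfer was saturated. This should give a ratio that is piecewise rational with denominators bounded away from zero on each piece, which yields a uniform Lipschitz constant. If this fails, the fallback is to note that the later arguments only use $\alpha\,y_{i,s}$, which equals $\bm{F}$ and is Lipschitz. I would then verify the Lipschitz property of that product, which is what the proofs of Theorems~\ref{theorem:convergence-Z} and~\ref{theorem:convergence_Z_exp} actually consume.
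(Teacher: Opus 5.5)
Your fluid map $\bm F(\cdot,t)$ and the operation-by-operation $O(1/h)$ comparison are sound; the stopping and skipping rules of Step~\ref{step:5} are indeed absorbed by $\min$ and positive parts. They give the limit of $\alpha^{\psi,h}_{i,s,a}(\bm y,t)$ at every fixed $\bm y$ with $y_{i,s}>0$, and Lipschitz continuity of the product $y_{i,s}\,\alpha^{\psi}_{i,s,a}=F_{i,s,a}$. But the lemma asserts Lipschitz continuity of $\alpha^{\psi}$ itself on all of $\deltay$, and both of your steps near $\{y_{i,s}=0\}$ fail.

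On the face, the shifted point has $y_{i,s}=1/(h\sum_{i'}N^0_{i'})$, so the error $C/h$ divided by $y_{i,s}$ is $O(1)$, not $o(1)$. Concretely, with a single bandit in $(i,s)$, Step~\ref{step:1} gives $\lfloor\oalphaLP_{i,s,a}(\bm x^*,t)\rfloor=0$ for every $a\neq a^0(i,s)$ with $\oalphaLP_{i,s,a}(\bm x^*,t)<1$. So for every $h$ the face value is an indicator ($\indicator\{a=a^0(i,s)\}$ before adaptation). It is not the right-hand limit of $F_{i,s,a}/y_{i,s}$, which equals $\oalphaLP_{i,s,a}(\bm x^*,t)$ wherever no constraint is violated.

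More seriously, your first approach to the main obstacle aims at a false claim. For the first triple $j$ adapted for a violated constraint $\ell$, there is no earlier saturated transfer. The residual is $L_\ell(\bm y)$ itself, a linear form in all coordinates, and the fluid fraction left at $a(j)$ is
\[
\Bigl(\oalphaLP_{i(j),s(j),a(j)}(\bm x^*,t)-\frac{L_\ell(\bm y)^+}{N^0_{i(j)}\,\Delta f_{M-1}\,y_{i(j),s(j)}}\Bigr)^+ .
\]
Take a concrete instance with one constraint and:
\begin{itemize}
\item a pair $(i_1,s_1)$ with actions $0,1$, $f_{i_1,1}(s_1,0)=0$, $f_{i_1,1}(s_1,1)=1$, and $\oalphaLP_{i_1,s_1,1}(\bm x^*,t)=p>0$;
\item a single-action pair $(i_2,s_2)$ with $f=-1$, as for the arrival gang in the simulation;
\item all other pairs single-action with $f=0$, so Assumption~\ref{assumption:feasibility} holds.
\end{itemize}
Then $\alpha^{\psi}_{i_1,s_1,1}(\bm y,t)=\min\{p,\,r\,y_{i_2,s_2}/y_{i_1,s_1}\}$ with $r=N^0_{i_2}/N^0_{i_1}$. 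This is constant on each ray $y_{i_2,s_2}=c\,y_{i_1,s_1}$. Hence it takes every value in $[0,p]$ in every neighbourhood of a point of $\deltay$ with $y_{i_1,s_1}=y_{i_2,s_2}=0$. So $\alpha^\psi$ is not Lipschitz there, not even continuous, and no argument can supply the uniform constant you need.

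The paper's proof passes over the same points. It asserts (its Conditions 5--8) that $\lim_h\Delta Z_{m,\ell}/Y^{\psi,h}_{i(j),s(j)}$ either diverges as $y_{i(j),s(j)}\downarrow 0$ or is constant near the face. That fails exactly where $L_\ell$ and $y_{i(j),s(j)}$ vanish together.

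Your fallback, Lipschitz continuity of $y_{i,s}\alpha^{\psi}_{i,s,a}$, is true. It is also the form actually used downstream: the constant $K$ in the proof of Lemma~\ref{lemma:sim} is a Lipschitz constant of $y_{i^\zeta,s^\zeta}\alpha^{\orule}_{\zeta}$. But it is a different statement. Adopting it means replacing Lipschitz-limit regularity by this product condition and re-checking Theorems~\ref{theorem:convergence-Z} and~\ref{theorem:convergence_Z_exp} under it, not proving the lemma as stated.
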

The proof of Lemma~\ref{lemma:ALP_stable} is provided in Appendix~\ref{app:lemma:ALP_stable}.

\begin{proposition}
    The \ALP policy family is asymptotically optimal that satisfies \eqref{eqn:prop:asym_opt:LP:1} and \eqref{eqn:prop:asym_opt:LP:2} by plugging in $\orule=\psi$ and $\bm{x}=\bm{x}^*$.
\end{proposition}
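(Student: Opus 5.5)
This proposition follows by assembling the two preceding lemmas with Proposition~\ref{prop:asym_opt:LP}. The plan is to verify each hypothesis of Proposition~\ref{prop:asym_opt:LP} for $\orule=\psi$ and $\bm{x}=\bm{x}^*$, apply its ``if'' direction, and then turn the resulting trajectory estimates into a statement about the objective.

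\textbf{Step 1: hypotheses of Proposition~\ref{prop:asym_opt:LP}.}
\begin{itemize}
\item By Lemma~\ref{lemma:ALP_stable}, the \ALP family $\psi$ satisfies \partialref{cond:weak_stab}{Lipschitz–limit regularity}, so $\psi\in\PsiZ$.
\item Under Assumption~\ref{assumption:feasibility}, the decision adaption in Steps~\ref{step:3}--\ref{step:done_ell} terminates with \eqref{eqn:constraint:linear} satisfied for every $h$. Hence $\psi\in\PsiZWC$, i.e.\ $\psi$ is feasible for the original WCG problem.
\item $\bm{x}^*$ is optimal for \eqref{eqn:obj:linear programming}--\eqref{eqn:constraint:linear programming:4}, so in particular it satisfies \eqref{eqn:constraint:linear programming:1}--\eqref{eqn:constraint:linear programming:3}.
\item The initial condition is $\bm{Y}^{\psi,h}(0)=\bm{y}^0$ by construction.
\item By Lemma~\ref{lemma:action_ALP_satisfied}, \eqref{eqn:assumption:action} holds for $\orule=\psi$ and $\bm{x}=\bm{x}^*$.
\end{itemize}

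\textbf{Step 2: apply the proposition.} The ``if'' direction of Proposition~\ref{prop:asym_opt:LP} now gives \eqref{eqn:prop:asym_opt:LP:1} and \eqref{eqn:prop:asym_opt:LP:2} with $\orule=\psi$ and $\bm{x}=\bm{x}^*$. This is the displayed claim.

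\textbf{Step 3: asymptotic optimality.} For any policy, $\frac{1}{h}\Gamma^{\phi,h}(\bs^0)=\sum_{t\in[T]_0}\bm{r}\cdot\bm{z}^{\phi,h}(t)$, weighted by $N^0_i$, and $\bm{r}$ is bounded. By \eqref{eqn:prop:asym_opt:LP:2},
\[
\Bigl|\Gamma^{\psi,h}(\bm{y}^0)-\Gamma^{\dborule(\bm{x}^*)}\Bigr|\le (T+1)\,|\calJ|\,\lVert\bm{r}\rVert\,\sum_{i\in[I]}N^0_i\,\bigl(C_1e^{-C_2h}+\epsilon\bigr).
\]
Here $\Gamma^{\dborule(\bm{x}^*)}$ equals the LP optimum $\Gamma^*$. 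The reason is that the occupation measure of $\phi(\bm{x}^*)$ reproduces $\bm{x}^*$ through \eqref{eqn:constraint:linear programming:1}--\eqref{eqn:constraint:linear programming:3} and \eqref{eqn:linear programming:3}: $\bm{z}^{\dborule(\bm{x}^*)}(t)$ is $x^*_t(i,s,\action)$ rescaled by $N^0_i/\sum_{i'}N^0_{i'}$. The relaxed problem upper-bounds $\max_{\phi\in\PhiWCh}\Gamma^{\phi,h}$, and $\psi$ is feasible for it. Hence the optimality gap of $\psi$ is at most $O(e^{-C_2h})+O(\epsilon)$. Since $\epsilon$ is arbitrary, $\psi$ is asymptotically optimal. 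In probability, the same inner-product bound applied to realized rewards on the complement of the event in \eqref{eqn:prop:asym_opt:LP:1} gives a deviation of order $\epsilon$ except with probability $e^{-Ch}$.

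\textbf{Main obstacle.} The only nontrivial point is the identification $\Gamma^{\dborule(\bm{x}^*)}=\Gamma^*$. It needs a short induction over $t$: when $\sum_{\action}x^*_t(i,s,\action)>0$, the randomization in \eqref{eqn:linear programming:3} reproduces the flow constraint \eqref{eqn:constraint:linear programming:1}. When that sum is zero, the state carries zero mass, so it contributes nothing to either side. Everything else is direct assembly of Lemmas~\ref{lemma:action_ALP_satisfied}--\ref{lemma:ALP_stable} and Proposition~\ref{prop:asym_opt:LP}.
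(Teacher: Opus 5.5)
Your proposal is correct and is the same argument as the paper's. The paper's entire proof is the assembly in your Steps~1--2: Lemma~\ref{lemma:ALP_stable} places $\psi$ in $\PsiZ$, Lemma~\ref{lemma:action_ALP_satisfied} supplies \eqref{eqn:assumption:action} with $\bm{x}=\bm{x}^*$, and the ``if'' direction of Proposition~\ref{prop:asym_opt:LP} then gives \eqref{eqn:prop:asym_opt:LP:1}--\eqref{eqn:prop:asym_opt:LP:2}; your Step~3 (feasibility of $\psi$ under Assumption~\ref{assumption:feasibility}, identifying the value of $\dborule(\bm{x}^*)$ with $\Gamma^*$, and turning \eqref{eqn:prop:asym_opt:LP:2} into a vanishing optimality gap) only makes explicit what the paper leaves to its discussion after Proposition~\ref{prop:asym_opt:LP}.
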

\proof{Proof.}
It is a direct result of Proposition~\ref{prop:asym_opt:LP} and Lemmas~\ref{lemma:action_ALP_satisfied} and \ref{lemma:ALP_stable}.

\endproof

\subsubsection{Simulation (Numerical Example)}\label{susubbsec:simulation}
\begin{figure}[t]
\centering
\includegraphics[width=0.7\linewidth]{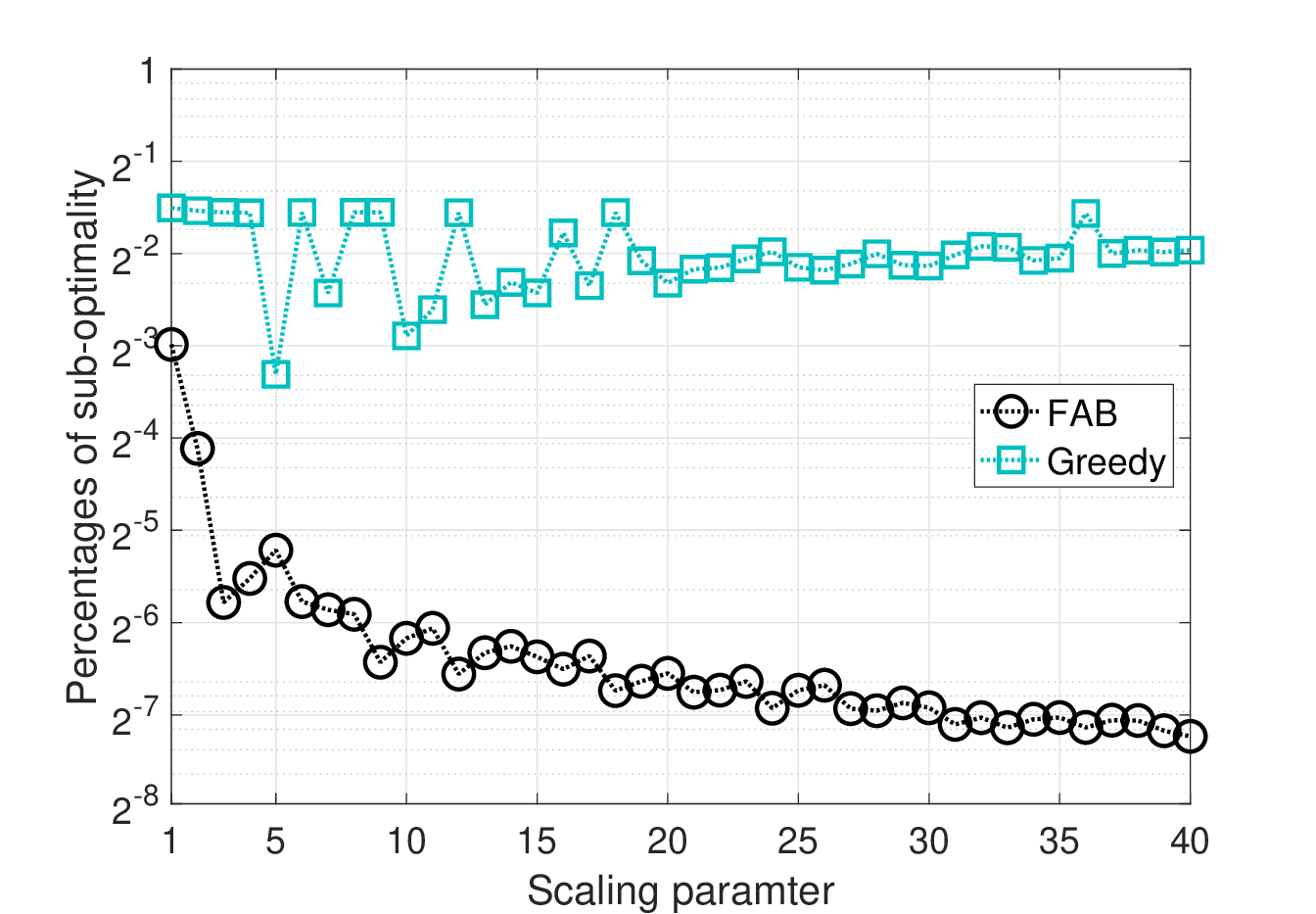}
\caption{Performance deviation of \ALP and Greedy (baseline) to the upper bound of optimality. \label{fig:case-study}}
\end{figure}

We plot in Figure~\ref{fig:case-study} the simulated suboptimality of \ALP against the scaling parameter $h$ (alternatively, the size dimension), where the y-axis is the relative difference between optimality to \eqref{eqn:obj:linear programming}-\eqref{eqn:constraint:linear programming:4} and a given policy $\phi\in\lPhih$,
\[\frac{\Gamma^* - \Gamma^{\phi,h}(\bm{y}^0)}{\Gamma^*}, \]
with given initial state $\bm{y}^0$.
In particular, in Figure~\ref{fig:case-study}, we tested two policies, \ALP and a greedy approach.
The greedy approach is a baseline policy that always selects the \GSA triples $(i,s,\action)$ with the highest $N^0_i r_i(s,\action)$ (the coefficient of $x_{t}(i,s,\action)$ in the objective function \eqref{eqn:obj:linear programming}).
Its actions are adapted through the same decision adaption process as in Steps~\ref{step:3}-\ref{step:done_ell} and hence are feasible to the original WCG problem \eqref{eqn:obj:h} and \eqref{eqn:constraint:linear}.

The simulations presented in Figure~\ref{fig:case-study} considers a WCG system with $I=5$ gangs, time horizon $T=30$, and the number of constraints $L=6$.
The detailed settings are provided in Appendix~\ref{app:simulation:settings}.
It satisfies Assumption~\ref{assumption:feasibility} and \eqref{eqn:assumption:action}, but falls out the scope of the non-degenerate properties requested in the past work~\cite{gast2023linear,brown2023fluid}.

In Figure~\ref{fig:case-study}, we can observe \ALP quickly approaches optimality $\Gamma^*$ (upper bound of maximum of the original WCG problem). The percentage of suboptimality is already less than $1\%$ when $h\geq 23$, for which the confidence intervals of the simulated results are within $\pm 2\%$ of the observed mean.
It consists with Proposition~\ref{prop:asym_opt:LP}.

Such a system simulates a discrete-time, finite-time-horizon version of the resource allocation problem discussed in \cite{fu2018restless}.
There are five resource pools each for which has a finite capacity of resource units that can be used by four services. 
The customers keep arriving into the system seeking to be served by one of the services. 
The controller decides how to assign the arrived customers to the four services or directly block them.
Unlike the long-run case discussed in \cite{fu2018restless}, here, we consider time-varying arrival rates (expected arrival numbers), for which the customers arrive in bulk for each time slot.
When $h\rightarrow \infty$, the expected arrival rates and the service capacities of the resource pools tend to infinity proportionately.
We model the four services and the arrival process of the customers through $I=5$ gangs.

Each service, once serving a customer, will simultaneously occupy numbers of resource units from (part of) the five pools. 
Hence, the number of customers being served by the each service is limited by those of the other services and the capacities of the resource pools, imposing five capacity constraints over the numbers of used resource units at each time $t$.
At each time $t$, there is a service-dependent probability for each customer to finish its service.
When a customer finishes its service, it leaves the system and the corresponding resource units will be immediately released and be reused by future customer-service pairs.
By serving each customer, the system earns money; meanwhile, it is charged at different cost rates by using resource units from different pools. 
Apart from the five constraints for the resource pool capacities, there is one more constraint requesting that the number of customers newly assigned to the services at time $t$ is no larger than the number of new arrivals in the same time slot. 
We refer to Appendix~\ref{app:simulation:settings} for a detailed description of the system settings.

Recall that such a system satisfies Assumption~\ref{assumption:feasibility} and \eqref{eqn:assumption:action} but falls out the scope of the non-degenerate properties requested in the past work~\cite{gast2023linear,brown2023fluid}.
Assumption~\ref{assumption:feasibility} ensures the feasibility of the \ALP algorithm which is asymptotically optimal with exponentially diminishing suboptimality as $h\rightarrow \infty$ - the numerical results in Figure~\ref{fig:case-study} is consistent with Proposition~\ref{prop:asym_opt:LP}. 
Nonetheless, Proposition~\ref{prop:asym_opt:LP} is not limited to Assumption~\ref{assumption:feasibility}, but applicable to general WCG processes. For instance, the multi-agent patrolling problem discussed in \cite{fu2024patrolling} is a special WCG problem which does not satisfy Assumption~\ref{assumption:feasibility}.
The movement-adapted index policy proposed there satisfies \eqref{eqn:assumption:action}, and hence Proposition~\ref{prop:asym_opt:LP} directly applies.

\section{Conclusions}\label{sec:conclusions}

We have proposed a fairly broad class of policies that are specified through their well-defined behaviors and Lipschitz continuity of the action variables in the asymptotic regime.
We have proved that, under any Lipschitz-limit policy family, the WCG process converges to a deterministic process - its averaging process. 
The deviation between the stochastic and the deterministic processes diminishes in probability at rate $e^{-O(h)}$ where $h$ is the scaling parameter representing the size dimension of the WCG system.
Note that the convergence holds in general for all Lipschitz-limit policies without assuming non-degenerate conditions.
It follows with a sufficient and necessary condition for constructing a Lipschitz-limit policy family that is asymptotically optimal - the cumulative reward of the corresponding deterministic process coincides with that of an optimal solution to the original WCG problem.
If a Lipschitz-limit policy family is asymptotically optimal in the size dimension, then its performance deviation, from optimality, vanishes in probability at rate $e^{-O(h)}$.

We have provided a necessary and sufficient condition, related to the form of the action variables, for a Lipschitz-limit policy family to converge in probability (at rate $e^{-O(h)}$) to the optimum of the Whittle relaxation of the WCG problem. 
Recall that, under fairly general scenarios, the relaxed optimum coincides with that of the original WCG problem; in this case, the provided condition is also necessary and sufficient for a Lipschitz-limit policy family to be asymptotically optimal.
Indeed, in the same model setting as in~\cite{brown2023fluid,gast2024reoptimization}, we have proposed the \ALP algorithm that constructs an FA with computational complexity logarithmic-linear in the size dimension (at each time). The algorithm is scalable for systems with large $h$.
We have proved that its performance suboptimality diminishes in probability at rate $e^{-O(h)}$.
Through numerical simulations, which do not satisfy any non-degenerate condition, we have demonstrated the fast convergence of the \ALP algorithm to optimality.

\appendices
\section{Proof of Proposition~\ref{prop:stable_cond:1}}
\label{app:linear_interpolation}

\proof{Proof of Proposition~\ref{prop:stable_cond:1}.}
For some $i_0\in[I]$ and $s_0\in\bS_{i_0}$ and any $\bm{y}\in \deltay\backslash\deltayh$, we select $\sum_{i\in[I]}|\bS_i|$ points $\bm{y}_{i,s}(\bm{y})\in \deltayh$ such that
\begin{equation}
    \bm{y}_{i_0,s_0}(\bm{y}) \coloneqq \sum_{\begin{subarray}~i'\in[I], s'\in \bS_{i'}:\\ (i',s')\neq (i_0,s_0)\end{subarray}} \bv_{i',s'}\Bigl(\frac{\bigl\lfloor h\sum_{i''\in[I]}N^0_{i''} y_{i',s'}\bigr\rfloor}{h\sum_{i''\in[I]}N^0_{i''}}\Bigr) + \bv_{i_0,s_0}\Bigl(1-\sum_{\begin{subarray}~i'\in[I], s'\in \bS_{i'}:\\ (i',s')\neq (i_0,s_0)\end{subarray}} \frac{\bigl\lfloor h\sum_{i''\in[I]}N^0_{i''} y_{i',s'}\bigr\rfloor}{h\sum_{i''\in[I]}N^0_{i''}}\Bigr),
\end{equation}
where recall $\bv_{i,s}(x)$ is a vector of size $\sum_{i\in[I]}|\bS_i|$ with all zero elements except the $(i,s)$th one equal to $x$,
and, for other $(i,s)\in \prod_{i'\in[I]}\bS_{i'}\backslash \{(i_0,s_0)\}$,
\begin{multline}
    \bm{y}_{i,s}(\bm{y}) \coloneqq
        \bv_{i,s}\Bigl(\frac{\bigl\lfloor h\sum_{i'\in[I]}N^0_{i'} y_{i,s} + 1\bigr\rfloor}{h\sum_{i'\in[I]}N^0_{i'}}\Bigr) +\sum_{\begin{subarray}~i'\in[I], s'\in \bS_{i'}:\\ (i',s')\neq (i_0,s_0),\\(i',s')\neq (i,s)\end{subarray}} \bv_{i',s'}\Bigl(y_{i_0,s_0,i',s'}(\bm{y})\Bigr) \\+ \bv_{i_0,s_0}\Bigl(1-\sum_{\begin{subarray}~i'\in[I], s'\in \bS_{i'}:\\ (i',s')\neq (i_0,s_0),\\(i',s')\neq (i,s)\end{subarray}}y_{i_0,s_0,i',s'}(\bm{y}) -  \frac{\bigl\lfloor h\sum_{i'\in[I]}N^0_{i'} y_{i,s} + 1\bigr\rfloor}{h\sum_{i'\in[I]}N^0_{i'}}\Bigr),
\end{multline}
where $y_{i,s,i',s'}(\bm{y})$ is the $(i',s')$th element of $\bm{y}_{i,s}(\bm{y})$.
For all $(i,s)\in \prod_{i'\in[I]}\bS_{i'}\backslash \{(i_0,s_0)\}$, 
\begin{equation}
    \bm{y}_{i,s}(\bm{y})-\bm{y}_{i_0,s_0}(\bm{y}) = \bv_{i,s}(\frac{1}{h\sum_{i'\in[I]}N^0_{i'}}) + \bv_{i_0,s_0}(-\frac{1}{h\sum_{i'\in[I]}N^0_{i'}}),
\end{equation}
are linearly independent.
For any $\bm{y}\in\deltay$, there exist unique $\lambda_{i,s}(\bm{y})\in \bbR_0$ for $(i,s)\in \prod_{i'\in[I]}\bS_{i'}\backslash \{(i_0,s_0)\}$ such that
\begin{equation}
    \bm{y}-\bm{y}_{i_0,s_0}(\bm{y}) = \sum_{(i,s)\in \prod_{i'\in[I]}\bS_{i'}\backslash \{(i_0,s_0)\}} \lambda_{i,s}(\bm{y}) (\bm{y}_{i,s}(\bm{y})-\bm{y}_{i_0,s_0}(\bm{y})).
\end{equation}
Let $\lambda_{i_0,s_0}(\bm{y})\coloneqq 1- \sum_{(i,s)\in \prod_{i\in[I]}\bS_i\backslash \{(i_0,s_0)\}}\lambda_{i,s}(\bm{y})$.

For any $\bm{y}\in\deltay\backslash \deltayh$, we define
\begin{equation}\label{eqn:linear_interpolation}
    \alpha^{\orule,h}_{i,s,\action}(\bm{y},t)\coloneqq \sum_{(i',s')\in \prod_{i''\in[I]}\bS_{i''}} \lambda_{i',s'}(\bm{y}) \alpha^{\orule,h}_{i,s,\action}(\bm{y}_{i',s'}(\bm{y}),t).
\end{equation}
This is a linear interpolation since $\sum_{(i,s)\in \prod_{i'\in[I]}\bS_{i'}} \lambda_{i,s}(\bm{y})=1$.
Also, for any $\bm{y}'\in\deltayh$,
\begin{equation}
    \lim_{\bm{y}\to \bm{y}'} \alpha^{\orule,h}_{i,s,\action}(\bm{y},t) = \alpha^{\orule,h}_{i,s,\action}(\bm{y}',t).
\end{equation}
Such $\alpha^{\orule,h}_{i,s,\action}(\bm{y},t)$ is Lipschitz continuous in $\bm{y}\in\deltay$.

\endproof

\section{Lipschitz-Limit Regularity of Classic Policy Families (Policies)}
\label{app:stability}

Here, we will prove the Lipschitz-limit regularity of Whittle index policy~\cite{whittle1988restless}, relaxed fluid-budget balancing and fluid-budget balancing policies~\cite{brown2023fluid}, and the locally linear policies~\cite{gast2023linear}.

\subsection{RMAB case}
We start with the Whittle index policy and the locally linear policies, as both types of policies are defined when WCG reduces to the RMAB case.
In particular, in this subsection, we consider the special case where $I=1$, $\bA_1=\{0,1\}$, $L=1$, and $f_{i,1}(s,a) = a - \frac{M}{N_1}$ for $M< N_1$ - the WCG becomes standard RMAB.

\begin{proposition}\label{prop:stable:whittle}
The Whittle index policy, as a policy family, if exists, satisfies \partialref{cond:weak_stab}{Lipschitz–limit regularity}.
\end{proposition}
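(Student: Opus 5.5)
We will compute $\alpha^{\orule,h}_{1,s,\action}(\bm{y},t)$ explicitly for the Whittle index policy and show that it converges, uniformly in $\bm{y}$, to a piecewise-linear limit. In the RMAB case $I=1$, $\bA_1=\{0,1\}$, $L=1$, the Whittle index policy orders the states of $\bS_1$ by their (time-dependent, for finite horizon) indices $\nu_t(s)$, breaking ties by a fixed deterministic rule. Label the states $s_{(1)},s_{(2)},\ldots,s_{(|\bS_1|)}$ in decreasing index order. It then activates the $M=hM^0$ processes in the highest-indexed states; since $N_1=hN^0_1$, this is a fraction $\rho\coloneqq M^0/N^0_1$ of all processes. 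Given $\bm{Y}^{\orule,h}(t)=\bm{y}\in\deltayh$, the action vector is therefore a deterministic function of $\bm{y}$. The conditional expectation in \eqref{eqn:define:alpha} is then just the realized fraction. Writing $F_k(\bm{y})\coloneqq\sum_{k'\le k}y_{s_{(k')}}$ with $F_0=0$, for $y_{s_{(k)}}>0$ we get
\begin{equation*}
\alpha^{\orule,h}_{1,s_{(k)},1}(\bm{y},t)=\frac{\min\bigl\{y_{s_{(k)}},\,\max\{\rho-F_{k-1}(\bm{y}),0\}\bigr\}}{y_{s_{(k)}}},
\end{equation*}
and $\alpha^{\orule,h}_{1,s,0}=1-\alpha^{\orule,h}_{1,s,1}$. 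The candidate limit is this same formula, read as a function on all of $\deltay$. It has no $h$-dependence on the grid $\deltayh$, because $\rho h N^0_1$ is an integer.

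The key steps are as follows.

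First, we verify the formula above on $\deltayh$ for points with $y_{s}>0$. We then check what the second line of \eqref{eqn:define:alpha} produces when $y_{s}=0$. It is the minimum over nearby grid points of the same expression, which tends to the ``indicator'' value $\mathbb{1}\{F_{k-1}(\bm{y})<\rho\}$ (or its grid analogue).

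Second, we show that the linearly interpolated $\alpha^{\orule,h}$ of Proposition~\ref{prop:stable_cond:1} converges pointwise on $\deltay$ to the candidate $\alpha^{\orule}$. The interpolation nodes $\bm{y}_{i,s}(\bm{y})$ lie within $O(1/h)$ of $\bm{y}$ and the weights $\lambda$ sum to one, so $|\alpha^{\orule,h}(\bm{y},t)-\alpha^{\orule}(\bm{y},t)|$ is bounded by the oscillation of $\alpha^{\orule}$ over an $O(1/h)$-ball. This oscillation vanishes wherever $\alpha^{\orule}$ is continuous.

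Third, we establish Lipschitz continuity of the limit. The natural object is $y_{s}\alpha_{1,s,1}(\bm{y},t)=\min\{y_s,\max\{\rho-F_{k-1},0\}\}$. It is a composition of $\min$, $\max$ and linear maps, hence Lipschitz with constant at most $2$ in the max norm times $|\bS_1|$.

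The main obstacle is that $\alpha$ itself, not $y_s\alpha$, is required to be Lipschitz. The ratio is discontinuous where $y_s\to0$ with $F_{k-1}(\bm{y})<\rho<F_{k-1}(\bm{y})+y_s$ in a degenerate fashion: near $y_s=0$, the value jumps between $1$ and $(\rho-F_{k-1})/y_s$. We would first try to exploit the convention in the second line of \eqref{eqn:define:alpha}. That convention assigns at $y_s=0$ the minimum of neighboring values, and we would argue that the relevant limit is taken only along trajectories weighted by $Y_{s}$. If that fails, we would restrict the Lipschitz claim to the region $\{y_s\ge\delta\}$ for the boundary ``partially active'' state. Alternatively, we would interpret Lipschitz-limit regularity in the weighted sense $y_s\alpha_{1,s,\action}$, which is all that the proofs of Theorems~\ref{theorem:convergence-Z} and~\ref{theorem:convergence_Z_exp} use through $Z=Y\alpha$.
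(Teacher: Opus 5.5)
\textbf{Gap: the statement as written cannot be proved, and the paper's proof only establishes the weighted version.}

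Your diagnosis in the last paragraph is correct, and the obstacle cannot be overcome. For the ratio $\alpha^{\orule}_{1,s,1}$ itself, the claim fails in general. Take $|\bS_1|=3$ with strict ranking $s_{(1)},s_{(2)},s_{(3)}$ and $0<\rho<1$. On $\{y_{s_{(2)}}>0\}$ your candidate is the actual limit, $\alpha^{\orule}_{1,s_{(2)},1}(\bm{y},t)=\min\{1,\max\{\rho-y_{s_{(1)}},0\}/y_{s_{(2)}}\}$. It equals $1/2$ at $(\rho-\epsilon/2,\epsilon,1-\rho-\epsilon/2)$ and $1$ at $(\rho-\epsilon,\epsilon,1-\rho)$. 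These two points are at max-norm distance $\epsilon/2$, so the difference quotient is $1/\epsilon$. Letting $\epsilon\downarrow 0$ shows the limit is not even continuous at $(\rho,0,1-\rho)$, whatever value it takes there.

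Because the failure occurs inside $\{y_s>0\}$, no use of the convention in the second line of \eqref{eqn:define:alpha} (your first fallback) can repair it. Restricting to $\{y_s\ge\delta\}$ (your second fallback) proves a different statement, since the condition demands Lipschitz continuity on all of $\deltay$.

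The paper's proof does not get around this either; it silently takes your third route. Its displayed recursion
$\alpha^{\text{Whittle},h}_{1,s,1}(\bm{y},t)=\min\{y_{1,s},\max\{M/N_1-\sum_{\iota'<\iota}\alpha^{\text{Whittle},h}_{1,s_{\iota'},1}(\bm{y},t)\,y_{1,s_{\iota'}},0\}\}$
is really a formula for the active mass $y_{1,s}\alpha_{1,s,1}$. That is why it has the outer $\min\{y_{1,s},\cdot\}$ and the product $\alpha\,y$ inside the sum. Its induction along the index ranking, through compositions of $\min$, $\max$ and linear maps, is exactly your third step, and the division by $y_{1,s}$ is never addressed.

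Your closed form is the telescoped version of that recursion, because $\sum_{k'<k}y_{s_{(k')}}\alpha_{1,s_{(k')},1}=\min\{F_{k-1}(\bm{y}),\rho\}$, so you do not need the induction. Your step 2 (convergence of the interpolants at continuity points of the limit) is also more careful than the paper's bare assertion that the limit exists.

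So what either argument actually establishes is Lipschitz-limit regularity in the weighted sense, for $y_{1,s}\alpha_{1,s,a}$. You should present it explicitly as that reformulation, not as a proof of the proposition as written.

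Your remark that the weighted version is all the convergence proofs need is plausible. $\scra^{\orule}$ enters $Q^{h,\Lipschitza}$ only multiplied by the mass of its gang-state pair, and the constant $K$ in the proof of Lemma~\ref{lemma:sim} is taken to be the Lipschitz constant of $y_{i^{\zeta},s^{\zeta}}\alpha^{\orule}_{\zeta}$. It is not automatic, however. Appendix~\ref{app:theorem:convergence-Z} explicitly requires $\scra^{\orule}$ itself to be Lipschitz, including a Lipschitz extension off $\deltay$ in \eqref{eqn:define:alpha:x}, so those steps would have to be rechecked.
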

\proof{Proof of Proposition~\ref{prop:stable:whittle}.}
Based on the definition of Whittle index policy~\cite{whittle1988restless}, given the Whittle indices, we rank all the states $s\in\bS_1$ according to the descending order of their Whittle indices.
Tie case is broken arbitrarily.
We use $s_{\iota}$ to represent the $\iota$th state in the ranking.
Given $\bm{Y}^{\text{Whittle},h}(t)=\bm{y}$, the proportion of active bandit processes in state $s\in\bS_1$ (with ranking $\iota$) is 
\begin{equation}\label{eqn:prop:stable:whittle:1}
    \alpha^{\text{Whittle},h}_{1,s,1}(\bm{y},t) = \min\Bigl\{y_{1,s}, \max\bigl\{\frac{M}{N_1} - \sum_{\iota'= 1}^{\iota-1}\alpha^{\text{Whittle},h}_{1,s_{\iota'},1}(\bm{y},t)y_{1,s_{\iota'}},0\bigr\}\Bigr\},
\end{equation}
where, for the first state $s_0$,
$$
\alpha^{\text{Whittle},h}_{1,s_0,1}(\bm{y},t) = \min\Bigl\{y_{(1,s)}, \frac{M}{N_1}\Bigr\}.
$$
The passive proportion of state $s\in\bS_1$ is 
$$ \alpha^{\text{Whittle},h}_{1,s,0}(\bm{y},t) = 1-  \alpha^{\text{Whittle},h}_{1,s,1}(\bm{y},t).$$
In this context, the limit $\lim_{h\to\infty} \alpha^{\text{Whittle},h}_{i,s,\action}(\bm{y},t)$ exists for all $(i,s,\action)\in\calJ$.

By observation, $\lim_{h\to\infty}\alpha^{\text{Whittle},h}_{1,s_0,1}(\bm{y},t)$ is Lipschitz continuous in $\bm{y}$. 
Now we assume that for all $\iota' = 1,2,\ldots,\iota-1$, $\lim_{h\to\infty}\alpha^{\text{Whittle},h}_{1,s_{\iota'},1}(\bm{y},t)$ are Lipschitz continuous in $\bm{y}$.
From \eqref{eqn:prop:stable:whittle:1}, $\alpha^{\text{Whittle},h}_{1,s_{\iota},1}(\bm{y},t)$ is also Lipschitz continuous in $\bm{y}$.
Hence, $\lim_{h\to \infty}\alpha^{\text{Whittle},h}_{1,s,1}(\bm{y},t)$ is Lipschitz continuous in $\bm{y}$ for all $s\in\bS_1$.
It leads to the Lipschitz continuity of $\lim_{h\to \infty}\alpha^{\text{Whittle},h}_{1,s,0}(\bm{y},t)$.
We prove the proporsition.

\endproof

The locally linear policies in \cite{gast2023linear} are defined to satisfy that $y_{1,s}\alpha^{\text{LL},h}_{i,s,\action}\bigl(\bm{y},t\bigr)$ is affine in $\bm{y}$ for all $(i,s,\action)\in\calJ$, $t\in[T]_0$, and $h\in\mathbb{N}_+$, where we use ``LL" in the superscript to indicate a locally linear policy (family).
It leads to the existence and Lipschitz continuity of  $\lim_{h\to\infty}\alpha^{\text{LC},h}_{i,s,\action}\bigl(\bm{y},t\bigr)$.
Such local linearity is an important assumption in \cite[Theorem 2]{gast2023linear} for the exponentially diminishing sub-optimality (in expectation) of the proposed policies (policy families).

\subsection{General case}

Now we consider the policies (policy families) for the general WCG.
Here we use superscript ``RFBB" in variables to indicate the relaxed fluid-budget balancing policy~\cite{brown2023fluid}.

For any $(i,s,\action)\in\calJ$ and function $f_{i,s,\action}:\deltay\times \mathbb{N}_+\mapsto \mathbb{R}$, we define the following conditions.
\begin{enumerate}
    \item $\lim_{h\to \infty}f_{i,s,\action}(\bm{y},h)$ exists and is Lipschitz continuous in $ \bigl\{\bm{y}\in\deltay\bigl| y_{i,s}>0\bigr\}$.\label{cond:deltaf:1}
    \item $\lim_{h\to\infty}f_{i,s,\action}(\bm{y},h)$ is continuous in $\bigl\{\bm{y}\in\deltay\bigl| y_{i,s}=0\bigr\}$.  \label{cond:deltaf:2}
    \item $\lim_{y_{i,s}\to 0}\lim_{h\to\infty}f_{i,s,\action}(\bm{y},h)$ either exists as a finite number or tends to infinity. \label{cond:deltaf:3}
    \item If $\lim_{y_{i,s}\to 0}\lim_{h\to\infty}f_{i,s,\action}(\bm{y},h) < \infty$, then there exists $\delta y >0$ such that, for any $y_{i,s}\in[0,\delta y]$, $ \lim_{h\to\infty}f_{i,s,\action}(\bm{y},h)$ is constant.\label{cond:deltaf:4}
\end{enumerate}

\begin{proposition}\label{prop:stable:fluid-balance}
The relaxed fluid-budget balancing policy (family) proposed in \cite{brown2023fluid} satisfy \partialref{cond:weak_stab}{Lipschitz–limit regularity}.
In particular, Conditions~\ref{cond:deltaf:1}-\ref{cond:deltaf:4} hold when plugging in $f_{i,s,\action}(\bm{y},h)=\alpha^{\text{RFBB},h}_{i,s,\action}(\bm{y},t)$ ($(i,s,\action)\in\calJ$).
\end{proposition}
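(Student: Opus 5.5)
Our plan is to first write down explicitly the action fractions that the relaxed fluid-budget balancing (RFBB) policy of \cite{brown2023fluid} prescribes, and then read off Conditions~\ref{cond:deltaf:1}--\ref{cond:deltaf:4} from that formula. Recall the construction of RFBB. Given an optimal $\bm{x}^*$ of \eqref{eqn:obj:linear programming}--\eqref{eqn:constraint:linear programming:4}, it starts from the fluid action fractions $\oalphaLP_{i,s,\action}(\bm{x}^*,t)$, applied to the observed $\bm{y}$. It then rebalances budget by moving the minimal mass needed from ``expensive'' actions to the cheap actions $a_i(s)$ of Assumption~\ref{assumption:feasibility}. The moved mass is a scalar multiple $\theta^h(\bm{y},t)\in[0,1]$ of the excess $\bigl(\sum_{(i,s,\action)}N^0_i y_{i,s}\oalphaLP_{i,s,\action}(\bm{x}^*,t) f_{i,\ell}(s,\action)\bigr)^+$, and $\theta^h$ is chosen as a maximum over $\ell\in[L]$ of the ratios needed to restore each constraint.

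Step 1. For each fixed $h$, we express $y_{i,s}\alpha^{\text{RFBB},h}_{i,s,\action}(\bm{y},t)$ as $y_{i,s}\oalphaLP_{i,s,\action}(\bm{x}^*,t)+g_{i,s,\action}(\bm{y})+O(1/h)$. Here $g_{i,s,\action}$ is a piecewise-linear function built from finitely many $\max$, $\min$ and $(\cdot)^+$ operations on affine functions of $\bm{y}$, and the $O(1/h)$ error comes only from rounding to multiples of $1/(h\sum_i N^0_i)$. Letting $h\to\infty$ shows that $\lim_h y_{i,s}\alpha^{\text{RFBB},h}_{i,s,\action}$ exists and is Lipschitz on all of $\deltay$, because compositions of Lipschitz maps with $\max/\min$ remain Lipschitz. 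Dividing by $y_{i,s}$ on $\{y_{i,s}>0\}$ gives Condition~\ref{cond:deltaf:1}. Since the divisor is bounded away from zero on each set $\{y_{i,s}\ge\delta\}$, this also yields the Lipschitz continuity needed for \partialref{cond:weak_stab}{Lipschitz–limit regularity}, provided the boundary $y_{i,s}=0$ is controlled.

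Step 2, which we expect to be the main obstacle, is the behaviour near $y_{i,s}=0$, that is, Conditions~\ref{cond:deltaf:2}--\ref{cond:deltaf:4}. The key observation is that the rebalancing factor $\theta(\bm{y},t)$ depends on $\bm{y}$ only through the aggregate constraint excess. When $y_{i,s}\to0$, the contribution of $(i,s)$ to this excess vanishes linearly. Hence $\alpha_{i,s,\action}$ has the form $\oalphaLP_{i,s,\action}(\bm{x}^*,t)(1-\theta(\bm{y},t))+\theta(\bm{y},t)\indicator\{\action=a_i(s)\}$, with $\theta$ Lipschitz in $\bm{y}$ including at $y_{i,s}=0$. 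This gives a finite limit and continuity on $\{y_{i,s}=0\}$. For Condition~\ref{cond:deltaf:4}, we use that $\theta$ is a maximum of finitely many affine-over-affine pieces whose active set is locally constant. We also use that the value assigned at $y_{i,s}=0$ by the second line of \eqref{eqn:define:alpha} is the limit from neighbouring lattice points. Together these should show that $\alpha$ is constant in the coordinate $y_{i,s}$ on some $[0,\delta y]$.

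Step 3. Finally, we use Proposition~\ref{prop:stable_cond:1} to extend from lattice points to $\deltay$ and to pass the limit through the interpolation. Combining this with Steps 1--2 gives that $\bm{\alpha}^{\text{RFBB}}$ exists and is Lipschitz, which is the claim.
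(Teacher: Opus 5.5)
\textbf{Genuine gap: the proposal proves a statement about a different policy.} The mechanism in your Step~2 does not exist for RFBB.

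As the paper uses it (see \eqref{eqn:prop:stable:fluid-balance:1}--\eqref{eqn:prop:stable:fluid-balance:5}), RFBB works as follows:
\begin{itemize}
\item It balances a single designated constraint $\ell^*$, only on the pairs in $\mathscr{Y}^0_t$.
\item It balances in both directions. Over budget, it shifts mass toward $\underline{\action}(i,s)$; under budget, toward the more expensive $\bar{\action}(i,s)$.
\item The shift is not a common fraction $\theta$ applied at every state. The GSA triples are processed greedily in a fixed priority order $\zeta$, each absorbing as much of the remaining excess $\Delta f^h_{i,s,\action}$ as its own mass allows.
\end{itemize}
No maximum over $\ell\in[L]$ and no action $a_i(s)$ from Assumption~\ref{assumption:feasibility} is involved. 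That assumption is not a hypothesis of this proposition; a common cheapest action only enters FBB in Proposition~\ref{prop:stable:fluid-balance:plus}.

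Consequently the limiting fraction is not $\oalphaLP_{i,s,\action}(1-\theta)+\theta\,\indicator\{\action=a_i(s)\}$. In the under-budget regime, for $(i,s,\action)\in\mathscr{J}^{0,+}_t$, it is $\max\{\eta_t(i,s,\action)+\bar{\Delta}f_{i,s,\action}(\bm{y},t)/(f_{i,\ell^*}(s,\bar{\action}(i,s))-f_{i,\ell^*}(s,\action)),0\}$. Here $\bar{\Delta}f_{i,s,\action}$ is the residual aggregate excess left by the earlier triples, divided by $y_{i,s}$. That residual generally does not vanish with $y_{i,s}$, so $\bar{\Delta}f_{i,s,\action}$ diverges as $y_{i,s}\to 0$ and only the clipping keeps $\alpha$ bounded. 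Your remark that the own contribution of $(i,s)$ to the excess vanishes linearly is therefore beside the point.

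\textbf{The missing idea.} The paper verifies Conditions~\ref{cond:deltaf:1}--\ref{cond:deltaf:4} for each normalized residual $\Delta f^h_{i,s,\action}/(\sum_{i'}N_{i'}y_{i,s})$ by induction along $\zeta$. It starts from the explicit first-triple expression \eqref{eqn:prop:stable:fluid-balance:5-1}, which tends to $0$ or to infinity as $y_{i_1,s_1}\to0$. It then transfers these conditions to $\alpha^{\text{RFBB}}$ through the clipped formula \eqref{eqn:prop:stable:fluid-balance:4}. The dichotomy in Condition~\ref{cond:deltaf:3} and the local constancy in Condition~\ref{cond:deltaf:4} are thus inherited from the residuals. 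They are not obtained from a locally constant active set in a maximum over $\ell$. Separately, the paper handles the switching set $\{\bar{\Delta}f_{i_1,s_1,\action_1}(\bm{y},t)=0\}$, where the balancing direction flips. Via \eqref{eqn:prop:stable:fluid-balance:6}--\eqref{eqn:prop:stable:fluid-balance:7} it shows that both one-sided limits equal $\eta_t(i,s,\action)$. Your one-sided model never meets this issue.

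\textbf{What survives, and what else fails.} Your Step~1 carries over. For the true RFBB the limiting masses $\lim_h y_{i,s}\alpha^{\text{RFBB},h}_{i,s,\action}$ are still built from affine functions of $\bm{y}$ by $\min$/$\max$ operations. The two regimes glue continuously precisely because of the switching-set computation above. Hence the masses are Lipschitz on $\deltay$, and $\alpha^{\text{RFBB}}$ is Lipschitz on each $\{y_{i,s}\geq\delta\}$. But all the content of the statement lies in the division by $y_{i,s}$ near $y_{i,s}=0$, where your argument does not apply.

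Even for the uniform-scaling rule you describe, Step~2 is not justified. There $\theta$ is a ratio of aggregate excess to aggregate movable mass, so its Lipschitz continuity is not automatic near zeros of the denominator. Moreover, $\theta$ varies with $y_{i,s}$ through the aggregate excess, so Condition~\ref{cond:deltaf:4} would typically fail for that rule.

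Finally, Step~3 should be dropped. Proposition~\ref{prop:stable_cond:1} gives, for each fixed $h$, an interpolant whose Lipschitz constant is not uniform in $h$. Near $y_{i,s}=0$ the lattice values $\lfloor k\,\eta_t(i,s,\action)\rfloor/k$ jump by order one between neighbouring lattice points. So the interpolation cannot transfer Lipschitz continuity to the limit. The paper instead evaluates the limit directly from the closed-form expressions, which are defined for every $\bm{y}\in\deltay$.
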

\proof{Proof of Proposition~\ref{prop:stable:fluid-balance}.}
To describe the relaxed fluid-budget balancing policy proposed in  \cite{brown2023fluid}, we need to firstly choose an $\ell^*\in[L]$ and define the following variable,
for any $(i,s,\action)\in\calJ$, $h\in\mathbb{N}_+$, $t\in[T]_0$, and $\bm{y}\in\deltay$,
\begin{equation}\label{eqn:prop:stable:fluid-balance:1}
    \frakR^h_{i,s,\action}\bigl(\bm{y},t\bigr) \coloneqq\begin{cases}
        \Bigl\lfloor \sum_{i'\in[I]}N_{i'} y_{i,s} \eta_{t}(i,s,\action)\Bigr\rfloor, & \text{if }\action \neq \underline{\action}(i,s),\\
        \sum_{i'\in[I]}N_{i'} y_{i,s} - \sum_{\action'\in\bA_i:\action' \neq \underline{\action}(i,s)}\Bigl\lfloor \sum_{i'\in[I]}N_{i'} y_{i,s} \eta_{t}(i,s,\action')\Bigr\rfloor, &\text{otherwise},
    \end{cases}
\end{equation}
where $\eta_{t}(i,s,\action)\in[0,1]$ ($(i,s,\action)\in\calJ$, $t\in[T]_0$) and $\underline{\action}(i,s)\coloneqq \arg\min_{\action\in\bA_i: \eta_{(i,s,a),t}>0}f_{i,\ell^*}(s,\action)$ are some pre-determined constants independent to $\bm{Y}^{\text{RGBB},h}(t)=\bm{y}$ and $h$.
We have 
\begin{equation}\label{eqn:prop:stable:fluid-balance:2}
    \lim_{h\to\infty} \frac{\frakR_{i,s,\action}^h(\bm{y},t)}{\sum_{i'\in[I]}N_{i'} y_{i,s}} = 
\begin{cases}
    \eta_{t}(i,s,\action), &\text{if }\action \neq\underline{\action}(i,s),\\
    1 - \sum_{\action'\in\bA_i:\action' \neq \underline{\action}(i,s)} \eta_{t}(i,s,\action'), &\text{otherwise},
\end{cases}
\end{equation}
which is independent to $\bm{y}$.

For RFBB, we need to further describe some parameters   that are independent to $\bm{Y}^{\text{RGBB},h}(t)$ and $h$:
\begin{itemize}
    \item a set of gang-state pairs $\mathscr{Y}^0_t\subset\{(i,s):i\in[I],s\in\bS_i\}$, and
    \item two sets of GSA triples
    $$
        \mathscr{J}^{0,+}_t\coloneqq \Bigl\{(i,s,\action)\in\calJ~\Bigl|~(i,s)\in\mathscr{Y}^0_t, f_{i,\ell^*}\bigl(s,\action\bigr) < f_{i,\ell^*}\bigl(s,\bar{\action}(i,s)\bigr)\Bigr\},
    $$
    and
    $$
        \mathscr{J}^{0,-}_t\coloneqq \Bigl\{(i,s,\action)\in\calJ~\Bigl|~(i,s)\in\mathscr{Y}^0_t, f_{i,\ell^*}\bigl(s,\action\bigr) > f_{i,\ell^*}\bigl(s,\underline{\action}(i,s)\bigr)\Bigr\},
    $$
    where $\bar{\action}(i,s)\coloneqq \arg\max_{\action\in\bA_i: \eta_{t}(i,s,\action)>0}f_{i,\ell^*}(s,\action)$.
\end{itemize}
Also, as described in \cite{brown2023fluid}, for the RFBB in this proof,  we order the \GSA triples and let $\zeta(i,s,\action) \in [|\calJ|]$ represent the rank of \GSA $(i,s,\action)\in\calJ$.
Let $(i_1,s_1,\action_1)$ represent the first \GSA in such an order.

For $\bm{Y}^{\text{RGBB},h}(t)=\bm{y}$, 
and $(i,s,\action)\in\calJ$,
\begin{multline}\label{eqn:prop:stable:fluid-balance:4}
\alpha^{\text{RFBB},h}_{i,s,\action}(\bm{y},t) =\\ \begin{cases}
    \frac{1}{\sum_{i'\in[I]}N_{i'} y_{i,s}} \max\Bigl\{\frakR^h_{i,s,\action}(\bm{y},t) + \frac{\Delta f^h_{i,s,\action}(\bm{y},t)}{f_{i,\ell^*}(s,\bar{\action}(i,s)) - f_{i,\ell^*}(s,\action)},0\Bigr\}, &\text{if } 
    \Delta f^h_{i_1,s_1,\action_1}(\bm{y},t) <0, \\&~~(i,s,\action)\in \mathscr{J}^{0,+}_t,\\
    \frac{1}{\sum\limits_{i'\in[I]}N_{i'} y_{i,s}} \max\Bigl\{\frakR^h_{i,s,\action}(\bm{y},t) + \frac{\Delta f^h_{i,s,\action}(\bm{y},t)}{f_{i,\ell^*}(s,\underline{\action}(i,s)) - f_{i,\ell^*}(s,\action)},0\Bigr\}, &\text{if } 
    \Delta f^h_{i_1,s_1,\action_1}(\bm{y},t) >0, \\&~~(i,s,\action)\in \mathscr{J}^{0,-}_t,\\
    \frac{1}{\sum_{i'\in[I]}N_{i'} y_{i,s}} \Bigl(\frakR^h_{i,s,\action}(\bm{y},t) 
    &\\~~~~~~+\sum_{\begin{subarray}~(i',s',\action')\in\mathscr{J}^{0,+}_t:\\i'=i,\\s'=s\end{subarray}}\min\Bigl\{\frakR^h_{i,s,\action'}(\bm{y},t),  \frac{-\Delta f^h_{i,s,\action'}(\bm{y},t)}{f_{i,\ell^*}(s,\bar{\action}(i,s)) - f_{i,\ell^*}(s,\action')}\Bigr\} \Bigr), &\text{if } \Delta f^h_{i_1,s_1,\action_1}(\bm{y},t) <0, \\&~~\action = \bar{\action}(i,s),\\&~~(i,s)\in\mathscr{Y}^0_t,\\    
    \frac{1}{\sum_{i'\in[I]}N_{i'} y_{i,s}} \Bigl(\frakR^h_{i,s,\action}(\bm{y},t) 
    &\\~~~~~~+\sum_{\begin{subarray}~(i',s',\action')\in\mathscr{J}^{0,-}_t:\\i'=i,\\s'=s\end{subarray}}\min\Bigl\{\frakR^h_{i,s,\action'}(\bm{y},t),  \frac{-\Delta f^h_{i,s,\action'}(\bm{y},t)}{f_{i,\ell^*}(s,\underline{\action}(i,s)) - f_{i,\ell^*}(s,\action')}\Bigr\} \Bigr), 
    &\text{if } \Delta f^h_{i_1,s_1,\action_1}(\bm{y},t) >0, \\&~~\action = \underline{\action}(i,s),\\&~~(i,s)\in\mathscr{Y}^0_t,\\
    \frac{\frakR^h_{i,s,\action}(\bm{y},t) }{\sum_{i\in[I]}N_i y_{i,s}}, &\text{otherwise}, 
\end{cases}
\end{multline}
where 
\begin{multline}\label{eqn:prop:stable:fluid-balance:5}
\Delta f^h_{i,s,\action}(\bm{y},t)\\\coloneqq 
\begin{cases}
 \sum_{(i',s',\action')\in\calJ}f_{i',\ell^*}(s',a')\frakR^h_{i',s',\action'}(\bm{y},t), &\text{if } \zeta(i,s,\action) =1 \\&~~~~\text{ or } \Delta f^h_{i_1,s_1,\action_1}(\bm{y},t)=0,\\
 \sum_{\begin{subarray}~(i',s',\action')\in \mathscr{J}^{0,+}_t:\\\zeta(i',s',\action')<\zeta(i,s,\action)\end{subarray}}\min\Bigl\{\frakR^h_{i',s',\action'}(\bm{y},t)\bigl(f_{i',\ell^*}(s',\bar{\action}(i',s'))-f_{i',\ell^*}(s',\action')\bigr), 
 &\\~~~~-\Delta f^h_{i',s',\action'}(\bm{y},t)\Bigr\} 
+\sum_{(i',s',\action')\in\calJ}f_{i',\ell^*}(s',\action')\frakR^h_{i',s',\action'}(\bm{y},t), &\text{if }\Delta f^h_{i_1,s_1,\action_1}(\bm{y},t) < 0,\\&~~~~\text{ and } \zeta(i,s,\action)>1,\\
 \sum_{\begin{subarray}~(i',s',\action')\in \mathscr{J}^{0,-}_t:\\\zeta(i',s',\action')<\zeta(i,s,\action)\end{subarray}}\max\Bigl\{\frakR^h_{i',s',\action'}(\bm{y},t)\bigl(f_{i',\ell^*}(s',\underline{\action}(i',s'))-f_{i',\ell^*}(s',\action')\bigr) ,
 &\\~~~~- \Delta f^h_{i',s',\action'}(\bm{y},t)\Bigr\} 
 +\sum_{(i',s',\action')\in\calJ}f_{i',\ell^*}(s',\action')\frakR^h_{(i',s',\action')}(\bm{y},t), &\text{if }\Delta f^h_{i_1,s_1,\action_1}(\bm{y},t) > 0, \\&~~~~\text{ and }\zeta(i,s,\action) >1.
\end{cases}
\end{multline}
Based on the above definition, for any $\zeta(i,s,\action) < \zeta(i',s',\action')$, $\norm{\Delta f^h_{i,s,\action}(\bm{y},t)} \geq \norm{\Delta f^h_{i',s',\action'}(\bm{y},t)}$, and they are either all non-positive or all non-negative.

We define, for $(i,s,\action)\in\calJ$, $\bar{\Delta}f_{i,s,\action}(\bm{y},t)\coloneqq \lim_{h\to\infty}\frac{\Delta f^h_{i,s,\action}(\bm{y},t)}{\sum_{i'\in[I]}N_{i'} y_{i,s}}$.
From \eqref{eqn:prop:stable:fluid-balance:5}, \begin{equation}\label{eqn:prop:stable:fluid-balance:5-1}
    \bar{\Delta}f_{i_1,s_1,\action_1}(\bm{y},t) = \frac{1}{y_{i_1,s_1}}\Bigl(\sum_{(i',s',\action')\in\calJ}y_{i',s'}f_{i',\ell^*}(s',\action') \lim_{h\to\infty}\frac{\frakR^h_{i',s',\action'}(\bm{y},t)}{\sum_{i''\in[I]}N_{i''} y_{i',s'}}\Bigr).
\end{equation}
It exists and is Lipschitz continuous in $\bm{y}\in\deltay$ with $y_{i_1,s_1}> 0$, and is continuous as $y_{i_1,s_1}\to 0$.
When $y_{i_1,s_1}\to 0$, $\bar{\Delta}f_{i_1,s_1,\action_1}(\bm{y},t)$ tends to either infinity or zero, of which the sign is the same as that in the brackets at the right hand side of \eqref{eqn:prop:stable:fluid-balance:5-1}.

Assume that, for all $(i,s,\action)$ with $\zeta(i,s,\action)=1,2,\ldots, \zeta'-1$, 
Conditions~\ref{cond:deltaf:1}-\ref{cond:deltaf:4} are satisfied by plugging in $f_{i,s,\action}(\bm{y},h)=\frac{\Delta f^h_{i,s,\action}(\bm{y},t)}{\sum_{i'\in[I]}N_{i'} y_{i,s}}$.
In this case, based on the definition in \eqref{eqn:prop:stable:fluid-balance:5} and the identical signs of all $\Delta f^h_{i',s',\action'}(\bm{y},t)$ ($(i',s',\action')\in\calJ$), by replacing $(i,s,\action)$ with the $\zeta'$th \GSA $(i',s',\action')$ (that is, $\zeta(i',s',\action')=\zeta'$), $f_{i',s',\action'}(\bm{y},h)=\frac{\Delta f^h_{i',s',\action'}(\bm{y},t)}{\sum_{i''\in[I]}N_{i''} y_{i',s'}}$ also satisfies the aforementioned Conditions~\ref{cond:deltaf:1}-\ref{cond:deltaf:4}.
That is, for all $(i,s,\action)\in\calJ$, Conditions~\ref{cond:deltaf:1}-\ref{cond:deltaf:4} hold with $f_{i,s,\action}(\bm{y},h)=\frac{\Delta f^h_{i,s,\action}(\bm{y},t)}{\sum_{i'\in[I]}N_{i'} y_{i,s}}$.

Together with the definition in \eqref{eqn:prop:stable:fluid-balance:4}, 
if $\bar{\Delta} f_{i_1,s_1,\action_1}(\bm{y},t) \neq 0$, then, for all $(i,s,\action)\in\calJ$, $\alpha^{\text{RFBB}}_{i,s,\action}(\bm{y},t)=\lim_{h\to\infty}\alpha^{\text{RFBB},h}_{i,s,\action}(\bm{y},t)$ is Lipschitz continuous in $\bigl\{\bm{y}\in\deltay~\bigl|~\bar{\Delta} f_{i_1,s_1,\action_1}(\bm{y},t) \neq 0\bigr\}$.

It remains to discuss the case within $\bigl\{\bm{y}\in\deltay~\bigl|~\bar{\Delta} f_{i_1,s_1,\action_1}(\bm{y},t) =0\bigr\}$. 
In this case, 
\begin{multline}\label{eqn:prop:stable:fluid-balance:6}
 \lim_{\begin{subarray}~h\to\infty:\\\bar{\Delta}f_{i_1,s_1,\action_1}(\bm{y},t) <0\end{subarray}}\alpha^{\text{RFBB},h}_{i,s,\action}(\bm{y},t) \\=
 \begin{cases}
     \max\Bigl\{\eta_{t}(i,s,\action)+\frac{\bar{\Delta}f_{i,s,\action}(\bm{y},t)}{f_{i,\ell^*}(s,\bar{\action}(i,s)) - f_{i,\ell^*}(s,\action)},0\Bigr\},& \text{if }(i,s,\action) \in\mathscr{J}^{0,+}_t,\\
     \eta_{t}(i,s,\action)  +\sum_{\begin{subarray}~(i',s',\action')\in\mathscr{J}^{0,+}_t:\\i'=i,\\s'=s\end{subarray}}\min\Bigl\{\eta_{t}(i,s,\action),  \frac{-\bar{\Delta} f_{i,s,\action'}(\bm{y},t)}{f_{i,\ell^*}(s,\bar{\action}(i,s)) - f_{i,\ell^*}(s,\action')}\Bigr\}, & \text{if }\action=\bar{\action}(i,s),\\&~~~~ (i,s)\in\mathscr{Y}^0_t,\\
     \eta_{t}(i,s,\action),&\text{otherwise}.
 \end{cases}
\end{multline}
Together with $\norm{\bar{\Delta}f_{i,s,\action}(\bm{y},t)} \leq \lim_{h\to\infty}\norm{\Delta f^h_{i_1,s_1,\action_1}(\bm{y},t)}  = 0$, $\lim_{\begin{subarray}~h\to\infty:\\\bar{\Delta} f_{i_1,s_1,\action_1}(\bm{y},t) \downarrow 0\end{subarray}}\alpha^{\text{RFBB},h}_{i,s,\action}(\bm{y},t)=\eta_{t}(i,s,\action)$.
Similarly, 
\begin{multline}\label{eqn:prop:stable:fluid-balance:7}
 \lim_{\begin{subarray}~h\to\infty:\\\bar{\Delta} f_{i_1,s_1,\action_1}(\bm{y},t) >0\end{subarray}}\alpha^{\text{RFBB},h}_{i,s,\action}(\bm{y},t)\\ =
 \begin{cases}
     \max\Bigl\{\eta_{t}(i,s,\action)+\frac{\bar{\Delta}f_{i,s,\action}(\bm{y},t)}{f_{i,\ell^*}(s,\underline{\action}(i,s)) - f_{i,\ell^*}(s,\action)},0\Bigr\},& \text{if }(i,s,\action) \in\mathscr{J}^{0,-}_t,\\
     \eta_{t}  (i,s,\action)+\sum_{\begin{subarray}~(i',s',\action')\in\mathscr{J}^{0,-}_t:\\i'=i,\\s'=s\end{subarray}}\min\Bigl\{\eta_{t}(i,s,\action),  \frac{-\bar{\Delta} f_{i,s,\action'}(\bm{y},t)}{f_{i',\ell^*}(s',\underline{\action}(i',s')) - f_{i',\ell^*}(s',\action')}\Bigr\}, & \text{if }\action=\underline{\action}(i,s),\\&~~~~ (i,s)\in\mathscr{Y}^0_t,\\
     \eta_{t}(i,s,\action),&\text{otherwise},
 \end{cases}
\end{multline}
leading to 
$$
\lim_{\begin{subarray}~h\to\infty:\\\bar{\Delta}f_{i_1,s_1,\action_1}(\bm{y},t) \downarrow 0\end{subarray}}\alpha^{\text{RFBB},h}_{i,s,\action}(\bm{y},t)=\eta_{t}(i,s,\action)=\lim_{\begin{subarray}~h\to\infty:\\\bar{\Delta}f_{i_1,s_1,\action_1}(\bm{y},t) \uparrow 0\end{subarray}}\alpha^{\text{RFBB},h}_{i,s,\action}(\bm{y},t).$$
That is,  $\alpha^{\text{RFBB}}_{i,s,\action}(\bm{y},t)=\lim_{h\to\infty}\alpha^{\text{RFBB},h}_{i,s,\action}(\bm{y},t) = \eta_{t}(i,s,\action)$ exists and is continuous for the points in $\bigl\{\bm{y}\in\deltay~\bigl|~\bar{\Delta} f_{i_1,s_1,\action_1}(\bm{y},t) =0\bigr\}$.

We recall Conditions~\ref{cond:deltaf:1}-\ref{cond:deltaf:4} for $f_{i,s,\action}(\bm{y},h)=\frac{\Delta f^h_{i,s,\action}(\bm{y},t)}{\sum_{i'\in[I]}N_{i'} y_{i,s}}$, which guarantee the Lipschitz continuity of $\bar{\Delta}f_{i,s,\action}(\bm{y},t)$ in all $\bm{y}\in\bigl\{\bm{y}'\in\deltay~\bigl|~\bar{\Delta} f_{i_1,s_1,\action_1}(\bm{y}',t) =0\bigr\}$.
It further leads to the Lipschitz continuity of $\alpha^{\text{RFBB}}_{i,s,\action}(\bm{y},t)$ in $\bm{y}\in\bigl\{\bm{y}'\in\deltay~\bigl|~\bar{\Delta} f_{i_1,s_1,\action_1}(\bm{y}',t) =0\bigr\}$.

It remains to show that Conditions~\ref{cond:deltaf:1}-\ref{cond:deltaf:4} also hold for plugging in $f_{i,s,\action}(\bm{y},h)=\alpha^{\text{RFBB},h}_{i,s,\action}(\bm{y},t)$.
The Lipschitz continuity and the boundness of $\lim_{h\to\infty}\alpha^{\text{RFBB},h}_{i,s,\action}(\bm{y},t)$ ensure Conditions~\ref{cond:deltaf:1}-\ref{cond:deltaf:3}.
Since $\lim_{h\to\infty}\frac{\frakR^h_{i,s,\action}(\bm{y},t)}{\sum_{i'\in[I]}N_{i'} y_{i,s}}$ is constant independent to $\bm{y}$, by \eqref{eqn:prop:stable:fluid-balance:4}, Condition~\ref{cond:deltaf:4} is satisfied with $f_{i,s,\action}(\bm{y},h)=\alpha^{\text{RFBB},h}_{i,s,\action}(\bm{y},t)$ if it is satisfied with $f_{i,s,\action}(\bm{y},h)=\frac{\Delta f^h_{i,s,\action}(\bm{y},t)}{\sum_{i'\in[I]}N_{i'} y_{i,s}}$.
It proves the proposition.

\endproof

We use ``FBB" to represent the fluid-budget balancing policy family.
Note that, in \cite{brown2023fluid}, FBB was proposed under an assumption: for any $i\in[I]$ and $s\in\bS_i$, there exists $\underline{\action}^*(i,s)\in\bA_i$ such that, for any $\ell\in[L]$, $ f_{i,\ell}(s,\underline{\action}^*(i,s)) =\min_{\action\in \bA_i} f_{i,\ell}(s,\action)$. 
\begin{proposition}\label{prop:stable:fluid-balance:plus}
The fluid-budget balancing policy (
family) proposed in \cite{brown2023fluid} satisfy \partialref{cond:weak_stab}{Lipschitz–limit regularity}.
\end{proposition}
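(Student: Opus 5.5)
The plan is to reduce the claim to Proposition~\ref{prop:stable:fluid-balance}, since FBB in \cite{brown2023fluid} is built on RFBB. FBB first runs the RFBB step with respect to a distinguished constraint $\ell^*$. It then adds a correction step: whenever any constraint $\ell\in[L]$ is still violated, it moves bandits to the action $\underline{\action}^*(i,s)$, which is simultaneously cheapest for all constraints. The proof should therefore show two things. First, the limiting action fractions of the RFBB stage are Lipschitz, which Proposition~\ref{prop:stable:fluid-balance} already gives, including Conditions~\ref{cond:deltaf:1}--\ref{cond:deltaf:4}. Second, the correction stage is, in the limit $h\to\infty$, a Lipschitz map applied to these fractions and to $\bm{y}$.

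First, I would write $\alpha^{\text{FBB},h}_{i,s,\action}(\bm{y},t)$ explicitly as a composition.
\begin{itemize}
\item Let $\beta^h_{i,s,\action}(\bm{y},t)\coloneqq \alpha^{\text{RFBB},h}_{i,s,\action}(\bm{y},t)$.
\item For each $\ell$, the residual violation is $V^h_\ell(\bm{y},t)\coloneqq \sum_{(i,s,\action)\in\calJ}N_i^0 y_{i,s}\beta^h_{i,s,\action}(\bm{y},t)f_{i,\ell}(s,\action)$.
\item The correction transfers a mass $\Delta^h_{i,s,\action}(\bm{y},t)$ from $(i,s,\action)$ to $(i,s,\underline{\action}^*(i,s))$, following the fixed priority order of \cite{brown2023fluid}.
\end{itemize}
Each transfer is given recursively, exactly like $\Delta f^h$ in \eqref{eqn:prop:stable:fluid-balance:5}. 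It is a $\min$ or $\max$ of the mass still available and the remaining violation divided by a fixed cost gap $f_{i,\ell}(s,\action)-f_{i,\ell}(s,\underline{\action}^*(i,s))$. Both quantities are rounded to multiples of $1/(h\sum_i N_i^0)$.

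Second, I would take $h\to\infty$. The rounding errors are $O(1/h)$, so all the limits exist, and $V_\ell$ becomes $\sum N_i^0 y_{i,s}\beta_{i,s,\action}f_{i,\ell}$. This is Lipschitz because it is a product of bounded Lipschitz functions on the compact set $\deltay$. Running the same induction on the priority rank $\zeta$ as in the proof of Proposition~\ref{prop:stable:fluid-balance} then shows that each limiting transferred mass is built from Lipschitz functions by finitely many $\max$, $\min$, $+$, and multiplications by constants. Such functions are Lipschitz. The positive part $V_\ell^+$ enters only through $\max\{\cdot,0\}$, so the switch between ``no correction'' and ``correction'' is continuous across the set $\{V_\ell=0\}$. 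The limit equals $\beta$ on both sides, mirroring \eqref{eqn:prop:stable:fluid-balance:6}--\eqref{eqn:prop:stable:fluid-balance:7}.

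The main obstacle is that $\alpha$ is a ratio: the transferred mass has to be divided by $y_{i,s}$, and this can blow up as $y_{i,s}\to0$. I would handle it with Condition~\ref{cond:deltaf:4} from Proposition~\ref{prop:stable:fluid-balance}. Near $y_{i,s}=0$, either the limiting fraction is locally constant, or the transfer saturates at $\min\{\text{available mass},\cdot\}$. In the saturated case the ratio is proportional to the bounded quantity $\beta_{i,s,\action}$. In both cases the ratio stays bounded and Lipschitz. At points where $y_{i,s}=0$ exactly, the value is fixed by the second line of \eqref{eqn:define:alpha}, which picks neighboring lattice points. The limit there therefore agrees with the one-sided limit, which gives Lipschitz continuity on all of $\deltay$ and concludes the proof.
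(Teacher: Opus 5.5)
Your route is the paper's: you write FBB as RFBB plus a correction toward $\underline{a}^*(i,s)$, define the corrections recursively along the priority order exactly like the paper's $\Delta g^h_{i,s,a,\ell}$ (itself modelled on $\Delta f^h$), induct on the rank $\zeta$, let $h\to\infty$, and invoke Conditions~\ref{cond:deltaf:1}--\ref{cond:deltaf:4} for the division by $y_{i,s}$. Everything you do at the level of \emph{masses} is sound. The quantities $y_{i,s}\beta_{i,s,a}$ and $V_\ell$ are Lipschitz on $\deltay$, and finitely many $\min$, $\max$, sums and constant scalings preserve this. Hence the limiting masses $y_{i,s}\alpha^{\text{FBB}}_{i,s,a}(\bm{y},t)$ are Lipschitz.

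The gap is the final passage from masses to fractions. First, boundedness of the ratio does not give Lipschitz continuity. Second, your dichotomy (locally constant, or saturated) is not exhaustive near points $\bm{y}^0$ with $y^0_{i,s}=0$ and $V_\ell(\bm{y}^0)=0$. There, with a single constraint, the first corrected triple has limiting fraction $\max\{\beta_{i,s,a}-V_\ell^+(\bm{y})/(c\,y_{i,s}),0\}$, where $c$ is the cost gap. Since $V_\ell$ generally varies in directions tangent to $\{y_{i,s}=0\}$, the ratio $V_\ell^+/y_{i,s}$ can be driven to any value in $[0,\infty)$.

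Here is a concrete instance. Take one gang with states $\{1,2,3\}$, actions $\{0,1\}$, $L=1$ and $f(s,a)=a-b$. Let RFBB leave the fractions at $\eta_t(s,1)=p,q,r$ with $p>0$ and $q<b<r$ (as the paper's formula does when $\mathscr{Y}^0_t=\emptyset$), and put $(1,1,1)$ first in the order. Then $V=py_1+qy_2+ry_3-b$. Set $y_3^*=(b-q)/(r-q)$ and approach $(0,1-y_3^*,y_3^*)$ along $y_1=\epsilon$, $y_3=y_3^*+k\epsilon$. One gets $V/y_1=(p-q)+k(r-q)$, so $\alpha^{\text{FBB}}_{1,1,1}$ has no limit at that point, whatever value the second line of \eqref{eqn:define:alpha} assigns there. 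Nearby, its gradient is of order $1/y_1$.

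Condition~\ref{cond:deltaf:4} cannot repair this. It was established for RFBB's own quantities, and it only describes behaviour along the $y_{i,s}$-axis with the other coordinates frozen, not the new violation $V_\ell$ jointly in $\bm{y}$. The paper's proof invokes Conditions~\ref{cond:deltaf:1}--\ref{cond:deltaf:4} for $\Delta g^h/(\sum_{i'}N_{i'}y_{i,s})$ in the same way and has the same hole. What the argument genuinely yields is Lipschitz continuity of the masses $y_{i,s}\alpha^{\text{FBB}}_{i,s,a}$, or of the fractions on $\{y_{i,s}\ge c\}$ for each fixed $c>0$. The masses appear to be the quantity the later proofs actually use, e.g.\ the Lipschitz constant of $y_{i,s}\alpha^{\varphi}_{\zeta}$ in the proof of Lemma~\ref{lemma:sim}. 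Lipschitz fractions on all of $\deltay$, as literally stated, do not follow and fail in examples like the one above.
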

\proof{Proof of Proposition~\ref{prop:stable:fluid-balance:plus}.}
FBB is a policy family adapted from RFBB.
Similar to the $\Delta f^h_{i,s,\action}$ defined in \eqref{eqn:prop:stable:fluid-balance:5}, for FBB, we define
\begin{multline}\label{eqn:prop:stable:fluid-balance:plus:2}
    \Delta g^h_{i,s,\action,\ell}(\bm{y},t)\\ 
    \coloneqq \begin{cases}   
    \max\Bigl\{0,\sum_{(i',s',\action')\in\calJ}f_{i',\ell}(s',\action')\sum_{i''\in[I]}N_{i''} y_{i',s'} \alpha^{\text{RFBB},h}_{i',s',\action'}(\bm{y},t)\Bigr\}, 
    &\!\!\!\!\!\!\!\!\!\!\!\!\!\!\!\!\!\!\!\!\!\!\!\!\!\!\!\!\text{if }\zeta(i,s,\action) =1,\\
    \sum_{\begin{subarray}~(i',s',\action')\in\calJ:\\\zeta(i',s',\action')<\zeta(i,s,\action)\end{subarray}}\min\Bigl\{\sum_{i\in[I]}N_i y_{i',s'} \alpha^{\text{RFBB},h}_{i',s',\action'}(\bm{y},t),  
    \max_{
    \begin{subarray}~\ell'\in[L]:\\(i',s',\action')\in\mathscr{J}^-_{\ell'}\end{subarray}}\frac{\Delta g^h_{i',s',\action',\ell'}(\bm{y},t)}{f_{i',\ell'}(s',\action')-f_{i',\ell'}(s',\underline{\action}^*(i',s'))}\Bigr\}&\\~~~~
    \times\bigl(f_{i',\ell}(s',\underline{\action}^*(i',s'))-f_{i',\ell}(s',\action')\bigr)     
&\\~~~~ +\sum_{(i',s',\action')\in\calJ}f_{i',\ell}(s',\action')\sum_{i''\in[I]}N_{i''} y_{i',s'} \alpha^{\text{RFBB},h}_{i',s',\action'}(\bm{y},t), 
&\!\!\!\!\!\!\!\!\!\!\!\!\!\!\!\!\!\!\!\!\!\!\!\!\!\!\!\!\text{otherwise},\\
    \end{cases}
\end{multline}
where $\mathscr{J}^-_{\ell}\coloneqq \Bigl\{(i,s,\action)\in\calJ:  f_{i,\ell}(s,\action) > f_{i,\ell}(s,\underline{\action}^*(i,s))\Bigr\}$, and $\max_{\ell'\in[L]:(i',s',\action')\in\mathscr{J}^-_{\ell}}$ returns zero if $\{\ell'\in[L]:(i',s',\action')\in\mathscr{J}^-_{\ell}\} = \emptyset$.
Based on Proposition~\ref{prop:stable:fluid-balance}, it can be obtained iteratively from $(i_1,s_1,\action_1)$ to the last \GSA that Conditions~\ref{cond:deltaf:1}-\ref{cond:deltaf:4} hold with substituted $f_{i,s,\action}(\bm{y},h)=\frac{\Delta g^h_{i,s,\action,\ell}(\bm{y},t)}{\sum_{i'\in[I]}N_{i'} y_{i,s}}$ ($(i,s,\action)\in\calJ$).

Let $\bar{\Delta} g_{i,s,\action,\ell}(t)\coloneqq \lim_{h\to\infty} \frac{\Delta g^h_{i,s,\action,\ell}(\bm{y},t)}{\sum_{i'\in[I]}N_{i'} y_{i,s}}$.
Also, based on the definition in \eqref{eqn:prop:stable:fluid-balance:plus:2}, $\max_{\ell\in[L]}\Delta g^h_{i,s,\action,\ell}(\bm{y},t)$ is always non-negative.

From the definition of FBB in \cite{brown2023fluid}, we have
\begin{multline}
    \alpha^{\text{FBB},h}_{i,s,\action}(\bm{y},t)\\ = \begin{cases}
        \max\Bigl\{\alpha^{\text{RFBB},h}_{i,s,\action}(\bm{y},t) - \frac{1}{\sum_{i'\in[I]}N_{i'} y_{i,s}}\max_{\ell\in[L]:(i,s,\action)\in\mathscr{J}^-_{\ell}}\frac{\Delta g^h_{i,s,\action,\ell}(\bm{y},t)}{f_{i,\ell}(s,\action)-f_{i,\ell}(s,\underline{\action}^*(i,s))},0\Bigr\}, &\text{if }\action\neq \underline{\action}^*(i,s),\\
        \alpha^{\text{RFBB},h}_{i,s,\action}(\bm{y},t) &\\~~+ \frac{1}{\sum_{i'\in[I]}N_{i'} y_{i,s}}\sum_{\action'\in\bA_i}\min\Bigl\{\max_{\ell\in[L]:(i,s,\action')\in\mathscr{J}^-_{\ell}}\frac{\Delta g^h_{i,s,\action',\ell}(\bm{y},t)}{f_{i,\ell}(s,\action')-f_{i,\ell}(s,\underline{\action}^*(i,s))},
        &\\~~~~~~~~~~~~~~~~~~~~~~~~~~~~~~~~~~~~~~~~~\sum_{i'\in[I]}N_{i'} y_{i,s}\alpha^{\text{RFBB},h}_{i,s,\action'}(\bm{y},t)\Bigr\},&\text{otherwise}.
    \end{cases}
\end{multline}
That is, 
\begin{multline*}
\lim_{h\to\infty}\alpha^{\text{FBB},h}_{i,s,\action}(\bm{y},t) \\= \begin{cases}
  \max\Bigl\{\alpha^{\text{RFBB}}_{i,s,\action}(\bm{y},t)-  \max_{\ell\in[L]:(i,s,\action)\in\mathscr{J}^-_{\ell}}\frac{\bar{\Delta} g_{i,s,\action,\ell}(t)}{f_{i,\ell}(s,\action)-f_{i,\ell}(s,\underline{\action}^*(i,s))},0\Bigr\}, &\text{if }\action\neq \underline{\action}^*(i,s),\\
  \alpha^{\text{RFBB}}_{i,s,\action}(\bm{y},t) 
  &\\~~+ \sum_{\action'\in\bA_i}\min\Bigl\{\max_{\ell\in[L]:(i,s,\action')\in\mathscr{J}^-_{\ell}}\frac{\bar{\Delta} g_{i,s,\action',\ell}(t)}{f_{i,\ell}(s,\action')-f_{i,\ell}(s,\underline{\action}^*(i,s))},\alpha^{\text{RFBB},h}_{i,s,\action'}(\bm{y},t)\Bigr\}&\text{otherwise},
\end{cases}
\end{multline*}
which, based on Proposition~\ref{prop:stable:fluid-balance} and Conditions~\ref{cond:deltaf:1}-\ref{cond:deltaf:4} for plugging in $f_{i,s,\action}(\bm{y},h)=\Delta g^h_{i,s,\action}(\bm{y},t)$, is Lipschitz continuous in $\bm{y}\in\deltay$. It proves the proposition.

\endproof

\section{Proof of Theorem~\ref{theorem:convergence-Z}}\label{app:theorem:convergence-Z}

We extend $\calJ$, the set of all GSA triples, to $\calJext \coloneqq \bigl\{(i,s,\action): i\in[I], s\in \bS_i, \action\in \bA_i\cup\{\dummyAction\}\bigr\}$, where $\dummyAction$ is a dummy action only used for completing the proof.
Define a gang-state-action-time (GSAT) $(i,s,\action,t)\in \calJext\times [T]_0$.
We  denote the set $\mathscr{K}\coloneqq \calJext\times [T]_0$ of all such \GSATs of the form $\kappa = (i,s,\action,t)$. 
For a \GSA $(i,s,\action)\in\calJext$, we rewrite it in the form as $\zeta=(i,s,\action)$, and let $i=i^\zeta$,  $s=s^\zeta$, and  $a=\action^{\zeta}$ to indicate the gang,  state, and action, respectively,  from a given $\zeta\in \calJext$.
Similarly, let $i^{\kappa}$, $\action^{\kappa}$, $s^{\kappa}$, and $t^{\kappa}$ represent the gang, action, state, and time, respectively, for $\kappa\in\mathscr{K}$.
We will also, when needed, use $\zeta^{\kappa}$ to represent the \GSA $(i^{\kappa},s^{\kappa},\action^{\kappa})$.

We define independent (and independent of all other random variables previously defined) Poisson random processes
$\tau \mapsto M_t(\zeta,\zeta',n,\tau)$ ($ \zeta,\zeta'\in \calJext,\, n\in [N_{i^{\zeta}}]$) with (exponentially distributed) inter-event times with rate
\begin{equation}\label{eqn:app:convergence_Z:2}
 \scrp_t(\zeta,\zeta')\coloneqq 
 \begin{cases}
    \mathbb{P}\Bigl\{s^{\orule,h}_{i^{\zeta},n}(t+1) = s^{\zeta'}~\Bigl|~s^{\orule,h}_{i^{\zeta},n}(t) = s^{\zeta},\action^{\orule,h}_{i^{\zeta},n}(t) = \action^{\zeta}\Bigr\}=p_{i^{\zeta}}(s^{\zeta},\action^{\zeta},s^{\zeta'}), &\text{if } \action^{\zeta'} = \dummyAction,\action^{\zeta}\neq \dummyAction,
    \\&~~\text{ and } i^{\zeta}=i^{\zeta'},\\
     0,&\text{otherwise}.
\end{cases}
\end{equation} 
For $t\in[T]_0$, define 
\begin{equation}
  \label{eq:summed_ms}
    M_t(\tau)\coloneqq \sum_{\zeta,\zeta'\in\mathcal{J}}\sum_{n\in[N_{i^\kappa}]}M_t(\zeta,\zeta',n,\tau),
\end{equation}
where $\tau_t(m)\coloneqq \inf \{\tau\geq 0~|~M_t(\tau)\geq m\}$
is the  occurrence time of the $m$th event in this  process, and $\tau_t(0) \equiv 0$.
The $m$th event in this process occurs in one of the summands labeled by $(\zeta, \zeta',n)$ in~\eqref{eq:summed_ms}, and to indicate this we write  $\bigl(\kappa_t(m),\zeta_t(m),n_t(m)\bigr) = \bigl((i^{\zeta},s^{\zeta},\action^{\zeta},t),\zeta',n\bigr)$, where $\kappa_t(m) = (i^{\zeta},s^{\zeta},\action^{\zeta},t)$.

Let $\scrR\coloneqq \bigcup_{n\in\mathbb{N}_0} [2n,2n+1)$, which is the set of all intervals from an even integer to its next odd integer.
For $\kappa, \kappa'\in\mathscr{K}$, $n\in[N_{i^{\kappa}}]$, we define
  \begin{equation}
    \label{eq:xi_definition_new}
\xi_\tau(\kappa,\kappa',n)\coloneqq
\sum_{\substack{m: (\kappa_t(m),\zeta_t(m),n_t(m)) = (\kappa,\zeta^{\kappa'},n)\\ t^{\kappa'}=t^{\kappa}+1}}\frac{\indicator\Biggl\{\tau\notin\scrR, \tau \in \Bigl[\max\bigl\{\tau_{t^{\kappa}}(m-1), \lfloor\tau\rfloor\big\},\tau_{t^{\kappa}}(m)\Bigr)\Biggr\}}{\tau_{t^{\kappa}}(m)-\tau_{t^{\kappa}}(m-1)},
  \end{equation}
where $\indicator\{\cdot\}$ is the indicator function.
We recall that an empty sum evaluates to $0$. This is a cadlag process for each $\kappa,\kappa',n$ and is zero unless  $t^{\kappa'}=t^{\kappa}+1$.  
We consider the vector random process $\tau\mapsto \bm{\xi}_{\tau}(\kappa,n) = (\xi_\tau(\kappa,\kappa',n):\kappa'\in \mathscr{K})$, for $\kappa\in \mathscr{K}$. 
We define another deterministic cadlag process 
\begin{equation} \label{eq:xi_definition_new:deter}
    \xi_{\tau}(0) \coloneqq \indicator\Bigl\{\tau\in \scrR\Bigr\}.
\end{equation}

For all fixed  $\kappa\in \mathscr{K}$, the random vectors $\bm{\xi}_{\tau}(\kappa,n)$ are identically distributed as   $\tau\geq 0$ varies,   and the trajectory $\xi_{\tau}(\kappa,\kappa',n)$ is continuous in $\tau\geq 0$ except, possibly,  at  finitely many points of discontinuity of  the first kind in  every finite time interval.

For any two  ``jump'' points $\tau_{t}(m_{1})$ and $\tau_{t}(m_{2})$ ($m_{2}>m_{1}$)
  \begin{multline}
    \label{eq:xi_integral}
   \int_{\tau_t(m_{1})}^{\tau_t(m_{2})}\xi_\tau(\kappa,\kappa',n)\, d\tau \\= \Bigl|\bigl\{m: (\kappa_{t^{\kappa}}(m),\zeta_{t^{\kappa}}(m),n_{t^{\kappa}}(m) = (\kappa,{\zeta^{\kappa'}},n), m_{1}<m\le m_{2},t^{\kappa'}={t^{\kappa}}+1,\tau_t(m)\notin \scrR\bigr\}\Bigr|
  \end{multline}
Note that the endpoints of this integral are random. 

Such a $\xi_{\tau}(\kappa,\kappa',n)$ is considered as an event that may trigger a state transition of the corresponding process, when $\tau\notin\scrR$. 
The periodic $\tau\in\scrR$ is set as silent periods without events.

We use the word ``potential" because a bandit process may not be in the
\GSA triple $\zeta^{\kappa}$ at time $t^{\kappa}$, causing fewer real transitions. 
Let $\bm{\xi}^h_\tau\coloneqq (\xi_{\tau}(0);\xi_{\tau}(\kappa,\kappa',n): \kappa,\kappa'\in\mathscr{K}, n\in[N_{i^{\kappa}}])$ which takes values in $\mathbb{R}_0^{|\mathscr{K}|\sum_{\kappa\in\mathscr{K}}N_{i^{\kappa}}+1}$, where recall $N_i = hN_i^0$.

For $\orule\in\PsiZ$, $\zeta\in\calJext$, $t\in[T]_0$, and $\bm{\scrz}\in \mathbb{R}^{|\mathscr{K}|}$, we define
\begin{equation}\label{eqn:define:alpha:x}
\scra^{\orule}_{\zeta}(\bm{\scrz},t) \coloneqq
\begin{cases}
    \alpha^{\orule}_{\zeta}(\scru(\bm{\scrz},t),t), & \text{if } \scru(\bm{\scrz},t) \in \deltay, \action^{\zeta}\neq \dummyAction,\\
    0, &\text{if } \action^{\zeta}=\dummyAction,\\
    \text{anything within $[0,1]$ and making $\scra$ Lipschitz continuous in $\bm{\scrz}$}, &\text{otherwise},
\end{cases}
\end{equation}
where 
\begin{equation}\label{eqn:define:scru}
    \scru(\bm{\scrz},t)\coloneqq \Bigl(\sum_{\begin{subarray}~\kappa\in\mathscr{K}:\\t^{\kappa}=t,\\i^{\kappa}=i,\\s^{\kappa}=s\end{subarray}}\scrz_{\kappa} ~:~i\in[I], s\in\bS_i\Bigr).
\end{equation}
For $\orule\in\PsiZ$ and $\zeta\in\calJ$, such $\scra^{\orule}_{\zeta}$ is Lipschitz continuous in its first argument due to the Lipschitz continuity of $\alpha^{\orule}_{\zeta}$ in the first argument (guaranteed by \partialref{cond:weak_stab}{Lipschitz–limit regularity}).

For given $h\in\mathbb{N}_+$, $\bm{x}\in\mathbb{R}^{|\mathscr{K}|}$, $\bm{\xi}=(\xi_0;\xi(\kappa,\kappa',n):\kappa,\kappa'\in\mathscr{K},n\in[N_{i^{\kappa}}])\in\mathbb{R}^{|\mathscr{K}|\sum_{\kappa'\in\mathscr{K}}N_{i^{\kappa'}}+1}$, and $\kappa,\kappa'\in\mathscr{K}$, define
\begin{multline}\label{eqn:app:convergence_Z:3}
Q^{h,\Lipschitza}(\kappa,\kappa',\bm{x},\bm{\xi}) \coloneqq 
\indicator\biggl\{\sum_{\begin{subarray}~\kappa''\in \mathscr{K}:\\t^{\kappa''} <t^{\kappa}\end{subarray}}\lvert x_{\kappa''}\rvert =0,\sum_{\begin{subarray}~\kappa'' \in \mathscr{K}:\\t^{\kappa''} = t^{\kappa},\\\action^{\kappa''}=\dummyAction\end{subarray}}\lvert x_{\kappa''}\rvert=0\biggr\}\sum_{n\in\bigl[ h\bigl\lceil  \lvert x_{\kappa}\rvert/h\bigr\rceil\bigr]}\xi(\kappa,\kappa',n) \\
+ \indicator\biggl\{\sum_{\begin{subarray}~\kappa''\in \mathscr{K}:\\t^{\kappa''} <t^{\kappa}\end{subarray}}\lvert x_{\kappa''}\rvert =0,\sum_{\begin{subarray}~\kappa'' \in \mathscr{K}:\\t^{\kappa''} = t^{\kappa},\\\action^{\kappa''}=\dummyAction\end{subarray}}\lvert x_{\kappa''}\rvert>0, i^{\kappa} = i^{\kappa'},s^{\kappa}=s^{\kappa'}, \action^{\kappa} = \dummyAction, t^{\kappa}=t^{\kappa'}\biggr\}\\
\times\sum_{\begin{subarray}~\kappa''\in\mathscr{K}:\\i^{\kappa''}=i^{\kappa},\\s^{\kappa''}=s^{\kappa},\\t^{\kappa}=t^{\kappa''}\end{subarray}}\lvert x_{\kappa''}\rvert\scra^{\orule}_{\zeta^{\kappa'}}(\frac{\bm{x}}{h\sum_{i\in[I]}N_i^0},t^{\kappa'})\xi(0)
+ f^{h,\Lipschitza}_{\kappa,\kappa'}(\lvert \bm{x}\rvert,\bm{\xi}),
\end{multline}
where $\lvert \bm{x} \rvert \coloneqq (\lvert x_{\kappa}\rvert: \kappa\in\mathscr{K})$, recall $\scra^{\orule}_{\zeta}$  is the fraction of bandit processes in state $s^{\zeta}$ to take action $\action^{\zeta}$ defined in \eqref{eqn:define:alpha:x}, and if $x_{\kappa} = 0$, $\bigl[ h\lceil  x_{\kappa}/h\rceil\bigr] = \emptyset$.

We consider a class of functions $f\in\mathscr{F}$ of variables $h, \Lipschitza, \kappa, \kappa', \bm{\xi}, \lvert x\rvert$, and written as 
$f^{h,\Lipschitza}_{\kappa,\kappa'}(\lvert \bm{x}\rvert,\bm{\xi})$, which satisfy: 
\begin{enumerate}
\item $\bm{\xi}\mapsto f^{h,\Lipschitza}_{\kappa,\kappa'}(\lvert \bm{x}\rvert,\bm{\xi})$ is linear for all $h, \Lipschitza, \kappa, \kappa',\lvert x\rvert$;
\item $\Lipschitza\mapsto f^{h,\Lipschitza}_{\kappa,\kappa'}(\lvert \bm{x}\rvert,\bm{\xi})$ is continuous in $a\in (0,1)$ for all $h, \kappa, \kappa',\lvert x\rvert$;
\item For such $f\in \mathscr{F}$, we must have
$\lim_{\Lipschitza \downarrow 0} f^{h,\Lipschitza}_{\kappa,\kappa'}(\lvert \bm{x}\rvert,\bm{\xi})=0$;
\item $\lim_{\Lipschitza\downarrow0}d~ f^{h,\Lipschitza}_{\kappa,\kappa'}(\lvert\bm{x}\rvert,\bm{\xi})/d\Lipschitza = 0$;
  \item for given $\Lipschitza\in(0,1)$, $Q^{1,\Lipschitza}
  (\kappa,\kappa',\bm{x},\bm{\xi})$ is Lipschitz in $\bm{x}$ and $\bm{\xi}$ separately; that is, there exist $A_{\bm{x},\kappa,\kappa'}>0$ and $B_{\bm{\xi},\kappa, \kappa'}>0$ such that
        \begin{equation}
            \label{eq:lipsq}
\begin{aligned}
  \lvert Q^{1,\Lipschitza}(\kappa,\kappa',\bm{x},\bm{\xi})-Q^{1,\Lipschitza}(\kappa,\kappa',\bm{x},\bm{\xi'})\rvert&\le A_{\bm{x},\kappa,\kappa'}\norm{\bm{\xi}-\bm{\xi'}}\\
  \lvert Q^{1,\Lipschitza}(\kappa,\kappa',\bm{x},\bm{\xi})-Q^{1,\Lipschitza}(\kappa,\kappa',\bm{x'},\bm{\xi})\rvert &\le B_{\bm{\xi},\kappa,\kappa'}\norm{\bm{x}-\bm{x'}}.
 \end{aligned} 
\end{equation}     
\end{enumerate}

As in \cite{fu2018restless,fu2020energy,fu2024patrolling}, such functions $f^{h,\Lipschitza}_{\kappa,\kappa'}(\bm{x},\bm{\xi})$ can be constructed by incorporating the Dirac delta function.

We provide an example of such $f^{h,\Lipschitza}_{\kappa,\kappa'}$ in Appendix~\ref{app:continuity_Q}, together with proofs for Conditions 1-5.

For $h\in\mathbb{N}_+$, $\kappa\in\mathscr{K}$, $\bm{x}\in\mathbb{R}^{|\mathscr{K}|}$, $\bm{\xi}\in\mathbb{R}^{|\mathscr{K}|\sum_{\kappa'\in\mathscr{K}}N_{i^{\kappa'}}+1}$, define a function
\begin{equation}\label{eqn:app:convergence_Z:4}
b^{h,\Lipschitza}_{\kappa}(\bm{x},\bm{\xi})\coloneqq \sum_{\kappa'\in\mathscr{K}}\Bigl(Q^{h,\Lipschitza}(\kappa',\kappa,\bm{x},\bm{\xi})-Q^{h,\Lipschitza}(\kappa,\kappa',\bm{x},\bm{\xi})\Bigr),
\end{equation}
which, for $h=1$, is separate Lipschitz continuous in $\bm{x}$ and $\bm{\xi}$.
Let $b^{h,\Lipschitza}(\bm{x},\bm{\xi})\coloneqq \bigl(b^{h,\Lipschitza}_{\kappa}(\bm{x},\bm{\xi}):\kappa\in\mathscr{K}\bigr)$, 
and, based on \eqref{eqn:app:convergence_Z:3} and \eqref{eqn:app:convergence_Z:4}, there exists matrix 
$\tilde{\mathcal{Q}}^{h,\Lipschitza}(\bm{x})\in\mathbb{R}^{|\mathscr{K}|\times\bigl(|\mathscr{K}|\sum_{\kappa'\in\mathscr{K}}N_{i^{\kappa'}}\bigr)}$ such that $b^{h,\Lipschitza}(\bm{x},\bm{\xi})=\tilde{\mathcal{Q}}^{h,\Lipschitza}(\bm{x})\bm{\xi}$.

For $U\in\mathbb{R}_0\cup\{\infty\}$, we define an adapted version of $b^h(\bm{x},\bm{\xi})$ as
\begin{equation}\label{eqn:app:convergence_Z:5}
b^{h,\Lipschitza,U}(\bm{x},\bm{\xi})\coloneqq b^{h,\Lipschitza,U}\bigl(\bm{x},\min\bigl\{\bm{\xi},U\bigr\}\bigr)=\tilde{\mathcal{Q}}^{h,\Lipschitza}(\bm{x})\min\{\bm{\xi},U\},
\end{equation}
where $\min\bigl\{\bm{\xi},U\bigr\} \coloneqq \bigl(\min\{\xi(\kappa,\kappa',n),U\}: \kappa,\kappa'\in\mathscr{K},n\in[N_{i^{\kappa}}]\bigr)$.
Let $b^{\Lipschitza,U}(\bm{x},\bm{\xi})$ represent the special case $b^{h,\Lipschitza,U}(\bm{x},\bm{\xi})$ with $h=1$.
It is obvious that $b^{h,\Lipschitza,\infty}(\bm{x},\bm{\xi})\coloneqq\lim_{U\rightarrow \infty}b^{h,\Lipschitza,U}(\bm{x},\bm{\xi}) = b^{h,\Lipschitza}(\bm{x},\bm{\xi})$.

\begin{lemma}\label{lemma:continuity_bU}
For $\Lipschitza\in(0,1)$, and $U\in\mathbb{R}_0$,     $b^{\Lipschitza,U}(\bm{x},\bm{\xi})$ is jointly Lipschitz continuous in $\bm{x}$ and $\bm{\xi}$.
\end{lemma}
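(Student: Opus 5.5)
The plan is to exploit the bilinear structure $b^{\Lipschitza,U}(\bm{x},\bm{\xi})=\tilde{\mathcal{Q}}^{1,\Lipschitza}(\bm{x})\min\{\bm{\xi},U\}$ from \eqref{eqn:app:convergence_Z:5}. The truncation bounds the $\bm{\xi}$-argument, and separate Lipschitz continuity of $Q^{1,\Lipschitza}$ in \eqref{eq:lipsq} supplies the rest. Joint Lipschitz continuity then follows from the triangle inequality
\begin{multline*}
\bigl\lVert b^{\Lipschitza,U}(\bm{x},\bm{\xi})-b^{\Lipschitza,U}(\bm{x}',\bm{\xi}')\bigr\rVert \leq \bigl\lVert b^{\Lipschitza,U}(\bm{x},\bm{\xi})-b^{\Lipschitza,U}(\bm{x}',\bm{\xi})\bigr\rVert\\
+\bigl\lVert b^{\Lipschitza,U}(\bm{x}',\bm{\xi})-b^{\Lipschitza,U}(\bm{x}',\bm{\xi}')\bigr\rVert,
\end{multline*}
provided each term is bounded with a constant \emph{uniform} in the frozen variable. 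Separate Lipschitz continuity with constants $A_{\bm{x},\kappa,\kappa'}$ and $B_{\bm{\xi},\kappa,\kappa'}$ that depend on the frozen variable is not enough; uniformity is the whole point of the lemma.

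\textbf{Step 1 ($\bm{\xi}$-direction, uniform in $\bm{x}$).} By linearity of $\bm{\xi}\mapsto Q^{1,\Lipschitza}(\kappa,\kappa',\bm{x},\bm{\xi})$ (Condition 1 on $f$, and direct inspection of the first two summands of \eqref{eqn:app:convergence_Z:3}), the difference in $\bm{\xi}$ equals $\tilde{\mathcal{Q}}^{1,\Lipschitza}(\bm{x})(\min\{\bm{\xi},U\}-\min\{\bm{\xi}',U\})$. The map $\bm{\xi}\mapsto\min\{\bm{\xi},U\}$ is $1$-Lipschitz coordinatewise. It therefore suffices to bound the entries of $\tilde{\mathcal{Q}}^{1,\Lipschitza}(\bm{x})$ uniformly over $\bm{x}$.
\begin{itemize}
\item With $h=1$, the first summand is $\sum_{n\in[\lceil|x_\kappa|\rceil]}\xi(\kappa,\kappa',n)$, whose coefficients are indicators in $\{0,1\}$ and only involve $n\le N_{i^\kappa}$ coordinates of $\bm\xi$.
\item The second summand has coefficient $\sum|x_{\kappa''}|\,\scra^{\orule}\le\sum|x_{\kappa''}|$, since $\scra^{\orule}\in[0,1]$.
\item The coefficients of $f^{1,\Lipschitza}$ are handled via Condition 5 / the explicit construction in Appendix~\ref{app:continuity_Q}.
\end{itemize}
The second coefficient is not bounded over all $\bm{x}$. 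I will therefore restrict to the relevant bounded domain of $\bm{x}$ (the process only visits $|x_\kappa|\le \sum_i N_i$). Alternatively, I will swap roles: for that summand, use the bound $\min\{\bm\xi,U\}\le U$ in Step 2 instead. This is the key use of finite $U$.

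\textbf{Step 2 ($\bm{x}$-direction, uniform in $\bm{\xi}$).} For fixed truncated $\bm{\xi}$, each entry of $\min\{\bm\xi,U\}$ lies in $[0,U]$. The $\bm{x}$-dependence consists of three parts.
\begin{itemize}
\item The indicator and counting parts of the first summand.
\item The product $\bigl(\sum|x_{\kappa''}|\bigr)\scra^{\orule}_{\zeta^{\kappa'}}(\bm{x}/\sum_iN_i^0,t)$. This is Lipschitz in $\bm x$ on bounded sets because $\scra^{\orule}$ is Lipschitz (Lipschitz-limit regularity, via \eqref{eqn:define:alpha:x}) and bounded by 1.
\item The $f$-term.
\end{itemize}
Condition 5 gives a constant $B_{\bm\xi,\kappa,\kappa'}$. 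I would inspect its dependence: it is linear in $\bm\xi$, because $Q$ is linear in $\bm\xi$ and the $x$-derivative of each coefficient is bounded. Hence $B_{\bm\xi,\kappa,\kappa'}\le c\,(1+\lVert\bm\xi\rVert)$, and after truncation $B\le c(1+U)$.

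\textbf{Main obstacle.} The hard part is the discontinuous pieces in $\bm{x}$: the indicators in \eqref{eqn:app:convergence_Z:3} and the integer count $\lceil |x_\kappa|\rceil$. These are not Lipschitz on their own; the smoothing function $f^{h,\Lipschitza}$ exists precisely to compensate them. I expect to rely directly on the construction of Appendix~\ref{app:continuity_Q}, which makes $Q^{1,\Lipschitza}$ Lipschitz in $\bm x$ for fixed $\Lipschitza\in(0,1)$. The task is then to check that the resulting constant scales at most linearly in the $\bm\xi$ entries, which is automatic since every term is linear in $\bm\xi$. Summing over the finitely many $\kappa'\in\mathscr{K}$ in \eqref{eqn:app:convergence_Z:4} gives a joint Lipschitz constant of order $|\mathscr{K}|\,c\,(1+U)$.
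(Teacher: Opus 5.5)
Your proposal is correct and follows the paper's argument. You write $b^{\Lipschitza,U}(\bm{x},\bm{\xi})=\tilde{\mathcal{Q}}^{1,\Lipschitza}(\bm{x})\min\{U,\bm{\xi}\}$ and add and subtract a mixed term. You then bound the $\bm{\xi}$-increment by a bound on $\tilde{\mathcal{Q}}^{1,\Lipschitza}$, and the $\bm{x}$-increment by its Lipschitz constant (inherited from the separate Lipschitz continuity of Appendix~\ref{app:continuity_Q}) times the truncation level $U$.

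You are in fact more careful than the paper, which simply asserts that $\tilde{\mathcal{Q}}^{1,\Lipschitza}(\bm{x})$ is bounded even though the $\xi(0)$-coefficient $\sum\lvert x_{\kappa''}\rvert\scra^{\orule}$ grows with $\bm{x}$. Restricting to a bounded $\bm{x}$-domain with $\bm{\xi}\geq 0$ is the right fix, and it is all the compact-set argument of Appendix~\ref{app:existence_ODE} needs. Your ``swap roles'' alternative would not help, though: in any splitting, $\tilde{\mathcal{Q}}^{1,\Lipschitza}$ evaluated at some point multiplies the $\bm{\xi}$-increment.
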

The proof of Lemma~\ref{lemma:continuity_bU} is provided in Appendix~\ref{app:continuity_bU}.

For the special case with $h=1$, $\Lipschitza \in(0,1)$, and $U\in\mathbb{R}_0$, we construct a trajectory $\bm{X}^{\sigma,\Lipschitza,U}_{\tau}$ on $\tau\geq 0$ that satisfies
\begin{equation}\label{eqn:app:convergence_Z:6}
\dot{\bm{X}}^{\sigma,\Lipschitza,U}_\tau = b^{\Lipschitza,U}(\bm{X}^{\sigma,\Lipschitza,U}_{\tau},\bm{\xi}^{1}_{\tau/\sigma}),
\end{equation}
with given $\bm{X}^{\sigma,\Lipschitza,U}_0=\bm{x}_0\coloneqq (x_{\kappa,0}:\kappa\in\mathscr{K})$, where  $\bm{\xi}^1_{\tau}$ is the special case of $\bm{\xi}^{h}_{\tau}$ with $h=1$.
\begin{lemma}\label{lemma:existence_ODE}
For any given $\bm{x}_0\in\mathbb{R}^{|\mathscr{K}|}_0$, $\bar{T}>0$, $\Lipschitza \in(0,1)$, and $U\in\mathbb{R}_0$, the solution to \eqref{eqn:app:convergence_Z:6} uniquely exists in $\tau\in[0,\bar{T}]$.
\end{lemma}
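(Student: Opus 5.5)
Our plan is to treat \eqref{eqn:app:convergence_Z:6} as a non-autonomous ODE on $[0,\bar{T}]$ with right-hand side $F(\tau,\bm{x})\coloneqq b^{\Lipschitza,U}(\bm{x},\bm{\xi}^1_{\tau/\sigma})$. We then apply the Carath\'eodory version of the Picard--Lindel\"of theorem, pathwise for each fixed realization of the driving process $\bm{\xi}^1$. Continuity in $\tau$ fails, because $\tau\mapsto\bm{\xi}^1_{\tau/\sigma}$ is only cadlag. So we work with the integral equation
\[
\bm{X}_\tau=\bm{x}_0+\int_0^\tau b^{\Lipschitza,U}(\bm{X}_u,\bm{\xi}^1_{u/\sigma})\,du
\]
and take absolutely continuous solutions in the Carath\'eodory sense.

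The first step is measurability and boundedness of $F$ in $\tau$. For every fixed $\bm{x}$, the map $\tau\mapsto F(\tau,\bm{x})$ is measurable. This holds because $\bm{\xi}^1_{\tau/\sigma}$ is cadlag with finitely many jumps on $[0,\bar{T}]$, almost surely, since the Poisson processes have finitely many events on bounded intervals. It is also bounded, because the truncation $\min\{\bm{\xi},U\}$ keeps every random coordinate in $[0,U]$ and $\xi_\tau(0)\in\{0,1\}$.

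The second step is a Lipschitz bound in $\bm{x}$. By Lemma~\ref{lemma:continuity_bU}, $b^{\Lipschitza,U}$ is jointly Lipschitz with some constant $K$. Hence $\norm{F(\tau,\bm{x})-F(\tau,\bm{x}')}\le K\norm{\bm{x}-\bm{x}'}$ uniformly in $\tau$. It also gives linear growth, $\norm{F(\tau,\bm{x})}\le\norm{b^{\Lipschitza,U}(\bm{0},\bm{\xi}^1_{\tau/\sigma})}+K\norm{\bm{x}}\le C_0+K\norm{\bm{x}}$. The constant $C_0$ is finite because the truncated argument ranges over a compact set and $b^{\Lipschitza,U}$ is continuous.

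Existence then follows by Picard iteration in $C([0,\bar{T}];\mathbb{R}^{|\mathscr{K}|})$, equipped with the weighted norm $\sup_\tau e^{-2K\tau}\norm{\cdot}$. In this norm the integral operator is a contraction with factor $1/2$ on the whole interval, so a global solution exists with no need to patch local solutions together. Uniqueness follows from Gr\"onwall's inequality applied to the difference of two solutions, which is bounded by $K\int_0^\tau\norm{\bm{X}_u-\bm{X}'_u}du$. Gr\"onwall also shows there is no blow-up: $\norm{\bm{X}_\tau}\le(\norm{\bm{x}_0}+C_0\bar{T})e^{K\bar{T}}$.

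The main obstacle is making sure Lemma~\ref{lemma:continuity_bU} really delivers a Lipschitz constant that is uniform in $\bm{x}$ on all of $\mathbb{R}^{|\mathscr{K}|}$. Two features of $Q^{1,\Lipschitza}$ could break this. The first is the indicator terms in \eqref{eqn:app:convergence_Z:3}. The second is the factor $\lvert x_{\kappa''}\rvert\scra^{\orule}$, which grows with $\bm{x}$. If only local Lipschitz continuity is available, we would first get a unique local solution. We would then extend it using the a priori bound from step two, which still holds if the growth is only linear, confining the trajectory to a ball on $[0,\bar{T}]$. If even linear growth were in doubt, we would use the structure of $b$ as a flux difference (out of $\kappa$ versus into $\kappa$) to show that $\sum_\kappa X_\kappa$ is conserved. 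Combined with nonnegativity of the solution, this gives boundedness of the trajectory.
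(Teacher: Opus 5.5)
Your argument is correct and is essentially the paper's. The paper checks the Carath\'eodory conditions (measurability in $\tau$, continuity and a Lipschitz bound in $\bm{x}$ via Lemma~\ref{lemma:continuity_bU}, and a majorant from the truncation $\min\{U,\bm{\xi}\}$), then cites Filippov's Chapter~1 existence and Lipschitz-uniqueness theorems, which your weighted-norm Picard iteration and Gr\"onwall step prove by hand; the one real difference favors you, since the paper takes $m(\tau)\equiv M$ with $M$ a bound on $\lVert b^{\Lipschitza,U}(\bm{x},\bm{\xi})\rVert$ uniform in $\bm{x}$, which is not evident because $Q^{1,\Lipschitza}$ contains the term $\lvert x_{\kappa''}\rvert\,\scra^{\orule}_{\zeta^{\kappa'}}(\cdot)\,\xi(0)$ growing linearly in $\bm{x}$, whereas your linear-growth estimate (or your local-Lipschitz-plus-a-priori-bound fallback) covers all of $[0,\bar{T}]$ without that assumption.
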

The proof for Lemma~\ref{lemma:existence_ODE} is provided in Appendix~\ref{app:existence_ODE}.

Recall that $b^{h,\Lipschitza,U}(\bm{x},\bm{\xi}) = \tilde{\mathcal{Q}}^{h,\Lipschitza}(\bm{x})\min\{U,\bm{\xi})$, where $\tilde{\mathcal{Q}}^{h,\Lipschitza}(\bm{x})$ is independent from $\bm{\xi}$.
It is relatively straightforward to see 
\begin{equation}\label{eqn:app:existence_ODE_limit:2}
\lim_{a\downarrow 0}\lim_{U\to \infty} \tilde{\mathcal{Q}}^{h,\Lipschitza}(\bm{x}) \min\{U,\bm{\xi}\}
= \lim_{a\downarrow 0}\tilde{\mathcal{Q}}^{h,\Lipschitza}(\bm{x}) \bm{\xi} = \lim_{U\to \infty} \lim_{a\downarrow 0}\tilde{\mathcal{Q}}^{h,\Lipschitza}(\bm{x}) \min\{U,\bm{\xi}\}.
\end{equation}
We define 
\begin{equation}\label{eqn:app:existence_ODE_limit:3}
\beta^h(\bm{x},\bm{\xi}) \coloneqq \lim_{\begin{subarray}~a\downarrow 0\\U\to \infty\end{subarray}}b^{h,\Lipschitza,U}(\bm{x},\bm{\xi}) = \lim_{a\downarrow 0}\tilde{\mathcal{Q}}^{h,\Lipschitza}(\bm{x}) \bm{\xi}\eqqcolon \tilde{\Theta}^h(\bm{x})\bm{\xi},
\end{equation}
and
\begin{equation}\label{eqn:app:existence_ODE_limit:5}
    \beta(\bm{x},\bm{\xi}) \coloneqq \beta^h(\bm{x},\bm{\xi})|_{h=1} =\tilde{\Theta}^h(\bm{x})\bigl|_{h=1}\bm{\xi} \eqqcolon \tilde{\Theta}(\bm{x})\bm{\xi}.
\end{equation}

Let $\bm{X}^{\sigma}_{\tau}$ represent the solution to 
\begin{equation}\label{eqn:app:existence_ODE_limit:7}
\dot{\bm{X}}^{\sigma}_{\tau} = \beta(\bm{X}^{\sigma}_{\tau},\bm{\xi}^{1}_{\tau/\sigma})=\tilde{\Theta}(\bm{X}^{\sigma}_{\tau})\bm{\xi}^{1}_{\tau/\sigma},  
\end{equation}
with initial condition $\bm{X}^{\sigma}_0 = \bm{x}_0$.

\begin{lemma}\label{lemma:existence_ODE_limit:h=1}
For any given $\bm{x}_0\in\mathbb{R}_0^{|\mathscr{K}|}$, and $\bar{T}>0$, there exists a unique solution $\bm{X}^{\sigma}_{\tau}$ for $\tau\in[0,\bar{T}]$ to \eqref{eqn:app:existence_ODE_limit:7}.
\end{lemma}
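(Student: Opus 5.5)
The natural route is Carathéodory existence and uniqueness for an ODE driven by a piecewise-continuous input. The right-hand side of \eqref{eqn:app:existence_ODE_limit:7} is $\tilde{\Theta}(\bm{x})\bm{\xi}^1_{\tau/\sigma}$, where $\bm{\xi}^1_{\tau/\sigma}$ is a fixed cadlag path. By the construction in \eqref{eq:xi_definition_new}–\eqref{eq:xi_definition_new:deter}, with probability one this path is bounded and has finitely many jumps on $[0,\bar{T}]$.

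The plan is to show that $\bm{x}\mapsto\tilde{\Theta}(\bm{x})$ is bounded and Lipschitz on the relevant domain. Then $\beta(\bm{x},\bm{\xi}^1_{\tau/\sigma})$ is measurable in $\tau$, Lipschitz in $\bm{x}$ with an integrable constant $L(\tau)=\mathrm{Lip}(\tilde\Theta)\,\norm{\bm{\xi}^1_{\tau/\sigma}}$, and of linear growth. Standard Picard iteration on each inter-jump interval then gives a unique absolutely continuous solution, and concatenating finitely many intervals covers $[0,\bar{T}]$.

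\textbf{Step 1: a uniform bound on $\bm{\xi}^1_{\tau/\sigma}$.} On $[0,\bar T/\sigma]$ the Poisson processes have almost surely finitely many events. Each nonzero $\xi_\tau(\kappa,\kappa',n)$ equals $1/(\tau_t(m)-\tau_t(m-1))$, which is finite, and $\xi_\tau(0)\in\{0,1\}$. Hence $\sup_{\tau\le\bar T}\norm{\bm{\xi}^1_{\tau/\sigma}}<\infty$ almost surely.

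\textbf{Step 2: Lipschitz continuity of $\tilde\Theta$.} I would pass to the limit $\Lipschitza\downarrow0$, $U\to\infty$ in \eqref{eqn:app:existence_ODE_limit:3} with $h=1$, splitting $Q^{1,\Lipschitza}$ into its three summands in \eqref{eqn:app:convergence_Z:3}.
\begin{itemize}
\item The $f$-term vanishes by conditions 3–4 on $\mathscr{F}$.
\item The second summand is $\sum\lvert x_{\kappa''}\rvert\,\scra^{\orule}_{\zeta^{\kappa'}}(\bm{x}/\sum_i N^0_i,t)$. This is Lipschitz because $\scra^\orule$ is Lipschitz and bounded in $[0,1]$, by \partialref{cond:weak_stab}{Lipschitz–limit regularity} and \eqref{eqn:define:alpha:x}, and because products of bounded Lipschitz functions on bounded sets are Lipschitz.
\end{itemize}
The indicator factors are the delicate part. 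Lemma~\ref{lemma:continuity_bU} supplies Lipschitz continuity only for $\Lipschitza>0$, presumably because $f^{h,\Lipschitza}$ smooths these discontinuities.

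To handle the limit I would not prove global Lipschitz continuity of $\tilde\Theta$. Instead I would work along the trajectory, using its piecewise structure.
\begin{itemize}
\item On each unit interval the silent/active alternation in $\scrR$ fixes which indicator is on.
\item While $\xi_\tau(0)=1$, only the dummy-action redistribution is active. Within a time layer $t$ this is an affine ODE in $\bm{x}$ with a Lipschitz coefficient.
\item While $\tau\notin\scrR$, only the transition summand acts. It depends on $\bm{x}$ through $\lceil\lvert x_\kappa\rvert\rceil$, which is piecewise constant, so $\dot{\bm{X}}$ is constant between hitting times of integer levels.
\end{itemize}

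\textbf{Step 3: uniqueness, and the main obstacle.} The obstacle is the discontinuity of the indicators in \eqref{eqn:app:convergence_Z:3} at $\sum_{t^{\kappa''}<t^\kappa}\lvert x_{\kappa''}\rvert=0$ and of $\lceil\cdot\rceil$ at integers. At such points solutions could a priori branch.

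I would argue that the trajectory crosses each such surface transversally, or reaches it only at isolated times. There are only finitely many crossings, since each crossing consumes a unit of mass. Forward uniqueness on each piece then follows from Picard–Lindelöf, applied with the right-continuous choice of the indicator.

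As a fallback I would obtain the solution as the limit of the unique solutions $\bm{X}^{\sigma,\Lipschitza,U}$ from Lemma~\ref{lemma:existence_ODE}. These are equicontinuous because the drift is bounded by Step 1. Arzelà–Ascoli would then give existence, and uniqueness would follow from a Gronwall estimate on each piece where the drift is Lipschitz.
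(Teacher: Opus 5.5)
Once you drop the global-Lipschitz plan and work along the trajectory, your argument is essentially the paper's: split $[0,\bar T]$ into silent ($\tau/\sigma\in\scrR$) and non-silent pieces, note that the drift is piecewise constant on non-silent pieces and is the dummy-action redistribution on silent ones, and concatenate finitely many unique pieces using boundedness of $\beta$. Two small refinements. First, the paper needs no Picard--Lindel\"of on silent pieces, because each block sum $\scru(\cdot,t)$ is conserved there, so $\scra^{\orule}$ and hence the drift are actually constant. Second, several switching surfaces (an emptied earlier layer, exhausted dummy mass, an integer level at which the active index shuts off) are reached and then held rather than crossed transversally, so what rules out branching is that each switch is irreversible, not transversality.
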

The proof of Lemma~\ref{lemma:existence_ODE_limit:h=1} is provided in Appendix~\ref{app:existence_ODE_limit:h=1}.

From \eqref{eqn:app:convergence_Z:3} and \eqref{eqn:app:convergence_Z:4}, for any $t\in[T]$, $\tau \geq 0$, and $\kappa\in\mathscr{K}$ with $t^{\kappa} = t$, 
\begin{itemize}
    \item if $\sum_{\kappa'\in\mathscr{K}:t^{\kappa'} = t}X^{\sigma}_{\kappa',\tau} = 0$, then for all $\kappa'\in\mathscr{K}$, $\lim_{a\downarrow 0}Q^{h,\Lipschitza}(\kappa,\kappa',\bm{X}^{\sigma}_{\tau},\bm{\xi}) = 0$; and
    \item if $\sum_{\kappa'\in\mathscr{K}:t^{\kappa'} = t-1}X^{\sigma}_{\kappa',\tau} = 0$, then for all $\kappa'\in\mathscr{K}$, $\lim_{a\downarrow 0}Q^{h,\Lipschitza}(\kappa',\kappa,\bm{X}^{\sigma}_{\tau},\bm{\xi}) = 0$.
\end{itemize}
In other words, if $\sum_{\kappa'\in\mathscr{K}: t^{\kappa'} \leq t}X^{\sigma}_{\kappa',\tau} = 0$, $\beta^h(\bm{X}^{\sigma}_{\tau'},\bm{\xi}^{h}_{\tau'/\sigma}) = 0$ for all $\tau' \geq \tau$, and hence $\sum_{\kappa'\in\mathscr{K}: t^{\kappa'} \leq t}X^{\sigma}_{\kappa',\tau'} = 0$ for all $\tau'\geq \tau$.
There exists a minimum $\tau$, for all $t\in[T]$, such that 
$\sum_{\kappa'\in\mathscr{K}: t^{\kappa'} \leq t}X^{\sigma}_{\kappa',\tau'} = 0$ for all $\tau'\geq \tau$.
For $t\in[T]$, we define 
\begin{equation}\label{eqn:app:convergence_Z:7}
    \tau^{\sigma}(t) \coloneqq \inf\Bigl\{\tau\in\mathbb{R}_0~\Bigl|~\sum_{\kappa\in\mathscr{K}:t^{\kappa}<t
    }X^{\sigma}_{\kappa,\tau} = 0\Bigr\},
\end{equation}
and $\tau^{\sigma}(0)=0$.

\begin{lemma}\label{lemma:convergence_b}
For any $U\in\mathbb{R}_0\cup\{\infty\}$, $\delta>0$, $\tau\in\mathbb{R}_0$, and $\bm{x}\in\mathbb{R}_0^{|\mathscr{K}|}$, there exists $\bar{b}^{\Lipschitza,U}(\bm{x})\coloneqq\mathbb{E}b^{\Lipschitza,U}(\bm{x},\bm{\xi}^{1}_{\tau}) = \tilde{\mathcal{Q}}^{1,\Lipschitza}(\bm{x})\mathbb{E}\min\{U,\bm{\xi}^{1}_{\tau}\}$
such that
\begin{equation}\label{eqn:lemma:convergence_b}
\lim_{\bar{T}\rightarrow +\infty}\mathbb{P}\biggl\{\Bigl\lVert \frac{1}{\bar{T}}\int_{\tau}^{\tau+\bar{T}}b^{\Lipschitza,U}(\bm{x},\bm{\xi}^{1}_{\tau'}) d\tau' -\bar{b}^{\Lipschitza,U}(\bm{x})\Bigr\rVert > \delta\biggr\}=0,
\end{equation}
uniformly for all $\tau\geq 0$. We recall that, since $\bm{\xi}^{1}_{\tau}$ is identically distributed for all $\tau\geq 0$, $\bar{b}^{\Lipschitza,U}(\bm{x})$ is independent for $\tau\geq 0$.
\end{lemma}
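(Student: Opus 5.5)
Since $b^{\Lipschitza,U}(\bm{x},\bm{\xi}) = \tilde{\mathcal{Q}}^{1,\Lipschitza}(\bm{x})\min\{U,\bm{\xi}\}$ is linear in its second argument and the matrix $\tilde{\mathcal{Q}}^{1,\Lipschitza}(\bm{x})$ does not depend on $\tau'$, the statement reduces to a weak law of large numbers for each coordinate of the truncated noise process $\tau'\mapsto\min\{U,\bm{\xi}^1_{\tau'}\}$. Concretely, I will write
\[
\frac{1}{\bar{T}}\int_{\tau}^{\tau+\bar{T}}b^{\Lipschitza,U}(\bm{x},\bm{\xi}^{1}_{\tau'})\, d\tau' -\bar{b}^{\Lipschitza,U}(\bm{x}) = \tilde{\mathcal{Q}}^{1,\Lipschitza}(\bm{x})\Bigl(\frac{1}{\bar{T}}\int_{\tau}^{\tau+\bar{T}}\min\{U,\bm{\xi}^{1}_{\tau'}\}\, d\tau' - \mathbb{E}\min\{U,\bm{\xi}^1_{0}\}\Bigr).
\]
The vector $\bm{\xi}^1$ has finitely many coordinates. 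So it suffices, for each coordinate $\xi_{\tau'}(\kappa,\kappa',n)$ and for $\xi_{\tau'}(0)$, to show that the time average converges in probability to its mean, uniformly in $\tau$. The deterministic coordinate $\xi_{\tau'}(0)=\indicator\{\tau'\in\scrR\}$ is periodic with period $2$, so its average over $[\tau,\tau+\bar{T}]$ differs from $1/2$ by at most $O(1/\bar{T})$, uniformly in $\tau$. The case $U=\infty$ I handle at the end.

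For a random coordinate, I will exploit the renewal structure. By \eqref{eq:xi_definition_new}, $\xi_{\tau'}(\kappa,\kappa',n)$ restricted to a unit interval $[k,k+1)$ is a functional of the independent Poisson processes $M_t(\cdot)$ on that interval. Specifically, it depends on the event times in $[k,k+1)$ and on the last event time before $k$, and is truncated at $\lfloor\tau'\rfloor$. Moreover, it vanishes on $\scrR$, so it is zero on every $[2j,2j+1)$. I will set $W_k\coloneqq \int_k^{k+1}\min\{U,\xi_{\tau'}(\kappa,\kappa',n)\}\,d\tau'$. Because of the silent even intervals and the truncation at $\lfloor\tau'\rfloor$, $W_k$ depends only on the Poisson increments in $[k,k+1)$, together with the most recent event time clipped to $k$. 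This makes $\{W_k\}$ a stationary sequence with at most one-step dependence, and in fact independent across odd $k$. Each $W_k$ is bounded by $U$ for finite $U$. A Chebyshev bound then gives $\mathrm{Var}\bigl(\sum_{k=\lceil\tau\rceil}^{\lfloor\tau+\bar{T}\rfloor}W_k\bigr)\le C U^2\bar{T}$, and the two fractional boundary pieces contribute at most $2U/\bar{T}$. Hence the probability in \eqref{eqn:lemma:convergence_b} is at most $C'U^2/(\delta^2\bar{T})$, for a constant independent of $\tau$. This gives the claimed uniformity. For the identification of the limit with $\mathbb{E}\min\{U,\bm{\xi}^1_{\tau}\}$, I will use the stationarity asserted after \eqref{eq:xi_definition_new:deter}, namely that the marginals of $\bm{\xi}^1_{\tau}$ do not depend on $\tau$ within the relevant residue class, combined with Fubini: $\mathbb{E}W_k=\int_k^{k+1}\mathbb{E}\min\{U,\xi_{\tau'}\}\,d\tau'$.

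For $U=\infty$, I will truncate. First, $\mathbb{E}\xi_{\tau'}(\kappa,\kappa',n)<\infty$: by \eqref{eq:xi_integral}, the integral of $\xi$ over $[k,k+1)$ is bounded by the number of Poisson events in a unit window plus one, and this quantity has all moments. So $W_k$ has finite variance even without truncation, and the same Chebyshev argument applies directly. Alternatively, I can choose $U$ large enough that $\mathbb{E}\bigl[\xi-\min\{U,\xi\}\bigr]<\delta/3$ and apply Markov's inequality to the tail average.

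The step I expect to be delicate is verifying the independence (or one-dependence) and the finite second moment of the blocks $W_k$. The definition of $\xi$ divides by inter-event gaps $\tau(m)-\tau(m-1)$ that can straddle block boundaries, so I will need to check carefully that the $\max\{\cdot,\lfloor\tau'\rfloor\}$ clipping confines each block's contribution to that block's own Poisson increments.
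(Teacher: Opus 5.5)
Your linearity reduction is the one the paper uses. Your block/Chebyshev plan is a more careful, quantitative version of the paper's bare appeal to the law of large numbers. Two steps fail as written, however.

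\emph{Identification of the limit.} The limit is not $\mathbb{E}\min\{U,\bm{\xi}^1_{\tau}\}$. The process $\bm{\xi}^1_{\tau}$ is not identically distributed in $\tau$, for three reasons:
\begin{enumerate}
\item $\xi_{\tau}(0)=\indicator\{\tau\in\scrR\}$ is deterministic.
\item Every random coordinate vanishes on $\scrR$.
\item The age of the current gap is truncated at $\tau$ because $\tau_t(0)=0$.
\end{enumerate}
Your own computation gives $\tfrac12$ as the limit of the $\xi(0)$-average, whereas the mean at a fixed $\tau$ is $0$ or $1$. Fubini only yields block averages $\mathbb{E}W_k=\int_k^{k+1}\mathbb{E}\min\{U,\xi_{\tau'}\}\,d\tau'$.

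What your argument actually proves is convergence to $\tilde{\mathcal{Q}}^{1,\Lipschitza}(\bm{x})\bar{\bm{\lambda}}^U$, where $\bar{\bm{\lambda}}^U\coloneqq\lim_{\bar{T}\to\infty}\frac{1}{\bar{T}}\int_0^{\bar{T}}\mathbb{E}\min\{U,\bm{\xi}^1_{\tau'}\}\,d\tau'$. For $U=\infty$ this is $\tfrac12$ in the $\xi(0)$ slot and half the corresponding Poisson rate in each active random slot, which is what the factors $\tfrac12$ in \eqref{eqn:lemma:integral_H:4} encode.

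In general $\tilde{\mathcal{Q}}^{1,\Lipschitza}(\bm{x})$ has a nonzero $\xi(0)$-column (the control-period term of \eqref{eqn:app:convergence_Z:3}). So the lemma is false if $\bar{b}^{\Lipschitza,U}(\bm{x})$ is read as the mean at a fixed $\tau$. You must state the redefinition $\bar{b}^{\Lipschitza,U}(\bm{x})\coloneqq\tilde{\mathcal{Q}}^{1,\Lipschitza}(\bm{x})\bar{\bm{\lambda}}^U$ explicitly rather than cite the stationarity assertion. The paper's proof has the same defect, hidden in its claim that $\bm{\xi}^1_{\tau}$ is identically distributed.

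\emph{Block independence.} The one-dependence you planned to verify does not hold. In \eqref{eq:xi_definition_new} the clipping $\max\{\tau_{t^{\kappa}}(m-1),\lfloor\tau\rfloor\}$ is vacuous: since $\lfloor\tau\rfloor\le\tau$, the indicator reduces to $\tau\ge\tau_{t^{\kappa}}(m-1)$. In any case the denominator is the full gap $\tau_{t^{\kappa}}(m)-\tau_{t^{\kappa}}(m-1)$. Let $\varrho$ be the total rate of $M_{t^{\kappa}}$. When the silent interval $[k+1,k+2)$ contains no event (probability $e^{-\varrho}$), a single gap enters both $W_k$ and $W_{k+2}$.

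What rescues the argument is mixing, not independence:
\begin{enumerate}
\item $W_k$ is a function of the marked points on $[0,k+1)$ together with the first point after $k+1$.
\item On the event that both halves of $[k+1,k+2j)$ contain a point, $W_k$ and $W_{k+2j}$ depend on the process over disjoint sets. This gives $|\mathrm{Cov}(W_k,W_{k+2j})|\le 6U^2e^{-\varrho(j-1/2)}$.
\item $\mathbb{E}W_k$ converges exponentially fast along odd $k$.
\end{enumerate}
Together these give $\mathrm{Var}\bigl(\sum_k W_k\bigr)\le CU^2\bar{T}$ and a window mean within $O(1/\bar{T})$ of the limit, both uniformly in $\tau$, so your Chebyshev bound survives.

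For $U=\infty$ your truncation-plus-Markov route is the cleaner one, since $\mathbb{E}[\xi_{\tau'}-\min\{U,\xi_{\tau'}\}]=O(1/U)$ uniformly in $\tau'$. Derive the bound $\int_k^{k+1}\xi\,d\tau'\le(\text{number of events in }[k,k+1))+1$ directly from \eqref{eq:xi_definition_new}: each gap meeting the block contributes at most $1$. Do not take it from \eqref{eq:xi_integral}, which counts a gap straddling $\scrR$ as $0$ or $1$ instead of as the fraction of it lying outside $\scrR$.
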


\proof{Proof of Lemma~\ref{lemma:convergence_b}.}
Let $\bar{b}^{\Lipschitza,U}(\bm{x})= \tilde{\mathcal{Q}}^{1,\Lipschitza}(\bm{x})\bm{\lambda}^U$ for $\bm{\lambda}^U \coloneqq \mathbb{E}\min\{U,\bm{\xi}^{1}_{\tau}\}$.
For any $U<\infty$, $\delta>0$, $\tau,\bar{T}\in\mathbb{R}_0$, and $\bm{x}\in\mathbb{R}_0^{|\mathscr{K}|}$,
\begin{multline}\label{eqn:lemma:convergence_b:1}
\mathbb{P}\biggl\{\Bigl\lVert \frac{1}{\bar{T}}\int_{\tau}^{\tau+\bar{T}}b^{\Lipschitza,U}(\bm{x},\bm{\xi}^{1}_{\tau'}) d\tau' -\bar{b}^{\Lipschitza,U}(\bm{x})\Bigr\rVert > \delta\biggr\} \\
=\mathbb{P}\biggl\{\Bigl\lVert \tilde{\mathcal{Q}}^{1,\Lipschitza}(\bm{x})\Bigl(\frac{1}{\bar{T}}\int_{\tau}^{\tau+\bar{T}}\min\bigl\{\bm{\xi}^{1}_{\tau'},U\bigr\} d\tau' -\bm{\lambda}^U\Bigr)\Bigr\rVert> \delta\biggr\}.
\end{multline}
Since $\bm{\xi}^{1}_{\tau}$ is identically distributed for all $\tau \geq 0$, 
\[\Bigl\lVert\frac{1}{\bar{T}}\int_{\tau}^{\tau+\bar{T}}\min\{U,\bm{\xi}^{1}_{\tau'}\} d\tau' -\bm{\lambda}^U\Bigr\rVert
 \sim \Bigl\lVert\frac{1}{\bar{T}}\int_{0}^{\bar{T}}\min\{U,\bm{\xi}^{1}_{\tau'}\} d\tau' -\bm{\lambda}^U\Bigr\rVert\]
where the  right hand side is independent from $\tau$.
Based on the Law of Large Numbers, we obtain \eqref{eqn:lemma:convergence_b} uniformly for all $\tau\geq0$.

For the case with $U\rightarrow\infty$, any $\delta>0$, $\tau,\bar{T}\in\mathbb{R}_0$, and $\bm{x}\in\mathbb{R}_0^{|\mathscr{K}|}$, we obtain
\begin{multline}\label{eqn:lemma:convergence_b:2}
\mathbb{P}\biggl\{\Bigl\lVert \frac{1}{\bar{T}}\int_{\tau}^{\tau+\bar{T}}b^{\Lipschitza,\infty}(\bm{x},\bm{\xi}^{1}_{\tau'}) d\tau' -\bar{b}^{\Lipschitza,\infty}(\bm{x})\Bigr\rVert > \delta\biggr\} \\
\leq \mathbb{P}\biggl\{\Bigl\lVert \tilde{\mathcal{Q}}^{1,\Lipschitza}(\bm{x})\frac{1}{\bar{T}}\Bigl\lfloor\int_0^{\bar{T}}\bm{\xi}^{1}_{\tau'} d\tau'\Bigr\rfloor -\bar{b}^{\Lipschitza,\infty}(\bm{x})\Bigr\rVert + \frac{o(\bar{T})}{\bar{T}}> \delta\biggr\}\\
=\mathbb{P}\biggl\{\Bigl\lVert \tilde{\mathcal{Q}}^{1,\Lipschitza}(\bm{x})\Bigl(\frac{1}{\bar{T}}\Bigl\lfloor\int_0^{\bar{T}}\bm{\xi}^{1}_{\tau'} d\tau'\Bigr\rfloor -\bm{\lambda}^{\infty}\Bigr)\Bigr\rVert + \frac{o(\bar{T})}{\bar{T}}> \delta\biggr\},
\end{multline}
where, for any vector $\bm{v}\in\mathbb{R}^M$, $\lfloor \bm{v}\rfloor \coloneqq \bigl(\lfloor v_m\rfloor: m\in[M]\bigr)$, $\bar{b}^{\Lipschitza,\infty} \coloneqq \lim_{U\rightarrow \infty}\bar{b}^{\Lipschitza,U}$, and $\bm{\lambda}^{\infty}\coloneqq \lim_{U\rightarrow \infty}\bm{\lambda}^U$. 
Based on the definition of $\bm{\xi}^{h}_{\tau}$, for any $\tau,\bar{T}\in\mathbb{R}_0$, each element of $\bigl\lfloor \int_0^{\bar{T}}\bm{\xi}^{1}_{\tau'}d\tau'\bigr\rfloor$ either is a deterministic value $\lfloor \bar{T}/2 \rfloor$ or follows a Poisson distribution with expectation $\bar{T}\lambda(\kappa,\kappa',n)$, where 
$\lambda(\kappa,\kappa',n) \leq     \scrp_{t^{\kappa}}(\zeta^{\kappa},\zeta^{\kappa'})$ for all $n\in[N_{i^{\kappa}}]$.
Let $\bm{\lambda}\coloneqq (\lfloor \bar{T}/2\rfloor; \lambda(\kappa,\kappa',n):\kappa,\kappa'\in\mathscr{K},n\in[N_{i^{\kappa}}])$.

From the Law of Large Numbers, plugging  $\bm{\lambda}^{\infty}=\bm{\lambda}$ in \eqref{eqn:lemma:convergence_b:2}, we obtain 
\begin{multline}\label{eqn:lemma:convergence_b:3}
\lim_{\bar{T}\rightarrow +\infty}\sup_{\tau\geq 0}\mathbb{P}\biggl\{\Bigl\lVert \frac{1}{\bar{T}}\int_{\tau}^{\tau+\bar{T}}b^{\Lipschitza,\infty}(\bm{x},\bm{\xi}^{1}_{\tau'}) d\tau' -\bar{b}^{\Lipschitza,\infty}(\bm{x})\Bigr\rVert > \delta\biggr\}\\
\leq\lim_{\bar{T}\rightarrow +\infty}\mathbb{P}\biggl\{\Bigl\lVert\tilde{\mathcal{Q}}^{1,\Lipschitza}(\bm{x})\Bigl(\frac{1}{\bar{T}}\Bigl\lfloor\int_0^{\bar{T}}\bm{\xi}^{1}_{\tau'}d\tau'\Bigr\rfloor - \bm{\lambda}\Bigr)\Bigr\rVert+\frac{o(\bar{T})}{\bar{T}} > \delta\biggr\}=0.
\end{multline}
That is, for $U\rightarrow \infty$, any $\delta>0$ and $\bm{x}\in\mathbb{R}_0^{|\mathscr{K}|}$, there exists $\bar{b}^{\Lipschitza,\infty}(\bm{x}) = \tilde{\mathcal{Q}}^{1,\Lipschitza}(\bm{x})\bm{\lambda}$ such that
\eqref{eqn:lemma:convergence_b} is satisfied uniformly for all $\tau\geq 0$. It proves the lemma.

\endproof

For $U\in\mathbb{R}_0\cup\{\infty\}$, define $\bar{\bm{x}}^{\Lipschitza,U}_{\tau}$ as the unique solution of \begin{equation}\label{eqn:averaging_ODE}
    \dot{\bar{\bm{x}}}^{\Lipschitza,U}_{\tau} = \bar{b}^{\Lipschitza,U}(\bar{\bm{x}}_{\tau}),
\end{equation}
with given $\bar{\bm{x}}^{\Lipschitza,U}_0\in\mathbb{R}^{|\mathscr{K}|}$.

\begin{lemma}\label{lemma:existence_ODE:averaging}
For any given $\bm{x}_0\in\mathbb{R}_0^{|\mathscr{K}|}$ and $\bar{T}>0$, there exists a unique solution $\bar{\bm{x}}^{\Lipschitza,U}_{\tau}$ for $\tau\in[0,\bar{T}]$ to \eqref{eqn:averaging_ODE}.
\end{lemma}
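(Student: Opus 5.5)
The proof will use the Picard--Lindel\"of (Cauchy--Lipschitz) theorem, applied to the autonomous ODE \eqref{eqn:averaging_ODE} on $[0,\bar{T}]$. The key observation is that the averaged drift factorizes as
\[
\bar{b}^{\Lipschitza,U}(\bm{x})=\tilde{\mathcal{Q}}^{1,\Lipschitza}(\bm{x})\,\bm{\lambda}^U,
\qquad \bm{\lambda}^U=\mathbb{E}\min\{U,\bm{\xi}^1_{\tau}\},
\]
as shown in the proof of Lemma~\ref{lemma:convergence_b}. Here $\bm{\lambda}^U$ is a deterministic finite vector that does not depend on $\tau$ or $\bm{x}$. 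For $U<\infty$ its entries are bounded by $U$. For $U=\infty$ its entries are either the deterministic rate of the $\xi_\tau(0)$ component or Poisson rates bounded by $\scrp_{t^\kappa}(\zeta^\kappa,\zeta^{\kappa'})\le 1$. So the problem reduces to regularity of $\bm{x}\mapsto \tilde{\mathcal{Q}}^{1,\Lipschitza}(\bm{x})\bm{\lambda}$ for a fixed bounded vector $\bm{\lambda}$.

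\textbf{Step 1 (global Lipschitz drift).} By Lemma~\ref{lemma:continuity_bU}, for $U<\infty$ the map $b^{\Lipschitza,U}(\bm{x},\bm{\xi})$ is jointly Lipschitz, so in particular it is Lipschitz in $\bm{x}$ uniformly over $\bm{\xi}$ in the bounded set $[0,U]^{\dim}$. Taking expectations then gives
\[
\lVert \bar{b}^{\Lipschitza,U}(\bm{x})-\bar{b}^{\Lipschitza,U}(\bm{x}')\rVert\le \mathbb{E}\lVert b^{\Lipschitza,U}(\bm{x},\bm{\xi}^1_\tau)-b^{\Lipschitza,U}(\bm{x}',\bm{\xi}^1_\tau)\rVert\le K\lVert\bm{x}-\bm{x}'\rVert .
\]
For $U=\infty$ I would argue directly from \eqref{eq:lipsq} (Condition 5), which makes $Q^{1,\Lipschitza}(\kappa,\kappa',\cdot,\bm{\xi})$ Lipschitz in $\bm{x}$. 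Combined with linearity in $\bm{\xi}$ (Condition 1 and the form of \eqref{eqn:app:convergence_Z:3}), this makes each column of $\tilde{\mathcal{Q}}^{1,\Lipschitza}(\bm{x})$ Lipschitz in $\bm{x}$. A finite weighted sum of these columns with the bounded weights $\bm{\lambda}^\infty$ is then Lipschitz with constant $\sum B\,\lambda$. The entries of $\tilde{\mathcal{Q}}$ that involve the terms $\sum_n\xi(\kappa,\kappa',n)$ over $n\le h\lceil|x_\kappa|/h\rceil$ are step functions of $|x_\kappa|$. For these I would rely on the smoothing function $f^{h,\Lipschitza}_{\kappa,\kappa'}$ with $\Lipschitza\in(0,1)$ (Appendix~\ref{app:continuity_Q}), which is precisely what guarantees \eqref{eq:lipsq}.

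\textbf{Step 2 (linear growth and extension).} Since $\bar{b}^{\Lipschitza,U}$ is globally Lipschitz on $\mathbb{R}^{|\mathscr{K}|}$, it has linear growth: $\lVert\bar b(\bm{x})\rVert\le \lVert\bar b(\bm{0})\rVert+K\lVert\bm{x}\rVert$. If the Lipschitz property is only available on $\mathbb{R}_0^{|\mathscr{K}|}$, I would use that $Q$ depends on $\bm{x}$ only through $|\bm{x}|$, so $\bar{b}(\bm{x})=\bar{b}(|\bm{x}|)$. Since $\bm{x}\mapsto|\bm{x}|$ is 1-Lipschitz, this extends the bound to all of $\mathbb{R}^{|\mathscr{K}|}$.

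\textbf{Step 3 (apply Picard--Lindel\"of).} Picard iteration on $C([0,\bar{T}];\mathbb{R}^{|\mathscr{K}|})$ with the weighted norm $\sup_\tau e^{-2K\tau}\lVert\cdot\rVert$ is a contraction, so a unique global solution exists on $[0,\bar{T}]$ for every initial condition $\bm{x}_0$. Gr\"onwall's inequality bounds the solution by $(\lVert\bm{x}_0\rVert+\bar{T}\lVert\bar b(\bm 0)\rVert)e^{K\bar T}$.

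The main obstacle is Step 1 in the case $U=\infty$. There, Lemma~\ref{lemma:continuity_bU} is not available as stated, and one has to check that the Lipschitz constants $B_{\bm{\xi},\kappa,\kappa'}$ in \eqref{eq:lipsq} do not blow up when integrated against the law of $\bm{\xi}$. My first attempt would be to use the factorization above. Because $\bar b$ depends on $\bm{\xi}$ only through the finite mean $\bm{\lambda}^\infty$, it suffices to have Lipschitz continuity of $\tilde{\mathcal{Q}}^{1,\Lipschitza}(\cdot)\bm{\lambda}^\infty$, and no uniformity in $\bm{\xi}$ is needed. As a fallback, I would take $U\to\infty$ monotonically, obtaining $\bm{\lambda}^U\uparrow\bm{\lambda}^\infty$ by monotone convergence, and note that the Lipschitz constant of $\tilde{\mathcal{Q}}^{1,\Lipschitza}(\cdot)\bm{\lambda}^U$ is bounded by $\max\lVert\text{column constants}\rVert\cdot\lVert\bm{\lambda}^\infty\rVert_1$, uniformly in $U$.
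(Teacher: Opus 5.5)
Your proposal is correct and takes essentially the same approach as the paper: the paper also reduces the lemma to Lipschitz continuity (and boundedness on a ball) of the averaged drift $\bar{b}^{a,U}(\bm{x})=\tilde{\mathcal{Q}}^{1,a}(\bm{x})\,\mathbb{E}\min\{U,\bm{\xi}^1_\tau\}$, then invokes Filippov's Carath\'eodory existence and Lipschitz-uniqueness theorems where you use a global Picard--Lindel\"of argument. Your explicit treatment of $U=\infty$ through the finite mean vector $\bm{\lambda}^\infty$ is a small addition, since the paper's proof only covers $U\in\mathbb{R}_0$.
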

The proof of Lemma~\ref{lemma:existence_ODE:averaging} is provided in Appendix~\ref{app:existence_ODE:averaging}. 

From \cite[Theorem 2.1 in Chapter 7]{freidlin2012random}, if $\bar{\bm{x}}^{\Lipschitza,U}_t$ and $\bar{b}^{\Lipschitza,U}(\bm{x})$ exist and satisfy \eqref{eqn:lemma:convergence_b} uniformly for $\tau\geq 0$, and $\lVert b^{\Lipschitza,U}(\bm{x},\bm{\xi}^{1}_t)\rVert^2 < \infty$ for all $\bm{x}\in\mathbb{R}_0^{|\mathscr{K}|}$, then, for any $0<\bar{T}<\infty$ and $\delta >0$,
\begin{equation}\label{eqn:app:convergence_Z:8}
\lim_{\sigma\downarrow 0} \mathbb{P}\Bigl\{\sup_{0\leq \tau\leq \bar{T}}\bigl\lVert \bm{X}^{\sigma,\Lipschitza,U}_{\tau} - \bar{\bm{x}}^{\Lipschitza,U}_\tau\bigr\rVert > \delta\Bigr\}=0,
\end{equation}
where the initial point $\bm{X}^{\sigma,\Lipschitza,U}_0 = \bar{\bm{x}}^{\Lipschitza,U}_0$ is given.
From Lemma~\ref{lemma:convergence_b}, \eqref{eqn:app:convergence_Z:8} is achieved for all given $\Lipschitza\in(0,1)$ and $U\in\mathbb{R}_0$. 

Given the linearity of $b^{\Lipschitza,U}(\bm{x},\bm{\xi})$ in $\bm{\xi}$, we obtain
\begin{multline}\label{eqn:app:existence_ODE_limit:25}
\lim_{a\downarrow 0}\lim_{U\rightarrow \infty}\bar{b}^{\Lipschitza,U}(\bm{x}) = \lim_{a\downarrow 0}\lim_{U\rightarrow \infty}\mathbb{E}b^{\Lipschitza,U}(\bm{x},\bm{\xi}^{1}_{\tau}) = \lim_{a\downarrow 0}\lim_{U\rightarrow \infty}\tilde{\mathcal{Q}}^{1,\Lipschitza}(\bm{x})\mathbb{E}\min\{U,\bm{\xi}^{1}_{\tau}\}\\
    =\lim_{U\rightarrow \infty}\lim_{a\downarrow 0}\tilde{\mathcal{Q}}^{1,\Lipschitza}(\bm{x})\mathbb{E}\min\{U,\bm{\xi}^{1}_{\tau}\}
    =\lim_{U\rightarrow \infty}\lim_{a\downarrow 0}\mathbb{E}b^{U,\Lipschitza}(\bm{x},\bm{\xi}^{1}_{\tau})
    =\lim_{U\rightarrow \infty}\lim_{a\downarrow 0}\bar{b}^{\Lipschitza,U}(\bm{x}).
\end{multline}
Define
\begin{equation}\label{eqn:app:existence_ODE_limit:26}
    \bar{\beta}(\bm{x}) \coloneqq \lim_{\begin{subarray}~U\rightarrow \infty\\a\downarrow 0\end{subarray}}\bar{b}^{\Lipschitza,U}(\bm{x}_{\tau}).
\end{equation}

Let $\bar{\bm{x}}_{\tau}$ represent the solution to 
\begin{equation}\label{eqn:app:existence_ODE_limit:27}
    \dot{\bar{\bm{x}}}_{\tau} = \bar{\beta}(\bm{x}),
\end{equation}
with given $\bar{\bm{x}}_0 = \bm{x}_0$.

\begin{lemma}\label{lemma:existence_ODE_limit:averaging}
For any given $\bar{\bm{x}}_0 = \bm{x}_0\in\mathbb{R}_0^{|\mathscr{K}|}$, there exists a unique solution $\bar{\bm{x}}_{\tau}$ for $\tau\in[0,\bar{T}]$ to \eqref{eqn:app:existence_ODE_limit:27}.    
\end{lemma}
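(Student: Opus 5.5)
We need existence and uniqueness on $[0,\bar{T}]$ of the solution of the autonomous ODE $\dot{\bar{\bm{x}}}_\tau=\bar{\beta}(\bar{\bm{x}}_\tau)$ with $\bar{\bm{x}}_0=\bm{x}_0$. The plan is to show that $\bar{\beta}$ is globally Lipschitz on the relevant region, or at least Lipschitz on each of finitely many pieces that are traversed in order, and then apply Picard--Lindel\"of. Since $\bar{\beta}(\bm{x})=\lim_{U\to\infty,\Lipschitza\downarrow0}\tilde{\mathcal{Q}}^{1,\Lipschitza}(\bm{x})\bm{\lambda}^U=\tilde{\Theta}(\bm{x})\bm{\lambda}$, I first write $\bar{\beta}$ explicitly. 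I use the interchange of limits in \eqref{eqn:app:existence_ODE_limit:25}, the linearity of $Q$ in $\bm{\xi}$, and conditions 3--4 on $f^{h,\Lipschitza}_{\kappa,\kappa'}$, which make the correction term vanish in the limit $\Lipschitza\downarrow0$. Each coordinate of $\bar{\beta}$ is then a finite sum of two kinds of terms:
\begin{enumerate}
\item transition terms $\indicator\{\cdot\}\,\lvert x_\kappa\rvert\,\scrp_{t^\kappa}(\zeta^\kappa,\zeta^{\kappa'})$, which for $h=1$ come from summing $\lceil\lvert x_\kappa\rvert\rceil$ Poisson rates. Here I would argue that the relevant rate is linear in $\lvert x_\kappa\rvert$.
\item action-assignment terms $\sum\lvert x_{\kappa''}\rvert\,\scra^{\orule}_{\zeta^{\kappa'}}(\bm{x},t)\cdot\mathbb{E}\xi(0)$, where $\mathbb{E}\xi(0)=1/2$.
\end{enumerate}

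Second, I establish Lipschitz continuity of these pieces. The function $\lvert x\rvert$ is $1$-Lipschitz, and $\scra^{\orule}$ is Lipschitz and bounded in $[0,1]$ by \partialref{cond:weak_stab}{Lipschitz–limit regularity} together with definition \eqref{eqn:define:alpha:x}. Products of bounded Lipschitz functions are Lipschitz on bounded sets. Boundedness holds because the total mass $\sum_\kappa x_\kappa$ is conserved by the antisymmetric structure of $b$ in \eqref{eqn:app:convergence_Z:4}: every outflow $Q(\kappa,\kappa')$ is an inflow to $\kappa'$. So any solution starting from $\bm{x}_0$ stays in a compact set, and local Lipschitz continuity on that set suffices. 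I would also check nonnegativity: outflow from $\kappa$ is proportional to $\lvert x_\kappa\rvert$, so coordinates cannot cross zero, the absolute values may be dropped, and invariance of $\mathbb{R}_0^{|\mathscr{K}|}$ follows.

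The main obstacle is the indicator functions $\indicator\{\sum_{t^{\kappa''}<t^\kappa}\lvert x_{\kappa''}\rvert=0,\dots\}$, which make $\bar{\beta}$ discontinuous across the faces where earlier-time mass vanishes. I handle this stage by stage, in the same way as the definition of $\tau^\sigma(t)$ in \eqref{eqn:app:convergence_Z:7}.
\begin{enumerate}
\item On the region where mass at times $<t$ is zero and mass at time $t$ sits in dummy-action GSATs, only the assignment terms for time $t$ are active. The vector field there is Lipschitz, so a unique solution exists until the dummy mass at time $t$ is exhausted.
\item The dummy mass decays at a rate proportional to itself, times $\sum_{a}\scra=1$ minus the dummy share. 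It therefore reaches zero, or one takes the limiting face, at a well-defined time, and the next regime becomes active.
\item In that next regime the transition terms move time-$t$ mass to time-$t+1$ dummy GSATs. The field is again Lipschitz, and a unique solution exists until the time-$t$ mass vanishes.
\end{enumerate}
Because the mass of each stage decays exponentially, it never hits zero in finite time. I would therefore redefine the switching via the limit of the paper's construction, or, more simply, argue that on each regime the indicator pattern is constant along the trajectory. Concatenating the finitely many ($\le 2(T+1)$) regimes, with unique continuation at each switching time because the solution is continuous and the next field is Lipschitz, gives existence and uniqueness on $[0,\bar{T}]$.

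If the switching-time argument fails, the fallback is to prove the lemma as a limit of Lemma~\ref{lemma:existence_ODE:averaging}. I would show that $\bar{\bm{x}}^{\Lipschitza,U}_\tau$ is Cauchy as $\Lipschitza\downarrow0$ and $U\to\infty$ using Gr\"onwall, with Lipschitz constants uniform in $\Lipschitza$ and $U$ via condition 5. The limit then solves \eqref{eqn:app:existence_ODE_limit:27}, and uniqueness follows by applying Gr\"onwall to the piecewise-Lipschitz field.
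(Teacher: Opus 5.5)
Your skeleton matches the paper's: alternate a control regime and a transition regime for each $t$, then concatenate at the switching times. But the step meant to make each regime work rests on a misreading of $\bar{\beta}$.

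With $h=1$, the transition term in \eqref{eqn:app:existence_ODE_limit:6} is $\sum_{n\in[\lceil x_{\kappa}\rceil]}\xi(\kappa,\kappa',n)$, so after averaging it is $\lceil x_{\kappa}\rceil\,\bar{\xi}_{\kappa,\kappa'}$, as in \eqref{eqn:app:existence_ODE_limit:30}. The correction $\Delta^{\Lipschitza}_{\kappa,\kappa'}$, which you rightly send to zero as $\Lipschitza\downarrow 0$, is exactly what had smoothed out the ceiling. Hence in the transition regime $\bar{\beta}$ is a step function of $\bm{x}$, discontinuous whenever a coordinate crosses an integer. The rate is not linear in $\lvert x_\kappa\rvert$, and Picard--Lindel\"of is not available there.

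In the control regime, the outflow from the dummy GSAT $(i,s,\dummyAction,t)$ is $\bigl(\sum_{\kappa''}x_{\kappa''}\bigr)\scra^{\orule}_{\zeta^{\kappa'}}\bar{\xi}(0)$, with the sum over \emph{all} GSATs of the group $(i,s,t)$. It is proportional to the conserved group mass, not to the dummy mass. So your claim that stage masses decay exponentially and never reach zero is wrong in both regimes:
\begin{itemize}
\item dummy mass decreases linearly;
\item each time-$t$ coordinate decreases at rate $\lceil x_\kappa\rceil\sum_{\kappa'}\bar{\xi}_{\kappa,\kappa'}$, which is bounded away from zero while $x_\kappa>0$.
\end{itemize}
This finite-time extinction is what triggers the switches. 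Your proposed remedies do not work. ``Redefining the switching'' has no meaning for an ODE with a fixed right-hand side. Asserting that the indicator pattern stays constant would leave the trajectory in its first regime forever.

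The missing idea, which is the paper's proof, is that $\bar{\beta}(\bar{\bm{x}}_\tau)$ is piecewise constant \emph{in $\tau$} along the trajectory.
\begin{itemize}
\item In a control period, mass only moves inside each $(i,s,t)$ group. So $\scru(\bar{\bm{x}}_\tau,t)$, hence $\scra^{\orule}$, hence the whole field, is frozen until the dummy mass is exhausted.
\item In a transition period, the field changes only when some $x_\kappa$ decreases through an integer. This happens monotonically, transversally, and finitely often.
\end{itemize}
Each piece is then a constant-velocity motion with an obviously unique solution. Boundedness of $\bar{\beta}$ gives continuity at the switching points, so concatenation yields existence and uniqueness without any Lipschitz property.

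Your fallback does not close the gap either. Condition 5 gives Lipschitz constants only for each fixed $\Lipschitza\in(0,1)$, and these blow up as $\Lipschitza\downarrow 0$ because the limiting field is discontinuous. So there is no uniform Gr\"onwall estimate, and Gr\"onwall alone does not give uniqueness for a discontinuous field.
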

The proof of Lemma~\ref{lemma:existence_ODE_limit:averaging} is provided in Appendix~\ref{app:existence_ODE_limit:averaging}.

\begin{lemma}\label{lemma:continuity_trajectory_a:stochastic}
For any $\bar{T}>0$ and $\tau\in[0,\bar{T}]$, given the trajectory $\bm{\xi}^1_{\tau}$ for $\tau\in [0,\sigma\bar{T})$,
\begin{equation}\label{eqn:continuity_trajectory_a:stochastic:1}
    \lim_{\begin{subarray}~\Lipschitza\downarrow 0\\U\rightarrow \infty\end{subarray}} \bm{X}^{\sigma,a,U}_{\tau} = \bm{X}^{\sigma}_{\tau}, 
\end{equation}
and
\begin{equation}\label{eqn:continuity_trajectory_a:averaging:1}
    \lim_{\begin{subarray}~\Lipschitza\downarrow 0\\U\rightarrow \infty\end{subarray}} \bar{\bm{x}}^{\sigma,a,U}_{\tau} = \bar{\bm{x}}^{\sigma}_{\tau}. 
\end{equation}  
\end{lemma}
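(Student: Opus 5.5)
The plan is to treat both limits as a continuous-dependence statement for ODEs whose right-hand sides converge, and to close each with a Gr\"onwall argument. The two statements have the same structure. In \eqref{eqn:continuity_trajectory_a:stochastic:1} the driving trajectory $\bm{\xi}^1_{\tau/\sigma}$, $\tau\in[0,\bar{T}]$, is fixed. It is a cadlag path with finitely many jumps on $[0,\sigma\bar{T}]$, so it is bounded by some finite $\Xi$. Hence for every $U\geq\Xi$ we have $\min\{U,\bm{\xi}^1_{\tau/\sigma}\}=\bm{\xi}^1_{\tau/\sigma}$ along the whole path. The $U$-limit is therefore trivially attained: $\bm{X}^{\sigma,\Lipschitza,U}_\tau=\bm{X}^{\sigma,\Lipschitza,\infty}_\tau$ for all large $U$. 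It remains to send $\Lipschitza\downarrow0$ in
\[
\dot{\bm{X}}^{\sigma,\Lipschitza}_\tau=\tilde{\mathcal{Q}}^{1,\Lipschitza}(\bm{X}^{\sigma,\Lipschitza}_\tau)\bm{\xi}^1_{\tau/\sigma},
\]
and compare it with \eqref{eqn:app:existence_ODE_limit:7}.

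First I would write
\[
\bm{X}^{\sigma,\Lipschitza}_\tau-\bm{X}^{\sigma}_\tau=\int_0^\tau\bigl(\tilde{\mathcal{Q}}^{1,\Lipschitza}(\bm{X}^{\sigma,\Lipschitza}_{r})-\tilde{\mathcal{Q}}^{1,\Lipschitza}(\bm{X}^{\sigma}_{r})\bigr)\bm{\xi}^1_{r/\sigma}\,dr+\int_0^\tau\bigl(\tilde{\mathcal{Q}}^{1,\Lipschitza}(\bm{X}^{\sigma}_{r})-\tilde{\Theta}(\bm{X}^{\sigma}_{r})\bigr)\bm{\xi}^1_{r/\sigma}\,dr .
\]
The first integral is bounded by $B\Xi\int_0^\tau\norm{\bm{X}^{\sigma,\Lipschitza}_r-\bm{X}^\sigma_r}dr$, using the Lipschitz bound \eqref{eq:lipsq} in $\bm{x}$ together with Lemma~\ref{lemma:continuity_bU}. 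The second integral tends to $0$ as $\Lipschitza\downarrow0$. This follows from dominated convergence, because $\tilde{\mathcal{Q}}^{1,\Lipschitza}\to\tilde{\Theta}$ pointwise by \eqref{eqn:app:existence_ODE_limit:3}, and because the family $f^{1,\Lipschitza}\in\mathscr{F}$ is uniformly bounded on the compact range of $\bm{X}^\sigma$ over $[0,\bar{T}]$ (properties 1--3 of $\mathscr{F}$). Gr\"onwall's inequality then gives $\sup_{\tau\le\bar{T}}\norm{\bm{X}^{\sigma,\Lipschitza}_\tau-\bm{X}^\sigma_\tau}\le e^{B\Xi\bar{T}}\,o(1)$. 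Existence and uniqueness of both trajectories come from Lemmas~\ref{lemma:existence_ODE} and \ref{lemma:existence_ODE_limit:h=1}.

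The averaged statement \eqref{eqn:continuity_trajectory_a:averaging:1} follows the same template, with $\bar{b}^{\Lipschitza,U}(\bm{x})=\tilde{\mathcal{Q}}^{1,\Lipschitza}(\bm{x})\bm{\lambda}^U$ in place of the random drift and $\bar{\beta}$ as the limit. The limits commute by \eqref{eqn:app:existence_ODE_limit:25}, so the double limit is well defined. Here $\bm{\lambda}^U\to\bm{\lambda}^\infty$ monotonically by monotone convergence, and $\bm{\lambda}^\infty$ is finite since the underlying counts are Poisson with rates bounded by $\scrp_t$. The Gr\"onwall step uses the Lipschitz constant $B\norm{\bm{\lambda}^\infty}$. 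The forcing term is now $\bigl(\tilde{\mathcal{Q}}^{1,\Lipschitza}(\bar{\bm{x}}_r)\bm{\lambda}^U-\bar{\beta}(\bar{\bm{x}}_r)\bigr)$, which I would split into a $U$-part, controlled by $\norm{\bm{\lambda}^U-\bm{\lambda}^\infty}$, and an $\Lipschitza$-part, handled as before. Lemmas~\ref{lemma:existence_ODE:averaging} and \ref{lemma:existence_ODE_limit:averaging} supply the trajectories.

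The main obstacle is the uniformity of the Lipschitz constant in $\Lipschitza$. Property 5 gives $B_{\bm{\xi},\kappa,\kappa'}$ only for each fixed $\Lipschitza$, whereas the limiting $\tilde{\Theta}$ contains indicator switches, namely the conditions $\sum|x_{\kappa''}|=0$ in \eqref{eqn:app:convergence_Z:3}. I would first try to show that the constants stay bounded as $\Lipschitza\downarrow0$. Property 4 says the derivative of $f^{h,\Lipschitza}$ in $\Lipschitza$ vanishes, so the smoothing error is $o(\Lipschitza)$ uniformly. If that fails, the fallback is to run the Gr\"onwall argument separately on each stage $[\tau^\sigma(t-1),\tau^\sigma(t)]$ defined in \eqref{eqn:app:convergence_Z:7}. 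On each such stage the active indicators are constant, $\tilde{\Theta}$ is Lipschitz (through $\scra^{\orule}$ and Lipschitz-limit regularity), and one only needs to check that the switching times $\tau^\sigma(t)$ of $\bm{X}^{\sigma,\Lipschitza}$ converge to those of $\bm{X}^\sigma$.
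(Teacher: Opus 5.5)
Your treatment of the $U$-limit is fine and matches the paper: on a fixed path $\bm{\xi}^1$ is bounded, so the truncation is inactive for large $U$. The gap is in the $a\downarrow 0$ step, and it is exactly the obstacle you flag without resolving it.

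The constant $B$ in your Gr\"onwall bound is the $\bm{x}$-Lipschitz constant of $\tilde{\mathcal{Q}}^{1,a}$ given by \eqref{eq:lipsq} and Lemma~\ref{lemma:continuity_bU}. It exists only for each fixed $a$, and it must blow up as $a\downarrow 0$. The reason is that $\tilde{\mathcal{Q}}^{1,a}(\bm{x})\bm{\xi}\to\tilde{\Theta}(\bm{x})\bm{\xi}$ pointwise, while $\tilde{\Theta}$ is discontinuous in $\bm{x}$, through $\lceil x_\kappa\rceil$ and the indicators $\indicator\{x(t^{\kappa}-1)=0\}$ and $\indicator\{x^{\mu}(t^{\kappa})=0\}$. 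A family with uniformly bounded Lipschitz constants would have a Lipschitz pointwise limit. In Appendix~\ref{app:continuity_Q} this shows up as the bound $K_a$ on $dy_a/du$ in \eqref{eqn:dirac-delta:2} diverging.

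Since your dominated-convergence term is $o(1)$ with no rate, the bound $e^{B^{(a)}\Xi\bar{T}}\,o(1)$ proves nothing. Your first remedy therefore cannot succeed: Property~4 controls $\partial f/\partial a$, not regularity in $\bm{x}$. The averaged case has the same problem through the constant $B\norm{\bm{\lambda}^\infty}$.

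The fallback does not close the gap either, for two reasons.
\begin{itemize}
\item On a stage $[\tau^{\sigma}(t-1),\tau^{\sigma}(t)]$, $\tilde{\Theta}$ is still not Lipschitz. It contains $\sum_{n\in[\lceil x_\kappa\rceil]}\xi(\kappa,\kappa',n)$, which jumps at every integer value of $x_\kappa$ the trajectory passes through. It also switches from the control regime to the transition regime when $x^{\mu}(t)$ reaches zero.
\item For $a>0$ the smoothed trajectory never reaches these surfaces. For $x_\kappa\in(0,1)$ the total outflow of $\kappa$ is at most $\sum_{\kappa'}\xi(\kappa,\kappa',1)\bigl(1-y_a(1-x_\kappa)\bigr)$. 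This vanishes so fast as $x_\kappa\downarrow 0$ (roughly like $\exp\{-c/(a x_\kappa)^2\}$) that $x_\kappa$ cannot reach $0$ in finite time. So once $x(t-1)$ is positive it stays positive, and the analogue of \eqref{eqn:app:convergence_Z:7} for $\bm{X}^{\sigma,a,U}$ is $+\infty$. There are no exact switching times whose convergence you could check.
\end{itemize}

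What is missing is an estimate showing that the time $\bm{X}^{\sigma,a,U}$ spends in the thin $a$-dependent layers around the discontinuity surfaces produces only a bounded amplification. In those layers $\tilde{\mathcal{Q}}^{1,a}-\tilde{\Theta}$ is of order one and the local Lipschitz constant is huge.

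The paper's proof is built around exactly this point. It discretizes $[0,\tau)$ and separates the subintervals where the local Lipschitz constant $K^{a}_Q(n)$ along the trajectories exceeds a threshold from the rest. It then bounds the contribution of those subintervals to the Gr\"onwall exponent by the total variation of $\tau'\mapsto\tilde{\mathcal{Q}}^{1,0}(\bm{X}^{\sigma,0,U}_{\tau'})$, which is piecewise constant with finitely many jumps, rather than by Lipschitz constant times elapsed time. You need some device of this kind for the argument to go through.
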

The proof of Lemma~\ref{lemma:continuity_trajectory_a:stochastic} is provided in Appendix~\ref{app:continuity_trajectory_a}.

From Lemma~\ref{lemma:continuity_trajectory_a:stochastic}, for any $\epsilon>0$, there exist $a_0>0$ and $U_0<\infty$ such that, for all $a\leq a_0$ and $U\geq U_0$,
\begin{multline}\label{eqn:app:swapping_limits:3} 
    \norm{\bm{X}^{\sigma}_{\tau} - \bar{\bm{x}}_{\tau}} 
    \leq \norm{\bm{X}^{\sigma}_{\tau} - \bm{X}^{\sigma,a_0,U_0}_{\tau}} + \norm{\bm{X}^{\sigma,a_0,U_0}_{\tau} - \bar{\bm{x}}^{a_0,U_0}_{\tau}}
    +\norm{\bar{\bm{x}}^{a_0,U_0}_{\tau} - \bar{\bm{x}}_{\tau}}\\
    \leq 2\epsilon + \norm{\bm{X}^{\sigma,a_0,U_0}_{\tau} - \bar{\bm{x}}^{a_0,U_0}_{\tau}}.
\end{multline}
Together with \eqref{eqn:app:convergence_Z:8}, for any $\delta>0$, there exist  $0<\epsilon < \delta/2$ and $a_0>0$ such that 
\begin{equation}\label{eqn:app:convergence_Z:9}
    \lim_{\sigma\downarrow 0}\mathbb{P}\Bigl\{\sup_{0\leq \tau \leq \bar{T}}\norm{\bm{X}^{\sigma}_{\tau} - \bar{\bm{x}}_{\tau}} > \delta\Bigr\} \leq \lim_{\sigma\downarrow 0}\mathbb{P}\Bigl\{\sup_{0\leq \tau \leq \bar{T}}\norm{\bm{X}^{\sigma,a_0,U_0}_{\tau} - \bar{\bm{x}}^{a_0,U_0}_{\tau}} > \delta-2\epsilon\Bigr\} =0.
\end{equation}

Let $\bm{X}^h_{\tau}$ represent the solution to
\begin{equation}\label{eqn:app:existence_ODE_limit:1}
    \dot{\bm{X}}^h_{\tau} = \frac{1}{h}b^h(h\bm{X}^h_{\tau},\bm{\xi}^{h}_{\tau}),
\end{equation}
with initial condition $\bm{X}^h_0 = \bm{x}_0$.
\begin{lemma}\label{lemma:existence_ODE_limit:h>1}
For any given $\bm{x}_0\in\mathbb{R}_0^{|\mathscr{K}|}$ and $\bar{T}>0$, there exists a unique solution $\bm{X}^h_{\tau}$ for $\tau\in[0,\bar{T}]$ to \eqref{eqn:app:existence_ODE_limit:1}.
\end{lemma}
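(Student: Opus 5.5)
The plan is to treat \eqref{eqn:app:existence_ODE_limit:1} pathwise. We fix a realization of the driving process $\tau\mapsto\bm{\xi}^h_\tau$ on $[0,\bar{T}]$ and solve the equation as a deterministic ODE on the finitely many intervals between jumps, in the same way as Lemma~\ref{lemma:existence_ODE_limit:h=1} was established for $h=1$.

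\textbf{Step 1: reduce to $h=1$.} First I will use the scaling structure of \eqref{eqn:app:convergence_Z:3}. Since $N_i=hN_i^0$ and the index set $[h\lceil|x_\kappa|/h\rceil]$ appears there, the map $\bm{x}\mapsto \frac1h b^h(h\bm{x},\bm{\xi})$ has the same form as $\beta$ in \eqref{eqn:app:existence_ODE_limit:5}. The only differences are that the $\bm{\xi}$-vector is larger, of dimension $|\mathscr{K}|\sum_{\kappa}N_{i^\kappa}+1$, and that the coefficient matrix $\tilde{\Theta}^h$ is rescaled by $1/h$. So I will write $\frac1h b^h(h\bm{x},\bm{\xi})=\frac1h\tilde{\Theta}^h(h\bm{x})\bm{\xi}$, with $\tilde{\Theta}^h$ obtained as the $a\downarrow0$ limit in \eqref{eqn:app:existence_ODE_limit:3}.

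\textbf{Step 2: solve between jumps.} Almost surely, $\bm{\xi}^h_\tau$ is piecewise constant or piecewise continuous, with finitely many discontinuities of the first kind on $[0,\bar{T}]$. These come from the finitely many Poisson events and the deterministic switches at integer times in $\scrR$. On each interval between consecutive discontinuities, the right-hand side is a fixed linear functional of a bounded $\bm{\xi}$ composed with $\tilde{\Theta}^h(h\bm{x})$. For Lipschitz continuity in $\bm{x}$, I will use Condition~5 of $\mathscr{F}$ and Lemma~\ref{lemma:continuity_bU} applied to the scaled problem; the Lipschitz continuity of $\scra^{\orule}$ from \partialref{cond:weak_stab}{Lipschitz–limit regularity} gives the relevant term. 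Picard--Lindel\"of then gives a unique solution on that interval. I will concatenate the pieces, using the endpoint value of each piece as the initial condition of the next, to obtain a unique absolutely continuous solution on $[0,\bar{T}]$.

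\textbf{Step 3: no blow-up.} The solution stays bounded, so the local solutions extend over the whole interval. The components of $\bm{X}^h_\tau$ encode fractions that are conserved by the flow $\sum_{\kappa'}(Q(\kappa',\kappa)-Q(\kappa,\kappa'))$, so the total mass is constant. The indicator gating in \eqref{eqn:app:convergence_Z:3} also keeps the components nonnegative.

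\textbf{Main obstacle.} I expect the hard part to be the $a\downarrow0$ limit: $\tilde{\Theta}^h$ contains indicator functions such as $\indicator\{\sum|x_{\kappa''}|=0\}$, which are not Lipschitz. My plan is to mirror the argument of Lemma~\ref{lemma:existence_ODE_limit:h=1}. I will split time into the phases $[\tau(t-1),\tau(t))$ defined analogously to \eqref{eqn:app:convergence_Z:7}. On each phase the active indicators are constant, because earlier-time mass is already zero and, as shown before \eqref{eqn:app:convergence_Z:7}, it stays zero. Within a phase the vector field is therefore Lipschitz, and uniqueness holds phase by phase. If this fails, the fallback is to take the solutions $\bm{X}^{h,a,U}_\tau$ for $a>0$ and $U<\infty$, which exist uniquely by the $h$-scaled analogue of Lemma~\ref{lemma:existence_ODE}. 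I would then pass to the limit as in Lemma~\ref{lemma:continuity_trajectory_a:stochastic}, using equicontinuity from boundedness of the right-hand side and Arzel\`a--Ascoli.
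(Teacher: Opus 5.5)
\textbf{There is a genuine gap.} Your Step~2, and your resolution of the ``main obstacle'', both rely on a Lipschitz property that the limit equation does not have.

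In this lemma the right-hand side is the $a\downarrow 0$, $U\to\infty$ limit, namely $\frac1h\beta^h(h\bm{X},\bm{\xi}^h_\tau)=\frac1h\tilde{\Theta}^h(h\bm{X})\bm{\xi}^h_\tau$ (cf.~\eqref{eqn:ODE_Z_h}). Condition~5 of $\mathscr{F}$ and Lemma~\ref{lemma:continuity_bU} hold only for fixed $a\in(0,1)$. Their constants come from the bound on $dy_a/du$ in \eqref{eqn:dirac-delta:2} and blow up as $a\downarrow 0$, so they do not transfer to the limit field.

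Nor is the limit field Lipschitz within a phase $[\tau^h(t),\tau^h(t+1))$, for two reasons besides the gates you mention:
\begin{enumerate}
\item The transition term runs $h\lceil |x_\kappa|/h\rceil$ clocks. This count is piecewise constant in $x_\kappa$ and jumps at lattice points.
\item The gate $\indicator\{\sum_{t^{\kappa''}=t,\,a^{\kappa''}=\dummyAction}|x_{\kappa''}|=0\}$ flips inside every phase. It does so when the dummy-action mass is exhausted and the control sub-period gives way to the transition sub-period.
\end{enumerate}
So Picard--Lindel\"of gives neither existence over a whole phase nor uniqueness.

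The fallback does not close the gap either. Arzel\`a--Ascoli gives only a subsequential limit of the $a>0$ solutions. For a discontinuous field such a limit need not solve the limit equation (in general it solves only a Filippov-type inclusion), and it carries no uniqueness. Moreover, the argument of Lemma~\ref{lemma:continuity_trajectory_a:stochastic} takes the unique solution of the limit equation as given, so invoking it here is circular.

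\textbf{The missing idea.} Along any candidate trajectory the right-hand side is piecewise constant in $\tau$, so no regularity in $\bm{x}$ is needed. This is the paper's route.
\begin{itemize}
\item On a non-silent unit period ($\tau\notin\scrR$), $\xi(0)=0$, so only the transition term acts. It is the product of the piecewise-constant $\bm{\xi}^h_\tau$ and a matrix that is piecewise constant in $\bm{x}$. Hence the solution is piecewise linear and determined by its value at the start of the period.
\item On a silent period ($\tau\in\scrR$), every $\xi(\kappa,\kappa',n)$ vanishes and only the control term acts. It moves mass only between slots sharing the same $(i,s,t)$. So each aggregate $\sum_{\kappa:\,i^\kappa=i,\,s^\kappa=s,\,t^\kappa=t}X^h_{\tau,\kappa}$ is conserved, i.e.\ the argument $\scru(\cdot,t)$ at which $\scra^{\orule}$ is evaluated. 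Hence $\scra^{\orule}$ is frozen, and the control term is constant until the dummy-action mass is exhausted.
\end{itemize}
Concatenating these explicit pieces over the alternating unit periods, starting from $\bm{X}^h_0=\bm{x}_0$, gives the unique solution on $[0,\bar{T}]$.

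At the finitely many switching points you still have to check which branch is entered. For example, $\lceil\cdot\rceil$ is left-continuous, so a decreasing $x_\kappa$ crosses lattice points without ambiguity. Also, inflow into a given slot and outflow from it never occur simultaneously. Uniqueness rests on this structural check, not on a Lipschitz estimate.
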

The proof of Lemma~\ref{lemma:existence_ODE_limit:h>1} is provided in Appendix~\ref{app:existence_ODE_limit:h>1}.

We define an appropriate initial condition $\bm{x}_0\in\mathbb{R}^{|\mathscr{K}|}_0$ such that 
\begin{equation}\label{eqn:sim:1}
        x_{0,\kappa} = \begin{cases}
            \bm{Z}^{\orule,1}_{\zeta^{\kappa}}(0)\sum_{i\in[I]}N^0_i, &\text{if } t^{\kappa} = 0, \\
            0, &\text{otherwise},
        \end{cases}
\end{equation}

Given $\bm{X}^{\sigma}_0=\bm{X}^h_0=\bar{\bm{x}}_0= \sum_{i\in[I]}N_i^0\bm{\scrz}_0\coloneqq \bm{x}_0$ that satisfies \eqref{eqn:sim:1} and takes values in $\mathscr{N}_0^{|\mathscr{K}|}$, consider a trajectory \[\bm{\mathcal{Z}}^h_\tau=\frac{\bm{X}^h_\tau}{\sum_{i\in[I]}N_i^0},\] 
which satisfies
\begin{equation}\label{eqn:ODE_Z_h}
    \dot{\bm{\mathcal{Z}}}^h_\tau = \frac{1}{h\sum_{i\in[I]}N_i^0}\beta^h\bigl(h\sum_{i\in[I]}N_i^0\bm{\mathcal{Z}}^h_{\tau},\bm{\xi}^{h}_{\tau}\bigr),
\end{equation}
and $\bm{\mathcal{Z}}^h_0 = \bm{\scrz}_0$. 

\begin{lemma}\label{lemma:convergence-Z}
For $\tau \geq 0$,
\begin{equation}\label{eqn:app:convergence_Z:12}
    \frac{\bm{X}^{\sigma}_\tau}{\sum_{i\in[I]}N_i^0}\sim \bm{\mathcal{Z}}^h_\tau.
\end{equation}   
For any $0<\bar{T}<\infty$ and $\delta>0$, 
\begin{equation}\label{eqn:app:convergence_Z:13}
    \lim_{h\rightarrow \infty}\mathbb{P}\biggl\{\sup_{0\leq \tau\leq \bar{T}}\Bigl\lVert \bm{\mathcal{Z}}^h_{\tau} - \frac{\bar{\bm{x}}_{\tau}}{\sum_{i\in[I]}N_i^0}\Bigr\rVert > \delta\biggr\} = 0,
\end{equation}
where recall that $\bar{\bm{x}}_{\tau}$ is a deterministic trajectory when given $\bar{\bm{x}}_0\in\mathbb{R}^{|\mathscr{K}|}$. 
\end{lemma}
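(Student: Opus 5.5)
The plan is to get the distributional identity \eqref{eqn:app:convergence_Z:12} from a time–space rescaling of the driving noise, with $\sigma=1/h$. Once that identity holds, \eqref{eqn:app:convergence_Z:13} follows from the averaging result \eqref{eqn:app:convergence_Z:9}.

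\textbf{Step 1: the distributional identity.} Set $\sigma=1/h$ and compare \eqref{eqn:app:existence_ODE_limit:7} with \eqref{eqn:app:existence_ODE_limit:1}.

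Two facts drive the comparison. First, the inter-event times in \eqref{eqn:app:convergence_Z:2} are exponential, and the processes indexed by $n\in[N_{i^\kappa}]=[hN^0_{i^\kappa}]$ are i.i.d. Hence the superposition over $n$ of the $h$-system potential events for a given pair $(\kappa,\kappa')$ has the same law as the $h=1$ system's events with time accelerated by the factor $h$. In other words, summing $\xi$ over the $h$ copies behaves like $\bm{\xi}^1_{\tau h}=\bm{\xi}^1_{\tau/\sigma}$.

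Second, consider the first term of $Q^{h,a}$ in \eqref{eqn:app:convergence_Z:3}. It sums $\xi(\kappa,\kappa',n)$ over $n\le h\lceil |x_\kappa|/h\rceil$, so when $\bm{x}=h\bm{X}^h$ it counts the $h$-fold replicated copies. The second term depends on $\bm{x}$ only through the scale-free ratio $\bm{x}/(h\sum_i N^0_i)$, and it is multiplied by $\sum|x_{\kappa''}|$. It therefore scales exactly by $h$.

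Dividing by $h$ as in \eqref{eqn:app:existence_ODE_limit:1}, the drift $\frac1h\beta^h(h\bm{X},\bm{\xi}^h_\tau)$ has the same law, as a functional of the path, as $\beta(\bm{X},\bm{\xi}^1_{\tau/\sigma})$. The deterministic silent-period component $\xi_\tau(0)$ needs separate handling: I would check that the rescaling of $\scrR$ is compatible with this, or that it only enters through its time average, which is $1/2$ in both systems.

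Both equations have unique solutions, by Lemmas~\ref{lemma:existence_ODE_limit:h=1} and~\ref{lemma:existence_ODE_limit:h>1}, and they start from the same initial point $\bm{x}_0$. The solution map is measurable in the driving path, so equality in law of the drivers gives $\bm{X}^\sigma_\tau\sim\bm{X}^h_\tau$. Dividing by $\sum_i N^0_i$ yields \eqref{eqn:app:convergence_Z:12}.

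\textbf{Step 2: convergence in probability.} By Step 1 with $\sigma=1/h$, and since equality in law holds for the whole path on $[0,\bar T]$ (not merely for fixed $\tau$), we get
\[
\mathbb{P}\Bigl\{\sup_{0\le\tau\le\bar T}\bigl\lVert \bm{\mathcal{Z}}^h_\tau-\tfrac{\bar{\bm{x}}_\tau}{\sum_i N^0_i}\bigr\rVert>\delta\Bigr\}
=\mathbb{P}\Bigl\{\sup_{0\le\tau\le\bar T}\lVert \bm{X}^{1/h}_\tau-\bar{\bm{x}}_\tau\rVert>\delta\textstyle\sum_i N^0_i\Bigr\}.
\]
As $h\to\infty$ we have $\sigma\downarrow0$, and \eqref{eqn:app:convergence_Z:9}, obtained from Freidlin's averaging theorem together with Lemma~\ref{lemma:continuity_trajectory_a:stochastic}, sends the right-hand side to $0$. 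This is \eqref{eqn:app:convergence_Z:13}.

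\textbf{Main obstacle.} The delicate step is Step 1. One must verify that the $h$-fold superposition of independent Poisson clocks, together with the indexing $n\le h\lceil|x_\kappa|/h\rceil$ and the regularizing term $f^{h,a}$, reproduces exactly the time-accelerated $h=1$ driver after the limits $a\downarrow0$ and $U\to\infty$. The identity must also hold at the level of processes, not only marginals.

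I would first prove the identity for fixed $a,U$, where $b^{h,a,U}$ is linear in $\bm{\xi}$ through $\tilde{\mathcal{Q}}^{h,a}$ and the superposition property of Poisson processes applies directly. I would then pass to $a\downarrow0$ and $U\to\infty$ on both sides using Lemma~\ref{lemma:continuity_trajectory_a:stochastic}, since limits of equally distributed processes remain equally distributed.
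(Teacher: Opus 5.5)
\textbf{Same route as the paper, but Step 1 fails.} You follow the paper's proof. You set $\sigma=1/h$ and argue that, from a common state, the $h\lceil x_\kappa\rceil$ clocks of rate $\scrp$ in \eqref{eqn:app:existence_ODE_limit:1} and the $\lceil x_\kappa\rceil$ clocks of rate $h\scrp$ in \eqref{eqn:app:existence_ODE_limit:7} produce the same event stream. The paper phrases this as equality in law of exponential waiting times. You then conclude $\bm{X}^\sigma\sim\bm{X}^h$ and import \eqref{eqn:app:convergence_Z:9}.

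The step you deferred is a genuine gap, and the paper's proof does not address it either. The component $\xi_\tau(0)=\indicator\{\tau\in\scrR\}$ is deterministic and is not rescaled by the time change. \eqref{eqn:app:existence_ODE_limit:7} is driven by $\indicator\{h\tau\in\scrR\}$, with blocks of length $1/h$. \eqref{eqn:app:existence_ODE_limit:1} is driven by $\indicator\{\tau\in\scrR\}$, with blocks of length $1$. The drift acts only in a control regime on $\scrR$ and only in a transition regime off $\scrR$, so the two systems run on different calendar clocks.

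The identity therefore fails already at fixed $\tau$. Under \eqref{eqn:sim:1} all mass sits on non-dummy triples at $t=0$, so the first phase is a transition phase.
\begin{itemize}
\item On $[0,1)\subset\scrR$ the drift of \eqref{eqn:app:existence_ODE_limit:1} vanishes identically, so $\bm{X}^h_\tau=\bm{x}_0$ on $[0,1]$ for every $h$.
\item For $h\ge 2$, the interval $[0,1)$ contains about $h/2$ non-silent blocks of \eqref{eqn:app:existence_ODE_limit:7}, in which $\bm{X}^\sigma$ moves with positive probability.
\end{itemize}

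This is not just a start-up offset. A control phase needs cumulative silent time $1$. For $\bm{X}^h$ that is one unit block. For $\bm{X}^\sigma$ it is $h$ blocks of length $1/h$, which takes calendar time about $2$. Your fallback that $\xi(0)$ ``only enters through its time average $1/2$'' cannot give an exact identity in law, because the phase indicators make the dynamics depend on how silent and non-silent time are interleaved.

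\textbf{Consequences for your plan.}
\begin{itemize}
\item Proving the identity first at fixed $(\Lipschitza,U)$ does not help. The truncation $\min\{U,\cdot\}$ is not invariant under the rescaling. The merged clock of the $h$-system has $h$ times as many summands, so its $\xi$-values are of order $h$, whereas $\bm{\xi}^1_{\tau/\sigma}$ keeps its $h=1$ values.
\item More seriously, \eqref{eqn:app:convergence_Z:13}, read as a calendar-time supremum, cannot be obtained by transfer from \eqref{eqn:app:convergence_Z:9}. The example above gives $\bm{\mathcal{Z}}^h_\tau\equiv\bm{x}_0/\sum_{i\in[I]}N_i^0$ on $[0,1]$ for every $h$. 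The autonomous averaged trajectory $\bar{\bm{x}}_\tau$ already moves there. More generally, $\bm{\mathcal{Z}}^h$ keeps a frozen/active pattern on unit blocks that no averaged trajectory has.
\end{itemize}
You are right that the supremum would require a path-level identity; the paper only asserts one for marginals.

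\textbf{What survives.} What does survive is equality in law of the states at phase boundaries, i.e.\ after time-changing each system by its own cumulative silent and non-silent time. That is all Lemma~\ref{lemma:sim} uses, through $\tau^h(t)$. A sound argument should be stated for these embedded states and compared with the phase boundaries of the averaged dynamics, rather than as a calendar-time identity.
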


\proof{Proof of Lemma~\ref{lemma:convergence-Z}.}
Let $\sigma = 1/h$, where $h$ is the scaling parameter.
For any $\tau,\bar{T}\in\mathbb{R}_0$,
\begin{equation}\label{eqn:app:convergence_Z:10}
\int_\tau^{\tau+\bar{T}} \beta(\bm{X}^{\sigma}_{\tau},\bm{\xi}^{1}_{\tau/\sigma})d\tau = \sigma\int_{\tau/\sigma}^{(\tau+\bar{T})/\sigma}\beta(\bm{X}^{\sigma}_{\sigma\tau},\bm{\xi}^{1}_\tau)d\tau = \frac{1}{h}\int_{h\tau}^{h(\tau+\bar{T})}\beta(\bm{X}^{\sigma}_{\tau/h},\bm{\xi}^{1}_\tau)d\tau.
\end{equation}

Given $h\in\mathbb{N}_+$ and $\bm{X}^{\sigma}_0=\bar{\bm{x}}_0=\bm{x}_0\in\mathbb{N}_0^{|\mathscr{K}|}$, consider a trajectory $\bm{X}^h_\tau$ which satisfies \eqref{eqn:app:existence_ODE_limit:1}.
Define a set $\mathscr{X}^h \subset \mathbb{R}_0^{|\mathscr{K}|}$ where, for any $\bm{x}=(x_{\kappa}:\kappa\in\mathscr{K})\in\mathscr{X}^h$, each element $x_{\kappa}$ can be rewritten as $\frac{n_{\kappa}}{h}$ with $n_{\kappa}\in\mathbb{N}_0$. 
Based on the definitions of $\bm{\xi}^{h}_\tau$ and $\lim_{a\downarrow 0}Q^{h,\Lipschitza}$, for $t\in[T]_0$, and $\bar{T} = \tau_{t}(m)$ for some $m\in\mathbb{N}_0$, $\bm{X}^{\sigma}_{\bar{T}}$ and $\bm{X}^h_{\bar{T}}$ must take some values in $\mathscr{X}^h$.
In particular, for $\kappa\in\mathscr{K}$, given $\tau_m=\tau_{t^{\kappa}}(m)$ for $m\in\mathbb{N}_0$,
if $\bm{X}^{\sigma}_{\tau_m} = \bm{X}^h_{\tau_m}=\bm{x}$ with  $\bm{x}\in\mathscr{X}^h$ and $x_{\kappa}>0$, then, let $\bar{T}^{\sigma}$ and $\bar{T}^h$ represent the time such that $\lvert \int_{h\tau_m}^{h(\tau_m+\bar{T}^{\sigma})}\beta_{\kappa}(\bm{X}^{\sigma}_{\tau/h},\bm{\xi}^{1}_{\tau})d\tau\rvert= 1$ and $\lvert \int_{\tau_m}^{\tau_m+\bar{T}^h}\beta^h_{\kappa}(h\bm{X}^h_{\tau},\bm{\xi}^{h}_{\tau})d\tau\rvert= 1$, respectively, that are exponentially distributed with rate $h\lceil x_{\kappa}/h\rceil\sum_{\kappa'\in\mathscr{K}:t^{\kappa'}={t^{\kappa}+1}}\scrp_{t^{\kappa}}(\zeta^{\kappa},\zeta^{\kappa'})$, we have $\bar{T}^{\sigma}\sim \bar{T}^h$, where $\sim$ means equivalence in distribution.
Hence, for given $\bar{T}\in\mathbb{R}_0$, if $\bm{X}^{\sigma}_{\tau_m} = \bm{X}^h_{\tau_m}=\bm{x}$, then, for any $\kappa\in\mathscr{K}$,
\begin{equation}\label{eqn:app:convergence_Z:11}
    \frac{1}{h}\int_{h\tau_m}^{h(\tau_m+\bar{T})}\beta_{\kappa}(\bm{X}^{\sigma}_{\tau/h},\bm{\xi}^{1}_{\tau})d\tau \sim \frac{1}{h}\int_{\tau_m}^{\tau_m+\bar{T}}\beta^h_{\kappa}(h\bm{X}^h_{\tau},\bm{\xi}^{h}_{\tau})d \tau,
\end{equation}
leading to $\bm{X}^{\sigma}_\tau \sim \bm{X}^h_\tau$ for all $\tau\in\mathbb{R}_0$.

From \eqref{eqn:app:convergence_Z:11}, we obtain \eqref{eqn:app:convergence_Z:12}.
Together with \eqref{eqn:app:convergence_Z:9}, for any $0<\bar{T}<\infty$ and $\delta>0$, \eqref{eqn:app:convergence_Z:13} holds.

\endproof

Similar to \eqref{eqn:app:convergence_Z:7}, for a time point of the real process $t\in[T]_0$, define
\begin{equation}\label{eqn:app:convergence_Z:15}
\tau^h(t)\coloneqq \inf\Bigl\{\tau\in\mathbb{R}_0~\Bigl|~\sum_{\kappa\in\mathscr{K}:t^{\kappa}<t
}\mathcal{Z}^h_{\kappa,\tau} = 0\Bigr\},
\end{equation}
where $\tau^h(0)=0$.

\begin{lemma}\label{lemma:sim}
For any $\delta>0$, $\bm{\mathcal{Z}}^h_0= \bm{\scrz}_0\coloneqq \bm{x}_0/\sum_{i\in[I]}N_i^0$ that satisfies \eqref{eqn:sim:1},
\begin{equation}
    \lim_{h\to \infty} \bbP\Bigl\{\max_{t\in[T]_0, \zeta\in\calJ}\norm{Z^{\orule,h}_{\zeta}(t)-\mathcal{Z}^h_{\tau^h(t),\kappa(\zeta,t)}} > \delta\Bigr\} = 0,
\end{equation}
where $\kappa(\zeta,t)$ is the $\kappa\in\mathscr{K}$ such that $\zeta^{\kappa} = \zeta$ and $t^{\kappa}=t$.
\end{lemma}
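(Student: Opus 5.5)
The plan is to argue by induction over the real decision epochs $t\in[T]_0$, coupling the discrete WCG process $\{\bm{Z}^{\orule,h}(t)\}$ with the continuous-time trajectory $\bm{\mathcal{Z}}^h_\tau$ of \eqref{eqn:ODE_Z_h} on the random interval $[\tau^h(t),\tau^h(t+1)]$. Inside such an interval, $\bm{\mathcal{Z}}^h$ does two things. First, the dummy-action components of time $t$ are split into the real actions at rates $\scra^{\orule}_{\zeta}$, which is the $\xi(0)$-driven term in \eqref{eqn:app:convergence_Z:3}. Second, it moves mass from GSATs with $t^{\kappa}=t$ to dummy-action GSATs with time $t+1$, one bandit at a time, through the Poisson clocks $M_t(\zeta,\zeta',n,\cdot)$ with rates $p_i(s,\action,s')$. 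By \eqref{eqn:app:convergence_Z:7}/\eqref{eqn:app:convergence_Z:15}, the stopping time $\tau^h(t+1)$ is exactly when all time-$t$ mass has been transferred. So $\bm{\mathcal{Z}}^h_{\tau^h(t+1)}$ restricted to time-$(t+1)$ GSATs records which next state each bandit drew. I would first check, using the construction of $\bm{\xi}^h$ and $\lim_{\Lipschitza\downarrow 0}Q^{h,\Lipschitza}$, that each bandit in $(i,s,\action)$ lands in $s'$ with probability $p_i(s,\action,s')$, independently across bandits. This is the embedded-jump-chain property of competing exponential clocks. It gives the equality in distribution $\bm{Y}^{\orule,h}(t+1)\sim$ (aggregated time-$(t+1)$ mass of $\bm{\mathcal{Z}}^h_{\tau^h(t+1)}$), given matching states at time $t$.

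The inductive step then compares the action split. In the real process, $Z^{\orule,h}_{i,s,\action}(t)$ has conditional mean $\alpha^{\orule,h}_{i,s,\action}(\bm{Y}^{\orule,h}(t),t)Y^{\orule,h}_{i,s}(t)$ by \eqref{eqn:define:alpha}. In $\bm{\mathcal{Z}}^h$, the split uses the limit $\alpha^{\orule}$ evaluated at the current aggregated state. I would bound the discrepancy by three terms:
\begin{enumerate}
\item the gap $\sup_{\bm{y}}|\alpha^{\orule,h}-\alpha^{\orule}|$, which tends to $0$ by the Lipschitz-limit regularity;
\item the Lipschitz constant of $\alpha^{\orule}$ multiplied by $\|\bm{Y}^{\orule,h}(t)-\mathcal{Y}^h(t)\|$, which is controlled by the induction hypothesis;
\item the fluctuation of $Z^{\orule,h}(t)$ around its conditional mean.
\end{enumerate}
For the first term, pointwise convergence on a compact simplex together with the equi-Lipschitz interpolation of Proposition~\ref{prop:stable_cond:1} should give uniform convergence; if that fails, I would pass to a subsequence and a covering argument.

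The third term is where I expect the main obstacle. A general policy $\phi^h$ need not randomize independently across bandits, so conditional concentration of $Z^{\orule,h}(t)$ is not automatic. My first attempt is to work with the conditional-mean identity only, plus a Chebyshev bound on the bandit-level transitions, which are conditionally independent given actions. The transitions contribute $O(1/h)$ variance. The action fluctuations would be handled by observing that the statement only compares against $\mathcal{Z}^h$, whose own split is also random through the coupling. If needed, I would redefine the coupling so that $\bm{\mathcal{Z}}^h$ uses the realized action counts of $\phi^h$, leaving only the transition noise and the $\alpha^{\orule,h}\to\alpha^{\orule}$ error.

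Finally, I would combine the pieces with a union bound over the finitely many $t\in[T]_0$ and $\zeta\in\calJ$. At the base case $t=0$, both processes start from $\bm{y}^0$ by \eqref{eqn:sim:1}. At each step, the error is at most a constant (depending on $T$ and the Lipschitz constants) times the previous error plus terms that vanish in probability. A Gronwall-type discrete iteration then yields the claim for every $\delta>0$.
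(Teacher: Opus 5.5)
\textbf{There is a genuine gap at your third term, and neither of your fallbacks closes it.}

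In $\bm{\mathcal{Z}}^h$ the action split is not random. During a silent period it is driven by the deterministic $\xi_\tau(0)=\indicator\{\tau\in\scrR\}$ of \eqref{eq:xi_definition_new:deter}, at the rates $\scra^{\orule}$ of the current aggregated mass. So the only randomness in $\bm{\mathcal{Z}}^h$ is in the Poisson transition clocks, and there is nothing to couple with the realized action counts of $\phi^h$. Redefining $\bm{\mathcal{Z}}^h$ to use those counts does not help either. It replaces the solution of \eqref{eqn:ODE_Z_h} by a different process and loses the identity \eqref{eqn:app:convergence_Z:12}, which is what makes the lemma usable in the proof of Theorem~\ref{theorem:convergence-Z}.

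In fact no argument can close this step under the stated hypotheses, because policies in $\lPhih$ act on labelled states. Take one gang with $\bS_1=\bA_1=\{0,1\}$. All bandits start in state $0$ and move to $0$ or $1$ with probability $\frac12$ each, whatever the action. Let $\phi^h_1$ put every bandit in state $1$ into action $1$ if bandit $n=1$ is in state $1$, and into action $0$ otherwise; use action $0$ everywhere else. By exchangeability, $\alpha^{\orule,h}_{1,1,1}(\bm{y},1)=y_{1,1}$ whenever $y_{1,1}>0$, for every $h$, so $\orule\in\PsiZ$. Yet $Z^{\orule,h}_{1,1,1}(1)$ equals $0$ or $Y^{\orule,h}_{1,1}(1)\approx\frac12$, each with probability $\frac12$. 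The matching component of $\bm{\mathcal{Z}}^h$ is $\approx\frac14$. So the lemma, and Theorem~\ref{theorem:convergence-Z}, need an extra hypothesis, for example $Z^{\orule,h}_{\zeta}(t)-\alpha^{\orule,h}_{\zeta}(\bm{Y}^{\orule,h}(t),t)\,Y^{\orule,h}_{i^{\zeta},s^{\zeta}}(t)\to0$ in probability. This holds for count-based deterministic rules such as FAB, and for independent per-bandit randomization.

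\textbf{Your first term fails as well.} Proposition~\ref{prop:stable_cond:1} makes each interpolant $\alpha^{\orule,h}$ Lipschitz for fixed $h$, but the constant may grow like $h$, since the grid spacing is $1/(h\sum_iN_i^0)$. So there is no equi-Lipschitz family. Pointwise convergence then says nothing about $\alpha^{\orule,h}$ at the moving random argument $\bm{Y}^{\orule,h}(t)$, and subsequences or coverings do not help. In the same example (with $N_1=hN_1^0$), take the count-based rule that uses action $1$ in state $1$ only when $Y^{\orule,h}_{1,1}(1)\in[\frac12+N_1^{-2/3},\frac12+N_1^{-1/2}]$. Its pointwise limit is $\alpha^{\orule}_{1,1,1}\equiv0$. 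By the central limit theorem, however, this window is hit with probability bounded away from zero, and then $Z^{\orule,h}_{1,1,1}(1)\approx\frac12$ against $\approx0$ in $\bm{\mathcal{Z}}^h$. So uniform convergence of $\alpha^{\orule,h}$, at least near the fluid trajectory, must also be assumed.

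\textbf{The paper's proof has the same two holes.} It goes through an auxiliary policy $\psi$ that randomizes independently at rates $\alpha^{\orule}$. It gets $\bm{\mathcal{Z}}^h\approx\bm{Z}^{\psi,h}$ from the law of large numbers, then compares $\bm{Z}^{\orule,h}$ with $\bm{Z}^{\psi,h}$ through one-step conditional means. But in \eqref{eqn:lemma:sim:9} it replaces $Z^{\orule,h}_{\zeta}(t+1)$ by its conditional mean $Y^{\orule,h}_{i^{\zeta},s^{\zeta}}(t+1)\alpha^{\orule,h}_{\zeta}(\bm{Y}^{\orule,h}(t+1),t+1)$ given $\bm{Y}^{\orule,h}(t+1)$. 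In \eqref{eqn:lemma:sim:13}--\eqref{eqn:lemma:sim:14} it uses an $h$-independent Lipschitz constant and applies pointwise limits at the random point $\bm{Z}^{\psi,h}(t)$. Once both hypotheses are stated explicitly, your direct three-term induction goes through and is simpler than the detour via $\psi$.
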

The proof of Lemma~\ref{lemma:sim} is provided in Appendix~\ref{app:lemma:sim}.


\proof{Proof of Theorem~\ref{theorem:convergence-Z}.}
From Lemma~\ref{lemma:convergence-Z}, given an initial point $\bm{\mathcal{Z}}^h_0=\bar{\bm{x}}_0/\sum_{i\in[I]}N^0_i$ that satisfies \eqref{eqn:sim:1}, for any $0\leq \tau \leq \bar{T}$,
\begin{equation}\label{eqn:app:convergence_Z:14}
    \lim_{h\rightarrow \infty}\mathbb{E}\Bigl[\bm{\mathcal{Z}}^h_{\tau}\Bigr]= \frac{\bar{\bm{x}}_{\tau}}{\sum_{i\in[I]}N^0_i}.
\end{equation}
Together with Lemma~\ref{lemma:sim}, for any $t\in[T]_0$ and $\zeta\in\calJ$,
\begin{equation}\label{eqn:app:convergence_Z:15:half}
    \lim_{h\to\infty} \bbE^{\orule}_{\bm{y}^0} Z^{\orule,h}_{\zeta}(t) = \lim_{h\rightarrow \infty}\mathbb{E}\mathcal{Z}^h_{\tau^h(t), \kappa(\zeta,t)}= \frac{\mathbb{E}\bar{x}_{\tau^h(t), \kappa(\zeta,t)}}{\sum_{i\in[I]}N^0_i}.
\end{equation}

From Lemmas~\ref{lemma:convergence-Z} and \ref{lemma:sim},
for $\delta > 0$, 
\begin{multline}
    \lim_{h\to\infty} \bbP\Bigl\{\max_{t\in[T]_0,\zeta\in\calJ}\norm{Z^{\orule,h}_{\zeta}(t) - \frac{\bar{x}_{\tau^h(t),\kappa(\zeta,t)}}{\sum_{i\in[I]}N^0_i}} > \delta\Bigr\}\\
    \leq \lim_{h\to\infty} \bbP\Biggl\{\max_{t\in[T]_0,\zeta\in\calJ}\Bigl\{\norm{Z^{\orule,h}_{\zeta}(t)-\mathcal{Z}^h_{\tau,\kappa(\zeta,t)}}+\norm{\mathcal{Z}^h_{\tau^h(t),\kappa(\zeta,t)} - \frac{\bar{x}_{\tau^h(t),\kappa(\zeta,t)}}{\sum_{i\in[I]}N^0_i}}\Bigr\} > \delta\Biggr\}=0.
\end{multline}
Recall \eqref{eqn:app:convergence_Z:15:half}, it proves the theorem.

\endproof

\section{Continuity of $Q^{h,\Lipschitza}$ defined in \eqref{eqn:app:convergence_Z:3}}\label{app:continuity_Q}

For $\Lipschitza\in(0,1)$ and $u\in[0,1]$, define
\begin{equation}\label{eqn:dirac-delta:1}
y_{\Lipschitza}(u)\coloneqq \begin{cases}
\int_{-\infty}^{\rho(u)}\frac{1}{\Lipschitza\sqrt{\pi}}e^{-(v-\frac{1}{\Lipschitza})^2/\Lipschitza^2} dv,&\text{if }u\in(0,1),\\
0, & \text{if }u=0,\\
1, & \text{if } u=1,
\end{cases}
\end{equation}
where $\rho(u)=-\cot(u\pi)$. The key features we need of $\rho(u)$ are that $\rho(u)\rightarrow -\infty$ for $u\rightarrow 0$, $\rho(u)\rightarrow +\infty$ for $u\rightarrow 1$, and is suitably smooth.
In this context, $y_{\Lipschitza}(u)$ is continuous on $u\in[0,1]$, and, for $u\in(0,1)$, the there exists $K_a <\infty$ such that derivative
\begin{equation}\label{eqn:dirac-delta:2}
    \frac{d y_{\Lipschitza}}{d u} = \frac{1}{\Lipschitza\sqrt{\pi}}e^{-(\cot{u\pi}+\frac{1}{\Lipschitza})^2/\Lipschitza^2}\frac{\pi}{\sin^2{u\pi}}  < K_a < \infty.
\end{equation}

For $u\in(0,1)$,
\begin{equation}\label{eqn:dirac-delta:6}
\frac{d y_{\Lipschitza}(u)}{d\Lipschitza} = \frac{d}{d \Lipschitza} \Phi\Bigl(\frac{\rho(u) - \frac{1}{a}}{\Lipschitza/\sqrt{2}}\Bigr) = \frac{1}{\sqrt{\pi}}\Bigl(\frac{2}{\Lipschitza^3} - \frac{\rho(u)}{\Lipschitza^2}\Bigr)e^{-\bigl(\frac{\rho(u)-\frac{1}{\Lipschitza}}{\Lipschitza}\bigr)^2},
\end{equation}
where $\Phi(x)$ is the cumulative distribution function of the standard normal distribution.
We have
\begin{equation}
    \lim_{\Lipschitza \downarrow 0} \Bigl\lvert \frac{d y_{\Lipschitza}(u)}{d \Lipschitza}\Bigr\rvert = 0,
\end{equation}
and, for any $\Lipschitza\in[0,1)$, there exists $K_{\Lipschitza} < \infty$ such that 
\begin{equation}
    \Bigl\lvert \frac{d y_{\Lipschitza}(u)}{d \Lipschitza}\Bigr\rvert < K_{\Lipschitza}.
\end{equation}


Define 
\begin{equation}\label{eqn:x_t}
    x(t) = \sum_{\kappa''\in\mathscr{K}:t^{\kappa''} \leq t}|x_{\kappa''}|,
\end{equation}
\begin{equation}\label{eqn:xa_t}
    x^{\mu}(t) = \sum_{\kappa''\in\mathscr{K}:t^{\kappa''} = t, \action^{\kappa''} = \dummyAction}|x_{\kappa''}|,
\end{equation}
and $x(t)\equiv x^{\mu}(t) \equiv 0 $ for all $t<0$.



We define, for $h=1$,
\begin{multline}\label{eqn:dirac-delta:3}
f^{h,\Lipschitza}_{\kappa,\kappa'}(\bm{x},\bm{\xi}) = 
\indicator\biggl\{0<x(t^{\kappa}-1)<1 \text{ or } 0<x^{\mu}(t^{\kappa}) <1\biggr\} \Bigl(\sum_{n\in \bigl[\lceil x_{\kappa} \rceil\bigr]}\xi(\kappa,\kappa',n)\\
-\indicator\Bigl\{ i^{\kappa}=i^{\kappa'},s^{\kappa}=s^{\kappa'}, \action^{\kappa} = \dummyAction,t^{\kappa}=t^{\kappa'}\Bigr\}\sum_{\begin{subarray}~\kappa''\in\mathscr{K}:\\i^{\kappa''}=i^{\kappa},\\s^{\kappa''}=s^{\kappa},\\t^{\kappa''}=t^{\kappa}\end{subarray}}x_{\kappa''}\scra^{\orule}_{\zeta^{\kappa'}}(\frac{\bm{x}}{\sum_{i\in[I]}N_i^0},t^{\kappa'})\xi(0)+\Delta^a_{\kappa,\kappa'}(\bm{x})\bm{\xi}\Bigr)\\
\times y_{\Lipschitza}\bigl(1- x(t^{\kappa}-1)\bigr)y_{\Lipschitza}\bigl(1- x^{\mu}(t^{\kappa})\bigr) \\
+\indicator\biggl\{x(t^{\kappa}-1)=0, x^{\mu}(t^{\kappa})=0\biggr\}
\Delta^a_{\kappa,\kappa'}(\bm{x})\bm{\xi}\\
+\indicator\biggl\{0<x(t^{\kappa}-1)<1\biggr\}\indicator\Bigl\{i^{\kappa}=i^{\kappa'},s^{\kappa}=s^{\kappa'},a^{\kappa}=\dummyAction,t^{\kappa}=t^{\kappa'}\Bigr\}\\
\times\sum_{\begin{subarray}~\kappa''\in\mathscr{K}:\\i^{\kappa''}=i^{\kappa},\\s^{\kappa''}=s^{\kappa},\\t^{\kappa''}=t^{\kappa}\end{subarray}}x_{\kappa''}\scra^{\orule}_{\zeta^{\kappa'}}(\frac{\bm{x}}{\sum_{i\in[I]}N_i^0},t^{\kappa'})\xi(0)y_{\Lipschitza}(1-x^{\mu}(t^{\kappa})),
\end{multline}
where $\Delta^a_{\kappa,\kappa'}(\bm{x})\bm{\xi} \coloneqq  
-\xi\bigl(\kappa,\kappa',\lceil x_{\kappa} \rceil\bigr)y_{\Lipschitza}\bigl(\norm{x_{\kappa}}_+\bigr) $
with $\norm{x}_+\coloneqq \lceil x \rceil-x$,  $\bm{x}\in\mathbb{R}_0^{|\mathscr{K}|}$, $\bm{\xi}=(\xi(\kappa,\kappa',n):\kappa,\kappa'\in\mathscr{K},n\in[N_{i^{\kappa}}])\in \mathbb{R}_0^{|\mathscr{K}|\sum_{\kappa\in\mathscr{K}}N_{i^{\kappa}}}$, $\xi(\kappa,\kappa',0)\coloneqq 0$,  and $y_a(u)$ defined in \eqref{eqn:dirac-delta:1}.
Here, based on the definition in \eqref{eqn:x_t}, $x(t^{\kappa}-1) = \sum_{\kappa'\in\mathscr{K}:t^{\kappa'}< t^{\kappa}} |x_{\kappa'}|$.
This  $f^{h,\Lipschitza}_{\kappa,\kappa'}(\bm{x},\bm{\xi})$ is linear in $\bm{\xi}$, continuous in $\Lipschitza\in(0,1)$, and, there exist $K>0$ (independent from $\Lipschitza$) such that
\begin{multline}
\lim_{\Lipschitza\downarrow 0} f^{h,\Lipschitza}_{\kappa,\kappa'}(\bm{x},\bm{\xi})\leq \lim_{\Lipschitza\downarrow 0} K \Bigl(\indicator\bigl\{0<x(t^{\kappa}-1)<1 \text{ or } 0< x^{\mu}(t^{\kappa}) <1\bigr\}y_a(1-x(t^{\kappa}-1))y_a(1-x^{\mu}(t^{\kappa})) \\ + y_a(\norm{ x_{\kappa}}_+)+y_{\Lipschitza}(1-x^{\mu}(t^{\kappa}))\Bigr)  =  0.
\end{multline}
There exist $K_1,K_2,K_3 <\infty$ such that the derivative 
\begin{equation}\label{eqn:derivative_f_to_a}
\lvert\frac{d f^{h,\Lipschitza}_{\kappa,\kappa'}}{d\Lipschitza}\rvert \leq K_1 \bigl\lvert \frac{d y_{\Lipschitza}}{d\Lipschitza} \bigr\rvert + K_2 \bigl\lvert \frac{d \Delta^a_{\kappa,\kappa'}}{d\Lipschitza}\bigr\rvert \leq (K_1+K_3)  \bigl\lvert \frac{d y_{\Lipschitza}}{d\Lipschitza} \bigr\rvert,
\end{equation}
where $K_1,K_2,K_3$ are dependent on $\bm{\xi}$ and $\bm{x}$ but independent from $\Lipschitza\in(0,1)$. 
Based on \eqref{eqn:dirac-delta:6}, $\lim_{\Lipschitza\downarrow 0}\bigl\lvert \frac{d y_{\Lipschitza}}{d\Lipschitza} \bigr\rvert = 0$, leading to $\lim_{\Lipschitza\downarrow 0}\bigl\lvert \frac{d f^{h,\Lipschitza}_{\kappa,\kappa'}}{d\Lipschitza} \bigr\rvert = 0$.

We continue discussing the Lipschitz continuity of $Q^{1,\Lipschitza}$.
In this proof, we consider given $\kappa,\kappa'\in\mathscr{K}$.

In the following, we start with the continuity of $Q^{1,\Lipschitza}(\kappa,\kappa',\bm{x},\bm{\xi})$ in $\bm{x}\in\mathbb{R}_0^{|\mathscr{K}|}$ (right-continuous at the zero point) for given $\kappa,\kappa',\bm{\xi}$. 
For $h=1$, we plug $f^{h,\Lipschitza}_{\kappa,\kappa'}(\bm{x},\bm{\xi})$ defined in \eqref{eqn:dirac-delta:3} into \eqref{eqn:app:convergence_Z:3}, obtaining
\begin{multline}\label{eqn:app:conitinuity_Q:1}
Q^{1,\Lipschitza}(\kappa,\kappa',\bm{x},\bm{\xi}) 
= \indicator\bigl\{0<x(t^{\kappa}-1)<1 \text{ or } 0<x^{\mu}(t^{\kappa})<1\bigr\}\\
\times\Bigl(\sum_{n\in\bigl[\lceil x_{\kappa} \rceil\bigr]}\xi(\kappa,\kappa',n)-\indicator\Bigl\{ i^{\kappa}=i^{\kappa'},s^{\kappa}=s^{\kappa'}, \action^{\kappa} = \dummyAction,t^{\kappa}=t^{\kappa'}\Bigr\}\\
\times\sum_{\begin{subarray}~\kappa''\in\mathscr{K}:\\i^{\kappa''}=i^{\kappa},\\s^{\kappa''}=s^{\kappa},\\t^{\kappa''}=t^{\kappa}\end{subarray}}x_{\kappa''}\scra^{\orule}_{\zeta^{\kappa'}}(\frac{\bm{x}}{\sum_{i\in[I]}N_i^0},t^{\kappa'})\xi(0)+\Delta^a_{\kappa,\kappa'}(\bm{x})\bm{\xi}\Bigr)\\
\times y_{\Lipschitza}\bigl(1- x(t^{\kappa}-1))\bigr)y_{\Lipschitza}\bigl(1- x^{\mu}(t^{\kappa}))\bigr) \\
+\indicator\bigl\{x(t^{\kappa}-1)=0,x^{\mu}(t^{\kappa})=0\bigr\}\\
\times\Bigl(\sum_{n\in\bigl[\lceil x_{\kappa} \rceil\bigr]}\xi(\kappa,\kappa',n)-\indicator\Bigl\{ i^{\kappa}=i^{\kappa'},s^{\kappa}=s^{\kappa'}, \action^{\kappa} = \dummyAction,t^{\kappa}=t^{\kappa'}\Bigr\}\\
\times\sum_{\begin{subarray}~\kappa''\in\mathscr{K}:\\i^{\kappa''}=i^{\kappa},\\s^{\kappa''}=s^{\kappa},\\t^{\kappa''}=t^{\kappa}\end{subarray}}x_{\kappa''}\scra^{\orule}_{\zeta^{\kappa'}}(\frac{\bm{x}}{\sum_{i\in[I]}N_i^0},t^{\kappa'})\xi(0)+\Delta^a_{\kappa,\kappa'}(\bm{x})\bm{\xi}\Bigr)\\
+\indicator\bigl\{0<x(t^{\kappa}-1)<1\bigr\}\indicator\Bigl\{ i^{\kappa}=i^{\kappa'},s^{\kappa}=s^{\kappa'}, \action^{\kappa} = \dummyAction,t^{\kappa}=t^{\kappa'}\Bigr\}\sum_{\begin{subarray}~\kappa''\in\mathscr{K}:\\i^{\kappa''}=i^{\kappa},\\s^{\kappa''}=s^{\kappa},\\t^{\kappa''}=t^{\kappa}\end{subarray}}x_{\kappa''}\scra^{\orule}_{\zeta^{\kappa'}}(\frac{\bm{x}}{\sum_{i\in[I]}N_i^0},t^{\kappa'})\xi(0)\\
+\indicator\bigl\{x(t^{\kappa}-1)=0\bigr\}\indicator\Bigl\{ i^{\kappa}=i^{\kappa'},s^{\kappa}=s^{\kappa'}, \action^{\kappa} = \dummyAction,t^{\kappa}=t^{\kappa'}\Bigr\}\sum_{\begin{subarray}~\kappa''\in\mathscr{K}:\\i^{\kappa''}=i^{\kappa},\\s^{\kappa''}=s^{\kappa},\\t^{\kappa''}=t^{\kappa}\end{subarray}}x_{\kappa''}\scra^{\orule}_{\zeta^{\kappa'}}(\frac{\bm{x}}{\sum_{i\in[I]}N_i^0},t^{\kappa'})\xi(0)y_{\Lipschitza}(1-x(t^{\kappa}-1))
\end{multline}
where $\Delta^a_{\kappa,\kappa'}(\bm{x})\bm{\xi} = 
-\xi\bigl(\kappa,\kappa',\lceil x_{\kappa} \rceil\bigr)y_{\Lipschitza}\bigl(\norm{ x_{\kappa}}_+\bigr) $.

Since $\lim_{x(t^{\kappa}-1)\uparrow 1} Q^{1,\Lipschitza}(\kappa,\kappa',\bm{x},\bm{\xi}) = 0$,
and for all $x(t^{\kappa}-1)\geq 1$, $Q^{1,\Lipschitza}(\kappa,\kappa',\bm{x},\bm{\xi}) = 0$, $Q^{1,\Lipschitza}(\kappa,\kappa',\bm{x},\bm{\xi})$ is continuous for all $x(t^{\kappa}-1) \geq 1$.
Similarly, $Q^{1,\Lipschitza}(\kappa,\kappa',\bm{x},\bm{\xi})=\lim_{x^{\mu}(t^{\kappa})\uparrow 1} Q^{1,\Lipschitza}(\kappa,\kappa',\bm{x},\bm{\xi}) $, and thus $Q^{1,\Lipschitza}$ is continuous for all $x^{\mu}(t^{\kappa})\geq 1$. 
We then consider the case with $0\leq x(t^{\kappa}-1) < 1$ or $0\leq x^{\mu}(t^{\kappa}) < 1$.
In this case, \eqref{eqn:app:conitinuity_Q:1} becomes
\begin{multline}\label{eqn:app:conitinuity_Q:3}
    Q^{1,\Lipschitza}(\kappa,\kappa',\bm{x},\bm{\xi}) 
= \\\Bigl(\sum_{n\in\bigl[\lceil x_{\kappa} \rceil\bigr]}\xi(\kappa,\kappa',n)-\indicator\Bigl\{ i^{\kappa}=i^{\kappa'},s^{\kappa}=s^{\kappa'}, \action^{\kappa} = \dummyAction,t^{\kappa}=t^{\kappa'}\Bigr\}\sum_{\begin{subarray}~\kappa''\in\mathscr{K}:\\i^{\kappa''}=i^{\kappa},\\s^{\kappa''}=s^{\kappa},\\t^{\kappa''}=t^{\kappa}\end{subarray}}x_{\kappa''}\scra^{\orule}_{\zeta^{\kappa'}}(\frac{\bm{x}}{\sum_{i\in[I]}N_i^0},t^{\kappa'})\xi(0)+\Delta^a_{\kappa,\kappa'}(\bm{x})\bm{\xi}\Bigr)\\
\times \Biggl(\indicator\bigl\{x(t^{\kappa}-1)=0,x^{\mu}(t^{\kappa})=0\bigr\} + \Bigl(1-\indicator\bigl\{x(t^{\kappa}-1)=0,x^{\mu}(t^{\kappa})=0\bigr\}\Bigr)y_{\Lipschitza}(1-x(t^{\kappa}-1))y_{\Lipschitza}(1-x^{\mu}(t^{\kappa}))\Biggr)\\
+\indicator\Bigl\{ i^{\kappa}=i^{\kappa'},s^{\kappa}=s^{\kappa'}, \action^{\kappa} = \dummyAction,t^{\kappa}=t^{\kappa'}\Bigr\}\sum_{\begin{subarray}~\kappa''\in\mathscr{K}:\\i^{\kappa''}=i^{\kappa},\\s^{\kappa''}=s^{\kappa},\\t^{\kappa''}=t^{\kappa}\end{subarray}}x_{\kappa''}\scra^{\orule}_{\zeta^{\kappa'}}(\frac{\bm{x}}{\sum_{i\in[I]}N_i^0},t^{\kappa'})\xi(0)\\
\times \Biggl(\indicator\bigl\{x(t^{\kappa}-1) = 0\bigr\}+\Bigl(1-\indicator\Bigl\{x(t^{\kappa}-1) = 0\Bigr\}\Bigr)y_{\Lipschitza}(1-x(t^{\kappa}-1))\Biggr).
\end{multline}
For the first term in the right hand side of \eqref{eqn:app:conitinuity_Q:3}, we define
\begin{multline}\label{eqn:app:conitinuity_Q:4:1}
G^1_a(\bm{x},\bm{\xi})\coloneqq y_{\Lipschitza}\bigl(1- x(t^{\kappa}-1)\bigr)y_{\Lipschitza}\bigl(1- x^{\mu}(t^{\kappa})\bigr)\Bigl(1-\indicator\bigl\{x(t^{\kappa}-1)=0, x^{\mu}(t^{\kappa})=0\bigr\}\Bigr)+\indicator\bigl\{x(t^{\kappa}-1)=0,x^{\mu}(t^{\kappa})=0\bigr\}\\
=\begin{cases}
1, &\text{if } x(t^{\kappa}-1) = 0, x^{\mu}(t^{\kappa})=0,\\
y_{\Lipschitza}\bigl(1- x(t^{\kappa}-1)\bigr)y_{\Lipschitza}\bigl(1- x^{\mu}(t^{\kappa})\bigr), &\text{otherwise}.
\end{cases}
\end{multline}
Together with $\lim_{\begin{subarray}~x(t^{\kappa}-1)\downarrow 0,\\x^{\mu}(t^{\kappa})\downarrow 0\end{subarray}}y_{\Lipschitza} \bigl(1- x(t^{\kappa}-1)\bigr) y_{\Lipschitza} \bigl(1- x^{\mu}(t^{\kappa})\bigr)= 1$, $G^1_a(\bm{x},\bm{\xi})$ is continuous in $x(t^{\kappa}-1)= 0$ and $x^{\mu}(t^{\kappa})= 0$ and continuous for all $x(t^{\kappa}-1) \in (0,1)$ or $x^{\mu}(t^{\kappa}) \in (0,1)$.
Also, since there exists $K<\infty$ such that $\lvert d y_a(u)/d u\rvert < K$ (based on \eqref{eqn:dirac-delta:2}), $G^1_a(\bm{x},\bm{\xi})$ is Lipschitz continuous for all $x(t^{\kappa}-1) \in [0,1)$ or $x^{\mu}(t^{\kappa}) \in [0,1)$.

For the second term in the right hand side of \eqref{eqn:app:conitinuity_Q:3}, consider
\begin{multline}\label{eqn:app:conitinuity_Q:4:2}
G^2_a(\bm{x},\bm{\xi})\coloneqq y_{\Lipschitza}\bigl(1- x(t^{\kappa}-1)\bigr)\Bigl(1-\indicator\bigl\{x(t^{\kappa}-1)=0\bigr\}\Bigr)+\indicator\bigl\{x(t^{\kappa}-1)=0\bigr\}\\
=\begin{cases}
1, &\text{if } x(t^{\kappa}-1) = 0,\\
y_{\Lipschitza}\bigl(1- x(t^{\kappa}-1)\bigr), &\text{otherwise}.
\end{cases}
\end{multline}
Similarly, $\lim_{x(t^{\kappa}-1)\downarrow 0}y_{\Lipschitza} \bigl(1- x(t^{\kappa}-1)\bigr) = 1$, $G^2_a(\bm{x},\bm{\xi})$ is right-continuous in $x(t^{\kappa}-1)= 0$ and continuous for all $x(t^{\kappa}-1) \in (0,1)$.
Together with \eqref{eqn:dirac-delta:2}, $G^2_a(\bm{x},\bm{\xi})$ is Lipschitz continuous in $x(t^{\kappa}-1) \in [0,1)$.

We then discuss the continuity of $\sum_{n\in\bigl[\lceil x_{\kappa}\rceil\bigr]}\xi(\kappa,\kappa',n) + \Delta^a_{\kappa,\kappa'}(\bm{x})\bm{\xi}$.
We expand it as 
\begin{equation}\label{eqn:app:conitinuity_Q:5}
 Y_a(\bm{x},\bm{\xi})\coloneqq  \sum_{n\in\bigl[\lceil x_{\kappa}\rceil\bigr]}\xi(\kappa,\kappa',n) + \Delta^a_{\kappa,\kappa'}(\bm{x})\bm{\xi}\\ = \sum_{n\in\bigl[\lceil x_{\kappa}\rceil\bigr]}\xi(\kappa,\kappa',n)
-\xi\bigl(\kappa,\kappa',\lceil x_{\kappa} \rceil\bigr)y_{\Lipschitza}\bigl(\norm{ x_{\kappa}}_+\bigr),
\end{equation}
for which the problematic points are those with $x_{\kappa} = \lceil x_{\kappa}\rceil$.


Consider the limit where $x_{\kappa}\downarrow n_1$ for $ n_1\in\mathbb{N}_0$. 

From \eqref{eqn:app:conitinuity_Q:5}, 
\begin{multline}\label{eqn:app:conitinuity_Q:6}
  \lim_{x_{\kappa}\downarrow n_1} Y_a(\bm{x},\bm{\xi})
  =\lim_{x_{\kappa}\downarrow n_1}\Biggl(\sum_{n\in[n_1] }\xi(\kappa,\kappa',n) + \xi(\kappa,\kappa',n_1+1)
  -\xi(\kappa,\kappa',n_1+1)y_\Lipschitza(\norm{ x_{\kappa}}_+)\Biggr) \\
  = \lim_{x_{\kappa}\downarrow n_1}\sum_{n\in[n_1] }\xi(\kappa,\kappa',n) +\xi_{\kappa,\kappa'}(n_1+1) - \xi(\kappa,\kappa',n_1+1)\lim_{u\uparrow 1}y_\Lipschitza(u) = \sum_{n\in[n_1] }\xi(\kappa,\kappa',n)\\
  = Y_a(\bm{x},\bm{\xi})\Bigl|_{x_{\kappa} = n_1}.
\end{multline}

Along similar lines, for $x_{\kappa} \uparrow n_1\in\mathbb{N}_+$, we obtain
\begin{multline}\label{eqn:app:conitinuity_Q:12}
    \lim_{x_{\kappa}\uparrow n_1} Y_a(\bm{x},\bm{\xi}) = 
    \lim_{x_{\kappa}\uparrow n_1}\biggl(\sum_{n\in[n_1]}\xi(\kappa,\kappa',n) - \xi(\kappa,\kappa',n_1)y_\Lipschitza(\norm{ x_{\kappa}}_+)\biggr)\\
    =\lim_{x_{\kappa}\uparrow n_1}\sum_{n\in[n_1]}\xi(\kappa,\kappa',n) - \xi(\kappa,\kappa',n_1)\lim_{u\downarrow 0}y_\Lipschitza(u)
    = Y_a(\bm{x},\bm{\xi})\Bigl|_{x_{\kappa}=n_1}.
\end{multline}

That is, $Y_a(\bm{x},\bm{\xi})$ is continuous in all $x_{\kappa} \geq 0$.
Recall that, based on the definition of $Y_a(\bm{x},\bm{\xi})$ in \eqref{eqn:app:conitinuity_Q:5}, it is dependent on $\bm{x}$ through only $x_{\kappa}$. $Y_a(\bm{x},\bm{\xi})$ is continuous in all $\bm{x}\in\mathbb{R}_0^{|\mathscr{K}|}$.

Based on the continuity of $Y_a(\bm{x},\bm{\xi})$ in $\bm{x}$ and continuity of $G^1_a(\bm{x},\bm{\xi})$ and $G^2_a(\bm{x},\bm{\xi})$ (defined in \eqref{eqn:app:conitinuity_Q:4:1} and \eqref{eqn:app:conitinuity_Q:4:2}) in $x(t^{\kappa}-1)$ and $x^{\mu}(t^{\kappa})$, we plug $Y_a(\bm{x},\bm{\xi})$, $G^1_a(\bm{x},\bm{\xi})$, and $G^2_a(\bm{x},\bm{\xi})$ in \eqref{eqn:app:conitinuity_Q:3},
\begin{multline}\label{eqn:app:conitinuity_Q:13}
        Q^{1,\Lipschitza}(\kappa,\kappa',\bm{x},\bm{\xi})\\
= \Bigl(Y_a(\bm{x},\bm{\xi})- \indicator\Bigl\{ i^{\kappa}=i^{\kappa'},s^{\kappa}=s^{\kappa'}, \action^{\kappa} = \dummyAction,t^{\kappa}=t^{\kappa'}\Bigr\}\sum_{\begin{subarray}~\kappa''\in\mathscr{K}:\\i^{\kappa''}=i^{\kappa},\\s^{\kappa''}=s^{\kappa},\\t^{\kappa''}=t^{\kappa}\end{subarray}}x_{\kappa''}\scra^{\orule}_{\zeta^{\kappa'}}(\frac{\bm{x}}{\sum_{i\in[I]}N_i^0},t^{\kappa'})\xi(0)\Bigr)G^1_a(\bm{x},\bm{\xi})\\
+ \indicator\Bigl\{ i^{\kappa}=i^{\kappa'},s^{\kappa}=s^{\kappa'}, \action^{\kappa} = \dummyAction,t^{\kappa}=t^{\kappa'}\Bigr\}\sum_{\begin{subarray}~\kappa''\in\mathscr{K}:\\i^{\kappa''}=i^{\kappa},\\s^{\kappa''}=s^{\kappa},\\t^{\kappa''}=t^{\kappa}\end{subarray}}x_{\kappa''}\scra^{\orule}_{\zeta^{\kappa'}}(\frac{\bm{x}}{\sum_{i\in[I]}N_i^0},t^{\kappa'})\xi(0) G^2_a(\bm{x},\bm{\xi}),
\end{multline}
which is continuous in $\bm{x}$.

It remains to show the Lipschitz continuity of $Q^{1,\Lipschitza}$, separately, in $\bm{x}$ and $\bm{\xi}$.
For given $\bm{x}$, $Q^{1,\Lipschitza}<\infty$  and is linear in $\bm{\xi}$, which leads to its Lipschitz continuity in $\bm{\xi}$.

Recall that $G^1_a(\bm{x},\bm{\xi})$ and $G^2_a(\bm{x},\bm{\xi})$ are Lipschiz continuous in $\bm{x}$ (discussed after \eqref{eqn:app:conitinuity_Q:4:1} and \eqref{eqn:app:conitinuity_Q:4:2}).
Observing $Y_a$ defined in \eqref{eqn:app:conitinuity_Q:5}, it is piece-wise differentiable on $\bm{x}$. 
That is, when $x_{\kappa} \neq \lceil x_{\kappa} \rceil$, $Y_a(\bm{x},\bm{\xi})$ is differentiable in $\bm{x}$.
Also, since it is dependent on $\bm{x}$ through only $x_{\kappa}$, for $\kappa'' \neq \kappa$, $dY_a(\bm{x},\bm{\xi})/d x_{\kappa''} = 0$.

For $x_{\kappa}\in (n_1,n_1+1)$ with $n_1\in\mathbb{N}_0$, 
\begin{equation}
    \Bigl\lvert \frac{d Y_a(\bm{x},\bm{\xi})}{d x_{\kappa}}\Bigr\rvert  \leq \xi(\kappa,\kappa',n_1+1)    
    \Bigl\lvert \frac{d y_\Lipschitza (\norm{ x_{\kappa}}_+)}{d x_{\kappa}} \Bigr\rvert
    \leq K \max_{u\in[0,1]}\Bigl\lvert \frac{d y_a (u)}{d u} \Bigr\rvert,
\end{equation}
where $K<\infty$ dependent on $\bm{\xi}$ but independent to $\bm{x}$.

Based on \eqref{eqn:dirac-delta:2}, there exists $K<\infty$ (independent to $u$) such that $\lvert d y_a(u)/d u\rvert < K$.
Hence, there exists $K<\infty$ such that $\Bigl\lvert \frac{d Y_a(\bm{x},\bm{\xi})}{d x_{\kappa}}\Bigr\rvert  < K$ for all the points $\bm{x}$ with $x_{\kappa}\in(n_1,n_1+1)$. 
Since $ Y_a(\bm{x},\bm{\xi})$  is piece-wise differentiable with $\Bigl\lvert\frac{d Y_a(\bm{x},\bm{\xi})}{d x_{\kappa}}\Bigr\rvert  < K <\infty$ and is  continuous in $\bm{x}\in\mathbb{R}_0^{|\mathscr{K}|}$, there exists $K<\infty$ such that, for any $\bm{x},\bm{x}'\in\mathbb{R}_0^{|\mathscr{K}|}$, 
\[
\bigl\lvert Y_a(\bm{x},\bm{\xi}) - Y_a(\bm{x}',\bm{\xi})\bigr\rvert\leq K\bigl\lVert \bm{x}-\bm{x}'\bigr\rVert. 
\]
It is Lipchitz continuous in $\bm{x}\in\mathbb{R}_0^{|\mathscr{K}|}$.

Recall \eqref{eqn:app:conitinuity_Q:13}, since $Y_a$, $G^1_a$, and $G^2_a$ are Lipchitz continuous in $\bm{x}\in\mathbb{R}_0^{|\mathscr{K}|}$, so does $Q^{1,\Lipschitza}$.

\section{Proof of Theorem~\ref{theorem:convergence_Z_exp}}
\label{app:theorem:convergence_Z_exp}
Recall that the following proof is based on \cite[Theorem 4.1 in Chapter 7 and Theorem 3.3 in Chapter 3]{freidlin2012random}.
It follows similar lines as the proof for \cite[Theorem 3]{fu2024patrolling} but considers a more general case and
achieves Theorem~\ref{theorem:convergence_Z_exp} that is applicable to a broader range of problems than \cite[Theorem 3]{fu2024patrolling}.

For $U\in\mathbb{R}_0$, $\bm{x},\bm{\omega}\in\mathbb{R}^{|\mathscr{K}|}$, $\phi\in\lPhih$, 
we define
\begin{equation}\label{eqn:large_deviation:define_H}
\mathcal{G}^{\Lipschitza,U}(\bm{x},\bm{\omega}) \coloneqq \lim_{\bar{T}\rightarrow \infty}\frac{1}{\bar{T}}\ln \mathbb{E}\exp\Bigl\{\int_0^{\bar{T}}\innerproduct{\bm{\omega}}{b^{\Lipschitza,U}(\bm{x},\bm{\xi}^{1}_{\tau})}d\tau\Bigr\},
\end{equation}
where $\innerproduct{\cdot}{\cdot}$ is the dot production. 
For given $U<\infty$ and $\Lipschitza\in(0,1)$, $\mathcal{G}^{\Lipschitza,U}$ is bounded and Lipschitz continuous in both arguments.
Based on \cite[Lemma 4.1 in Chapter 7]{freidlin2012random}, such $\mathcal{G}^{\Lipschitza,U}$ is convex in the second argument $\bm{\omega}$.

\begin{lemma}\label{lemma:large_deviation:integral_H}
For $U\in\mathbb{R}_0\cup\{\infty\}$, $\phi\in\lPhih$,  
any compact sets $\mathscr{X}^c,\mathscr{W}^c\subset\mathbb{R}^{|\mathscr{K}|}$, and any $\bm{x}\in\mathscr{X}^c$ and $\bm{\omega}\in\mathscr{W}^c$, $\mathcal{G}^{\Lipschitza,U}(\bm{x},\bm{\omega})$ is Riemann integrable.
\end{lemma}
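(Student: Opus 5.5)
The plan is to show that, on the compact set $\mathscr{X}^c\times\mathscr{W}^c$, the function $\mathcal{G}^{\Lipschitza,U}$ is finite, bounded and continuous. Riemann integrability then follows from the classical fact that a bounded function on a compact box whose discontinuity set has Lebesgue measure zero is Riemann integrable; continuity gives an empty discontinuity set. To avoid depending on the existence of the limit in \eqref{eqn:large_deviation:define_H}, I will work with the finite-horizon quantities
\[
\mathcal{G}_{\bar{T}}(\bm{x},\bm{\omega})\coloneqq\frac{1}{\bar{T}}\ln \mathbb{E}\exp\Bigl\{\int_0^{\bar{T}}\innerproduct{\bm{\omega}}{b^{\Lipschitza,U}(\bm{x},\bm{\xi}^{1}_{\tau})}d\tau\Bigr\}.
\]
I will prove bounds that hold uniformly in $\bar{T}$; these then pass to the limit, or to $\limsup/\liminf$ if needed.

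\emph{Case $U<\infty$.} Recall $b^{\Lipschitza,U}(\bm{x},\bm{\xi})=\tilde{\mathcal{Q}}^{1,\Lipschitza}(\bm{x})\min\{\bm{\xi},U\}$.
\begin{itemize}
\item By Lemma~\ref{lemma:continuity_bU}, $\bm{x}\mapsto\tilde{\mathcal{Q}}^{1,\Lipschitza}(\bm{x})$ is bounded and Lipschitz on $\mathscr{X}^c$, with constants $C_Q$ and $L_Q$.
\item Hence the exponent satisfies $\bigl|\int_0^{\bar T}\innerproduct{\bm{\omega}}{b^{\Lipschitza,U}}d\tau\bigr|\le \bar{T}\,C_Q\,U\,D\norm{\bm{\omega}}$, where $D$ is the dimension of $\bm{\xi}$. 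This gives $|\mathcal{G}_{\bar T}|\le C_QUD\max_{\mathscr{W}^c}\norm{\bm{\omega}}$ uniformly in $\bar T$.
\item For continuity I use the elementary inequality $|\ln\mathbb{E}e^{A}-\ln\mathbb{E}e^{B}|\le \operatorname{ess\,sup}|A-B|$. Applied pathwise, it yields
\[
|\mathcal{G}_{\bar T}(\bm{x},\bm{\omega})-\mathcal{G}_{\bar T}(\bm{x}',\bm{\omega}')|\le UD\bigl(C_Q\norm{\bm{\omega}-\bm{\omega}'}+L_Q\max_{\mathscr{W}^c}\norm{\bm{\omega}}\norm{\bm{x}-\bm{x}'}\bigr).
\]
\item This Lipschitz bound is uniform in $\bar T$, so it survives the limit $\bar T\to\infty$.
\end{itemize}

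\emph{Case $U=\infty$.} This is the main obstacle, because $\bm{\xi}^1_\tau$ is unbounded and the sup-bound above fails. The remedy is to bound integrals of $\bm{\xi}$ rather than $\bm{\xi}$ itself.
\begin{itemize}
\item By \eqref{eq:xi_integral}, $\int_0^{\bar T}\xi_\tau(\kappa,\kappa',n)\,d\tau$ is at most a Poisson count with mean at most $\bar{T}\scrp_{t^\kappa}(\zeta^\kappa,\zeta^{\kappa'})$, plus $O(1)$ boundary terms. The coordinate $\int_0^{\bar T}\xi_\tau(0)\,d\tau$ is deterministic, at most $\bar T$.
\item Since the exponent is linear in $\bm{\xi}$, Hölder's inequality over the finitely many independent Poisson coordinates gives
\[
\mathbb{E}\exp\Bigl\{\int_0^{\bar T}\innerproduct{\bm{\omega}}{\tilde{\mathcal{Q}}(\bm{x})\bm{\xi}^1_\tau}d\tau\Bigr\}\le \exp\{\bar T\,c(\bm{\omega})\}.
\]
Here $c(\bm{\omega})=\sum\lambda\,(e^{D C_Q\norm{\bm{\omega}}}-1)+C_Q\norm{\bm{\omega}}$ is bounded on $\mathscr{W}^c$, so $\mathcal{G}$ is bounded above there. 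Jensen's inequality gives the matching lower bound $\mathcal{G}\ge\innerproduct{\bm{\omega}}{\bar b^{\Lipschitza,\infty}(\bm{x})}$ from Lemma~\ref{lemma:convergence_b}.
\end{itemize}

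For continuity when $U=\infty$, I handle the two arguments separately.
\begin{itemize}
\item \emph{Continuity in $\bm{\omega}$.} By \cite[Lemma 4.1 in Chapter 7]{freidlin2012random}, $\mathcal{G}$ is convex in $\bm{\omega}$. A convex function that is finite on a neighbourhood of the compact set is locally Lipschitz there, hence continuous.
\item \emph{Continuity in $\bm{x}$.} Write $A$ and $B$ for the exponents at $\bm{x}$ and $\bm{x}'$, so that $A-B=\int\innerproduct{\bm{\omega}}{(\tilde{\mathcal{Q}}(\bm{x})-\tilde{\mathcal{Q}}(\bm{x}'))\bm{\xi}^1_\tau}d\tau$. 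Hölder's inequality gives
\[
\mathbb{E}e^{A}\le(\mathbb{E}e^{pB})^{1/p}(\mathbb{E}e^{q(A-B)})^{1/q}.
\]
Applying the Poisson exponential-moment bound to the second factor and letting $\norm{\bm{x}-\bm{x}'}\to0$ and then $p\downarrow1$ yields
\[
\limsup_{\bm{x}'\to\bm{x}}\mathcal{G}(\bm{x}',\bm{\omega})\le\mathcal{G}(\bm{x},\bm{\omega}),
\]
and swapping the roles of $\bm{x}$ and $\bm{x}'$ gives the reverse inequality. Passing $p\downarrow1$ requires continuity of $\mathcal{G}$ in its second argument, which is the convexity step just above.
\item Together with the joint boundedness, this gives continuity on the compact set.
\end{itemize}

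The delicate point is keeping every bound uniform in $\bar T$ while the Poisson exponential moments grow linearly in $\bar T$. This works because the exponential moments scale exactly as $e^{\bar T c}$, so after dividing by $\bar T$ the constants are $\bar T$-independent.
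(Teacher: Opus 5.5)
Your argument is sound in outline. It reaches the same conclusion as the paper (bounded and continuous on $\mathscr{X}^c\times\mathscr{W}^c$, hence Riemann integrable), but by a different route.

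\textbf{The paper's route.} It computes the limiting log-moment generating function of the noise in closed form, \eqref{eqn:lemma:integral_H:4}. It does this by sandwiching $\int_0^{\bar{T}}\xi^1_\tau\,d\tau$ between its floor and its floor plus one, and treating the floor as Poisson. It then dominates $\mathcal{G}^{a,U}(\bm{x},\bm{\omega})$ by $\mathcal{G}_{\xi}\bigl((\bm{\omega}^T\tilde{\mathcal{Q}}^{1,a}(\bm{x}))^+\bigr)$ for all $U$ at once, and simply asserts boundedness and joint continuity.

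\textbf{Your route.} You never need the exact formula or the existence of the limit. For $U<\infty$ you get a joint Lipschitz bound, uniform in $\bar{T}$, from the bounded drift and $\lvert\ln\mathbb{E}e^{A}-\ln\mathbb{E}e^{B}\rvert\le\sup\lvert A-B\rvert$. For $U=\infty$ you use only:
\begin{itemize}
\item the one-sided bound $\int_0^{\bar{T}}\xi_\tau(\kappa,\kappa',n)\,d\tau\le N+1$ with $N$ Poisson;
\item Jensen for the lower bound;
\item convexity for continuity in $\bm{\omega}$;
\item H\"older for continuity in $\bm{x}$.
\end{itemize}

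\textbf{Comparison.} The paper's route buys an explicit $\mathcal{G}_{\xi}$, which is reused in Lemmas~\ref{lemma:large_deviation:derivative_H} and~\ref{lemma:large_deviation:unique_zero}. Yours is more robust: it does not depend on the floor of $\int\xi$ being exactly Poisson, which is not evident given the silent periods in the definition of $\xi$ behind \eqref{eq:xi_integral}. It also supplies the continuity argument that the paper leaves implicit.

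\textbf{Joint continuity needs tightening.} For $U=\infty$ you prove continuity in $\bm{\omega}$ and in $\bm{x}$ separately, then appeal to joint boundedness. Separate continuity plus boundedness does not give joint continuity in general. What makes it work is uniformity. Your Poisson upper bound (with $C_Q=\sup_{\mathscr{X}^c}\lVert\tilde{\mathcal{Q}}^{1,a}\rVert$) and your Jensen lower bound hold uniformly in $\bm{x}\in\mathscr{X}^c$ on a neighbourhood of $\mathscr{W}^c$. Convexity then makes $\mathcal{G}^{a,\infty}(\bm{x},\cdot)$ Lipschitz on $\mathscr{W}^c$ with a constant independent of $\bm{x}$. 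This equi-Lipschitz property, together with continuity in $\bm{x}$, gives joint continuity; you should say so explicitly.

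\textbf{The swapping step needs the same care.} Exchanging $\bm{x}$ and $\bm{x}'$ only yields $\mathcal{G}(\bm{x},\bm{\omega})\le\liminf_{\bm{x}'\to\bm{x}}p^{-1}\mathcal{G}(\bm{x}',p\bm{\omega})$. Letting $p\downarrow1$ there needs continuity in $\bm{\omega}$ uniform in $\bm{x}'$. A cleaner fix is to apply H\"older to the exponent at $\bm{x}$ with $\bm{\omega}/p$. This gives $p\,\mathcal{G}(\bm{x},\bm{\omega}/p)\le\liminf_{\bm{x}'\to\bm{x}}\mathcal{G}(\bm{x}',\bm{\omega})$, and you can then let $p\downarrow1$ at the fixed point $\bm{x}$.

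\textbf{Choice of limit.} If you do not assume the limit in $\bar{T}$ exists, define $\mathcal{G}$ through $\limsup_{\bar{T}}$. Convexity and your H\"older estimates survive a $\limsup$, whereas a $\liminf$ of convex functions need not be convex.
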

\proof{Proof of Lemma~\ref{lemma:large_deviation:integral_H}.}
For  any $\bm{\mu}\in\mathbb{R}^{|\mathscr{K}|\sum_{\kappa\in\mathscr{K}}N^0_{i^{\kappa}}+1}$, define
\begin{equation}
\mathcal{G}_{\xi}(\bm{\mu})\coloneqq \lim_{\bar{T}\rightarrow\infty} \frac{1}{\bar{T}}\ln \mathbb{E} \exp\Bigl\{\int_0^{\bar{T}}\innerproduct{\bm{\mu}}{\bm{\xi}^{1}_{\tau}}d\tau\Bigr\}.
\end{equation}

Based on the definition in Appendix~\ref{app:theorem:convergence-Z}, 
$$\mathbb{E} \exp\Bigl\{\mu(0)\Bigl\lfloor\int_0^{\bar{T}}\xi^{1}_{\tau}(0)d\tau\Bigr\rfloor\Bigr\} = \exp\Bigl\{\mu(0)\lfloor \frac{\bar{T}+1}{2}\rfloor\Bigr\},$$
and for $\kappa,\kappa'\in\mathscr{K}$,
since $\Bigl\lfloor\int_0^{\bar{T}}\xi^{1}_{\tau}(\kappa,\kappa',n)d\tau\Bigr\rfloor = \Bigl\lfloor\int_{\begin{subarray}~\tau\in[0,\bar{T})\\\tau\notin \scrR\end{subarray}}\xi^{1}_{\tau}(\kappa,\kappa',n)d\tau\Bigr\rfloor $ is Poisson distributed,
\begin{multline}\label{eqn:lemma:integral_H:1}
\mathbb{E}\exp\Bigl\{\int_0^{\bar{T}}\mu_{\kappa,\kappa'}(n)\xi^{1}_{\tau}(\kappa,\kappa',n)d\tau\Bigr\}
\leq \mathbb{E}\exp\Bigl\{\mu_{\kappa,\kappa'}(n)\bigl\lfloor\int_0^{\bar{T}}\xi^{1}_{\tau}(\kappa,\kappa',n)d\tau\bigr\rfloor+|\mu_{\kappa,\kappa'}(n)| \Bigr\}\\
=\exp\Bigl\{\scrp_{t^{\kappa}}(\zeta^{\kappa},\zeta^{\kappa'})\bigl\lvert[0,\bar{T})\backslash \scrR\bigr\rvert\bigl(e^{\mu_{\kappa,\kappa'}(n)}-1\bigr)+|\mu_{\kappa,\kappa'}(n)|\Bigr\},
\end{multline}
where recall the parameter of the Poisson distribution $\scrp_{t^{\kappa}}(\zeta^{\kappa},\zeta^{\kappa'})= \frac{1}{|[0,\bar{T})\backslash \scrR|}\mathbb{E}\Bigl[\bigl\lfloor\int_{\begin{subarray}~\tau\in[0,\bar{T})\\\tau\notin \scrR\end{subarray}}\xi^1_{\tau}(\kappa,\kappa',n)d\tau\bigr\rfloor\Bigr]$.

Hence,
\begin{equation}\label{eqn:lemma:integral_H:2}
    \mathcal{G}_{\xi}(\bm{\mu}) \leq \frac{\mu(0)}{2} + \frac{1}{2}\sum_{\begin{subarray}~\kappa,\kappa'\in\mathscr{K},\\n\in[N_{i^{\kappa}}^0]\end{subarray}} \scrp_{t^{\kappa}}(\zeta^{\kappa},\zeta^{\kappa'})\bigl(e^{\mu_{\kappa,\kappa'}(n)}-1\bigr).
\end{equation}

Similarly, replacing $|\mu_{\kappa,\kappa'}(n)|$ with $-|\mu_{\kappa,\kappa'}(n)|$ in \eqref{eqn:lemma:integral_H:1}, we can obtain
\begin{equation}\label{eqn:lemma:integral_H:3}
    \mathcal{G}_{\xi}(\bm{\mu}) \geq \frac{\mu(0)}{2} + \frac{1}{2}\sum_{\begin{subarray}~\kappa,\kappa'\in\mathscr{K},\\n\in[N_{i^{\kappa}}^0]\end{subarray}} \scrp_{t^{\kappa}}(\zeta^{\kappa},\zeta^{\kappa'})\bigl(e^{\mu_{\kappa,\kappa'}(n)}-1\bigr).
\end{equation}
Together with \eqref{eqn:lemma:integral_H:2},  
\begin{equation}\label{eqn:lemma:integral_H:4}
 \mathcal{G}_{\xi}(\bm{\mu}) = \frac{\mu(0)}{2} +\frac{1}{2}\sum_{\begin{subarray}~\kappa,\kappa'\in\mathscr{K},\\n\in[N_{i^{\kappa}}^0]\end{subarray}} \scrp_{t^{\kappa}}(\zeta^{\kappa},\zeta^{\kappa'})\bigl(e^{\mu_{\kappa,\kappa'}(n)}-1\bigr).
\end{equation}

For any $\bm{v}\in\mathbb{R}^{|\mathscr{K}|\sum_{\kappa\in\mathscr{K}}N^0_{i^{\kappa}}+1}$, let $\min\{\bm{v},U\} \coloneqq (\min\{v_n,U\}:n\in[N])$.
Recall $b^{\Lipschitza,U}(\bm{x},\bm{\xi}) = \tilde{\mathcal{Q}}^{1,\Lipschitza}(\bm{x})\min\{\bm{\xi},U\}$.
That is, 
\begin{multline}\label{eqn:lemma:integral_H:5}
    \mathcal{G}^{\Lipschitza,U}(\bm{x},\bm{\omega}) = \lim_{\bar{T}\rightarrow\infty}\frac{1}{\bar{T}}\ln\mathbb{E}\exp \Bigl\{\innerproduct{\bm{\omega}^T\tilde{\mathcal{Q}}^{1,\Lipschitza}(\bm{x})}{\int_0^{\bar{T}}\min\{\bm{\xi}^{1}_{\tau},U\}d\tau}\Bigr\}
    \\
    \leq \lim_{\bar{T}\rightarrow\infty}\frac{1}{\bar{T}}\ln\mathbb{E}\exp \Bigl\{\innerproduct{\bigl(\bm{\omega}^T\tilde{\mathcal{Q}}^{1,\Lipschitza}(\bm{x})\bigr)^+}{\int_0^{\bar{T}}\bm{\xi}^{1}_{\tau}d\tau}\Bigr\}
     = \mathcal{G}_{\xi}\bigl((\bm{\omega}^T\tilde{\mathcal{Q}}^{1,\Lipschitza}(\bm{x}))^+\bigr),
\end{multline}
where $(\bm{v})^+\coloneqq \Bigl(\max\{v_n,0\}:n\in\bigl[|\mathscr{K}|\sum_{\kappa\in\mathscr{K}}N^0_{i^{\kappa}}+1\bigr]\Bigr)$ for any vector $\bm{v}\in \mathbb{R}^{|\mathscr{K}|\sum_{\kappa\in\mathscr{K}}N^0_{i^{\kappa}}+1}$.
For any compact sets $\mathscr{W}^c,\mathscr{X}^c\subset \mathbb{R}^{|\mathscr{K}|}$, $\bm{\omega}\in\mathscr{W}^c$, and $\bm{x}\in\mathscr{X}^c$, $\mathcal{G}^{\Lipschitza,U}(\bm{x},\bm{\omega})$ is bounded and is jointly continuous in both arguments.
Hence, it is Riemann integrable. The lemma is proved.

\endproof

From Lemma~\ref{lemma:large_deviation:integral_H}, there exists $\mathcal{G}^{\Lipschitza,U}$ defined in \eqref{eqn:large_deviation:define_H} that satisfies
\begin{equation}\label{eqn:theorem:convergence_Z_exp:condition}
    \int_0^{\bar{T}}\mathcal{G}^{\Lipschitza,U}(\bm{x}_\tau,\bm{\omega}_\tau)d \tau = \lim_{\epsilon\downarrow 0}\ln \mathbb{E}\exp\Bigl\{\frac{1}{\epsilon}\int_0^{\bar{T}}\innerproduct{\bm{\omega}_\tau}{b^{\Lipschitza,U}(\bm{x}_\tau,\bm{\xi}^{1}_{\tau/\epsilon})}d\tau\Bigr\}.
\end{equation}

For $\bm{x},\bm{\beta}\in\mathbb{R}^{|\mathscr{K}|}$, consider the Legendre transform of $\mathcal{G}^{\Lipschitza,U}(\bm{x},\bm{\omega})$,
\begin{equation}\label{eqn:large_deviation:legendre_transfor}
\mathcal{L}^{\Lipschitza,U}(\bm{x},\bm{\beta}) \coloneq \sup_{\bm{\omega}\in\mathbb{R}^{|\mathscr{K}|}}\Bigl[\innerproduct{\bm{\omega}}{\bm{\beta}} - \mathcal{G}^{\Lipschitza,U}(\bm{x},\bm{\omega})\Bigr],
\end{equation}
where recall $\mathcal{G}^{\Lipschitza,U}(\bm{x},\bm{\omega})$ is convex in $\bm{\omega}$. 
Since $\innerproduct{\bm{0}}{\bm{\beta}} - \mathcal{G}^{\Lipschitza,U}(\bm{x},\bm{0}) = 0$, $\mathcal{L}^{\Lipschitza,U}(\bm{x},\bm{\beta})$ is always non-negative.

\begin{lemma}\label{lemma:large_deviation:derivative_H}
Given $\bm{x},\bm{\omega}\in\mathbb{R}^{|\mathscr{K}|}$, 
\begin{equation}\label{eqn:lemma:derivative_H}
\lim_{U\rightarrow \infty}\frac{\partial \mathcal{G}^{\Lipschitza,U}}{\partial \bm{\omega}} = \frac{\partial \mathcal{G}_{\xi}(\bm{\omega}^T\tilde{\mathcal{Q}}^{1,\Lipschitza}(\bm{x}))}{\partial \bm{\omega}}.
\end{equation}
\end{lemma}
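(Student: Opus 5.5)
To prove Lemma~\ref{lemma:large_deviation:derivative_H}, I would first turn $\mathcal{G}^{\Lipschitza,U}$ into an explicit closed form, as was done for $\mathcal{G}_{\xi}$ in the proof of Lemma~\ref{lemma:large_deviation:integral_H}. Since $b^{\Lipschitza,U}(\bm{x},\bm{\xi})=\tilde{\mathcal{Q}}^{1,\Lipschitza}(\bm{x})\min\{\bm{\xi},U\}$, set $\bm{\mu}\coloneqq(\bm{\omega}^T\tilde{\mathcal{Q}}^{1,\Lipschitza}(\bm{x}))^T$. Then $\mathcal{G}^{\Lipschitza,U}(\bm{x},\bm{\omega})$ is the scaled cumulant generating function of $\int_0^{\bar{T}}\innerproduct{\bm{\mu}}{\min\{\bm{\xi}^1_\tau,U\}}d\tau$.

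\textbf{Step 1: closed form for fixed $U$.} The coordinates of $\bm{\xi}^1_\tau$ are independent. The deterministic coordinate $\xi_\tau(0)$ is at most $1$, so truncation does not affect it once $U\ge 1$. Each random coordinate $\xi_\tau(\kappa,\kappa',n)$ equals $1/(\text{inter-event length})$ on successive pieces of its Poisson clock. Its truncated integral is therefore a renewal-reward sum: an event contributes $\min\{1,U\ell\}$, where $\ell$ is the piece length, up to the bounded boundary corrections at the unit grid used in \eqref{eq:xi_definition_new}. Renewal-reward large deviations give a limit $\mathcal{G}^{U}_{\xi}(\bm{\mu})=\mu(0)/2+\sum \Lambda^U_{\kappa,\kappa',n}(\mu_{\kappa,\kappa'}(n))$. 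Here each $\Lambda^U$ is a smooth convex function of one real variable, defined implicitly through the Laplace transform of $(\ell,\min\{1,U\ell\})$ with $\ell\sim\mathrm{Exp}(\scrp_{t^\kappa}(\zeta^\kappa,\zeta^{\kappa'}))$, restricted to the non-silent time fraction $1/2$. As a result, $\mathcal{G}^{\Lipschitza,U}(\bm{x},\bm{\omega})=\mathcal{G}^U_{\xi}(\bm{\mu})$, and by the chain rule
\[
\frac{\partial \mathcal{G}^{\Lipschitza,U}}{\partial\bm{\omega}}=\tilde{\mathcal{Q}}^{1,\Lipschitza}(\bm{x})\,\nabla\mathcal{G}^U_{\xi}(\bm{\mu}).
\]
The same chain rule applied to \eqref{eqn:lemma:integral_H:4} gives $\partial\mathcal{G}_\xi(\bm{\omega}^T\tilde{\mathcal{Q}}^{1,\Lipschitza}(\bm{x}))/\partial\bm{\omega}=\tilde{\mathcal{Q}}^{1,\Lipschitza}(\bm{x})\nabla\mathcal{G}_\xi(\bm{\mu})$, where $\nabla\mathcal{G}_\xi$ has the entries $\tfrac12\scrp\, e^{\mu}$. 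The lemma therefore reduces to the coordinatewise statement $(\Lambda^U)'(\mu)\to\tfrac12\scrp\,e^{\mu}$ as $U\to\infty$, for each fixed $\mu$.

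\textbf{Step 2: convergence of the functions.} As $U\to\infty$, $\min\{1,U\ell\}\uparrow\indicator\{\ell>0\}=1$ almost surely. Monotone and dominated convergence in the defining Laplace transform then give $\Lambda^U(\mu)\to\tfrac12\scrp(e^\mu-1)$ pointwise. This matches \eqref{eqn:lemma:integral_H:4}, consistent with the sandwich argument \eqref{eqn:lemma:integral_H:1}--\eqref{eqn:lemma:integral_H:3}.

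\textbf{Step 3: from functions to derivatives (main obstacle).} Pointwise convergence does not by itself give convergence of derivatives. Here I would use convexity: each $\Lambda^U$ is convex and differentiable, and the limit $\tfrac12\scrp(e^\mu-1)$ is differentiable. A standard convex-analysis fact (e.g.\ Rockafellar, Theorem 24.5) says that pointwise convergence of convex functions to a differentiable convex limit implies convergence of the derivatives at every point. Applying this coordinatewise, and then multiplying by the fixed matrix $\tilde{\mathcal{Q}}^{1,\Lipschitza}(\bm{x})$, gives \eqref{eqn:lemma:derivative_H}.

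The delicate points are the following. First, the renewal-reward limit in Step~1 must actually exist and be differentiable for $\mu$ of both signs. For $\mu>0$ the truncated reward is bounded by $1$ per event, so the relevant exponential moments are finite. Second, the grid corrections from \eqref{eq:xi_definition_new} must be shown to contribute only $O(1)$, hence vanish after dividing by $\bar{T}$, exactly as the $|\mu|$ terms did in \eqref{eqn:lemma:integral_H:1}. If the explicit renewal form is too messy, I would instead sandwich $\mathcal{G}^U_\xi$ between two explicit convex functions that both converge to $\mathcal{G}_\xi$, and apply the same convexity argument.
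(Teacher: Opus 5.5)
Your strategy is correct, and it differs from the paper's at the decisive step.

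\textbf{Shared reduction.} Both proofs start the same way. With $\bm{\mu}=\tilde{\mathcal{Q}}^{1,\Lipschitza}(\bm{x})^T\bm{\omega}$, the chain rule turns \eqref{eqn:lemma:derivative_H} into $\nabla\mathcal{G}^U_{\xi}(\bm{\mu})\to\nabla\mathcal{G}_{\xi}(\bm{\mu})$.

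\textbf{The paper's route.} It treats the floored truncated integral as a compound Poisson sum with marks $\theta\in[0,1]$ whose law tends to $\delta_1$. It writes $\mathcal{G}^U_\xi$ in closed form through $\Theta_{\kappa,\kappa'}(n)$. It then differentiates under the integral to get the explicit vector $\bm{\rho}$, with entries $\frac{1}{2}\scrp_{t^{\kappa}}(\zeta^{\kappa},\zeta^{\kappa'})\int_0^1\theta e^{\mu\theta}\bbP\{\theta\}d\theta$, and lets $U\to\infty$ in that formula.

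\textbf{Your route.} You never differentiate at finite $U$. You use three facts: $\mathcal{G}^U_\xi$ is convex (automatic, as a limit of log-moment generating functions); it converges pointwise to $\mathcal{G}_\xi$; and \eqref{eqn:lemma:integral_H:4} is smooth. Rockafellar's Theorem~24.5 then gives convergence of gradients, even of subgradients.

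\textbf{What each buys.} The paper's route produces an explicit finite-$U$ derivative, but nothing downstream uses it; Lemma~\ref{lemma:large_deviation:unique_zero} only needs the limit. Your route needs only a limit at the level of functions, so it does not depend on the fine structure of the truncated process.

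\textbf{Where your Step~1 is inaccurate.} You should rely on that robustness, because your Step~1 model does not match \eqref{eq:xi_definition_new}:
\begin{itemize}
\item The pieces are gaps of the superposed clock $M_t(\tau)$, not of a private clock of $(\kappa,\kappa',n)$. So coordinates with the same $t$ are dependent, and the truncated cumulant need not split into one-variable functions $\Lambda^U$. The paper's compound-Poisson description makes the same idealization.
\item $\scrR$ cuts a piece at every integer, so those grid corrections are of order $\bar T$ in number, not $O(1)$.
\end{itemize}

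\textbf{Why this does not hurt your argument.} Theorem~24.5 is multivariate, so you can drop the coordinatewise reduction entirely. For pointwise convergence, use your sandwich fallback. For example, apply H\"older's inequality against the defect $\int_0^{\bar T}(\bm{\xi}^1_\tau-U)^+d\tau$. Each piece contributes at most one to this defect, and only when it is shorter than $1/U$. Hence the defect's scaled cumulant generating function vanishes as $U\to\infty$. Letting the H\"older exponent tend to one then uses only the continuity of $\mathcal{G}_\xi$.
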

\proof{Proof of Lemma~\ref{lemma:large_deviation:derivative_H}.}
For $U\in\mathbb{R}_0$, $\phi\in\lPhih$,  
and any $\bm{\mu}\in\mathbb{R}^{|\mathscr{K}|\sum_{\kappa\in\mathscr{K}}N^0_{i^{\kappa}}+1}$, define
\begin{equation}
\mathcal{G}_{\xi}^U(\bm{\mu})\coloneqq \lim_{\bar{T}\rightarrow\infty} \frac{1}{\bar{T}}\ln \mathbb{E} \exp\Bigl\{\int_0^{\bar{T}}\innerproduct{\bm{\mu}}{\min\{\bm{\xi}^{1}_{\tau},U\}}d\tau\Bigr\}.
\end{equation}
Similar to the analysis in \eqref{eqn:lemma:integral_H:1}-\eqref{eqn:lemma:integral_H:3}, we have
\begin{multline}\label{eqn:large_deviation:derivative_H:1}
\mathcal{G}_{\xi}^U(\bm{\mu})=\lim_{\bar{T}\rightarrow\infty} \frac{1}{\bar{T}}\ln \mathbb{E} \exp\Bigl\{\innerproduct{\bm{\mu}}{\int_0^{\bar{T}}\min\{\bm{\xi}^{1}_{\tau},U\}d\tau}\Bigr\}\\ 
\leq \lim_{\bar{T}\rightarrow\infty} \frac{1}{\bar{T}}\ln \mathbb{E} \exp\Bigl\{\innerproduct{\bm{\mu}}{\bigl\lfloor\int_0^{\bar{T}}\min\{\bm{\xi}^{1}_{\tau},U\}d\tau\bigr\rfloor + \bm{1}(\bm{\mu})}\Bigr\} \\
= \lim_{\bar{T}\rightarrow\infty} \frac{1}{\bar{T}}\ln \mathbb{E} \exp\Bigl\{\innerproduct{\bm{\mu}}{\bigl\lfloor\int_0^{\bar{T}}\min\{\bm{\xi}^{1}_{\tau},U\}d\tau\bigr\rfloor}\Bigr\},
\end{multline}
and
\begin{multline}\label{eqn:large_deviation:derivative_H:2}
\mathcal{G}_{\xi}^U(\bm{\mu})\geq \lim_{\bar{T}\rightarrow\infty} \frac{1}{\bar{T}}\ln \mathbb{E} \exp\Bigl\{\innerproduct{\bm{\mu}}{\bigl\lfloor\int_0^{\bar{T}}\min\{\bm{\xi}^{1}_{\tau},U\}d\tau\bigr\rfloor - \bm{1}(\bm{\mu})}\Bigr\} \\
= \lim_{\bar{T}\rightarrow\infty} \frac{1}{\bar{T}}\ln \mathbb{E} \exp\Bigl\{\innerproduct{\bm{\mu}}{\bigl\lfloor\int_0^{\bar{T}}\min\{\bm{\xi}^{1}_{\tau},U\}d\tau\bigr\rfloor}\Bigr\}.
\end{multline}
That is,
\begin{equation}\label{eqn:large_deviation:derivative_H:4}
\mathcal{G}_{\xi}^U(\bm{\mu})
= \lim_{\bar{T}\rightarrow\infty} \frac{1}{\bar{T}}\ln \mathbb{E} \exp\Bigl\{\innerproduct{\bm{\mu}}{\bigl\lfloor\int_0^{\bar{T}}\min\{\bm{\xi}^{1}_{\tau},U\}d\tau\bigr\rfloor}\Bigr\}.
\end{equation}
Recall that $\bigl\lfloor\int_{\tau\in[0,\bar{T})\backslash \scrR}\xi_{\tau}(\kappa,\kappa',n)d\tau\bigr\rfloor $ is Poisson distributed with parameter $\scrp_{t^{\kappa}}(\zeta^{\kappa},\zeta^{\kappa'})$, and given a trajectory $\bigl\{\xi_{\tau}(\kappa,\kappa',n), 0\leq \tau\leq \bar{T}\bigr\}$, there exist a sequence of points $0=\tau_0<\tau_1<\ldots<\tau_M\leq \bar{T}$ such that, for $m\in[M]$,
\[ \int_{\tau_{m-1}}^{\tau_m}\xi_{\tau}(\kappa,\kappa',n)d\tau = 1.\]
With the imposed upper bound $U$, we have
\[
\Bigl\lfloor\int_0^{\bar{T}}\min\{\xi_{\tau}(\kappa,\kappa',n),U\}d\tau\Bigr\rfloor = \sum_{m\in[M]}\theta_m\int_{\tau_{m-1}}^{\tau_m}\xi_{\tau}(\kappa,\kappa',n)d\tau = \sum_{m\in[M]}\theta_m,
\]
where $M$ is Poisson distributed, $\theta_m \coloneqq \max\Bigl\{1,\frac{U}{\xi_{\tau_{m-1}}(\kappa,\kappa',n)}\Bigr\}$, and all such $\theta_m$ ($m\in[M]$ are independently and identically distributed.
It follows that $\Bigl\lfloor\int_0^{\bar{T}}\min\{\xi_{\tau}(\kappa,\kappa',n),U\}d\tau\Bigr\rfloor $ follows compound Poisson distribution, and hence
\begin{multline}
    \mathbb{E}\exp\Bigl\{\innerproduct{\bm{\mu}}{\bigl\lfloor\int_0^{\bar{T}}\min\{\bm{\xi}^{1}_{\tau},U\}d\tau\bigr\rfloor}\Bigr\}\\
=  \exp\Bigl\{\frac{1}{2}\sum_{\kappa,\kappa'\in\mathscr{K},n\in[N^0_{i^{\kappa}}]}\bigl(\Theta_{\kappa,\kappa'}(n)-1\bigr)\scrp_{t^{\kappa}}(\zeta^{\kappa},\zeta^{\kappa'})(\bar{T}+o(\bar{T})) + \mu(0)\frac{\lfloor \bar{T}+1\rfloor}{2}\Bigr\},
\end{multline}
where 
\[\Theta_{\kappa,\kappa'}(n)\coloneqq \int_0^1 \exp\bigl\{\mu_{\kappa,\kappa'}(n)\theta\bigr\}\bbP\{\theta\}d\theta \leq \exp\{\mu_{\kappa,\kappa'}(n)\}.\]
In particular, $\lim_{U\rightarrow \infty}\bbP\{\theta\} = \delta_1(\theta)$ where $\delta_1(\theta)$ is a Dirac delta function satisfying 
\[ \delta_1(\theta) = \begin{cases}
    0,& \text{if }\theta \neq 1,\\
    \infty, &\text{otherwise},
\end{cases}\]
and $\int_{-\infty}^{\infty}\delta_1(\theta)d\theta = 1$.
Let $\bm{\Theta}\coloneqq (\Theta_{\kappa,\kappa'}(n):\kappa,\kappa'\in\mathscr{K},n\in[N^0_{i^{\kappa}}])$.
Obviously, $\lim_{U\rightarrow\infty} \bm{\Theta} = (e^{\mu_{\kappa,\kappa'}(n)}:\kappa,\kappa'\in\mathscr{K},n\in[N^0_{i^{\kappa}}])$.

We then obtain
\begin{equation}
    \mathcal{G}^U_{\xi}(\bm{\mu}) = \frac{\mu(0)}{2}+\frac{1}{2}\sum_{\kappa,\kappa'\in\mathscr{K},n\in[N^0_{i^{\kappa}}]}\bigl(\Theta_{\kappa,\kappa'}(n)-1\bigr)\scrp_{t^{\kappa}}(\zeta^{\kappa},\zeta^{\kappa'}),
\end{equation}
and, for $\kappa\in\mathscr{K}$, 
\begin{equation}
  \frac{\partial}{\omega_{\kappa}} \mathcal{G}^{\Lipschitza,U}(\bm{x},\bm{\omega}) = \frac{\partial}{\omega_{\kappa}} \mathcal{G}^U_{\xi}\bigl(\bm{\omega}^T\tilde{\mathcal{Q}}^{1,\Lipschitza}(\bm{x})\bigr) = \innerproduct{\bm{\rho}}{\bm{q}_{\kappa}},
\end{equation}
where $\bm{q}_{\kappa}$ is the $\kappa$th row vector of $\tilde{\mathcal{Q}}^{1,\Lipschitza}(\bm{x})$, and 
\[\bm{\rho} \coloneqq \biggl(\frac{1}{2};\frac{1}{2}\scrp_{t^{\kappa_1}}(\zeta^{\kappa_1},\zeta^{\kappa_2})\int_0^1\theta e^{\mu_{\kappa_1,\kappa_2}(n)\theta}\bbP\{\theta\}d\theta:~\kappa_1,\kappa_2\in\mathscr{K},n\in[N^0_{i^{\kappa_1}}]\biggr),\]
with $\mu_{\kappa_1,\kappa_2}(n) = \bigl(\bm{\omega}^T\tilde{\mathcal{Q}}^{1,\Lipschitza}(\bm{x})\bigr)_{\kappa_1,\kappa_2}(n)$.
Together with $\mathcal{G}_{\xi}$ given by \eqref{eqn:lemma:integral_H:4}, we obtain \eqref{eqn:lemma:derivative_H}.
It proves the lemma.

\endproof

\begin{lemma}\label{lemma:large_deviation:unique_zero}
For $U\in\mathbb{R}_0\cup\{\infty\}$, $\phi\in\lPhih$,  
$\bm{x},\bm{\beta}\in\mathbb{R}^{|\mathscr{K}|}$, and any $\delta > 0$, there exists $U_0<\infty$ such that, for all $U > U_0$, if $\lVert \bm{\beta} - \mathbb{E}b^{\Lipschitza,U}(\bm{x},\bm{\xi}^{1}_\tau)\rVert \geq \delta$, then $\mathcal{L}^{\Lipschitza,U}(\bm{x},\bm{\beta}) > 0$.
\end{lemma}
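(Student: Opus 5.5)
The plan is to read positivity of the Legendre transform off the explicit form of $\mathcal{G}^{\Lipschitza,U}$ obtained in the proof of Lemma~\ref{lemma:large_deviation:derivative_H}. Fix $\bm{x}$ and write $\bar b^{\Lipschitza,U}(\bm{x})=\mathbb{E}b^{\Lipschitza,U}(\bm{x},\bm{\xi}^1_\tau)$.

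First, $\mathcal{G}^{\Lipschitza,U}(\bm{x},\bm{0})=0$, and $\mathcal{G}^{\Lipschitza,U}(\bm{x},\cdot)$ is convex and differentiable in $\bm{\omega}$. Differentiability comes from the compound-Poisson representation
\[
\mathcal{G}^U_\xi(\bm\mu)=\frac{\mu(0)}{2}+\frac12\sum\bigl(\Theta_{\kappa,\kappa'}(n)-1\bigr)\scrp_{t^\kappa}(\zeta^\kappa,\zeta^{\kappa'}),
\]
composed with the linear map $\bm\omega\mapsto\bm\omega^T\tilde{\mathcal{Q}}^{1,\Lipschitza}(\bm{x})$. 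The formula for $\partial\mathcal{G}^{\Lipschitza,U}/\partial\omega_\kappa=\innerproduct{\bm\rho}{\bm q_\kappa}$, evaluated at $\bm\omega=\bm 0$, gives $\bm\rho$ equal to the vector of means of the (truncated) $\bm\xi$ increments. Hence
\[
\nabla_{\bm\omega}\mathcal{G}^{\Lipschitza,U}(\bm{x},\bm 0)=\tilde{\mathcal{Q}}^{1,\Lipschitza}(\bm{x})\,\mathbb{E}\min\{\bm\xi^1_\tau,U\}=\bar b^{\Lipschitza,U}(\bm{x}),
\]
which is consistent with Lemma~\ref{lemma:convergence_b}. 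For $U=\infty$ the same identity follows from Lemma~\ref{lemma:large_deviation:derivative_H}. To pass from finite $U$ to $U=\infty$, I will use the fact that $\Theta\to e^{\mu}$ and that the first derivatives converge locally uniformly in $\bm\omega$.

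Next, given $\bm\beta$ with $\lVert\bm\beta-\bar b^{\Lipschitza,U}(\bm{x})\rVert\ge\delta$, set $\bm e=(\bm\beta-\bar b)/\lVert\bm\beta-\bar b\rVert$ (up to the max-norm/Euclidean comparison) and test $\bm\omega=\eta\bm e$ in the supremum defining $\mathcal{L}^{\Lipschitza,U}$. A Taylor expansion gives
\[
\innerproduct{\eta\bm e}{\bm\beta}-\mathcal{G}^{\Lipschitza,U}(\bm{x},\eta\bm e)=\eta\innerproduct{\bm e}{\bm\beta-\bar b}-R(\eta),
\]
with $R(\eta)=o(\eta)$, so the left side is at least $\eta c\delta-R(\eta)$. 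I will bound $R(\eta)\le C\eta^2$ uniformly in $U>U_0$ and in $\bm e$ on the unit sphere, using second derivatives of $\Theta$: these are bounded by $\int\theta^2e^{|\mu|\theta}\bbP\{\theta\}d\theta\le e^{|\mu|}$ because $\theta\in[0,1]$, while $\tilde{\mathcal{Q}}^{1,\Lipschitza}(\bm{x})$ is fixed. Choosing $\eta=c\delta/(2C)$ then gives $\mathcal{L}^{\Lipschitza,U}(\bm{x},\bm\beta)\ge c^2\delta^2/(4C)>0$.

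The role of $U_0$ is to guarantee that the compound-Poisson law of $\theta$ stays close enough to $\delta_1$ for the constants $c$ and $C$ to be uniform in $U$. It should also absorb the gap between $\mathbb{E}b^{\Lipschitza,U}$ and the limiting mean in Lemma~\ref{lemma:large_deviation:derivative_H}, so that the gradient at $\bm 0$ is within $\delta/2$ of the stated mean. I expect this to be the main obstacle, because the displayed expressions for $\Theta$ and $\bm\rho$ depend on $U$ only implicitly through $\bbP\{\theta\}$. I will try to handle it by dominated convergence on $[0,1]$.

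One degenerate case needs separate treatment: the component of $\bm\beta-\bar b$ may lie in the kernel of $\tilde{\mathcal{Q}}^{1,\Lipschitza}(\bm{x})^T$ directions, where $\mathcal{G}$ is flat. This causes no difficulty, since there $\mathcal{G}(\bm{x},\eta\bm e)=0$ identically along $\bm e$, and the linear term $\eta\innerproduct{\bm e}{\bm\beta}$ alone makes $\mathcal{L}=+\infty>0$.
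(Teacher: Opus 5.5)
Your argument is correct. It reaches the conclusion by a different and more direct route than the paper.

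\emph{The paper's route.} It works through the limit $U\to\infty$. It posits a stationary point $\bm{\omega}^{a,U}(\bm{x},\bm{\beta})$ of $\innerproduct{\bm{\omega}}{\bm{\beta}}-\mathcal{G}^{a,U}(\bm{x},\bm{\omega})$ and passes to $U=\infty$ through Lemma~\ref{lemma:large_deviation:derivative_H} to get the explicit Poisson cumulant. It then shows that a vanishing limiting $\mathcal{L}$ forces $\sum\scrp\,(e^{u}(u-1)+1)=0$. Hence $u=\innerproduct{\bm{\omega}}{\tilde{\bm{q}}_{\kappa,\kappa'}(n,\bm{x})}=0$ for every column, and $\bm{\beta}$ equals the unique mean. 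Finally it returns to large finite $U$ by continuity in $U$, which is where $U_0$ comes from.

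\emph{Your route.} You stay at fixed $U$ and use, in effect, a duality fact: since $\mathcal{G}^{a,U}(\bm{x},\bm{0})=0$, the equality $\mathcal{L}^{a,U}(\bm{x},\bm{\beta})=0$ is equivalent to $\bm{\beta}\in\partial_{\bm{\omega}}\mathcal{G}^{a,U}(\bm{x},\bm{0})$. Differentiability at the origin with gradient $\bar{b}^{a,U}(\bm{x})$ then gives positivity everywhere else via the test point $\eta\bm{e}$.

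\emph{What your route buys.}
It does not assume the supremum is attained at a stationary point. It is not, for instance, when $\bm{\beta}-\bar{b}^{a,U}(\bm{x})\notin\mathrm{range}\,\tilde{\mathcal{Q}}^{1,a}(\bm{x})$, where $\mathcal{L}^{a,U}=+\infty$. It also does not assume $\bm{\omega}^{a,U}$ converges as $U\to\infty$.
It proves the claim for every $U$, so any $U_0$ works.
Cap $\eta$ at a fixed $\eta_0$ on which your Hessian bound holds, and take $\eta=\min\{\eta_0,c\delta/(2C)\}$. You then get $\mathcal{L}^{a,U}(\bm{x},\bm{\beta})\ge\eta c\delta/2$, uniformly in $U$ and in $\bm{\beta}$. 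The later step $\inf\Lambda^{a,U}_{0,\bar{T}}>0$ in the proof of Theorem~\ref{theorem:convergence_Z_exp} leans on this kind of uniformity, and the paper's pointwise continuity argument does not supply it.
The paper's route, in exchange, exhibits the limiting rate function and its unique zero explicitly.

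\emph{Simplifications.} The step you call the main obstacle is unnecessary. The bound $\int_0^1\theta^2e^{|\mu|\theta}\bbP\{\theta\}d\theta\le e^{|\mu|}$ holds for every law of $\theta$ on $[0,1]$, so $C$ is automatically uniform in $U$. You have also already identified the gradient at $\bm{0}$ with $\bar{b}^{a,U}(\bm{x})$ exactly, so no gap to the $U=\infty$ mean needs absorbing. The kernel case needs no separate treatment either: there $R\equiv 0$ in your main computation.

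That gradient identification is cleanest via Jensen: $\mathcal{G}^{a,U}(\bm{x},\bm{\omega})\ge\innerproduct{\bm{\omega}}{\bar{b}^{a,U}(\bm{x})}$ makes $\bar{b}^{a,U}(\bm{x})$ a subgradient at $\bm{0}$, independently of the exact form of $\bm{\rho}$. Differentiability at the origin is then the only input you borrow from the compound-Poisson representation, and the paper's own proof uses that same input.
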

\proof{Proof of Lemma~\ref{lemma:large_deviation:unique_zero}.}
Based on \cite[Chapter 7, Section 4]{freidlin2012random}, for any $U\in\mathbb{R}_0$, if $\bm{\beta}=\bar{b}^U(\bm{x})$, then $\mathcal{L}^U(\bm{x},\bm{\beta}) = 0$.

Let $\bm{\omega}^{\Lipschitza,U}(\bm{x},\bm{\beta})$ represent the extreme point satisfying $\frac{\partial \mathcal{G}^{\Lipschitza,U}}{\partial \bm{\omega}}\Bigr|_{\bm{\omega}=\bm{\omega}^{\Lipschitza,U}(\bm{x},\bm{\beta})}=\bm{\beta}$. Such an extreme point may not be unique.
Based on Lemma~\ref{lemma:large_deviation:derivative_H}, when $U\rightarrow \infty$, it becomes
\begin{equation}\label{eqn:lemma:unique_zero:1}
    \bm{\beta} = \lim_{U\rightarrow \infty}\frac{\partial \mathcal{G}^{\Lipschitza,U}(\bm{x},\bm{\omega})}{\partial \bm{\omega}}\Bigr|_{\bm{\omega}=\bm{\omega}^{\Lipschitza,U}(\bm{x},\bm{\beta})} = \frac{\partial \mathcal{G}_{\xi}\bigl(\bm{\omega}\tilde{\mathcal{Q}}^{1,\Lipschitza}(\bm{x})\bigr)}{\partial \bm{\omega}}\Bigr|_{\bm{\omega}=\bm{\omega}^{\Lipschitza,\infty}(\bm{x},\bm{\beta})},
\end{equation}
where $\bm{\omega}^{\Lipschitza,\infty}(\bm{x},\bm{\beta}) \coloneqq \lim_{U\rightarrow \infty} \bm{\omega}^{\Lipschitza,U}(\bm{x},\bm{\beta})$, and $\mathcal{G}_{\xi}$ is given in \eqref{eqn:lemma:integral_H:4}.
Substituting \eqref{eqn:lemma:integral_H:4} in \eqref{eqn:lemma:unique_zero:1}, it becomes
\begin{equation}\label{eqn:lemma:unique_zero:2}
\bm{\beta} = \tilde{\mathcal{Q}}^{1,\Lipschitza}(\bm{x})\Bigl(\frac{1}{2};\frac{1}{2}\scrp_{t^{\kappa}}(\zeta^{\kappa},\zeta^{\kappa'})\exp\bigl\{<\bm{\omega}^{\Lipschitza,\infty}(\bm{x},\bm{\beta}),\tilde{\bm{q}}_{\kappa,\kappa'}(n,\bm{x})>\bigr\}: \kappa,\kappa'\in\mathscr{K},n\in[N^0_{i^{\kappa}}]\Bigr),
\end{equation}
where $\tilde{\bm{q}}_{\kappa,\kappa'}(n,\bm{x})$ are the column vectors of matrix $\tilde{\mathcal{Q}}^{1,\Lipschitza}(\bm{x})$.
Moreover, if 
\begin{equation}\label{eqn:lemma:unique_zero:3}
    \lim_{U\rightarrow\infty} \mathcal{L}^{\Lipschitza,U}(\bm{x},\bm{\beta}) = <\bm{\omega}^{\Lipschitza,\infty}(\bm{x},\bm{\beta}),\bm{\beta}>-\mathcal{G}_{\xi}\bigl((\bm{\omega}^{\Lipschitza,\infty}(\bm{x},\bm{\beta}))^T\tilde{\mathcal{Q}}^{1,\Lipschitza}(\bm{x})\bigr)=0,
\end{equation}
then, substituting \eqref{eqn:lemma:unique_zero:2} and \eqref{eqn:lemma:integral_H:4} in \eqref{eqn:lemma:unique_zero:3}, we obtain
\begin{multline}\label{eqn:lemma:unique_zero:4}
<\bm{\omega}^{\Lipschitza,\infty}(\bm{x},\bm{\beta}),\bm{\beta}> - \mathcal{G}_{\xi}\bigl((\bm{\omega}^{\Lipschitza,\infty}(\bm{x},\bm{\beta}))^T\tilde{\mathcal{Q}}^{1,\Lipschitza}(\bm{x})\bigr)=0\\
=\bigl(\bm{\omega}^{\Lipschitza,\infty}(\bm{x},\bm{\beta})\bigr)^T\tilde{\mathcal{Q}}^{1,\Lipschitza}(\bm{x})\Bigl(\frac{1}{2};\frac{1}{2}\scrp_{t^{\kappa}}(\zeta^{\kappa},\zeta^{\kappa'})\exp\bigl\{<\bm{\omega}^{\Lipschitza,\infty}(\bm{x},\bm{\beta}),\tilde{\bm{q}}_{\kappa,\kappa'}(n,\bm{x})>\bigr\}: \kappa,\kappa'\in\mathscr{K},n\in[N^0_{i^{\kappa}}]\Bigr)\\
-\frac{<\bm{\omega}^{\Lipschitza,\infty}(\bm{x},\bm{\beta}), \tilde{q}(0)>}{2}-\frac{1}{2} \sum_{\begin{subarray}~\kappa,\kappa'\in\mathscr{K},\\n\in[N^0_{i^{\kappa}}]\end{subarray}}\scrp_{t^{\kappa}}(\zeta^{\kappa},\zeta^{\kappa'})\Bigl(\exp\bigl\{<\bm{\omega}^{\Lipschitza,\infty}(\bm{x},\bm{\beta}),\tilde{\bm{q}}_{\kappa,\kappa'}(n,\bm{x})>\bigr\}-1\Bigr)\\
=\frac{1}{2}\sum_{\begin{subarray}~\kappa,\kappa'\in\mathscr{K},\\n\in[N^0_{i^{\kappa}}]\end{subarray}}\scrp_{t^{\kappa}}(\zeta^{\kappa},\zeta^{\kappa'})\Bigl(\exp\bigl\{<\bm{\omega}^{\Lipschitza,\infty}(\bm{x},\bm{\beta}),\tilde{\bm{q}}_{\kappa,\kappa'}(n,\bm{x})>\bigr\}\Bigl(<\bm{\omega}^{\Lipschitza,\infty}(\bm{x},\bm{\beta}),\tilde{\bm{q}}_{\kappa,\kappa'}(n,\bm{x})>-1\Bigr)+1\Bigr),
\end{multline}
where $\tilde{q}(0)$ is the first column vector of $\tilde{\mathcal{Q}}^{1,\Lipschitza}$.
For \eqref{eqn:lemma:unique_zero:4}, since $\exp\bigl\{<\bm{\omega},\bm{q}>\bigr\}\bigl(<\bm{\omega},\bm{q}>-1\bigr)+1 \geq 0$ for any $\bm{\omega},\bm{q} \in \mathbb{R}^{|\mathscr{K}|}$ and the equality holds if and only if $<\bm{\omega},\bm{q}>=0$, 
\begin{equation}
\exp\bigl\{<\bm{\omega}^{\infty}(\bm{x},\bm{\beta}),\tilde{\bm{q}}_{\kappa,\kappa'}(n,\bm{x})>\bigr\}\Bigl(<\bm{\omega}^{\infty}(\bm{x},\bm{\beta}),\tilde{\bm{q}}_{\kappa,\kappa'}(n,\bm{x})>-1\Bigr)+1 = 0,    
\end{equation}
or, equivalently,
\begin{equation}\label{eqn:lemma:unique_zero:5}
  <\bm{\omega}^{\infty}(\bm{x},\bm{\beta}),\tilde{\bm{q}}_{\kappa,\kappa'}(n,\bm{x})> = 0,
\end{equation}
for all $\kappa,\kappa'\in\mathscr{K}$ and $n\in[N^0_{i^{\kappa}}]$.

Based on \eqref{eqn:lemma:unique_zero:5} and \eqref{eqn:lemma:unique_zero:2}, given $\bm{x}\in\mathbb{R}^{|\mathscr{K}|}$,
if $\lim_{U\rightarrow \infty}\mathcal{L}^{\Lipschitza,U}(\bm{x},\bm{\beta}) = 0$, then $$\bm{\beta}= \tilde{\mathcal{Q}}^{1,\Lipschitza}(\bm{x})(\frac{1}{2};\frac{1}{2}\scrp_{t^{\kappa}}(\zeta^{\kappa},i^{\kappa'}):\kappa,\kappa'\in\mathscr{K},n\in[N_{i^{\kappa}}^0])$$ which is the unique solution satisfying $\lim_{U\rightarrow \infty}\mathcal{L}^{\Lipschitza,U}(\bm{x},\bm{\beta}) = 0$.

Recall that, for all $U\in\mathbb{R}_0$, if $\bm{\beta}=\mathbb{E}b^{\Lipschitza,U}(\bm{x},\bm{\xi}^{1}_{\tau})$, then $\mathcal{L}^{\Lipschitza,U}(\bm{x},\bm{\beta}) = 0$.
Since $\mathcal{L}^{\Lipschitza,U}(\bm{x},\bm{\beta})$ is continuous in $U$, together with the unique $\bm{\beta}$ for $\lim_{U\rightarrow \infty}\mathcal{L}^{\Lipschitza,U}(\bm{x},\bm{\beta}) = 0$, for any given $\bm{x}$ and $\delta > 0$, if $\lVert \bm{\beta} - \mathbb{E}b^{\Lipschitza,U}(\bm{x},\bm{\xi}^{1}_{\tau})\rVert \geq \delta$, then there exists $U_0 <\infty$ such that, for all $U > U_0$, $\mathcal{L}^{\Lipschitza,U}(\bm{x},\bm{\beta}) > 0$. It proves the lemma.

\endproof

\proof{Proof of Theorem~\ref{theorem:convergence_Z_exp}.}
For $\bm{\chi}_\tau: \mathbb{R}_0\mapsto \mathbb{R}^{|\mathscr{K}|}$ and a trajectory $\varpi\coloneqq\{\chi_\tau, 0\leq\tau\leq\bar{T}\}$, define $\Lambda^{\Lipschitza  ,U}_{0,\bar{T}}(\varpi)\coloneqq \int_0^{\bar{T}}\mathcal{L}^{\Lipschitza,U}(\bm{\chi}_{\tau},\dot{\bm{\chi}}_{\tau})d\tau$.
Define $\Pi^{\Lipschitza,U}_{0,\bar{T}}$ as the compact set of all such trajectories with given $\bm{\chi}_0=\sum_{i\in[I]}N_i^0\bm{\scrz}_0\in\mathbb{R}^{|\mathscr{K}|}$, where recall that $\sum_{i\in[I]}N_i^0\bm{\scrz}_0\in\mathbb{R}^{|\mathscr{K}|}$ is also the given initial state of $\{\bm{X}^{\sigma,\Lipschitza,U}_\tau,0\leq \tau\leq\bar{T}\}$ and $\{\bar{\bm{x}}^{\Lipschitza,U}_\tau,0\leq \tau\leq\bar{T}\}$ that are defined in Appendix~\ref{app:theorem:convergence-Z} and are the solutions of $\dot{\bm{X}}^{\sigma,\Lipschitza,U}_{\tau}=b^{\Lipschitza,U}(\bm{X}^{\sigma}_{\tau},\bm{\xi}^{1}_{\tau/\sigma})$ and $\dot{\bar{\bm{x}}}^{\Lipschitza,U}_{\tau} = \mathbb{E}b^{\Lipschitza,U}(\bar{\bm{x}}^{\Lipschitza,U}_{\tau},\bm{\xi}^{1}_{\tau/\sigma})$, respectively.
For $U\in\mathbb{R}_0$, $\Lipschitza\in(0,1)$, and $\delta > 0$, define a closed set $\mathscr{C}^{\Lipschitza,U}(\delta)\coloneqq \{\varpi\in\Pi^{\Lipschitza,U}_{0,\bar{T}}~|~\sup_{0\leq\tau\leq\bar{T}}\lVert \bm{\chi}_{\tau}-\bar{\bm{x}}^{\Lipschitza,U}_\tau\rVert \geq \delta\}$.

Based on \cite[Theorem 4.1 in Chapter 7 and Theorem 3.3 in Chapter 3]{freidlin2012random}, for sufficiently large $U\in\mathbb{R}_0$, any $\Lipschitza\in(0,1)$, and any $\delta>0$, as there exists $\mathcal{G}^{\Lipschitza,U}$ satisfying \eqref{eqn:theorem:convergence_Z_exp:condition},
\begin{equation}\label{eqn:theorem:convergence_Z_exp:5}
    \limsup_{\sigma\downarrow 0} \sigma \ln \mathbb{P}\Bigl\{\sup_{0\leq \tau\leq \bar{T}}\lVert \bm{X}^{\sigma,\Lipschitza,U}_\tau - \bar{\bm{x}}^{\Lipschitza,U}_{\tau}\rVert > \delta\Bigr\}\leq -\inf_{\varpi\in\mathscr{C}^{\Lipschitza,U}(\delta)}\Lambda^{\Lipschitza,U}_{0,\bar{T}}(\varpi).
\end{equation}
Based on Lemma~\ref{lemma:large_deviation:unique_zero} and the continuity of $\bar{\bm{x}}^{\Lipschitza,U}_\tau$ and $\Lambda^{\Lipschitza,U}_{0,\bar{T}}(\varpi)$ in $U\in\mathbb{R}_+\cup\{\infty\}$, for any $\delta >0$, there exist $\sigma>0$ and $U_0<\infty$ such that, for all $U>U_0$, $\inf_{\varpi\in\mathscr{C}^{\Lipschitza,U}(\delta)}\Lambda^{\Lipschitza,U}_{0,\bar{T}}(\varpi) \geq \sigma > 0$.
From \eqref{eqn:theorem:convergence_Z_exp:5}, for any $\delta > 0$, sufficiently large $U$, and given initial state $\bm{X}^{\sigma}_0=\bar{\bm{x}}_0=\sum_{i\in[I]}N^0_i \bm{\scrz}_0$, there exist $\sigma_0,C > 0$ such that, for all $\sigma < \sigma_0$,
\begin{equation}\label{eqn:theorem:convergence_Z_exp:6}
\mathbb{P}\Bigl\{\sup_{0\leq \tau\leq \bar{T}}\lVert \bm{X}^{\sigma,\Lipschitza,U}_\tau - \bar{\bm{x}}^{\Lipschitza,U}_{\tau}\rVert >  \delta\Bigr\} \leq e^{-C/\sigma}.
\end{equation}

Similar to the analysis of \eqref{eqn:app:convergence_Z:9}, 
\begin{equation}\label{eqn:theorem:convergence_Z_exp:7}
    \mathbb{P}\Bigl\{\sup_{0\leq \tau\leq \bar{T}}\lVert \bm{X}^{\sigma}_\tau - \bar{\bm{x}}_{\tau}\rVert > \delta\Bigr\} 
    \leq \mathbb{P}\Bigl\{\sup_{0\leq \tau\leq \bar{T}}\norm{ \bm{X}^{\sigma}_\tau - \bm{X}^{\sigma,\Lipschitza_0,U_0}}_\tau+\norm{\bm{X}^{\sigma,\Lipschitza_0,U_0}_\tau - \bar{\bm{x}}^{\Lipschitza_0,U_0}_\tau}
    +\norm{\bar{\bm{x}}^{\Lipschitza_a,U_0}_{\tau} - \bar{\bm{x}}_{\tau}}>\delta
    \Bigr\}.
\end{equation}
Based on Lemma~\ref{lemma:continuity_trajectory_a:stochastic}  and \eqref{eqn:theorem:convergence_Z_exp:7}, for any $\delta>0$,
there exist  $0<\epsilon < \delta/2$, 
$\Lipschitza_0>0$, and $U_0 <\infty$ such that
\begin{multline}\label{eqn:theorem:convergence_Z_exp:8}
    \mathbb{P}\Bigl\{\sup_{0\leq \tau\leq \bar{T}}\lVert \bm{X}^{\sigma}_\tau - \bar{\bm{x}}_{\tau}\rVert > \delta\Bigr\} 
    \leq \mathbb{P}\Bigl\{\sup_{0\leq \tau\leq \bar{T}}\norm{ \bm{X}^{\sigma}_\tau - \bm{X}^{\sigma,\Lipschitza_0,U_0}}_\tau+\norm{\bm{X}^{\sigma,\Lipschitza_0,U_0}_\tau - \bar{\bm{x}}^{\Lipschitza_0,U_0}_\tau}
    +\norm{\bar{\bm{x}}^{\Lipschitza_a,U_0}_{\tau} - \bar{\bm{x}}_{\tau}}>\delta
    \Bigr\}\\
    \leq \mathbb{P}\Bigl\{\sup_{0\leq \tau\leq \bar{T}}\norm{\bm{X}^{\sigma,\Lipschitza_0,U_0}_\tau - \bar{\bm{x}}^{\Lipschitza_0,U_0}_\tau}
   >\delta - 2\epsilon
    \Bigr\}.
\end{multline}
Based on \eqref{eqn:theorem:convergence_Z_exp:6} and \eqref{eqn:theorem:convergence_Z_exp:8}, for any $\delta >0$,  there exist $\sigma_0,C > 0$ such that, for all $\sigma < \sigma_0$,
\begin{equation}
\mathbb{P}\Bigl\{\sup_{0\leq \tau\leq \bar{T}}\lVert \bm{X}^{\sigma}_\tau - \bar{\bm{x}}_{\tau}\rVert >  \delta\Bigr\} \leq e^{-C/\sigma}.
\end{equation}

Along the same lines as the proof of Theorem~\ref{theorem:convergence-Z}, for the solution, $\bm{\mathcal{Z}}^h_\tau$, of \eqref{eqn:ODE_Z_h} and $\bm{\mathcal{Z}}^h_0 = \bm{\scrz}_0$ and $\sigma = \frac{1}{h}$, we have \eqref{eqn:app:convergence_Z:12}.
Hence, for any $\bar{T} <\infty$ and $\delta>0$, there exist $H <\infty$ and $C>0$ such that, for all $h>H$,
\begin{equation}
  \mathbb{P}\Bigl\{\sup_{0\leq \tau\leq \bar{T}}\lVert \bm{\mathcal{Z}}^{h}_\tau - \lim_{h\rightarrow \infty} \mathbb{E}\bm{\mathcal{Z}}^h_\tau\rVert >  \delta\Bigr\} \leq e^{-Ch}.
\end{equation}
Together with Lemma~\ref{lemma:sim}, 
we prove \eqref{eqn:theorem:convergence_Z_exp} for any given $\orule\in\PsiZ$.
It proves the theorem.

\endproof

\section{Proof of Proposition~\ref{prop:asym_opt:LP}}\label{app:asym_opt:LP}

\proof{Proof of Proposition~\ref{prop:asym_opt:LP}.}
For any $\orule\in\lPsi$ and $t\in[T]_0$, define an action matrix $\mathcal{A}^{\orule,h}(t) \coloneqq \bigl[A^{\varphi,h}_{i,s,i',s',\action'}(t)\bigr]_{(\sum_{i\in[I]}|\bS_i|)\times|\mathcal{J}|}$,  where
\begin{equation}
    A^{\orule,h}_{i,s,i',s',\action'}(t) \coloneqq \begin{cases}
        \alpha^{\orule,h}_{i',s',\action'}(\bm{Y}^{\varphi,h}(t),t),&\text{if }i=i',s=s',\\
        0,&\text{otherwise},
    \end{cases}
\end{equation}
where $\alpha^{\orule,h}_{i,s,\action}$ is defined in \eqref{eqn:define:alpha}.

We start by analyzing \eqref{eqn:prop:asym_opt:LP:1}.
By Theorem~\ref{theorem:convergence_Z_exp}, for any $\epsilon>\epsilon'>0$ and $\orule\in\PsiZ$, there exist $H<\infty$ and $C>0$ such that for all $h>H$,
\begin{multline}\label{eqn:prop:asym_opt:LP:3-1}
    \bbP\Bigl\{\norm{\bZ^{\orule,h}(t)-\bz^{\dborule(\bm{x})}(t)}>\epsilon\Bigr\}
    \bbP \Bigl\{\norm{\bZ^{\orule,h}(t)-\bz^{\dborule(\bm{x})}(t)}>\epsilon~\Bigl|~\norm{\bZ^{\orule,h}(t)-\bz^{\orule,h}(t)}\leq \epsilon'\Bigr\} + e^{-Ch}\\
    \leq \bbP\Bigl\{\norm{\bz^{\orule,h}(t)-\bz^{\dborule(\bm{x})}(t)}>\epsilon-\epsilon'~\Bigl|~\norm{\bZ^{\orule,h}(t)-\bz^{\orule,h}(t)}\leq \epsilon'\Bigr\} + e^{-Ch}.
\end{multline}
For $\orule\in\PsiZ$ and $\bm{x}\in[0,1]^{|\mathcal{J}|(T+1)}$ satisfying \eqref{eqn:constraint:linear programming:1}-\eqref{eqn:constraint:linear programming:3}, 
we have $\bm{Y}^{\orule,h}(0)=\bm{Y}^{\dborule(\bm{x}),h}(0)=\bm{y}^0$.
For the special case $t=0$,
\begin{equation}
    \lim_{h\to\infty}\bbP\Bigl\{\norm{\bz^{\orule,h}(0)-\bz^{\dborule(\bm{x})}(0)}>\epsilon \Bigr\}
    \leq\lim_{h\to\infty}\bbP\Bigl\{\norm{\bigl(\bm{Y}^{\orule,h}(0)\bigr)^T(\mathcal{A}^{\orule,h}(0)-\mathcal{A}^{\dborule(\bm{x}),h}(0))}>\epsilon\Bigr\}
    =0,
\end{equation}
where the last equality is based on \eqref{eqn:assumption:action}.
Together with \eqref{eqn:prop:asym_opt:LP:3-1}, for $t=0$ and any $\epsilon>0$, there exist $H<\infty$ and $C>0$ such that 
\begin{equation}\label{eqn:prop:asym_opt:LP:3-3}
    \bbP\Bigl\{\norm{\bZ^{\orule,h}(t)-\bz^{\dborule(\bm{x})}(t)}>\epsilon\Bigr\} \leq e^{-Ch}.
\end{equation}

Now we assume that, for all $t=0,1,\ldots, T-1$, \eqref{eqn:prop:asym_opt:LP:3-3} holds.
Then, for $t=T$ and any $\epsilon,\epsilon'>0$, there exist $H<\infty$ and $C,C'>0$ such that 
\begin{multline}\label{eqn:prop:asym_opt:LP:3-4}
  \bbP\Bigl\{\norm{\bm{y}^{\orule,h}(t)-\bm{y}^{\dborule(\bm{x})}(t)}>\epsilon\Bigr\}
  \leq \bbP\Bigl\{\norm{\bbE^{\orule}_{\bm{y}^0}\bigl(\bZ^{\orule,h}(t-1)\bigr)^T-\bigl(\bz^{\dborule(\bm{x})}(t-1)\bigr)^T}>\epsilon\Bigr\}\\
  \leq  \bbP\Bigl\{\norm{\bz^{\orule,h}(t-1)-\bz^{\dborule(\bm{x})}(t-1)}>\epsilon~\Bigl|~\norm{\bZ^{\orule,h}(t-1)-\bz^{\orule,h}(t-1)}\leq \epsilon'\Bigr\} + e^{-Ch}\\
  \leq  \bbP\Bigl\{\norm{\bZ^{\orule,h}(t-1)-\bz^{\dborule(\bm{x})}(t-1)}>\epsilon-\epsilon'~\Bigl|~\norm{\bZ^{\orule,h}(t-1)-\bz^{\orule,h}(t-1)}\leq \epsilon'\Bigr\} + e^{-Ch}\\
  \leq e^{-C' h},
\end{multline}
where the third inequality is from Theorem~\ref{theorem:convergence_Z_exp}, and the last inequality is based on the above assumption that \eqref{eqn:prop:asym_opt:LP:3-3} for all $t=0,1,\ldots,T-1$.
For $t=T$,
\begin{multline}
\lim_{h\to\infty}\bbP\Bigl\{\norm{\bz^{\orule,h}(t)-\bz^{\dborule(\bm{x})}(t)}>\epsilon \Bigr\}\\
\leq \lim_{h\to\infty}\bbP\Bigl\{\norm{\bigl(\bm{y}^{\orule,h}(t)\bigr)^T\mathcal{A}^{\orule,h}(t)-\bigl(\bm{y}^{\dborule(\bm{x})}(t)\bigr)^T\mathcal{A}^{\dborule(\bm{x}),h}(0))}>\epsilon~\Bigl|~\norm{\bm{y}^{\orule,h}(t)-\bm{y}^{\dborule(\bm{x})}(t)}\leq \epsilon'\Bigr\} \\
\leq \lim_{h\to\infty}\bbP\Bigl\{\norm{\bigl(\bm{y}^{\orule,h}(t)\bigr)^T(\mathcal{A}^{\orule,h}(t)-\mathcal{A}^{\dborule(\bm{x}),h})}>\epsilon-\epsilon'~\Bigl|~\norm{\bm{y}^{\orule,h}(t)-\bm{y}^{\dborule(\bm{x})}(t)}\leq \epsilon'\Bigr\}\\
\leq \lim_{h\to\infty}\bbP\Bigl\{\norm{\bigl(\bm{Y}^{\orule,h}(t)\bigr)^T(\mathcal{A}^{\orule,h}(t)-\mathcal{A}^{\dborule(\bm{x}),h})}>\epsilon-2\epsilon'~\Bigl|~\norm{\bm{y}^{\orule,h}(t)-\bm{y}^{\dborule(\bm{x})}(t)}\leq \epsilon',\norm{\bm{Y}^{\orule,h}(t)-\bm{y}^{\dborule(\bm{x})}(t)}\leq \epsilon'\Bigr\}
\\=0,
\end{multline}
where the first inequality is based on \eqref{eqn:prop:asym_opt:LP:3-4}, the third inequality comes from Theorem~\ref{theorem:convergence_Z_exp}, and the last equality is from \eqref{eqn:assumption:action}.
Recall \eqref{eqn:prop:asym_opt:LP:3-1}, we obtain that \eqref{eqn:prop:asym_opt:LP:3-3} holds for $t=T$.
That is, for any $\epsilon>0$, there exists $H<\infty$ and $C>0$ such that, for all $h>H$, \eqref{eqn:prop:asym_opt:LP:1} holds.

For any $\epsilon>0$ and $t\in[T]_0$, there exist $H<\infty$ and $C,C_1,C_2>0$ such that, for all $h>H$,
\begin{multline}\label{eqn:prop:asym_opt:LP:3-5}
    \norm{\bz^{\orule,h}(t)-\bz^{\dborule(\bm{x})}(t)}\\
    \leq \bbE\Bigl\{\norm{\bZ^{\orule,h}(t)-\bz^{\dborule(\bm{x})}(t)}~\Bigl|~\norm{\bZ^{\orule,h}(t)-\bz^{\orule,h}(t)}\leq \epsilon\Bigr\} + e^{-Ch}\\
    \leq C_1 e^{-C_2 h} + \epsilon,
\end{multline}
where the inequalities are based on Theorem~\ref{theorem:convergence_Z_exp} and \eqref{eqn:prop:asym_opt:LP:1}, respectively.

It remains to show the necessity of \eqref{eqn:assumption:action}.
Based on Theorem~\ref{theorem:convergence_Z_exp} and \eqref{eqn:prop:asym_opt:LP:1},
\begin{equation}
    \lim_{h\to \infty} \max_{t\in [T]_0} \norm{\bz^{\orule,h}(t)-\bz^{\orule(\bm{x})}(t)} = 0.
\end{equation}
Hence, for any $t\in[T]_0$,
\begin{equation}\label{eqn:prop:asym_opt:LP:3-6}
    \lim_{h\to\infty} \bm{y}^{\orule,h}(t) = \lim_{h\to\infty} \bm{y}^{\orule(\bm{x})}(t),
\end{equation}
and
\begin{equation}\label{eqn:prop:asym_opt:LP:3-7}
    \lim_{h\to\infty} \bbE\bigl[\bZ^{\orule,h}(t)\bigl|\bm{Y}^{\orule,h}(t)\bigr]=\lim_{h\to\infty} \bbE\bigl[\bZ^{\orule,h}(t)\bigl|\bm{Y}^{\orule,h}(t)\bigr].
\end{equation}

For any $\epsilon>0$, $(i,s,\action)\in\calJ$ and $t\in[T]_0$,
\begin{multline}
    \lim_{h\to\infty}\bbP\Bigl\{\Bigl\lvert \bigl(\alpha^{\orule,h}_{i,s,\action}(\bm{Y}^{\orule,h}(t),t)-\oalphaLP_{i,s,\action}(\bm{x},t)\bigr)Y^{\orule,h}_{i,s}(t)\Bigr\rvert>\epsilon\Bigr\}\\
    = \lim_{h\to\infty}\bbP\Bigl\{\Bigl\lvert \bbE\bigl[Z^{\orule,h}_{i,s,\action}(t)\bigl|\bm{Y}^{\orule,h}(t)\bigr]-\oalphaLP_{i,s,\action}(\bm{x},t)Y^{\orule,h}_{i,s}(t)\Bigr\rvert>\epsilon\Bigr\}\\
    \leq \lim_{h\to\infty}\bbP\Bigl\{\Bigl\lvert \bbE\bigl[Z^{\orule,h}_{i,s,\action}(t)\bigl|\bm{Y}^{\orule,h}(t)\bigr]-\oalphaLP_{i,s,\action}(\bm{x},t)Y^{\orule(\bm{x})}_{i,s}(t)\Bigr\rvert
    +\Bigl\lvert \oalphaLP_{i,s,\action}(\bm{x},t)\bigl(Y^{\orule(\bm{x})}_{i,s}(t)-Y^{\orule,h}_{i,s}(t)\bigr)\Bigr\rvert  
    >\epsilon\Bigr\}\\
    =   \lim_{h\to\infty}\bbP\Bigl\{\Bigl\lvert \bbE\bigl[Z^{\orule,h}_{i,s,\action}(t)\bigl|\bm{Y}^{\orule,h}(t)\bigr]-\bbE\bigl[Z^{\orule(\bm{x})}_{i,s,\action}(t)\bigl|\bm{Y}^{\orule(\bm{x})}(t)\bigr]\Bigr\rvert
    +\Bigl\lvert \oalphaLP_{i,s,\action}(\bm{x},t)\bigl(Y^{\orule(\bm{x})}_{i,s}(t)-Y^{\orule,h}_{i,s}(t)\bigr)\Bigr\rvert  
    >\epsilon\Bigr\}
    \\=0,
\end{multline}
where the first equality is based on the definition of $\balpha^{\orule,h}$ in \eqref{eqn:define:alpha}, the last equality comes from \eqref{eqn:prop:asym_opt:LP:3-6}, \eqref{eqn:prop:asym_opt:LP:3-7}, and Theorem~\ref{theorem:convergence_Z_exp}.
That is, \eqref{eqn:assumption:action} holds.
It proves the proposition.


\endproof

\section{Simulation Settings}\label{app:simulation:settings}
The simulation was coded through g++ 11.4.0 (Linux) and implemented on the high performance computing platform, Spartan, offered by The University of Melbourne.

We consider a WCG system with $I=5$, $T=30$, and $L=6$.
\begin{enumerate}[label=(\alph*)]
    \item The state and action spaces are $\bS_1 = \{0,1\}$, $\bA_1=\{0,1\}$; 
    $\bS_2 =\bS_3= \{0,1,2\}$, $\bA_2=\bA_3=\{0,1,2\}$;
    $\bS_4 = \{0,1,2,3\}$, $\bA_4=\{0,1,2,3\}$; and
    $\bS_5 = \{0,1,\ldots,30\}\times [T]_0$, $\bA_5=\{0\}$.
    \item The transition matrices
    $\mathcal{P}_i(a)=\bigl[p_i(s,a,s')\bigr]_{|\bS_i|\times|\bS_i|}$ ($i\in[I]$) are 
    \[\mathcal{P}_1(0) = \left[\begin{array}{ll}
    1&0\\0.3 & 0.7
    \end{array}\right], 
    \mathcal{P}_1(1) = \left[\begin{array}{ll}
    0&1\\0.3 & 0.7
    \end{array}\right];
    \]
    \[\mathcal{P}_2(0)=\mathcal{P}_3(0)  = \left[\begin{array}{lll}
    1&0&0\\0.3 & 0.7 & 0\\0.09& 0.42 &0.49
    \end{array}\right], 
    \mathcal{P}_2(1) =\mathcal{P}_3(1) = \left[\begin{array}{lll}
    0&1&0\\0&0.3 & 0.7 \\0.09& 0.42& 0.49
    \end{array}\right],\]
    \[\mathcal{P}_2(2) = \mathcal{P}_3(2)= \left[\begin{array}{lll}
    0&0&1\\0&0.3 & 0.7 \\0.09& 0.42 &0.49
    \end{array}\right];
    \]
    \[\mathcal{P}_4(0)=\left[\begin{array}{llll}
    1&0&0&0 \\ 0.2&0.8 & 0& 0 \\0.04& 0.32 &0.64 & 0\\ 0.008 & 0.096 & 0.384 & 0.512
    \end{array}\right],
    \mathcal{P}_4(1)=\left[\begin{array}{llll}
    0&1&0&0 \\ 0&0.2&0.8 & 0 \\0&0.04& 0.32 &0.64 \\ 0.008 & 0.096 & 0.384 & 0.512
    \end{array}\right],\]
    \[
    \mathcal{P}_4(2)=\left[\begin{array}{llll}
    0&0&1&0 \\ 0&0&0.2&0.8 \\0&0.04& 0.32 &0.64 \\ 0.008 & 0.096 & 0.384 & 0.512
    \end{array}\right],
    \mathcal{P}_4(3)=\left[\begin{array}{llll}
    0&0&0&1 \\ 0&0&0.2&0.8 \\0&0.04& 0.32 &0.64 \\ 0.008 & 0.096 & 0.384 & 0.512
    \end{array}\right].
    \]
    For $s\in\bS_5 = \{0,1,\ldots,30\}\times[T]_0$, let $s\coloneqq(n^s,\tau^s)$. For $s\in\bS_i$ and $s'\in\bS_i\backslash \bigl(\{30\}\times[T]_0\bigr)$, the transition probabilities
    \[p_5(s,0,s') = \begin{cases}
        \frac{\lambda(\tau^s)^{s'}e^{-\lambda(\tau^s)}}{s'!},&\text{if } \tau^{s'} = \tau^s+1,\\
        0, & \text{otherwise},
    \end{cases}
    \] 
    where $(\lambda(\tau):\tau=0,1,\ldots,30) = (10,9.6,9,5.8,6.8,3.2,6.2,5.2,5.5,8.8,5,4.6,4,2.8,$ $1.8,3.2,3.2,2.2,2.5,0,6,3.6,3,2.8,3.8,3.2,5.2,5.2,2.5,4.4,0)$.
    In the case with $\tau^{s'} = \tau^s+1$, $p_5(s,0,s')$ is the probability of Poisson distribution with parameter $\lambda(\tau^s)$.
    For $s'=(30,\tau')$, $p_5(s,0,s') = 1- \sum_{s''\in\bS_5\backslash\bigl(\{30\}\times[T]_0\bigr)}p_5(s,0,s'')$ so that $\sum_{s''\in\bS_5}p_5(s,0,s'') = 1$.
\end{enumerate}
Here, for gangs $i=1,2,3,4$, $\sum_{n\in[N_i]}s^{\phi,h}_{i,n}(t)$ represent the number of customers that are being served by service $i$ at time $t$, and $\sum_{n\in[N_i]}\action^{\phi,h}_{i,n}(t)$ is the potential number of new customers that are allocated to service $i$ during time slot $t$.
If $s^{\phi,h}_{i,n}(t)+\action^{\phi,h}_{i,n}(t)$ exceeds the maximal possible number of customers on service $i$, $|\bS_i|-1$, (which is determined by resource capacities discussed in later paragraphs), then in reality only $\min\bigl\{|\bS_i|-1-s^{\phi,h}_{i,n}(t),\action^{\phi,h}_{i,n}(t)\bigr\}$ are allocated.

The transition probability $p_i(s,a,s')$ for $i=1,2,3,4$ is determined as follows.
If $a=1$, then a new arrival is added to service $i$; otherwise, there is no new arrival.
Meanwhile, each customer being served by service $i$ departures identically and independently with probability $d_i = 0.3,0.3,0.3,0.2$ for $i=1,2,3,4$, respectively. 
Let $D_i(t)$ represent the number of departures for service $i$ during time slot $t$.
The transition probability of gang $i=1,2,3,4$ is given by 
\[
p_i(s,a,s') = \mathbb{P}\Bigl\{s'= \min\{s+a-D_i(t), |\bS_i|-1\}~\Bigl|~s^{\phi,h}_{i,n}(t) = s, \action^{\phi,h}_{i,n}(t) = a\Bigr\}.
\]
Note that the value of $\action^{\phi,h}_{i,n}(t) = a$ is subject to the total number of arrivals - the state of gang $i=5$. This restriction will be reflected through the first constraint $\ell=1$ of \eqref{eqn:constraint:linear} discussed in the following.

The last gang $i=5$ simulates the arrivals of customers, for which $\sum_{n\in[N_5]}s^{\phi,h}_{5,n}(t)$ is the total number of newly arrived customers in time slot $t$.
The total number of allocated customers should be no more than the total number of arrivals in each time slot. That is, the constraint $\ell=1$ of \eqref{eqn:constraint:linear} is specified by
\begin{itemize}
    \item for all $i=1,2,3,4$ and $(s,a)\in\bS_i\times\bA_i$, $f_{i,1}(s,a) = \min\bigl\{|\bS_i|-1-s,a\bigr\}$, and
    \item for $i=5$ and $(s',a')\in\bS_5\times \bA_5$, $f_{5,1}(s',a') = -s$.
\end{itemize}
It assures
\[\sum_{i=1}^4\sum_{n\in[N_i]}\min\bigl\{|\bS_i|-1-s^{\phi,h}(t),a^{\phi,h}_{i,n}(t) \bigr\}\leq \sum_{n\in[N_5]}s^{\phi,h}_{5,n}(t)\,~\forall t\in[T]_0.\]

Serving the customers occupies resource units from five different resource pools, incurring the other five constraints,  $\ell=2,3,\ldots,6$ of \eqref{eqn:constraint:linear}, each for which imposes the capacity constraint over resource pool $\ell-1$.
We specify
\begin{itemize}
    \item for $\ell=2,3,\ldots,6$, $i=1,2,3,4$, and $(s,a)\in\bS_i\times \bA_i$, $f_{i,\ell}(s,a) = w_{i,\ell-1}\min\bigl\{|\bS_i|-1,s+a\bigr\} - \frac{C_{\ell-1}}{(I-1)N_i^0}$, where $w_{i,\ell-1}\in\mathbb{R}_0$ is the number of resource units (possibly real numbers) in pool $\ell-1$ occupied by each customer being served by service $i$, and $C_{\ell-1}\in\mathbb{R}_0$ is the capacity of resource pool $\ell-1$ for $h=1$; and
    \item for $\ell=2,3,\ldots,6$, $i=5$, and $(s,a)\in\bS_i\times \bA_i$, $f_{5,\ell}(s,a) = 0$.
\end{itemize}
In this case, constraints $\ell=2,3,\ldots,6$ of \eqref{eqn:constraint:linear} are equivalent to
\[
\sum_{i=1}^4\sum_{n\in[N_i]}w_{i,\ell-1}\bigl(s^{\phi,h}_{i,n}(t) + \action^{\phi,h}_{i,n}(t)\bigr) \leq h C_{\ell-1},~\forall t\in[T]_0.
\]
In particular, 
\[[w_{i,\ell-1}]_{4 \times 5} = \left[
\begin{array}{lllll}
0&3&0&1&0\\
1&0&0&0&3\\
3&0&1&0&0\\
0&0&3&1&0
\end{array}
\right],
\]
and $(C_1,C_2,\ldots,C_5) = (8,5,9,7,8)$.

For $i=1,2,3,4$ and $(s^{\phi,h}_{i,n}(t),\action^{\phi,h}_{i,n}(t))=(s,a)\in\bS_i\times \bA_i$, the $s$ customers may complete their service and leave the system with departure probabilities reflected through the above-specified transition probabilities $p_i(s,a,s')$ for $s' < \min\bigl\{|\bS_i|-1,s+a\bigr\}$.
Once a customer leaves the system, its occupied resource units will be immediately released for future use.

When a customer is served (finishing service) by service $i=1,2,3,4$, the system, as a service provider/coordinator, earns $R_i = 110.3085,122.0775,120.3569,100.182$, respectively, dollars. 
Meanwhile, by using units of resource pool $\ell-1$, the system is charged $\mathcal{C}_{\ell-1}=1.3687,1.0789,0.6667,0.1523,1.9761$ for $\ell=2,3,\ldots,6$, respectively, per resource unit per time slot.
These numbers for $R_i$ and $\mathcal{C}_{\ell-1}$ are generated by pseudo-random generator of g++ 11.4.0 (Linux). 
It follows that, for \GSA triple $(i,s,a)$ ($i=1,2,3,4$, $s\in\bS_i$, and $a\in\{0,1\}$), the reward function
\begin{equation}\label{eqn:obj:simulation}
    r_i(s,a) = R_i\mathbb{E} D_i(t) - \sum_{\ell=2}^6\mathcal{C}_{\ell-1}w_{i,\ell-1} = R_i s d_i - \sum_{\ell=2}^6\mathcal{C}_{\ell-1}w_{i,\ell-1},  
\end{equation}
where recall that $D_i(t)$ is the number of departures (completions) of customers from service $i$ (identically and independently distributed for all bandit processes in gang $i$), and $d_i$ is the departure probability for each customer being served by service $i$.

\section{Joint Lipschitz continuity of $b^{\Lipschitza,U}$}\label{app:continuity_bU}

During this appendix, we consider the case with $\Lipschitza\in(0,1)$. Based on the linearity of $Q^{h,\Lipschitza}$ in $\bm{\xi}$, there exists $\tilde{\mathcal{Q}}^{h,\Lipschitza}(\bm{x})$ such that
\begin{equation}\label{eqn:app:conitinuity_b:1}
    b^{h,\Lipschitza,U}(\bm{x},\bm{\xi}) = \tilde{\mathcal{Q}}^{h,\Lipschitza}(\bm{x}) \min\{U,\bm{\xi}\},
\end{equation}
where $\tilde{\mathcal{Q}}^{h,\Lipschitza}(\bm{x})$ is independent to $\bm{\xi}$.
Based on the separate Lipschitz continuity of $Q^{1,\Lipschitza}$ in $\bm{x}\in\mathbb{R}^{|\mathscr{K}|}$ and $\bm{\xi}\in\mathbb{R}^{|\mathscr{K}\sum_{\kappa\in\mathscr{K}}N_{i^{\kappa}}}$,  $b^{\Lipschitza,U}=b^{h,\Lipschitza,U}|_{h=1}$ is also separately Lipschitz continuous for such $\bm{x}$ and $\bm{\xi}$.

We then show the joint, Lipschitz continuity of $b^{\Lipschitza,U}(\bm{x},\bm{\xi})$ in $\bm{x}\in\mathbb{R}^{|\mathscr{K}|}$ and $\bm{\xi}\in\mathbb{R}^{|\mathscr{K}\sum_{\kappa\in\mathscr{K}}N_{i^{\kappa}}+1}$.
Based on \eqref{eqn:app:conitinuity_b:1}, for any $\bm{x}_0\in\mathbb{R}^{|\mathscr{K}|}$ and $\bm{\xi}_0\in\mathbb{R}^{|\mathscr{K}\sum_{\kappa\in\mathscr{K}}N_{i^{\kappa}}+1}$,
\begin{equation}\label{eqn:app:conitinuity_b:2}
    \lim_{(\bm{x},\bm{\xi})\to (\bm{x}_0,\bm{\xi}_0)}b^{\Lipschitza,U}(\bm{x},\bm{\xi}) = \lim_{(\bm{x},\bm{\xi})\to (\bm{x}_0,\bm{\xi}_0)}\tilde{\mathcal{Q}}^{1,\Lipschitza}(\bm{x}) \min\{U,\bm{\xi}\} = \tilde{\mathcal{Q}}^{1,\Lipschitza}(\bm{x}_0)\min\{U,\bm{\xi}_0\} = b^{\Lipschitza,U}(\bm{x}_0,\bm{\xi}_0),
\end{equation}
where the second equality holds because $\tilde{\mathcal{Q}}^{1,\Lipschitza}(\bm{x})$ is continuous in $\bm{x}$ and independent to $\bm{\xi}$.
That is, $b^{\Lipschitza,U}(\bm{x},\bm{\xi})$ is jointly continuous in $\bm{x}\in\mathbb{R}^{|\mathscr{K}|}$ and $\bm{\xi}\in\mathbb{R}^{|\mathscr{K}\sum_{\kappa\in\mathscr{K}}N_{i^{\kappa}}+1}$.

For $\bm{x}_1,\bm{x}_2\in\mathbb{R}^{|\mathscr{K}|}$, $\bm{\xi}_1,\bm{\xi}_2\in\mathbb{R}^{|\mathscr{K}\sum_{\kappa\in\mathscr{K}}N_{i^{\kappa}}+1}$, $\Delta x=\lVert \bm{x}_1-\bm{x}_2\rVert$, and $\Delta\xi = \lVert \bm{\xi}_1-\bm{\xi}_2\rVert$,
\begin{multline}\label{eqn:app:conitinuity_b:3}
 \lVert b^{\Lipschitza,U}(\bm{x}_1,\bm{\xi}_1)-b^{\Lipschitza,U}(\bm{x}_2,\bm{\xi}_2)\rVert = \bigl\lVert \tilde{\mathcal{Q}}^{1,\Lipschitza}(\bm{x}_1) \min\{U,\bm{\xi}_1\} -  \tilde{\mathcal{Q}}^{1,\Lipschitza}(\bm{x}_2) \min\{U,\bm{\xi}_2\} \bigr\rVert   \\
 =\bigl\lVert \tilde{\mathcal{Q}}^{1,\Lipschitza}(\bm{x}_1)\min\{U,\bm{\xi}_1\} - \tilde{\mathcal{Q}}^{1,\Lipschitza}(\bm{x}_1) \min\{U,\bm{\xi}_2\} + \tilde{\mathcal{Q}}^{1,\Lipschitza}(\bm{x}_1)\min\{U,\bm{\xi}_2\} - \tilde{\mathcal{Q}}^{1,\Lipschitza}(\bm{x}_2)\min\{U,\bm{\xi}_2)\bigr\rVert\\
 \leq \bigl\lVert \tilde{\mathcal{Q}}^{1,\Lipschitza}(\bm{x}_1)(\min\{U,\bm{\xi}_1\} -\min\{U, \bm{\xi}_2\})\bigr\rVert + \bigl\lVert (\tilde{\mathcal{Q}}^{1,\Lipschitza}(\bm{x}_1)-\tilde{\mathcal{Q}}^{1,\Lipschitza}(\bm{x}_2))\min\{U,\bm{\xi}_2\}\bigr\rVert\\
 \leq \bigl\lVert \tilde{\mathcal{Q}}^{1,\Lipschitza}(\bm{x}_1)\bigr\rVert \bigl\lVert\min\{U,\bm{\xi}_1\} -\min\{U, \bm{\xi}_2\}\bigr\rVert + \bigl\lVert \tilde{\mathcal{Q}}^{1,\Lipschitza}(\bm{x}_1)-\tilde{\mathcal{Q}}^{1,\Lipschitza}(\bm{x}_2)\bigr\rVert\bigl\lVert\min\{U,\bm{\xi}_2\}\bigr\rVert
\\
\leq K_1 \Delta\xi +K_2 \Delta x,
\end{multline}
where there exist $K_1,K_2 < \infty$ independent to $\bm{x}_1,\bm{x}_2,\bm{\xi}_1,\bm{\xi}_2$,  and the last inequality is based on the boundness and Lipshitz continuity of $\tilde{\mathcal{Q}}^{1,\Lipschitza}(\bm{x})$ in $\bm{x}\in\mathbb{R}^{|\mathscr{K}|}$.
Since 
\[\bigl\lVert (\bm{x}_1;\bm{\xi}_1) - (\bm{x}_2;\bm{\xi}_2)\bigr\rVert \geq \frac{1}{2}\Bigl(\lVert\bm{x}_1-\bm{x}_2 \rVert + \lVert \bm{\xi}_1-\bm{\xi}_2\rVert \Bigr)=\frac{1}{2}(\Delta x + \Delta \xi),
\]
together with \eqref{eqn:app:conitinuity_b:3}, there exists $K<\infty$ independent from $\bm{x}_1,\bm{x}_2,\bm{\xi}_1,\bm{\xi}_2$ such that
\begin{equation}\label{eqn:app:conitinuity_b:4}
   \lVert b^{\Lipschitza,U}(\bm{x}_1,\bm{\xi}_1)-b^{\Lipschitza,U}(\bm{x}_2,\bm{\xi}_2)\rVert \leq  K\bigl\lVert (\bm{x}_1;\bm{\xi}_1) - (\bm{x}_2;\bm{\xi}_2)\bigr\rVert.
\end{equation}
It proves Lipschitz continuity of $b^{\Lipschitza,U}(\bm{x},\bm{\xi})$ jointly in $\bm{x}\in\mathbb{R}^{|\mathscr{K}|}$, $\bm{\xi}\in\mathbb{R}^{|\mathscr{K}\sum_{\kappa\in\mathscr{K}}N_{i^{\kappa}}+1}$.

\section{Unique solution to \eqref{eqn:app:convergence_Z:6}}
\label{app:existence_ODE}

\proof{Proof of Lemma~\ref{lemma:existence_ODE}.}
To prove the existence of solutions to \eqref{eqn:app:convergence_Z:6}, we use the Carath\'eodory conditions, as described in \cite{filippov}. Specifically, we invoke Theorems~1 and 2 in Chapter~1.  The Carath\'eodory conditions require,in our context,  that
$(\tau,\bm{x})\mapsto b^{\Lipschitza,U}(\bm{x},\bm{\xi}^{1}_{\tau/\sigma})$ is first defined on a set $[0,\bar{T}]\times D$, where $D$ is compact; that also,
\begin{enumerate}[label=(\Alph*)]
  \item it is continuous in $\bm{x}$ for almost all $t$;\label{cond:existence_ODE:1}
  \item it is measurable in $\tau$ for all $\bm{x}\in D$;
  \item $\norm{b^{\Lipschitza,U}(\bm{x},\bm{\xi}^{1}_{\tau/\sigma})}\le m(\tau)$, where $m(\tau)$ is integrable on $[0,\bar{T}]$.
\end{enumerate}
These conditions are relatively straightforward to verify, noting that
$\tau\mapsto \min(U, \bm{\xi}(\tau))$ is clearly bounded on $[0,\bar{T}]$. 

By \cite[Chapter 1, Theorem 1]{filippov}, on  the interval $\tau\in [0,\bar{T}]$, there exists a solution to \eqref{eqn:app:convergence_Z:6}, when the following conditions are satisfied.
\begin{enumerate}[label=(\Alph*)]\setcounter{enumi}{3}
    \item $b^{\Lipschitza,U}(\bm{x},\bm{\xi}^{1}_{\tau/\sigma})$ satisfies the Carath\'eodory conditions on $\tau\in[0,\bar{T}]$ and $\norm{\bm{x}-\bm{x}_0} \leq B$, where $\bm{x}_0$ is the initial condition $\bm{X}^{\sigma,U}_0 = \bm{x}_0$; and
    \item $\gamma(\bar{T})\leq B$, where $\gamma(\tau) \coloneqq \int_0^{\tau} m(\tau') d \tau'$.\label{cond:existence_ODE:5}
\end{enumerate}
We set $m(\tau) = M$ and $B=MT$, where $M < \infty$ is  an upper bound of $\norm{b^{\Lipschitza,U}(\bm{x},\bm{\xi})}$. Then the above two conditions are verified straightforwardly. 
We further invoke \cite[Chapter 1, Theorem 2]{filippov}.
For the compact set $D= \Bigl\{\bm{x}\in\mathbb{R}_0^{|\mathscr{K}|}~\Bigl|~\norm{\bm{x}-\bm{x}_0} \leq B\Bigr\}$, 
if there exists a summable function $\ell(\tau)$ such that, for any $\bm{x}_1,\bm{x}_2\in D$ and $\tau\in[0,\bar{T}]$,
\begin{equation}\label{eqn:app:existance_ODE:1}
\norm{b^{\Lipschitza,U}(\bm{x}_1,\bm{\xi}^{1}_{\tau/\sigma})-b^{\Lipschitza,U}(\bm{x}_2,\bm{\xi}^{1}_{\tau/\sigma})}\leq     \ell(\tau) \norm{\bm{x}_1-\bm{x}_2},
\end{equation}
then, in the domain $[0,\bar{T}]\times D$, there exists at most one solution of \eqref{eqn:app:convergence_Z:6}.
Based on the Lipschitz continuity of $b^{\Lipschitza,U}(\bm{x},\bm{\xi})$, we can easily verify \eqref{eqn:app:existance_ODE:1}.
Hence, there exists a unique solution of \eqref{eqn:app:convergence_Z:6} on the interval $\tau\in[0,\bar{T}]$.

\endproof

\section{Unique solution to \eqref{eqn:averaging_ODE}}\label{app:existence_ODE:averaging}
\proof{Proof of Lemma~\ref{lemma:existence_ODE:averaging}.}
Similar to the proof of Lemma~\ref{lemma:existence_ODE} in Appendix~\ref{app:existence_ODE}.
For any $\Lipschitza\in(0,1)$ and $U\in\mathbb{R}_0$, since $b^{\Lipschitza,U}$ satisfies Conditions~\ref{cond:existence_ODE:1}-\ref{cond:existence_ODE:5}, 
Conditions~\ref{cond:existence_ODE:1}-\ref{cond:existence_ODE:5} are also satisfied by replacing $b^{\Lipschitza,U}(\bm{x},\bm{\xi}^{1}_{\tau/\sigma})$ with $\bar{b}^{\Lipschitza,U}(\bm{x})$, where $\bar{b}^{\Lipschitza,U}(\bm{x}) = \mathbb{E}b^{\Lipschitza,U}(\bm{x},\bm{\xi}^{1}_{\tau/\sigma}) = \tilde{\mathcal{Q}}^{1,\Lipschitza}\mathbb{E}\min\{U,\bm{\xi}^{1}_{\tau/\sigma}\}$.
Based on the Lipschitz continuity of $\bar{b}^{\Lipschitza,U}(\bm{x})$, we can also verify the existence $\ell(t)$ for \eqref{eqn:app:existance_ODE:1}.
Hence, from \cite[Chapter 1, Theorems 1 and 2]{filippov}, there exists a unique solution of \eqref{eqn:averaging_ODE} on the interval $\tau\in[0,\bar{T}]$.

\endproof

\section{Unique solution to ODE for the limit case $\Lipschitza\downarrow 0$}

In this section, when $\Lipschitza\downarrow 0$, we will prove that there are unique solutions to \eqref{eqn:app:existence_ODE_limit:7}, \eqref{eqn:app:existence_ODE_limit:1}, and \eqref{eqn:app:existence_ODE_limit:27}.
Since $b^{h,U}(\bm{x},\bm{\xi}) = b^{h,U}(|\bm{x}|,\bm{\xi})$, in this section, we consider only the half hyperplane $\bm{x}\in\mathbb{R}_0^{|\mathscr{K}|}$.

Based on the definition of $f^{h,\Lipschitza}_{\kappa,\kappa'}(\bm{x},\bm{\xi})$ in Appendix~\ref{app:continuity_Q}, $\lim_{\Lipschitza\downarrow 0} f^{h,\Lipschitza}_{\kappa,\kappa'}(\bm{x},\bm{\xi}) = 0$. 

Based on \eqref{eqn:app:convergence_Z:3} and \eqref{eqn:app:convergence_Z:4}, the $\kappa$th element of $\beta^h(\bm{x},\bm{\xi})$ is
\begin{multline}\label{eqn:app:existence_ODE_limit:6}
    \beta^h_{\kappa}(\bm{x},\bm{\xi}) = \tilde{\Theta}^h(\bm{x})\bm{\xi} \\= \sum_{\kappa'\in\mathscr{K}}\Biggl(
\indicator\bigl\{x(t^{\kappa'}-1)=0\bigr\}\sum_{n\in\bigl[h\lceil x_{\kappa'}/h\rceil\bigr]}\xi(\kappa',\kappa,n)
 - \indicator\bigl\{x(t^{\kappa}-1)=0\bigr\}\sum_{n\in\bigl[h\lceil x_{\kappa}/h\rceil\bigr]}\xi(\kappa',\kappa,n)
\Biggr),
\end{multline}
where recall $x(t)$ is based on the definition in the beginning of Appendix~\ref{app:continuity_Q}.
Such a matrix function $\tilde{\Theta}^h(\bm{x}) = \lim_{\Lipschitza\downarrow 0}\tilde{\mathcal{Q}}^{h,\Lipschitza}(\bm{x})$ is  piece-wise constant and left-continuous in $\bm{x}\in\mathbb{R}_0^{|\mathscr{K}|}$ except those points with zero elements.

\subsection{Unique solution to \eqref{eqn:app:existence_ODE_limit:7}}\label{app:existence_ODE_limit:h=1}

\proof{Proof of Lemma~\ref{lemma:existence_ODE_limit:h=1}.}
For a very special case with $\bm{x}_0 = \bm{0}$, $\beta(\bm{0},\bm{\xi}) \equiv 0$. There is a unique solution $\bm{X}^{\sigma}_{\tau} = 0$ for all $\tau\in[0,\bar{T}]$.

We then consider the more general case with $\norm{\bm{x}_0}>0$.
We start with constructing a trajectory $\bm{\chi}^{\sigma}_{\tau}$ and then prove that the constructed trajectory solves \eqref{eqn:app:existence_ODE_limit:7}.


Recall the Poisson random process $\tau \mapsto M_t(\tau)$ for $t\in[T]_0$ defined in \eqref{eq:summed_ms}, for which $\tau_t(m)$ is the occurrence time of the $m$th event, and this event is labeled by $(\kappa_t(m),\zeta_t(m),n_t(m))$.
Based on \eqref{eq:xi_definition_new}, when $\tau=\tau_t(m)$, the random variable $\xi_{\tau}(\kappa,\kappa',n) > 0$ only if $\kappa = \kappa_t(m)$, $\zeta^{\kappa'} = \zeta_t(m)$, $n = n_t(m)$, $t^{\kappa'}=t^{\kappa}+1$, and $\tau$ is not in the silent period $\scrR$. 
For any such $(\kappa,\kappa',n)$ and $\tau\notin \scrR$, $\xi_{\tau'}(\kappa,\kappa',n)$ is a constant for all $\max\bigl\{\tau_t(m-1),\lfloor \tau\rfloor\bigr\}\leq\tau'< \tau_t(m)$.
Meanwhile, for the silent period $\tau\in\scrR$, $\xi_{\tau}(\kappa,\kappa',n) = 0$ for all $\kappa,\kappa'\in\mathscr{K}$ and $n\in[N_{i^{\kappa}}]$.
That is, for $\tau\in [0,\infty)$, $\bm{\xi}_{\tau}$ is piece-wise constant.

Recall the definition of $\beta(\bm{x},\bm{\xi})$ in \eqref{eqn:app:existence_ODE_limit:5}, we can expend it as
\begin{equation}
\beta(\bm{x},\bm{\xi})=\lim_{\Lipschitza \downarrow 0, U\to \infty}\sum_{\kappa'\in\mathscr{K}}\Bigl(Q^{1,\Lipschitza}(\kappa',\kappa,\bm{x},\bm{\xi})-Q^{1,\Lipschitza}(\kappa,\kappa',\bm{x},\bm{\xi})\Bigr),
\end{equation}
where $Q^{1,\Lipschitza}$ is defined in \eqref{eqn:app:convergence_Z:3}.
That is, 
\begin{equation}\label{eqn:define_theta}
\beta(\bm{x},\bm{\xi})=\sum_{\kappa'\in\mathscr{K}}\Bigl(\Theta(\kappa',\kappa,\bm{x},\bm{\xi})-\Theta(\kappa,\kappa',\bm{x},\bm{\xi})\Bigr),
\end{equation}
where 
\begin{multline}\label{eqn:expand_theta}
    \Theta(\kappa,\kappa',\bm{x},\bm{\xi})  \coloneqq \lim_{\Lipschitza \downarrow 0, U\to \infty}Q^{1,\Lipschitza}(\kappa,\kappa',\bm{x},\bm{\xi})
    =\indicator\biggl\{\sum_{\begin{subarray}~\kappa''\in \mathscr{K}:\\t^{\kappa''} <t^{\kappa}\end{subarray}}\lvert x_{\kappa''}\rvert =0,\sum_{\begin{subarray}~\kappa'' \in \mathscr{K}:\\t^{\kappa''} = t^{\kappa},\\\action^{\kappa''}=\dummyAction\end{subarray}}\lvert x_{\kappa''}\rvert=0\biggr\}\sum_{n\in\bigl[ \bigl\lceil  \lvert x_{\kappa}\rvert\bigr\rceil\bigr]}\xi(\kappa',\kappa,n) \\
+ \indicator\biggl\{\sum_{\begin{subarray}~\kappa''\in \mathscr{K}:\\t^{\kappa''} <t^{\kappa}\end{subarray}}\lvert x_{\kappa''}\rvert =0,\sum_{\begin{subarray}~\kappa'' \in \mathscr{K}:\\t^{\kappa''} = t^{\kappa},\\\action^{\kappa''}=\dummyAction\end{subarray}}\lvert x_{\kappa''}\rvert>0, i^{\kappa} = i^{\kappa'},s^{\kappa}=s^{\kappa'}, \action^{\kappa} = \dummyAction,t^{\kappa}=t^{\kappa'}\biggr\}\\
\times\sum_{\begin{subarray}~\kappa''\in\mathscr{K}:\\i^{\kappa''}=i^{\kappa},\\s^{\kappa''}=s^{\kappa},\\t^{\kappa}=t^{\kappa'}\end{subarray}}x_{\kappa''}\scra^{\orule}_{\zeta^{\kappa'}}(\frac{\bm{x}}{\sum_{i\in[I]}N_i^0}t^{\kappa'})\xi(0).
\end{multline}
At the right hand side of \eqref{eqn:expand_theta}, we plug in $\bm{\xi} = \bm{\xi}^1_{\tau}$, then the first and the second term potentially becomes non-zero only if $\tau$ is in and not in, respectively, the silent period $\scrR$.

When $\tau\notin\scrR$, the first term in \eqref{eqn:expand_theta} is piece-wise constant in $\bm{x}$, and the second term is zero.
That is, given a trajectory $\bm{\xi}^1_{\tau}$ for a non-silent period $[\tau_0, \tau_0+1)$ and $\bm{X}^{\sigma}_{\sigma\tau_0}=\bm{x}$, the differential equation $\beta(\bm{X}^{\sigma}_{\tau},\bm{\xi}^1_{\tau/\sigma})$ is piece-wise constant in $\tau\in[\sigma\tau_0,\sigma\tau_0+\sigma)$.
There exists a unique solution of such $\bm{X}^{\sigma}_{\tau}$ for $\tau\in[\sigma\tau_0,\sigma\tau_0+\sigma)$.

When $\tau\in \scrR$, the first term at the right hand side of \eqref{eqn:expand_theta} is zero, and the second term is non-zero only if $s^{\kappa} = s^{\kappa'}$, $i^{\kappa}=i^{\kappa'}$, and $t^{\kappa}=t^{\kappa'}$.
In other words, with given trajectory $\bm{\xi}^1_{\tau}$ for a silent period $\tau\in[\tau_1,\tau_1+1)$, for any given $i\in[I]$, $s\in \bS_i$, and $t\in[T]_0$,
$$\sum_{\kappa \in \mathscr{K}: i^{\kappa}=i,s^{\kappa}=s,t^{\kappa}=t}\beta_{\kappa}(\bm{X}^{\sigma}_{\tau},\bm{\xi}^1_{\tau/\sigma}) = 0,$$
during $\tau\in[\sigma\tau_1,\sigma\tau_1+\sigma)$.
It follows that, for any $i\in[I]$, $s\in\bS_i$, and $t\in[T]_0$, $\sum_{\kappa''\in\mathscr{K}:i^{\kappa''}=i,s^{\kappa''} = s,t^{\kappa''}=t}X^{\sigma}_{\tau,\kappa''}$ remains constant during $\tau\in [\sigma\tau_1,\sigma\tau_1+\sigma)$.
Thus, within the same period, based on the definition of $\bm{\scra}^{\orule}$ in \eqref{eqn:define:alpha:x}, $\bm{\scra}^{\orule}(\bm{X}^{\sigma}_{\tau}/\sum_{i\in[I]}N_i^0,t)$ is also constant, making the second term at the right hand side of \eqref{eqn:expand_theta} constant.
For such silent period $[\sigma\tau_1,\sigma\tau_1+\sigma)$, given $\bm{X}^{\sigma}_{\sigma\tau_1} = \bm{x}$, there exists a unique $\bm{X}^{\sigma}_{\tau}$ satisfying \eqref{eqn:app:existence_ODE_limit:7}.

Given $\bm{X}^{\sigma}_{0} = \bm{x}$, the first period $[0,\sigma)$ is a silent period, for which we have a unique $\bm{X}^{\sigma}_{\tau}$ that solves \eqref{eqn:app:existence_ODE_limit:7}.
Since the differential equation $|\beta(\bm{x},\bm{\xi})| < \infty$ for any $\bm{x}$ and $\bm{\xi}$, $\bm{X}^{\sigma}_{\sigma} = \lim_{\tau\to \sigma} \bm{X}^{\sigma}_{\tau}$. 
Then, given such $\bm{X}^{\sigma}_{\sigma}$, we have the unique solution for the second period $[\sigma, 2\sigma)$.
We can construct the unique solution for all future periods $[K\sigma, (K+1)\sigma)$ iteratively.
It proves the lemma.

\endproof

\subsection{Unique solution to \eqref{eqn:app:existence_ODE_limit:1} for the limit case $\Lipschitza\downarrow 0$}\label{app:existence_ODE_limit:h>1}

In this subsection, we consider the case with given $\bm{\xi}^{h}_{\tau}$, $\tau\geq 0$, where $\tau_t(m), \kappa_t(m), \zeta_t(m)$, and $n_t(m)$ all correspond to the case with given $h \in\mathbb{N}_+$.

\proof{Proof of Lemma~\ref{lemma:existence_ODE_limit:h>1}.}
For the special case with $\bm{x}_0 = \bm{0}$, $b^h(\bm{0},\bm{\xi}) \equiv 0$. Hence, there is a unique solution $\bm{X}^h_{\tau}=0$ for $\tau\in[0,\bar{T}]$.

We then consider the more general case with $\norm{\bm{x}_0}>0$.
Similar to the proof for Lemma~\ref{lemma:existence_ODE_limit:h=1}, we will show that $b^h(h\bm{X}^h_{\tau},\bm{x}^h_{\tau})$ is piece-wise constant in $\tau$.

Recall the definition of $\beta^h(\bm{x},\bm{\xi})$ in \eqref{eqn:app:existence_ODE_limit:3}, we can expend it as
\begin{equation}\label{eqn:define_theta:h}
\beta^h_{\kappa}(\bm{x},\bm{\xi})=\sum_{\kappa'\in\mathscr{K}}\Bigl(\Theta^h(\kappa',\kappa,\bm{x},\bm{\xi})-\Theta^h(\kappa,\kappa',\bm{x},\bm{\xi})\Bigr),
\end{equation}
where 
\begin{multline}\label{eqn:expand_theta:h}
    \Theta^h(\kappa,\kappa',\bm{x},\bm{\xi})  \coloneqq \lim_{\Lipschitza \downarrow 0}Q^{h,\Lipschitza}(\kappa,\kappa',\bm{x},\bm{\xi})\\
    =\indicator\biggl\{\sum_{\begin{subarray}~\kappa''\in \mathscr{K}:\\t^{\kappa''} <t^{\kappa}\end{subarray}}\lvert x_{\kappa''}\rvert =0,\sum_{\begin{subarray}~\kappa'' \in \mathscr{K}:\\t^{\kappa''} = t^{\kappa},\\\action^{\kappa''}=\dummyAction\end{subarray}}\lvert x_{\kappa''}\rvert=0\biggr\}\sum_{n\in\bigl[ h\bigl\lceil  \lvert x_{\kappa}/h\rvert\bigr\rceil\bigr]}\xi(\kappa',\kappa,n) \\
+ \indicator\biggl\{\sum_{\begin{subarray}~\kappa''\in \mathscr{K}:\\t^{\kappa''} <t^{\kappa}\end{subarray}}\lvert x_{\kappa''}\rvert =0,\sum_{\begin{subarray}~\kappa'' \in \mathscr{K}:\\t^{\kappa''} = t^{\kappa},\\\action^{\kappa''}=\dummyAction\end{subarray}}\lvert x_{\kappa''}\rvert>0, i^{\kappa} = i^{\kappa'},s^{\kappa}=s^{\kappa'}, \action^{\kappa} = \dummyAction,t^{\kappa}=t^{\kappa'}\biggr\}\\
\times\sum_{\begin{subarray}~\kappa''\in\mathscr{K}:\\i^{\kappa''}=i^{\kappa},\\s^{\kappa''}=s^{\kappa},\\t^{\kappa}=t^{\kappa'}\end{subarray}}x_{\kappa''}\scra^{\orule}_{\zeta^{\kappa'}}(\frac{\bm{x}}{h\sum_{i\in[I]}N_i^0},t^{\kappa'})\xi(0).
\end{multline}
At the right hand side of \eqref{eqn:expand_theta:h}, we plug in $\bm{\xi} = \bm{\xi}^h_{\tau}$, then the first and the second term potentially becomes non-zero only if $\tau$ is in and not in, respectively, the silent period $\scrR$.

When $\tau\notin\scrR$, the first term in \eqref{eqn:expand_theta:h} is piece-wise constant in $\bm{x}$, and the second term is zero.
It leads to the piece-wise constant $\beta(h\bm{X}^h_{\tau},\bm{\xi}^1_{\tau })$ in $\tau$ during the non-silent periods.
Given an initial condition, there exists a unique solution $\bm{X}^h_{\tau}$ for each non-silent period.

When $\tau\in \scrR$, the first term at the right hand side of \eqref{eqn:expand_theta:h} is zero, and the second term is non-zero only if $s^{\kappa} = s^{\kappa'}$, $i^{\kappa}=i^{\kappa'}$, and $t^{\kappa}=t^{\kappa'}$.
In other words, with given trajectory $\bm{\xi}^h_{\tau}$ for a silent period $\tau\in[\tau_1,\tau_1+1)$, for any given $i\in[I]$, $s\in \bS_i$, and $t\in[T]_0$,
$$\sum_{\kappa \in \mathscr{K}: i^{\kappa}=i,s^{\kappa}=s,t^{\kappa}=t}\beta^h_{\kappa}(h\bm{X}^{\sigma}_{\tau},\bm{\xi}^1_{\tau}) = 0,$$
during $\tau\in[\tau_1,\tau_1+1)$.
Thus $\sum_{\kappa''\in\mathscr{K}:i^{\kappa''}=i,s^{\kappa''} = s,t^{\kappa''}=t}X^h_{\tau,\kappa''}$ remains constant during $\tau\in [\tau_1,\tau_1+1)$.
Together with the definition of $\bm{\scra}^{\orule}$ in \eqref{eqn:define:alpha:x}, the second term at the right hand side of \eqref{eqn:expand_theta:h} is also constant.
For such silent period $[\tau_1,\tau_1+1)$, given $\bm{X}^h_{\tau_1} = \bm{x}$, there exists a unique $\bm{X}^h_{\tau}$ satisfying \eqref{eqn:app:existence_ODE_limit:1}. 

Then We can iteratively construct a unique $\bm{X}^h_{\tau}$ for all $\tau\in[0,\bar{T}]$ with given $\bm{X}^h_0=\bm{x}_0$. It proves the lemma.

\endproof

\subsection{Unique solution to the averaging process in the limit case $\Lipschitza\downarrow 1$}\label{app:existence_ODE_limit:averaging}

\proof{Proof of Lemma~\ref{lemma:existence_ODE_limit:averaging}.}
Define 
\begin{equation}\label{eqn:app:existence_ODE_limit:28}
    \bar{\bm{\xi}} \coloneqq \mathbb{E} \bm{\xi}_{\tau}^{1},
\end{equation}
where, since $\bm{\xi}_{\tau}^{1} $ is identically distributed for all $\tau\geq 0$, the right hand side remains a constant for all $\tau \geq 0$.
Also, for $\kappa,\kappa'\in\mathscr{K}$, the expectation $\mathbb{E}\xi_{\tau}(\kappa,\kappa',n)$ is the same for all $n\in[N^0_{i^{\kappa}}]$, for which
\begin{equation}
    \bar{\xi}_{\kappa,\kappa'} = \mathbb{E}\xi_{\tau}(\kappa,\kappa',n) \leq  \scrp_{t^{\kappa}}(\zeta^{\kappa},\zeta^{\kappa'})\in[0,1],
\end{equation}
with $\scrp_{t^{\kappa}}(\zeta^{\kappa},\zeta^{\kappa'})$ defined in \eqref{eqn:app:convergence_Z:2}.
From \eqref{eqn:app:existence_ODE_limit:25}, we rewrite 
\begin{equation}\label{eqn:app:existence_ODE_limit:29}
    \bar{\beta}(\bm{x}) = \lim_{\Lipschitza\downarrow 0} \tilde{\mathcal{Q}}^1(\bm{x})\bar{\bm{\xi}},
\end{equation}
where, taking expectation on both sides of \eqref{eqn:app:existence_ODE_limit:6} (with $h=1$), for $\kappa\in\mathscr{K}$, 
\begin{multline}\label{eqn:app:existence_ODE_limit:30}
      \bar{\beta}_{\kappa}(\bm{x}) = \Bigl(\tilde{\Theta}^1(\bm{x})\bar{\bm{\xi}}\Bigr)_{\kappa}
      = \sum_{\kappa'\in\mathscr{K}}\Biggl(
\indicator\bigl\{x(t^{\kappa'}-1)=0,x^{\mu}(t^{\kappa'})=0\bigr\}\lceil x_{\kappa'}\rceil\bar{\xi}_{\kappa',\kappa} \\
+ \indicator\bigl\{x(t^{\kappa'}-1)=0\bigr\}\bigl(1-\indicator\{x^{\mu}(t^{\kappa'})=0\}\bigr)\indicator\{t^{\kappa'}=t^{\kappa},i^{\kappa'}=i^{\kappa},s^{\kappa'}=s^{\kappa},\action^{\kappa'}=\dummyAction\}\\
\times\sum_{\begin{subarray}~\kappa''\in\mathscr{K}:\\i^{\kappa''}=i^{\kappa'},\\ s^{\kappa''}=s^{\kappa'},\\t^{\kappa''}=t^{\kappa'}\end{subarray}}x_{\kappa''}\scra^{\orule}_{\zeta^{\kappa}}(\frac{\bm{x}}{\sum_{i\in[I]}N_i^0},t^{\kappa})\bar{\xi}(0)\\
 - \indicator\bigl\{x(t^{\kappa}-1)=0,x^{\mu}(t^{\kappa})=0\bigr\}\lceil x_{\kappa}\rceil\bar{\xi}_{\kappa,\kappa'}\\
 - \indicator\bigl\{x(t^{\kappa}-1)=0\bigr\}\bigl(1-\indicator\{x^{\mu}(t^{\kappa})=0\}\bigr)\indicator\{t^{\kappa'}=t^{\kappa},i^{\kappa'}=i^{\kappa},s^{\kappa'}=s^{\kappa},\action^{\kappa}=\dummyAction\}\\
 \times\sum_{\begin{subarray}~\kappa''\in\mathscr{K}:\\i^{\kappa''}=i^{\kappa},\\ s^{\kappa''}=s^{\kappa},\\t^{\kappa''}=t^{\kappa}\end{subarray}}x_{\kappa''}\scra^{\orule}_{\zeta^{\kappa'}}(\frac{\bm{x}}{\sum_{i\in[I]}N_i^0},t^{\kappa'})\bar{\xi}(0)
 \Biggr).
\end{multline}

Similar to the proof of Lemma~\ref{lemma:existence_ODE_limit:h=1}, 
we show that $\bar{\beta}(\bm{X}^{\sigma}_{\tau})$ is piece-wise constant in $\tau$. 
Unlike the case for Lemma~\ref{lemma:existence_ODE_limit:h=1} or \ref{lemma:existence_ODE_limit:h>1}, in this averaging case with differential equation $\bar{\beta}(\bm{x})$, in general, there is no clear silent/non-silent led by $\bar{\bm{\xi}}$, because $\bar{\xi}_{\kappa,\kappa'}$ and $\bar{\xi}(0)$ may be non-zero together.

However, there are still two periods characterized by $\tilde{\Theta}^1(\bm{x})$.
For any $\bm{x}$,
if there exists $\kappa\in\mathscr{K}$ with $x^{\mu}(t^{\kappa})=0$, then, for any $\kappa'\in\mathscr{K}$,
$$\bigl(1-\indicator\{x^{\mu}(t^{\kappa'})\}\bigr)\indicator\{t^{\kappa'}=t^{\kappa},i^{\kappa'}=i^{\kappa},s^{\kappa'}=s^{\kappa},\action^{\kappa'}=\dummyAction\} = 0.$$
Hence, in the summand over $\kappa'\in\mathscr{K}$ at the right hand side of \eqref{eqn:app:existence_ODE_limit:30}, if the first or third terms is positive for any $\kappa'\in\mathscr{K}$, then the second and the fourth terms must be zero for all $\kappa'\in\mathscr{K}$.

Given $\bm{x}$ and $t$,
we refer to the case where $x^{\mu}(t)=0$ as the \emph{transition period} for $(\bm{x},t)$; while, if $x^{\mu}(t)>0$, it is the \emph{control period}.

Consider $\tau_0\geq 0$ and $t\in[T]$ for which, with $\bm{x}=\bar{\bm{x}}^{\sigma}_{\tau_0}$, $x(t)>0$ and $x(t-1)=0$.
If, for a $\kappa\in\mathscr{K}$ with $t^{\kappa}=t$, the process  $\{\bar{\bm{x}}^{\sigma}_{\tau}, \tau\geq 0\}$ is in the control period for $(\bar{\bm{x}}^{\sigma}_{\tau_0},t^{\kappa})$, then the first and third terms  at the right hand side of \eqref{eqn:app:existence_ODE_limit:30} are zero by plugging in $\bm{x}=\bar{\bm{x}}^{\sigma}_{\tau_0}$.
We only need to look at the second and fourth terms, which, similar to the discussions after \eqref{eqn:expand_theta}, are constant as $\tau$ increases until the nearest future $\tau_1>\tau_0$ such that $x^{\mu}(t^{\kappa})=0$ by plugging in $\bm{x}=\bar{\bm{x}}^{\sigma}_{\tau_1}$. 
The process $\{\bar{\bm{x}}^{\sigma}_{\tau}, \tau\geq 0\}$ turns into the transition period at $\tau=\tau_1$.

On the other hand, for $\tau_1\geq 0$, the process  $\{\bar{\bm{x}}^{\sigma}_{\tau}, \tau\geq 0\}$ is in the transition period for $(\bar{\bm{x}}^{\sigma}_{\tau},t^{\kappa})$ with $\tau=\tau_1$. At the right hand side of \eqref{eqn:app:existence_ODE_limit:30}, the second and fourth terms are zero. Such $\bar{\beta}(\bar{\bm{x}}^{\sigma}_{\tau})$ is piece-wise constant until $\tau_2>\tau_1$ such that $x(t)=0$ (plugging in $\bm{x}=\bar{\bm{x}}^{\sigma}_{\tau_2}$).
Because, in the transition period, all $\kappa\in\mathscr{K}$ with $\action^{\kappa}=\dummyAction$ will have zero derivative $\bar{\beta}_{\kappa}(\bar{\bm{x}}^{\sigma}_{\tau})=0$. 
The transition period will cause decrements of $x(t^{\kappa})$ until it turns to zero at $\tau=\tau_2$.

During the period $\tau\in[\tau_0,\tau_2)$, since $\bar{\beta}(\bar{\bm{x}}^{\sigma}_{\tau})$ is piece-wise constant in $\tau$, there exists a unique $\bar{\bm{x}}^{\sigma}_{\tau}$ that solves \eqref{eqn:app:existence_ODE_limit:27} with given $\bar{\bm{x}}^{\sigma}_{\tau_0} = \bm{x}$.
Based on the boundness of $\bar{\beta}$, $\bar{\bm{x}}^{\sigma}_{\tau_2} = \lim_{\tau\to\tau_2}\bar{\bm{x}}^{\sigma}_{\tau}$.
We can iteratively consider such $\tau=\tau_2$ as another initial point to continue the process. It provess the lemma.

\endproof

\section{Continuity in $\Lipschitza\in[0,1)$}
\label{app:continuity_trajectory_a}

\proof{Proof of Lemma~\ref{lemma:continuity_trajectory_a:stochastic}.}

In the discussion of this section, we consider the case with $U\geq 1$ all the time so that it does not affect the discussion on silent period where $\norm{\bm{\xi}^1_{\tau}} = 1$. 

We start with showing \eqref{eqn:continuity_trajectory_a:stochastic:1}.
Let $\bm{X}^{\sigma,\Lipschitza,U}_{\tau}(\bm{x}) $ represent the solution to \eqref{eqn:app:convergence_Z:6} with initial condition $\bm{X}^{\sigma,\Lipschitza,U}_0(\bm{x}) = \bm{x}$.  From Lemma~\ref{lemma:existence_ODE}, $\bm{X}^{\sigma,\Lipschitza,U}_{\tau}(\bm{x}) $ uniquely exists.

For $\bm{x}_1,\bm{x}_2\in\mathbb{R}_0^{|\mathscr{K}|}$ and $a_1,a_2\in [0,1)$, 
\begin{multline}\label{eqn:lemma:uniqueness:a>1:1}
    \norm{\bm{X}^{\sigma,a_1,U}_{\tau}(\bm{x}_1) - \bm{X}^{\sigma,a_2,U}_{\tau}(\bm{x}_2)} \leq \norm{\bm{x}_1-\bm{x}_2}\\ 
    + \norm{\int_0^{\tau} b^{a_1,U}\bigl(\bm{X}^{\sigma,a_1,U}_{\tau'}(\bm{x}_1), \bm{\xi}^{1}_{\tau'/\sigma}\bigr)-b^{a_2,U}\bigl(\bm{X}^{\sigma,a_2,U}_{\tau'}(\bm{x}_2), \bm{\xi}^{1}_{\tau'/\sigma}\bigr)d\tau'}\\
    \leq \norm{\bm{x}_1-\bm{x}_2} + \int_0^{\tau} \norm{\Bigl(\tilde{\mathcal{Q}}^{1,a_1}\bigl(\bm{X}^{\sigma,a_1,U}_{\tau'}(\bm{x}_1)\bigr)-\tilde{\mathcal{Q}}^{1,a_2}\bigl(\bm{X}^{\sigma,a_2,U}_{\tau'}(\bm{x}_2)\bigr)\Bigr)\bm{\xi}^{1}_{\tau'/\sigma}}d\tau'\\
     \leq \norm{\bm{x}_1-\bm{x}_2} + \int_0^{\tau} \norm{\tilde{\mathcal{Q}}^{1,a_1}\bigl(\bm{X}^{\sigma,a_1,U}_{\tau'}(\bm{x}_1)\bigr)-\tilde{\mathcal{Q}}^{1,a_2}\bigl(\bm{X}^{\sigma,a_2,U}_{\tau'}(\bm{x}_2)\bigr)}\norm{\bm{\xi}^{1}_{\tau'/\sigma}}d\tau'.
\end{multline}
We equally divide $[0,\tau)$ into $N$ periods $[0,\Delta)$, $[\Delta, 2\Delta)$, $\ldots$, $[(N-1)\Delta, N\Delta)$, where $\Delta = \frac{\tau}{N}$.
For $n=1,2,\ldots,N$, let 
\[
\overline{q}^{\Lipschitza}_n \coloneqq \sup_{\tau'\in [(n-1)\Delta,n\Delta)}\norm{\tilde{\mathcal{Q}}^{1,\Lipschitza}\bigl(\bm{X}^{\sigma,a_1,U}_{\tau'}(\bm{x}_1\bigr)-\tilde{\mathcal{Q}}^{1,\Lipschitza}\bigl(\bm{X}^{\sigma,a_2,U}_{\tau'}(\bm{x}_2)\bigr)},
\]
for which
\begin{multline}\label{eqn:lemma:uniqueness:a>1:6}
    \int_0^{\tau} \norm{\tilde{\mathcal{Q}}^{1,a_1}\bigl(\bm{X}^{\sigma,a_1,U}_{\tau'}(\bm{x}_1)\bigr)-\tilde{\mathcal{Q}}^{1,a_2}\bigl(\bm{X}^{\sigma,a_2,U}_{\tau'}(\bm{x}_2)\bigr)}\norm{\bm{\xi}^{1}_{\tau'/\sigma}}d\tau' \\
    \leq \sum_{n=1}^N \Bigl(\overline{q}^{a_1}_n+K_{\Lipschitza}\norm{a_1-a_2}\Bigr)\int_{(n-1)\Delta}^{n\Delta} \norm{\bm{\xi}^{1}_{\tau'/\sigma}}d\tau',
\end{multline}
with $K_{\Lipschitza}<\infty$ the Lipschitz constant for $\tilde{\mathcal{Q}}^{1,\Lipschitza}(\bm{x})$ in $\Lipschitza \in[0,1)$. The existence of such $K_{\Lipschitza}<\infty$, independent to $\bm{x}$, is ensured by \eqref{eqn:derivative_f_to_a} and \eqref{eqn:dirac-delta:6}.
Based on the boundness of $\bm{\xi}^1_{\tau}$, there exists $K_{\xi}<\infty$, independent to $N$, such that
\begin{equation}\label{eqn:lemma:uniqueness:a>1:7}
    \int_{(n-1)\Delta}^{n\Delta} \norm{\bm{\xi}^{1}_{\tau'/\sigma}}d\tau' \leq 
    \sigma K_{\xi}\Delta.
\end{equation}

For $n\in[N]$, from \eqref{eqn:lemma:uniqueness:a>1:1}, \eqref{eqn:lemma:uniqueness:a>1:6}, and \eqref{eqn:lemma:uniqueness:a>1:7}, we have
\begin{multline}\label{eqn:lemma:uniqueness:a>1:8}
 \norm{\bm{X}^{\sigma,a_1,U}_{n\Delta}(\bm{x}_1) - \bm{X}^{\sigma,a_1,U}_{n\Delta}(\bm{x}_2)} 
 \leq \norm{\bm{X}^{\sigma,a_2,U}_{(n-1)\Delta}(\bm{x}_1) - \bm{X}^{\sigma,a_2,U}_{(n-1)\Delta}(\bm{x}_2)} \\
 + \Bigl(\bar{q}^{a_1}_n+K_a\norm{a_1-a_2}\Bigr)\int_{(n-1)\Delta}^{n\Delta} \norm{\bm{\xi}^{1}_{\tau'/\sigma}d\tau'} \\
 \leq \norm{\bm{X}^{\sigma,a_1,U}_{(n-1)\Delta}(\bm{x}_1) - \bm{X}^{\sigma,a_2,U}_{(n-1)\Delta}(\bm{x}_2)} + \sigma K_{\xi} \Delta \Bigl(\bar{q}^{a_1}_n+K_a\norm{a_1-a_2}\Bigr).
\end{multline}

For each $n\in[N]$, we define $K^{\Lipschitza}_Q(n)$ as the minimum $K\geq 0$ such that
\begin{equation}
    \norm{\tilde{\mathcal{Q}}^{1,\Lipschitza}\bigl(\bm{X}^{\sigma,a_1,U}_{\tau}(\bm{x}_1)\bigr)-\tilde{\mathcal{Q}}^{1,\Lipschitza}\bigl(\bm{X}^{\sigma,a_2,U}_{\tau}(\bm{x}_2)\bigr)}\leq K \norm{\bm{X}^{\sigma,a_1,U}_{\tau}(\bm{x}_1)-\bm{X}^{\sigma,a_2,U}_{\tau}(\bm{x}_2)},
\end{equation}
for all $\tau\in[(n-1)\Delta,n\Delta)$.
For $\Lipschitza\in(0,1)$, based on the Lipschitz continuity of $\tilde{Q}^{1,\Lipschitza}(\bm{x})$ (led by the Lipschitz continuity of $b^{\Lipschitza,U}(\bm{x},\bm{\xi})=\tilde{\mathcal{Q}}^{1,\Lipschitza}(\bm{x})\bm{\xi}$ in $\bm{x}$), $K^{\Lipschitza}_Q(n) < \infty$.
Note that $K^{\Lipschitza}_Q(n)$ may be infinity in the limit case $\Lipschitza\downarrow 0$.

From \eqref{eqn:lemma:uniqueness:a>1:8}, 
\begin{multline}\label{eqn:lemma:uniqueness:a>1:9}
 \norm{\bm{X}^{\sigma,a_1,U}_{n\Delta}(\bm{x}_1) - \bm{X}^{\sigma,a_2,U}_{n\Delta}(\bm{x}_2)} \\
 \leq \norm{\bm{X}^{\sigma,a_1,U}_{(n-1)\Delta}(\bm{x}_1) - \bm{X}^{\sigma,a_2,U}_{(n-1)\Delta}(\bm{x}_2)} \\
 + \sigma K_{\xi} \Delta \Bigl(K^{a_1}_Q (n)\sup_{\tau\in[(n-1)\Delta,n\Delta)}\norm{\bm{X}^{\sigma,a_1,U}_{\tau}(\bm{x}_1) - \bm{X}^{\sigma,a_2,U}_{\tau}(\bm{x}_2)} + K_{\Lipschitza}\norm{a_1-a_2}\Bigr)\\
 \leq \norm{\bm{X}^{\sigma,a_1,U}_{(n-1)\Delta}(\bm{x}_1) - \bm{X}^{\sigma,a_2,U}_{(n-1)\Delta}(\bm{x}_2)} \\
 + \sigma K_{\xi} \Delta \Bigl(K^{a_1}_Q(n) \norm{\bm{X}^{\sigma,a_1,U}_{(n-1)\Delta}(\bm{x}_1) - \bm{X}^{\sigma,a_2,U}_{(n-1)\Delta}(\bm{x}_2)} + K^{\Lipschitza}_Q(n)\bar{X}_N + K_{\Lipschitza}\norm{a_1-a_2}\Bigr),
\end{multline}
where 
\[
\bar{X}_N\coloneqq \max_{n\in[N]}\biggl(\sup_{\tau\in[(n-1)\Delta,n\Delta)}\norm{\bm{X}^{\sigma,a_1,U}_{\tau}(\bm{x}_1) - \bm{X}^{\sigma,a_2,U}_{\tau}(\bm{x}_2)} -\norm{\bm{X}^{\sigma,a_1,U}_{(n-1)\Delta}(\bm{x}_1) - \bm{X}^{\sigma,a_2,U}_{(n-1)\Delta}(\bm{x}_2)}\biggr).
\]
From the continuity of $\bm{X}^{\sigma,\Lipschitza,U}_{\tau}(\bm{x})$ in $\tau$, $\lim_{N\rightarrow \infty} \bar{X}_N = 0$.

For $K\in\mathbb{R}_0\cup\{\infty\}$ and $n\in[N]$, we define a set 
\begin{equation}
    \mathscr{N}^{\Lipschitza,N}_n(K)\coloneqq \Bigl\{n' = n,n+1,\ldots,N~\Bigl|~K^{\Lipschitza}_Q(n') \geq K\Bigr\},
\end{equation}
$\mathscr{N}^{\Lipschitza,N}_N \equiv \emptyset$, and $\mathscr{N}^{\Lipschitza,N}_n(\infty)\coloneqq \lim_{K\rightarrow \infty}\mathscr{N}^{\Lipschitza,N}_n(K)$.

For any $\tau > 0$, $N\in\mathbb{N}_+$, and $\Delta = \tau/N$, based on \eqref{eqn:lemma:uniqueness:a>1:9},
\begin{multline}\label{eqn:lemma:uniqueness:a>1:11}
\norm{\bm{X}^{\sigma,a_1,U}_{\tau}(\bm{x}_1) - \bm{X}^{\sigma,a_2,U}_{\tau}(\bm{x}_2)} 
 = \norm{\bm{X}^{\sigma,a_1,U}_{N\Delta}(\bm{x}_1) - \bm{X}^{\sigma,a_2,U}_{N\Delta}(\bm{x}_2)} \\
    \leq \prod_{n\in[N]}(1 + \sigma K_{\xi}K^{a_1}_Q(n)\Delta) \norm{\bm{x}_1-\bm{x}_2} \\
    +\sum_{n=0}^{N-1}\prod_{n'=1}^n\Bigl(1 + \sigma K_{\xi}K^{a_1}_Q(N-n')\Delta\Bigr) \sigma K_{\xi}\Delta\Bigl(K^{a_1}_Q(N) \bar{X}_N +K_\Lipschitza\norm{a_1-a_2}\Bigr).
\end{multline}
We plug $\bm{x}_1=\bm{x}_2 = \bm{x}_0$ in \eqref{eqn:lemma:uniqueness:a>1:11}, for any given $K\in\mathbb{R}_0\cup\{\infty\}$,
\begin{multline}\label{eqn:lemma:uniqueness:a>1:12}
    \norm{\bm{X}^{\sigma,a_1,U}_{\tau}(\bm{x}_0) - \bm{X}^{\sigma,a_2,U}_{\tau}(\bm{x}_0)}\\ \leq
    \sum_{n=0}^{N-1}\Bigl(1 + \sigma K_{\xi}\underline{K}^{a_1,N}_Q(K)\Delta\Bigr)^{n-\bigl\lvert \mathscr{N}^{a_1,N}_{N-n}(K)\bigr\rvert}\Bigl(1 + \sigma K_{\xi}\Delta\max_{n'=N-n}^N K^{a_1}_Q(n')\Bigr)^{\bigl\lvert \mathscr{N}^{a_1,N}_{N-n}(K)\bigr\rvert} \\
    \times\sigma K_{\xi}\Delta\Bigl(K^{a_1}_Q(N) \bar{X}_N +K_\Lipschitza\norm{a_1-a_2}\Bigr)\\
     \leq  
 \sigma K_{\xi}\Delta\Bigl(K^{a_1}_Q(N) \bar{X}_N +K_\Lipschitza\norm{a_1-a_2}\Bigr)\sum_{n=0}^{N-1}\Bigl(1 + \sigma K_{\xi}\underline{K}^{a_1,N}_Q(K)\Delta\Bigr)^n\Bigl(1 + \sigma K_{\xi}\Delta\max_{n'=1}^N K^{a_1}_Q(n')\Bigr)^{\bigl\lvert \mathscr{N}^{a_1,N}_1(K)\bigr\rvert} \\
     =\Bigl(1 + \sigma K_{\xi}\Delta\max_{n'=1}^N K^{a_1}_Q(n')\Bigr)^{\bigl\lvert \mathscr{N}^{a_1,N}_1(K)\bigr\rvert}  \sigma K_{\xi}\Delta\Bigl(K^{a_1}_Q(N) \bar{X}_N +K_\Lipschitza\norm{a_1-a_2}\Bigr) \frac{\Bigl(1+\sigma K_{\xi}\underline{K}^{a_1,N}_Q(K)\Delta\Bigr)^N-1}{\sigma K_{\xi} \underline{K}^{a_1,N}_Q(K)\Delta}\\
     =\Bigl(1 + \sigma K_{\xi}\Delta\max_{n'=1}^N K^{a_1}_Q(n')\Bigr)^{\bigl\lvert \mathscr{N}^{a_1,N}_1(K)\bigr\rvert}  \Bigl(K^{a_1}_Q(N) \bar{X}_N +K_\Lipschitza\norm{a_1-a_2}\Bigr) \frac{\Bigl(1+\sigma K_{\xi}\underline{K}^{a_1,N}_Q(K)\Delta\Bigr)^N-1}{\underline{K}^{a_1,N}_Q(K)},
\end{multline}
where 
\begin{equation}
    \underline{K}^{a_1,N}_Q(K)\coloneqq \max_{n\in[N]: n\notin \mathscr{N}^{a_1,N}_1(K)}K^{a_1}_Q(n).
\end{equation}

Let $\underline{K}^{a_1}_Q(K)\coloneqq \lim_{N\rightarrow \infty} \underline{K}^{a_1,N}_Q(K)$.
Obviously, $\underline{K}^{a_1}_Q(K)< K$.

Plugging $\Delta = \frac{\tau}{N}$ in \eqref{eqn:lemma:uniqueness:a>1:12} and taking $N\rightarrow \infty$, we obtain 
\begin{multline}\label{eqn:lemma:uniqueness:a>1:13}
  \lim_{a_2\to a_1}\lim_{a_1\downarrow 0}\frac{\norm{\bm{X}^{\sigma,a_1,U}_{\tau}(\bm{x}_0) - \bm{X}^{\sigma,a_2,U}_{\tau}(\bm{x}_0)}}{\norm{a_1-a_2}}\\ \leq \lim_{a_2\to a_1}\lim_{a_1\downarrow 0}\frac{1}{\norm{a_1-a_2}}
  \lim_{N\rightarrow \infty}\Bigl(1 + \sigma K_{\xi}\max_{n'=1}^N K^{a_1}_Q(n')\tau\frac{1}{N}\Bigr)^{\bigl\lvert \mathscr{N}^{a_1,N}_1(K)\bigr\rvert}  \lim_{N\rightarrow \infty}\Bigl(K^{a_1}_Q(N) \bar{X}_N +K_\Lipschitza\norm{a_1-a_2}\Bigr) \\
  \times\frac{\Bigl(1+\sigma K_{\xi}\underline{K}^{a_1,N}_Q(K)\tau\frac{1}{N}\Bigr)^N-1}{\underline{K}^{a_1,N}_Q(K)}\\
  = \lim_{a_2\to a_1}\lim_{a_1\downarrow 0} \lim_{N\rightarrow \infty}\Bigl(1 + \sigma K_{\xi}\max_{n'=1}^N K^{a_1}_Q(n')\tau\frac{1}{N}\Bigr)^{\bigl\lvert \mathscr{N}^{a_1,N}_1(K)\bigr\rvert} K_{\Lipschitza}\frac{\exp\{\sigma K_{\xi} \underline{K}^{a_1}_Q(K)\tau\}-1}{\underline{K}^{a_1}_Q(K)}.
  \\  \leq \lim_{a_2\to a_1}\lim_{a_1\downarrow 0}\exp\Bigl\{\sigma\tau K_{\xi}\lim_{N\to\infty}\max_{n'=1}^N K^{a_1}_Q(n')\frac{1}{N}\Bigl\lvert \mathscr{N}^{a_1,N}_1(K)\Bigr\rvert\Bigr\} K_{\Lipschitza} \frac{\exp\{\sigma \tau K_{\xi}\underline{K}^{a_1}_Q(K)\}-1}{\underline{K}^{a_1}_Q(K)} ,
\end{multline}
where $\lim_{a_1\downarrow 0} \bm{X}^{\sigma,a_1,U}(\bm{x}_0)$ indicates the unique solution to $\dot{\bm{X}}^{\sigma,0,U}_{\tau}(\bm{x}_0) = \lim_{a_1\downarrow 0} b^{a_1,U}(\bm{X}^{\sigma,0,U}(\bm{x}_0),\bm{\xi}^1_{\tau/\sigma})$ with initial condition $\bm{x}_0$.

Based on the definitions in \eqref{eqn:app:convergence_Z:3} and \eqref{eqn:app:convergence_Z:4}, when $\Lipschitza\downarrow 0$, $\tilde{\mathcal{Q}}^{1,\Lipschitza}(\bm{x}) $ becomes piece-wise constant in $\bm{x}$.
Hence, for any $\bar{T}<\infty$ and $a\downarrow 0$, there are only finitely many discontinuous points $\tau\in[0,\bar{T}]$ for $\bm{X}^{\sigma,0,U}_{\tau}$, which are also discontinuous points of $\tilde{\mathcal{Q}}^{1,0}(\bm{X}^{\sigma,0,U}_{\tau}(\bm{x}_0))$ in $\tau$.
Apart from these discontinuous points, $\tilde{\mathcal{Q}}^{1,0}(\bm{X}^{\sigma,0,U}_{\tau}(\bm{x}_0)$ is piece-wise constant.
That is, for any $K\in \mathbb{R}_0\cup\{\infty\}$, $\lim_{a_1\downarrow 0}\underline{K}^{a_1}_Q(K) =0$.

For $a_1\in[0,1)$, we define
\begin{multline}
\overline{K}^{a_1}_Q \coloneqq \sup_{\tau'\in[0,\tau)}\lim_{\delta \downarrow 0}\max\Biggl\{\frac{\norm{\tilde{\mathcal{Q}}^{1,a_1}(\bm{X}^{\sigma,a_1,U}_{\tau'}(\bm{x}_0)+\delta)-\tilde{\mathcal{Q}}^{1,a_1}(\bm{X}^{\sigma,a_1,U}_{\tau'}(\bm{x}_0))}}{\norm{\delta}},\\
\frac{\norm{\tilde{\mathcal{Q}}^{1,a_1}(\bm{X}^{\sigma,a_1,U}_{\tau'}(\bm{x}_0)-\delta)-\tilde{\mathcal{Q}}^{1,a_1}(\bm{X}^{\sigma,a_1,U}_{\tau'}(\bm{x}_0))}}{\norm{\delta}}\Biggr\}
\eqqcolon \sup_{\tau'\in[0,\tau]} q^{a_1}(\tau'),
\end{multline}
 and
\begin{equation}
\alpha^{a_1}(K) \coloneqq \lim_{N\to\infty}\frac{1}{N}\Bigl\lvert \mathscr{N}^{a_1,N}_1(K)\Bigr\rvert\leq \frac{1}{\tau}\int_0^{\tau}\indicator\bigl\{q^{a_1}(\tau')\geq K\bigr\}d\tau'.
\end{equation}
For any $a_1\in[0,1)$,
\begin{equation}
    \overline{K}^{a_1}_Q \geq \lim_{a_2\to a_1}\lim_{N\to\infty}\max_{n'=1}^N K^{a_1}_Q(n').
\end{equation}

We plug $K = \overline{K}^{a_1}_Q$ in \eqref{eqn:lemma:uniqueness:a>1:13},
\begin{multline}\label{eqn:lemma:uniqueness:a>1:14}
  \lim_{a_2\to a_1}\lim_{a_1\downarrow 0}\frac{\norm{\bm{X}^{\sigma,a_1,U}_{\tau}(\bm{x}_0) - \bm{X}^{\sigma,a_2,U}_{\tau}(\bm{x}_0)}}{\norm{a_1-a_2}} \leq  
    \lim_{a_2\to a_1}\exp\Bigl\{\sigma K_{\xi} \lim_{a_1\downarrow 0}\lim_{N\to\infty}\max_{n'=1}^N K^{a_1}_Q(n')\alpha^{a_1}(\overline{K}^{a_1}_Q)\tau\Bigr\}K_a \sigma \tau K_{\xi}\\
    \leq K_a \sigma \tau K_{\xi}\exp\Biggl\{\sigma K_{\xi}\int_0^{\tau} \lim_{a_2\to a_1}\lim_{N\to\infty}\max_{n'=1}^N K^{0}_Q(n') \indicator\Bigl\{q^{0}(\tau')=\overline{K}^{0}_Q\Bigr\} d\tau'\Biggr\}\\
    \leq K_a \sigma \tau K_{\xi}\exp\Biggl\{\sigma K_{\xi}\int_0^{\tau} \overline{K}^0_Q\indicator\Bigl\{q^{0}(\tau')=\overline{K}^{0}_Q\Bigr\} d\tau'\Biggr\}\\
    \leq K_a \sigma \tau K_{\xi}\exp\Biggl\{\sigma K_{\xi} \Bigl(\int_{\begin{subarray}~\tau'\in [0,\tau)\\ \tau'\notin\inter{\scrR^+}\end{subarray}}K_Q\biggl(\norm{\frac{d^-}{d \tau'}\tilde{\mathcal{Q}}^{1,0}(\bm{X}^{\sigma,0,U}_{\tau'}(\bm{x}_0))} +\norm{\frac{d^+}{d \tau'}\tilde{\mathcal{Q}}^{1,0}(\bm{X}^{\sigma,0,U}_{\tau'}(\bm{x}_0))}\biggr) d\tau'\\
    +\int_{\tau'\in\inter{\scrR^+}}\norm{\bm{\scra}^{\orule}\bigl(\frac{\bm{X}^{\sigma,0,U}_{\tau'}}{\sum_{i\in[I]}N^0_i},t(\bm{X}^{\sigma,0,U}_{\tau'})\bigr)}d\tau'\Bigr)\Biggr\},
\end{multline}
where 
\begin{multline}
    \scrR^+\coloneqq \biggl\{\tau'\in[0,\tau) \biggl| \\\forall \kappa\in\mathscr{K}, \indicator\Bigl\{\sum_{\begin{subarray}~\kappa'\in\mathscr{K}:\\t^{\kappa'}<t^{\kappa}\end{subarray}}\lvert X^{\sigma,0,1}_{\tau',\kappa'}(\bm{x}_0)\rvert=0,
\sum_{\begin{subarray}~\kappa'\in\mathscr{K}:\\t^{\kappa'}=t^{\kappa},\\a^{\kappa'} = \dummyAction\end{subarray}}\lvert X^{\sigma,0,1}_{\tau',\kappa'}(\bm{x}_0)\rvert=0\Bigr\} \sum_{\kappa'\in\mathscr{K}}\sum_{n\in\bigl\lceil\lvert X^{\sigma,0,1}_{\tau',\kappa}(\bm{x}_0)\rvert \bigr\rceil} \lvert\xi^1_{\tau'}(\kappa,\kappa',n) \rvert=0\biggr\},
\end{multline}
$t(\bm{x})\coloneqq \Bigl\{\max \bigl\{t\in[T]_0 \bigl| \sum_{\kappa\in\mathscr{K}:t^{\kappa}<t}|x_{\kappa}| = 0\bigr\}\cup \{0\}\Bigr\}$, 
$K_Q\coloneqq \sup_{\tau'\in[0,\tau),\tau'\notin\inter{\scrR^+}}\frac{1}{\norm{b^1(\bm{X}^{\sigma,0,U}_{\tau'}(\bm{x}_0),\bm{\xi}^1_{\tau'/\sigma})}}<\infty$, $d^-/d \tau'$ and $d^+/d\tau'$ are the left and right, respectively, derivatives in $\tau'$, and $\inter{\scrR^+}$ is the interior of $\scrR^+$.

Since $\tilde{\mathcal{Q}}^{1,0}(\bm{X}^{\sigma,0,U}_{\tau'})=\lim_{\Lipschitza\downarrow 0}\tilde{\mathcal{Q}}^{1,\Lipschitza}(\bm{X}^{\sigma,\Lipschitza,U}_{\tau'})$ has only finitely many discontinuous points in $\tau'\in[0,\tau)$ and is always bounded, $\int_0^{\tau}\norm{\frac{d^-}{d \tau'}\tilde{\mathcal{Q}}^{1,0}(\bm{X}^{\sigma,0,U}_{\tau'})} + \norm{\frac{d^+}{d \tau'}\tilde{\mathcal{Q}}^{1,0}(\bm{X}^{\sigma,0,U}_{\tau'})}d\tau'$ is bounded for any $\tau\in[0,\bar{T}]$. 
There exists $K<\infty$ such that, based on \eqref{eqn:lemma:uniqueness:a>1:14},
we obtain
\begin{equation}\label{eqn:lemma:uniqueness:a>1:15}
 \lim_{a_2\downarrow 0}\frac{\norm{\bm{X}^{\sigma,0,U}_{\tau}(\bm{x}_0) - \bm{X}^{\sigma,a_2,U}_{\tau}(\bm{x}_0)}}{\norm{a_2}} \leq   
 K_a \sigma \tau K_{\xi}\exp\Bigl\{\sigma K_{\xi}(K_Q K+\lvert \scrR^+\rvert)\Bigr\} < \infty,
\end{equation}
where $K_a,K_{\xi}$, $K_Q$, and $K$ are independent to $a$ and $U$.
It leads to the right continuity of $\bm{X}^{\sigma,\Lipschitza,U}_{\tau}$, for any $\tau\in[0,\bar{T}]$, in $\Lipschitza = 0$.

Based on the definition of $b^{\Lipschitza,U}(\bm{x},\bm{\xi}) = \tilde{\mathcal{Q}}^{1,\Lipschitza}(\bm{x})\min\{U,\bm{\xi}\}$, for any given $\bm{\xi}^{1}_{\tau}$ ($\tau\geq 0$) and $\Lipschitza\in[0,1)$, there exists $U_0<\infty$ such that, for all $U\geq U_0$, $\bm{X}^{\sigma,\Lipschitza,U}_{\tau} = \bm{X}^{\sigma,\Lipschitza,U_0}_{\tau}$ for all $\tau\in[0,\bar{T}]$. 
It shows the left continuity of $\bm{X}^{\sigma,\Lipschitza,U}_{\tau}$ in $U\rightarrow \infty$, leading to \eqref{eqn:continuity_trajectory_a:stochastic:1}.

It remains to prove \eqref{eqn:continuity_trajectory_a:averaging:1}.
Along similar lines, for $a_1,a_2\in[0,1)$ and any $N\in\mathbb{N}_+\cup\{\infty\}$, there exist $K_\Lipschitza,K_{\xi},K_Q,K<\infty$ such that
\begin{multline}\label{eqn:continuity_trajectory_a:averaging:2}
\lim_{a_2\to a_1}\lim_{a_1\downarrow 0}\frac{\norm{\bar{\bm{x}}^{a_1,U}_{\tau} - \bar{\bm{x}}^{a_2,U}_{\tau}} }{\norm{a_1-a_2}}\\
 \leq K_{\Lipschitza} \sigma \tau K_{\xi}\exp\Biggl\{\sigma K_{\xi} \Bigl(K_Q\int_{\begin{subarray}~\tau'\in [0,\tau)\\ \tau'\notin\inter{\scrR^+}\end{subarray}}\bigl(\norm{\frac{d^-}{d \tau'}\tilde{\mathcal{Q}}^{1,0}(\bar{\bm{x}}^{0,U}_{\tau'})}\\
 +\norm{\frac{d^+}{d \tau'}\tilde{\mathcal{Q}}^{1,0}(\bar{\bm{x}}^{0,U}_{\tau'})} \bigr)d\tau'+\int_{\tau'\in\scrR^+}\norm{\scra^{\orule}\bigl(\frac{\bar{\bm{x}}^{0,1}_{\tau'}}{\sum_{i\in[I]}N^0_i},t(\bar{\bm{x}}^{0,1}_{\tau'})\bigr)} d\tau'\Bigr)\Biggr\}\\
 \leq K_{\Lipschitza} \sigma \tau K_{\xi}\exp\Biggl\{\sigma K_{\xi}(K_Q K+\lvert \scrR^+\rvert)\Biggr\}.
\end{multline}

It proves the lemma.

\endproof

\section{Proof of Lemma~\ref{lemma:sim}}\label{app:lemma:sim}
\proof{Proof of Lemma~\ref{lemma:sim}.}
For any $\orule=(\phi^h: h\in\mathbb{N}_+)\in\Psi^0 $ (\partialref{cond:weak_stab}{Lipschitz–limit regularity}),
let $\psi\in\lPhih$ be such that, for all $t\in [T]_0$ and $\zeta\in\calJ$, 
\begin{equation}\label{eqn:define_psi}
    \bbP\Bigl\{\action^{\psi,h}_{i^{\zeta},n}(t)=\action^{\zeta} \Bigl| s^{\psi,h}_{i^{\zeta},n}(t) = s^{\zeta},\bm{Y}^{\psi,h}(t)=\bm{y}\Bigr\}  = \alpha^{\orule}_{\zeta}(\bm{y},t),
\end{equation}
where recall $\alpha^{\orule}_{\zeta}$ is defined in \eqref{eqn:define:alpha}.

For any $t\in[T]$, and $\kappa\in\calJ$ with $t^{\kappa}=t$, if 
$hX^h_{\tau^h(t),\kappa} \in \mathbb{N}_0$, then it is considered as the number of bandit processes in GSA triple $\zeta^{\kappa}$ at time $t^{\kappa}=t$. 
In particular, for $\tau_0 \geq \tau^h(t)$, if $\sum_{\kappa\in\mathscr{K}: t^{\kappa}=t,\action^{\kappa}=\dummyAction} hX^h_{\tau_0} = 0$ and and $\lim_{\tau\uparrow \tau_0}\sum_{\kappa\in\mathscr{K}:t^{\kappa}=t,\action^{\kappa}=\dummyAction} hX^h_{\tau} >0$, then we say the process $\bm{X}^h_{\tau}$ starts a transition period at $\tau=\tau_0$ for $t$.
In this case, observing the definition of $h\bm{X}^h_{\tau}$, for any $\kappa'\in\mathscr{K}$ with $t^{\kappa'}=t+1$, $i^{\kappa'}=i^{\kappa}$, and $\action^{\kappa'}=\dummyAction$, each unit reduced from $hX^h_{\tau,\kappa}$, representing a bandit process, will be added in $hX^h_{\tau,\kappa'}$, during $\tau\in[\tau_0,\tau^h(t+1))$, with probability $\scrp_{t}(\zeta^{\kappa},\zeta^{\kappa'}) = \bbP\Bigl\{s^{\orule,h}_{i^{\kappa},n}(t+1) = s^{\kappa'}\Bigl| s^{\orule,h}_{i^{\kappa},n}(t) = s^{\kappa},\action^{\orule,h}_{i^{\kappa},n}(t) = \action^{\kappa}\Bigr\}=\bbP\Bigl\{s^{\psi,h}_{i^{\kappa},n}(t+1) = s^{\kappa'}\Bigl| s^{\psi,h}_{i^{\kappa},n}(t) = s^{\kappa},\action^{\psi,h}_{i^{\kappa},n}(t) = \action^{\kappa}\Bigr\}=p_{i^{\kappa}}(s^{\kappa},\action^{\kappa},s^{\kappa'})$ (the definition of $\scrp$ in \eqref{eqn:app:convergence_Z:2}).
In this context, for any $\kappa\in\mathscr{K}$ with $t^{\kappa}=t+1$ and $\action^{\kappa} = \dummyAction$, $hX^h_{\tau^h(t+1),\kappa} \in \mathbb{N}_0$ and again  represents the number of bandit processes in GSA triple $\zeta^{\kappa}$ at time $t^{\kappa}=t+1$.

We define $\mathcal{P}\coloneqq \bigl[p(\zeta,i',s')\bigr]_{|\mathcal{J}|\times\sum_{i\in[I]}|\bS_i|}$ with
\begin{equation}
    p(\zeta,i',s') \coloneqq \begin{cases}
        p_{i^{\zeta}}(s^{\zeta},\action^{\zeta},s'),&\text{if }i' = i^{\zeta},\\
        0,&\text{otherwise}.
    \end{cases}
\end{equation}

Let $\bm{\mathcal{Z}}^h_{\tau}(t) \coloneqq (\mathcal{Z}^h_{\tau,\kappa}: \kappa\in\mathscr{K},t^{\kappa}=t)$ for all $\tau\in[0,\bar{T})$ and $t\in[T]_0$.
We have
\begin{multline}\label{eqn:lemma:sim:2}
    \Biggl\lVert \mathbb{E} \Bigl[\scru(\bm{\mathcal{Z}}^h_{\tau^h(t+1)},t+1)\Bigl|~\bm{\mathcal{Z}}^h_{\tau_0}\Bigr]
    -\mathbb{E} \Bigl[\bm{Y}^{\psi,h}(t+1) \Bigl|~\bm{Z}^{\psi,h}(t)\Bigr]\Biggr\rVert \leq \norm{\mathcal{P}^T \Bigl(\bm{\mathcal{Z}}^h_{\tau_0}(t)-\bm{Z}^{\psi,h}(t)\Bigr)}\\
    \leq \norm{\mathcal{P}}\norm{\bm{\mathcal{Z}}^h_{\tau_0}(t)-\bm{Z}^{\psi,h}(t)},
\end{multline}
where $\scru$ is defined in \eqref{eqn:define:scru}.
Also, from law of large numbers, for any $\delta>0$,
\begin{equation}\label{eqn:lemma:sim:3}
    \lim_{h\to\infty}\bbP\Biggl\{\Bigl\lVert \scru(\bm{\mathcal{Z}}^h_{\tau^h(t+1)},t+1)-
    \mathbb{E} \scru(\bm{\mathcal{Z}}^h_{\tau^h(t+1)},t+1)\Bigr\rVert > \delta\Bigl|~\bm{\mathcal{Z}}^h_{\tau_0}\Biggr\} = 0,
\end{equation}
and
\begin{equation}\label{eqn:lemma:sim:4}
\lim_{h\to\infty}\bbP\Biggl\{\Bigl\lVert \bm{Y}^{\psi,h}(t+1)  -
    \mathbb{E} \Bigl[\bm{Y}^{\psi,h}(t+1) \Bigr]\Bigr\rVert > \delta\Bigl|~\bm{Z}^{\psi,h}(t)\Biggr\} = 0.
\end{equation}
From \eqref{eqn:lemma:sim:2}-\eqref{eqn:lemma:sim:4}, for any $\delta>0$,
\begin{equation}\label{eqn:lemma:sim:5}
    \lim_{h\to\infty}\bbP\Biggl\{\Bigl\lVert \scru(\bm{\mathcal{Z}}^h_{\tau^h(t+1)},t+1)
    -\bm{Y}^{\psi,h}(t+1)\Bigr\rVert > \delta\Bigl|~\bm{\mathcal{Z}}^h_{\tau_0},\bm{Z}^{\psi,h}(t)\Biggr\}
    \leq \lim_{h\to\infty}\bbP\Biggl\{\norm{\mathcal{P}}\norm{\bm{\mathcal{Z}}^h_{\tau_0}(t)-\bm{Z}^{\psi,h}(t)} > \delta'\Biggr\},
\end{equation}
for some $0<\delta' < \delta$.

Upon $\tau=\tau^h(t+1)$, $\sum_{\kappa\in\mathscr{K}: t^{\kappa}=t+1,\action^{\kappa}=\dummyAction} hX^h_{\tau} >0$.
It triggers the decision making period: for all $\kappa\in\mathscr{K}$ with $t^{\kappa}=t+1$, 
\begin{multline}\label{eqn:lemma:sim:6}
\beta^h_{\kappa}(h\bm{X}^h_{\tau},\bm{\xi}^h_{\tau}) = \sum_{\begin{subarray}~\kappa'\in\mathscr{K}:\\t^{\kappa'}=t+1,\\i^{\kappa'}=i^{\kappa},\\s^{\kappa'}=s^{\kappa}\end{subarray}} \Bigl(\indicator\{\action^{\kappa}\neq \dummyAction, \action^{\kappa'} = \dummyAction\}\sum_{\begin{subarray}~\kappa''\in\mathscr{K}:\\t^{\kappa''}=t+1,\\i^{\kappa''}=i^{\kappa},\\s^{\kappa''}=s^{\kappa}\end{subarray}}X^h_{\tau,\kappa''}\scra^{\orule}_{\zeta^{\kappa}}(\frac{\bm{X}^h_{\tau}}{\sum_{i\in[I]}N^0_i},t+1) \\
- \indicator\{\action^{\kappa}=\dummyAction,\action^{\kappa'}\neq \dummyAction\}\sum_{\begin{subarray}~\kappa''\in\mathscr{K}:\\t^{\kappa''}=t+1,\\i^{\kappa''}=i^{\kappa},\\s^{\kappa''}=s^{\kappa}\end{subarray}}X^h_{\tau,\kappa''}\scra^{\orule}_{\zeta^{\kappa'}}(\frac{\bm{X}^h_{\tau}}{\sum_{i\in[I]}N^0_i},t^{\kappa'})\Bigr)\xi^h(0),
\end{multline}
where, based on definitions in \eqref{eqn:define:alpha} and \eqref{eqn:define:alpha:x}, $\scra^{\orule}_{\zeta}(\bm{X}^h_{\tau}/\sum_{i\in[I]}N^0_i,t+1)=\alpha^{\orule}_{\zeta}(\bm{y},t+1)=\bbP\Bigl\{\action^{\psi,h}_{i^{\zeta},n}(t)=\action^{\zeta} \Bigl| s^{\psi,h}_{i^{\zeta},n}(t) = s^{\zeta},\bm{Y}^{\psi,h}(t)=\bm{y}\Bigr\} $ when $\bm{y}=\scru(\bm{X}^h_{\tau}/\sum_{i\in[I]}N^0_i,t+1)$.
Such decision making period continues until for some $\tau_1 > \tau^h(t+1)$ such that $\sum_{\kappa\in\mathscr{K}: t^{\kappa}=t+1,\action^{\kappa}=\dummyAction} hX^h_{\tau,\kappa}$ becomes zero, and then $\bm{X}^h_{\tau}$ enters the transition period for $t+1$.

During the decision making period $\tau\in [\tau^h(t+1),\tau_1)$, based on \eqref{eqn:lemma:sim:6}, exactly 
$h\sum_{i\in[I]}N^0_i\scru(\bm{\mathcal{Z}}^h_{\tau^h(t+1)},t+1)\scra^{\orule}_{\zeta^{\kappa'}}(\bm{\mathcal{Z}}^h_{\tau},t^{\kappa'})$
is moved from $hX^h_{\tau^h(t+1),\kappa}$ to $hX^h_{\tau_1,\kappa'}$.
While, for each $\zeta\in\calJ$ and $\bm{Y}^{\psi,h}(t+1)=\bm{y}$, each bandit process in state $s^{\zeta}$ has probability $\alpha^{\orule}_{\zeta}(\bm{y},t+1)$ to take action $\action^{\zeta}$.
By law of large numbers, along similar lines as the analysis for the transition period,
\begin{multline}\label{eqn:lemma:sim:7}
    \lim_{h\to\infty}\Bigl\{\norm{\bm{\mathcal{Z}}^h_{\tau_1}(t+1) - \bm{Z}^{\psi,h}(t+1)} > \delta\Bigl| \scru(\bm{\mathcal{Z}}^h_{\tau^h(t+1)},t+1), Y^{\psi,h}_{\zeta}(t+1)\Bigr\}\\
    \leq \lim_{h\to\infty} \bbP\Bigl\{\max_{\zeta\in\calJ}\norm{\scra^{\orule}_{\zeta}(\bm{Z}^h_{\tau^h(t+1)},t+1)}\norm{\scru(\bm{\mathcal{Z}}^h_{\tau^h(t+1)},t+1)
    -Y^{\psi,h}_{\zeta}(t+1)}  > \delta'\Bigr\}\\
    \leq \lim_{h\to\infty} \bbP\Bigl\{\norm{\scru(\bm{\mathcal{Z}}^h_{\tau^h(t+1)},t+1)
    -Y^{\psi,h}_{\zeta}(t+1)} > \delta'\Bigr\},
\end{multline}
where $0<\delta''<\delta'<\delta$ and the last inequality is based on \eqref{eqn:lemma:sim:5}.

The decision making and transition periods continue alternatively until $\tau=\tau^h(T+1)$. Hence, given $\bZ^{\psi,h}(0) = \bm{\mathcal{Z}}^h_0 = \bm{x}_0/\sum_{i\in[I]}N^0_i$ that satisfies \eqref{eqn:sim:1}, 
for any $\delta>0$, $t\in[T]$,   
\begin{equation}\label{eqn:lemma:sim:8}
    \lim_{h\to \infty}\bbP\Bigl\{\norm{\bZ^{\psi,h}_{\zeta}(t)- \bm{\mathcal{Z}}^h_{\tau^h(t)}(t)} > \delta\Bigr\} = 0.
\end{equation}

We then discuss the relationship between $\bZ^{\psi,h}(t)$ and $\bZ^{\orule,h}(t)$ as $h\to \infty$.

For any $h\in\mathbb{N}_+$, $t\in[T]_0$, and $\bm{z}\in\Delta_{\calJ}$, we define 
\begin{equation}\label{eqn:define_expect_z}
    \orule^h_t(\bm{z}) \coloneqq \mathbb{E}\Bigl[\bm{Z}^{\orule,h}(t+1) \Bigl| \bm{Z}^{\orule,h}(t) = \bm{z}\Bigr],
\end{equation}
and
\begin{equation}
    \psi^h_t(\bm{z}) \coloneqq \mathbb{E}\Bigl[\bm{Z}^{\psi,h}(t+1) \Bigl| \bm{Z}^{\psi,h}(t) = \bm{z}\Bigr].
\end{equation}
We can then write 
\begin{equation}
    \bm{Z}^{\orule,h}(t+1) = \orule^h_t\bigl(\bm{Z}^{\orule,h}(t)\bigr) +\Delta^{\orule,h}_t\bigl(\bm{Z}^{\orule,h}(t)\bigr),
\end{equation}
and 
\begin{equation}
    \bm{Z}^{\psi,h}(t+1) = \psi^h_t\bigl(\bm{Z}^{\psi,h}(t)\bigr) +\Delta^{\psi,h}_t\bigl(\bm{Z}^{\psi,h}(t)\bigr),
\end{equation}
where $\Delta^{\orule,h}_t\bigl(\bm{z}\bigr)$ and $\Delta^{\psi,h}_t\bigl(\bm{z}\bigr)$ are random variables with zero mean.


By law of the large numbers, for any $t\in[T]_0$ and $\bm{z}\in\Delta_{\calJ}$
\begin{equation}\label{eqn:lemma:sim:11}
    \lim_{h\to \infty}\Delta^{\psi,h}_t(\bm{z}) = 0.
\end{equation}
We then think about $\lim_{h\to\infty}\Delta^{\orule,h}_t(\bm{z})$.
Based on the definition in \eqref{eqn:define_expect_z}, 
the $\zeta$th element of $\orule^h_t(\bm{z})$ is
\begin{equation}\label{eqn:lemma:sim:10}
    \orule^h_{t,\zeta}(\bm{z})=\bbE\Bigl[Y^{\orule,h}_{i^{\zeta},s^{\zeta}}(t+1)\alpha^{\orule,h}_{\zeta}\bigl(\bm{Y}^{\orule,h}(t+1),t+1\bigr)~\Bigl|~\bZ^{\orule,h}(t)=\bm{z}\Bigr].
\end{equation}
For any $\delta>0$, 
\begin{multline}\label{eqn:lemma:sim:9}
    \lim_{h\to\infty}\bbP\Bigl\{\norm{\Delta^{\orule,h}_{t,\zeta}(\bm{z})}>\delta~\Bigl|~\bZ^{\orule,h}(t)=\bm{z}\Bigr\}\\
    \leq 
    \lim_{h\to\infty}\bbP\Bigl\{\norm{Y^{\orule,h}_{i^{\zeta},s^{\zeta}}(t+1)\alpha^{\orule,h}_{\zeta}\bigl(\bm{Y}^{\orule,h}(t+1),t+1\bigr)-\bbE\Bigl[\bm{Y}^{\orule,h}_{i^{\zeta},s^{\zeta}}(t+1)\alpha^{\orule,h}_{\zeta}\bigl(\bm{Y}^{\orule,h}(t+1),t+1\bigr)\Bigr]}>\delta \\~\Bigl|~\bZ^{\orule,h}(t)=\bm{z}\Bigr\}\\
    \leq \lim_{\delta'\downarrow 0}\lim_{h\to\infty}\bbP\Biggl\{\norm{Y^{\orule,h}_{i^{\zeta},s^{\zeta}}(t+1)\alpha^{\orule,h}_{\zeta}\bigl(\bm{Y}^{\orule,h}(t+1),t+1\bigr)-\bbE\Bigl[\bm{Y}^{\orule,h}_{i^{\zeta},s^{\zeta}}(t+1)\alpha^{\orule,h}_{\zeta}\bigl(\bm{Y}^{\orule,h}(t+1),t+1\bigr)\Bigr]}>\delta \\~\Biggl|~\norm{\bm{Y}^{\orule,h}_{i^{\zeta},s^{\zeta}}(t+1)-\bbE\bigl[\bm{Y}^{\orule,h}_{i^{\zeta},s^{\zeta}}(t+1)\bigl|\bZ^{\orule,h}(t)=\bm{z}\bigr]}\leq\delta',\bZ^{\orule,h}(t)=\bm{z}\Biggr\},
\end{multline}
where the last inequality is based on the law of large numbers.
Together with the Lipschitz continuity of $\alpha^{\orule,h}_{\zeta}$ in the first argument for all $h\in\mathbb{N}_+^{\infty}$ (Proposition~\ref{prop:stable_cond:1} and \partialref{cond:weak_stab}{Lipschitz–limit regularity} of $\orule\in\Phi^0$),
the right hand side of \eqref{eqn:lemma:sim:9} becomes $0$.
That is,
\begin{equation}\label{eqn:lemma:sim:11:2}
    \lim_{h\to\infty}\Delta^{\orule,h}_{t,\zeta}(\bm{z}) = 0.
\end{equation}

For $\delta>0$, based on \eqref{eqn:lemma:sim:11} and \eqref{eqn:lemma:sim:11:2},
\begin{multline}\label{eqn:lemma:sim:12}
    \lim_{h\to \infty}\bbP\Biggl\{\norm{\bm{Z}^{\orule,h}(t+1)-\bm{Z}^{\psi,h}(t+1)}>\delta\Biggr\} \\
    \leq \lim_{h\to\infty}\bbP\Biggl\{\norm{\orule^h_t\Bigl(\bm{Z}^{\orule,h}(t)\Bigr)-\orule^h_t\Bigl(\bm{Z}^{\psi,h}(t)\Bigr)}+\norm{\orule^h_t\Bigl(\bm{Z}^{\psi,h}(t)\Bigr)-\psi^h_t\Bigl(\bm{Z}^{\psi,h}(t)\Bigr)} >\delta'\Biggr\}, 
\end{multline}
for some $\delta' < \delta$.

Let $\bm{y}_1\coloneqq \bbE\bigl[\bm{Y}^{\orule,h}_{i^{\zeta},s^{\zeta}}(t+1)\bigl|\bZ^{\orule,h}(t)=\bm{z}\bigr]$ and $\bm{y}_2 \coloneqq \bbE\bigl[\bm{Y}^{\orule,h}_{i^{\zeta},s^{\zeta}}(t+1)\bigl|\bZ^{\orule,h}(t)=\bm{z}+\bm{\Delta}\bigr]$ for some $\bm{\Delta}$.
Similar to \eqref{eqn:lemma:sim:9}, based on the Lipschitz continuity of $\bm{\alpha}^{\orule,h}$ in its first argument for all $h\in\mathbb{N}_+^{\infty}$ (Proposition~\ref{prop:stable_cond:1} and \partialref{cond:weak_stab}{Lipschitz–limit regularity} of $\orule\in\Phi^0$), we obtain
\begin{multline}
  \lim_{h\to\infty}\norm{\orule^h_{t,\zeta}(\bm{z})-\orule^h_{t,\zeta}(\bm{z}+\bm{\Delta})}
  \leq \lim_{h\to \infty}\norm{y_{1,i^{\zeta},s^{\zeta}}\alpha^{\orule,h}_{\zeta}(\bm{y}_1,t+1) - y_{2,i^{\zeta},s^{\zeta}}\alpha^{\orule,h}_{\zeta}(\bm{y}_2,t+1)}\\
  \leq K\lim_{h\to\infty}\norm{\bm{y}_1-\bm{y}_2}\leq K\norm{\bm{\Delta}},
\end{multline}
where $K$ is the Lipschitz constant of $y_{i^{\zeta},s^{\zeta}}\alpha^{\orule}_{\zeta}(\bm{y},t+1)$ with respect to $\bm{y}$.
That is, there exists $K<\infty$ (independent to $h$) such that
\begin{equation}\label{eqn:lemma:sim:13}
    \lim_{h\to\infty} \norm{\orule^h_t\Bigl(\bm{Z}^{\orule,h}(t)\Bigr)-\orule^h_t\Bigl(\bm{Z}^{\psi,h}(t)\Bigr)}\leq K\Bigl\lVert \bm{Z}^{\orule,h}(t)-\bm{Z}^{\psi,h}(t)\Bigr\rVert.
\end{equation}
Also, recall the definition \eqref{eqn:define_psi},
for any $\bm{z}\in\Delta_{\calJ}$, 
\begin{equation}\label{eqn:lemma:sim:14}
    \lim_{h\to\infty} \norm{\orule^h_t(\bm{z}) - \psi^h_t(\bm{z})} = 0.
\end{equation}
Plugging \eqref{eqn:lemma:sim:13} and \eqref{eqn:lemma:sim:14} into \eqref{eqn:lemma:sim:12}, there exists $\delta_1 < \delta$ such that
\begin{multline}\label{eqn:lemma:sim:15}
   \lim_{h\to \infty}\bbP\Biggl\{\norm{\bm{Z}^{\orule,h}(t+1)-\bm{Z}^{\psi,h}(t+1)}>\delta\Biggr\} 
   \leq \lim_{h\to \infty} \bbP\Biggl\{ K\norm{\bm{Z}^{\orule,h}(t)-\bm{Z}^{\psi,h}(t)} > \delta_1\Biggr\}\\
   \leq \lim_{h\to \infty} \bbP\Biggl\{ K\norm{\bm{Z}^{\orule,h}(t)-\bm{Z}^{\psi,h}(t)} > \delta_1\Biggl| \norm{\bm{Z}^{\orule,h}(t)-\bm{Z}^{\psi,h}(t)} \leq \delta_1/K\Biggr\} \\
   + \lim_{h\to \infty} \bbP\Biggl\{\norm{\bm{Z}^{\orule,h}(t)-\bm{Z}^{\psi,h}(t)} > \delta_1/K\Biggr\}\\
   \leq \lim_{h\to \infty} \bbP\Biggl\{\norm{\bm{Z}^{\orule,h}(t)-\bm{Z}^{\psi,h}(t)} > \delta_1/K\Biggr\}.
\end{multline}
Together with given initial condition $\bm{Z}^{\orule,h}(0)=\bm{Z}^{\psi,h}(0)=\bm{z}_0$, for all $t\in[T]_0$,
\begin{equation}
    \lim_{h\to \infty}\bbP\Biggl\{\norm{\bm{Z}^{\orule,h}(t)-\bm{Z}^{\psi,h}(t)}>\delta\Biggr\} =0.
\end{equation}
Since \eqref{eqn:lemma:sim:8} holds for all $t\in[T]_0$, it proves the lemma.

\endproof

\section{Proof of Lemma~\ref{lemma:action_ALP_satisfied}}
\label{app:lemma:action_ALP_satisfied}
\proof{Proof of Lemma~\ref{lemma:action_ALP_satisfied}.}
For the \ALP policy family, the difference between $\bm{A}^{\psi,h}(t)$ and $\boalphaLP(\bm{x}^*,t)$ is mainly caused by two parts:
\begin{enumerate}[label=(\alph*)]
    \item the fractional part of the numerators $\oalphaLP_{i,s,\action}(\bm{x}^*,t)Y^{\psi,h}_{i,s}(t) h\sum_{i\in[I]}N^0_i$ ($(i,s,\action)\in\mathcal{J}$) in Step~\ref{step:1}, and
    \item the adapted proportion of bandit processes $\max_{m\in[M]}\Delta Z_m$ in Step~\ref{step:5} where $M$ and $\Delta Z_m$ are $(j,t)$-dependent.
\end{enumerate}
In particular, there exist $K_1,K_2,K_3\in\mathbb{R}_+$ such that
\begin{multline}\label{eqn:alp:1}
    \Bigl\lvert \bigl(\alpha^{\psi,h}_{i,s,\action}(\bm{Y}^{\psi,h}(t),t)-\oalphaLP_{i,s,\action}(\bm{x}^*,t)\bigr)Y^{\psi,h}_{i,s}(t)\bigr)\Bigr\rvert \\
    \leq K_1\max_{(i',s',\action')\in\mathcal{J}:Y^{\psi,h}_{i',s'}(t)>0}\frac{\biggl\lvert\oalphaLP_{i',s',\action'}(\bm{x}^*,t)h\sum_{i''\in[I]}N^0_{i''} Y^{\psi,h}_{i',s'}(t) - \Bigl\lfloor \oalphaLP_{i',s',\action'}(\bm{x}^*,t)h\sum_{i''\in[I]}N^0_{i''} Y^{\psi,h}_{i',s'}(t)\Bigr\rfloor\biggr\rvert }{h\sum_{i''\in[I]}N^0_{i''} } \\
    +K_2 \max_{i\in[I],s\in\bS_i: Y^{\psi,h}_{i,s}(t)>0}\Biggl\lvert Y^{\psi,h}_{i,s}(t) -\sum_{\action'\in\bA_i:\action'\neq a^0(i,s)}\frac{\bigl\lfloor \oalphaLP_{i,s,\action'}(\bm{x}^*,t)h\sum_{i'\in[I]}N^0_{i'} Y^{\psi,h}_{i,s}(t)\bigr\rfloor}{h\sum_{i'\in[I]}N^0_{i'}} - \oalphaLP_{i,s,a^0(i,s)}(\bm{x}^*,t)Y^{\psi,h}_{i,s}(t)\Biggr\rvert\\
    + K_3 \max_{(i',s',\action')\in\mathcal{J}:Y^{\psi,h}_{i',s'}(t)>0}\max_{m\in[M_{\ell}(i',s',\action')]}\Delta Z_{m,\ell}(i',s',\action') \\
    \leq  \frac{o(h)}{h} + K_3 \max_{(i',s',\action')\in\mathcal{J}:Y^{\psi,h}_{i',s'}(t)>0}\max_{m\in[M_{\ell}(i',s',\action')]}\Delta Z_{m,\ell}(i',s',\action'),
\end{multline}
where $(i,s,a^0(i,s))$ is the picked up \GSA triple in Step~\ref{step:2} for each $(i,s)$, and $\Delta Z_{m,\ell}(i,s,\action)$ are those defined in \eqref{eqn:define_deltaZ} and updated in Step~\ref{step:5}.

Since $\bm{x}^*$ is an optimal solution to \eqref{eqn:obj:linear programming}-\eqref{eqn:constraint:linear programming:4}, it satisfies \eqref{eqn:constraint:linear programming:4}.
If 
\begin{equation}\label{eqn:alp:2}
    \lim_{h\rightarrow \infty}\lVert\bm{Y}^{\psi,h}(t)-\bm{y}^{\phi(\bm{x}^*),h}(t)\rVert = 0,
\end{equation}
then from the law of large numbers, the $L_{\ell}(t)$ obtained at the beginning of Step~\ref{step:3} converges to the left hand side of \eqref{eqn:constraint:linear programming:4} with $\bm{x}=\bm{x}^*$.
In this case, we obtain $\lim_{h\rightarrow \infty} \max\{0,L_{\ell}(t)\} = 0$, and hence $\lim_{h\rightarrow \infty} \Delta Z_{m,\ell}(i,s,\action) = 0$ for all $(i,s,\action)\in\mathcal{J}$ and $m\in [M_{\ell}(i,s,\action)]$.
Together with \eqref{eqn:alp:1}, it leads to \eqref{eqn:assumption:action} with plugged in $\phi = \psi$ and $\bm{x} = \bm{x}^*$.

Given the initial condition $\bm{Y}^{\psi,h}(0)=\bm{y}^0 = \bm{y}^{\phi(\bm{x}^*),h}(0)$, together with \eqref{eqn:alp:1}, \eqref{eqn:assumption:action} (plugged in $\phi = \psi$ and $\bm{x}=\bm{x}^*$) holds for $t=0$.
By invoking Proposition~\ref{prop:asym_opt:LP} with specified $T=0$, we get \eqref{eqn:alp:2} for $t=1$.
Assume that \eqref{eqn:assumption:action} and \eqref{eqn:alp:2} hold for $t\leq T'$. 
By invoking Proposition~\ref{prop:asym_opt:LP} for $T=T'$, we can iteratively obtain  \eqref{eqn:alp:2} for $t=T'+1$, leading to \eqref{eqn:assumption:action} for $t=T'+1$.
Hence, \eqref{eqn:assumption:action} holds for any $T<\infty$.

\endproof

\section{Proof of Lemma~\ref{lemma:ALP_stable}}
\label{app:lemma:ALP_stable}
The proof follows similar lines as those for proving Propositions~\ref{prop:stable:fluid-balance} and \ref{prop:stable:fluid-balance:plus}.
\proof{Proof of Lemma~\ref{lemma:ALP_stable}.}

By Step~\ref{step:3}, the LP-Approx policy is formed, for which 
$$
\alpha^{\text{LP-Approx}}_{i,s,\action}(\bm{y},t) = \lim_{h\to\infty} \frac{Z^{\text{LP-Approx},h}_{i,s,\action}(t)}{Y^{\text{LP-Approx},h}_{i,s}(t)}\Bigl|_{\bm{Y}^{\text{LP-Approx},h}(t)=\bm{y}} = \begin{cases}
    \oalphaLP_{i,s,\action}(\bm{x}^*,t),\text{if } \action \neq \action^0(i,s),\\
    1-\sum_{\begin{subarray}~\action'\in\bA_i:\\\action'\neq \action^0(i,s)\end{subarray}}\oalphaLP_{i,s,\action'}(\bm{x}^*,t),&\text{otherwise}.
\end{cases}
$$

The \ALP policy family, denoted by $\psi$, is constructed based on such LP-Approx with further Steps~\ref{step:4}-\ref{step:done_ell}.

Recall that, in Step~\ref{step:5}, $j=1,2,\ldots,|\calJ|$, are the rankings of all the GSA triples in $\calJ$, and, in this step, the algorithm considers fixed $j$ and $\ell\in[L]$.
For the $j$th GSA, $(i(j),s(j),a(j))$, the algorithm further ranks its associated $M_{\ell}\bigl(i(j),s(j),a(j)\bigr)$ actions. 
In Step~\ref{step:5}, we use $\action_m$ to represent the $m$th action of the $j$th GSA triple. 
In this context, $(i(j),s(j),\action_m)$ again forms a GSA in $\calJ$.

Based on the definition in Step~\ref{step:5}, given $\bm{Y}^{\psi,h}(t)=\bm{y}$, for each $(j,\ell)\in [|\calJ|]\times[L]$ and $m\in \Bigl[M_{\ell}\bigl(i(j),s(j),a(j)\bigr)\Bigr]$, $\Delta Z_{m,\ell}\bigl(i(j),s(j),a(j)\bigr)$ is updated at most once during the algorithm.
If, for some $(j,\ell)$ and $m$, $\Delta Z_{m,\ell}\bigl(i(j),s(j),a(j)\bigr)$ is never updated during the algorithm, then we set $\Delta Z_{m,\ell}\bigl(i(j),s(j),a(j)\bigr)=0$ without loss of generality.

For each $(j,\ell)\in [|\calJ|]\times[L]$ and $m\in \Bigl[M_{\ell}\bigl(i(j),s(j),a(j)\bigr)\Bigr]$, we define 
$$\bar{\Delta} Z_{m,j,\ell}(\bm{y})\coloneqq \lim_{h\to \infty}\frac{\Delta Z_{m,\ell}\bigl(i(j),s(j),a(j)\bigr)}{Y^{\psi,h}_{i(j),s(j)}(t)}\Bigl|_{\bm{Y}^{\psi,h}(t)=\bm{y}}.$$
Along similar lines as the proof of Propositions~\ref{prop:stable:fluid-balance} (in Appendix~\ref{app:stability}), 
such $\bar{\Delta} Z_{m,j,\ell}(\bm{y})$ satisfies the following conditions.
\begin{enumerate}\setcounter{enumi}{4}
    \item $\bar{\Delta} Z_{m,j,\ell}(\bm{y})$ exists and is Lipschitz continuous in $ \bigl\{\bm{y}\in\deltay\bigl| y_{i(j),s(j)}>0\bigr\}$.\label{cond:deltaZ:1}
    \item $\bar{\Delta} Z_{m,j,\ell}(\bm{y})$ is continuous in $\bigl\{\bm{y}\in\deltay\bigl| y_{i(j),s(j)}=0\bigr\}$.  \label{cond:deltaZ:2}
    \item $\bar{\Delta} Z_{m,j,\ell}(\bm{y})$ either exists as a finite number or tends to positive infinity. \label{cond:deltaZ:3}
    \item If $\bar{\Delta} Z_{m,j,\ell}(\bm{y}) < \infty$, then there exists $\delta y >0$ such that, for any $y_{i(j),s(j)}\in[0,\delta y]$, $ \bar{\Delta} Z_{m,j,\ell}(\bm{y})$ is constant.\label{cond:deltaZ:4}
\end{enumerate}

By Step~\ref{step:3}, $Z^{\psi,h}_{i,s,\action}(t)$ ($(i,s,\action)\in\calJ$) are initialized to be $Z^{\text{LP-Approx},h}_{i,s,\action}(t)$, which are constants, independent to $\bm{Y}^{\psi,h}(t)$.
In Step~\ref{step:5}, for each $(j,\ell)\in [|\calJ|]\times[L]$ and $m\in \Bigl[M_{\ell}\bigl(i(j),s(j),a(j)\bigr)\Bigr]$, upon updating $\Delta Z_{m,\ell}\bigl(i(j),s(j),a(j)\bigr)$, the algorithm also updates $Z^{\psi,h}_{i(j),s(j),\action_m}(t)$ with $ Z^{\psi,h}_{i(j),s(j),\action_m}(t) +\min\Bigl\{Z^{\psi,h}_{i(j),s(j),\action_{m+1}}(t), \Delta Z_{m,\ell}\bigl(i(j),s(j),a(j)\bigr)\Bigr\}$ and update $Z^{\psi,h}_{i(j),s(j),\action_{m+1}}(t)$ with $ Z^{\psi,h}_{i(j),s(j),\action_{m+1}}(t) -\min\Bigl\{Z^{\psi,h}_{i(j),s(j),\action_{m+1}}(t), \Delta Z_{m,\ell}\bigl(i(j),s(j),a(j)\bigr)\Bigr\}$.
Upon every such update in Step~\ref{step:5}, based on Conditions~\ref{cond:deltaZ:1}-\ref{cond:deltaZ:4}, the increment/decrement part satisfies that
$$
\lim_{h\to\infty} \frac{1}{Y^{\psi,h}_{i(j),s(j)}(t)}\min\Bigl\{Z^{\psi,h}_{i(j),s(j),\action_{m+1}}, \Delta Z_{m,\ell}\bigl(i(j),s(j),a(j)\bigr)\Bigr\}\Bigl|_{\bm{Y}^{\psi,h}(t)=\bm{y}}  
$$
is Lipschiz continuous in $\bm{y}\in \deltay$.
Hence, at the end of the algorithm, 
$$
\alpha^{\psi}_{i(j),s(j),a(j)}(\bm{y},t) = \lim_{h\to\infty}\frac{Z^{\psi,h}_{i(j),s(j),a(j)}(t)}{Y^{\psi,h}_{i(j),s(j)}(t)}\Bigl|_{\bm{Y}^{\psi,h}(t)=\bm{y}}
$$
is also Lipschitz continuous in $\bm{y}\in \deltay$.
It proves the proposition.

\endproof

\bibliographystyle{alpha}
\bibliography{references/IEEEabrv,references/bib1}
\end{document}